\newtheorem{theorem}{Theorem}[section]
\newtheorem{proposition}[theorem]{Proposition}
\newtheorem{corollary}[theorem]{Corollary}
\newtheorem{lemma}[theorem]{Lemma}
\theoremstyle{definition}
\newtheorem{definition}[theorem]{Definition}
\newtheorem{example}[theorem]{Example}
\newtheorem{examples}[theorem]{Examples}
\newtheorem{remark}[theorem]{Remark}
\renewcommand\indexname{\sc \indexname}
\newenvironment{proof2}[1][Proof]{\noindent\textbf{#1\;}}{\ \hfill$\square$}
\renewcommand\indexname{\sc \indexname}
\def\arrows{{\rightrightarrows}}
\def\e{{\varepsilon}}
\def\l{{\lambda}}
\def\i{{\iota}}
\def\t{{\tau}}
\def\calF{{\mathcal{F}}}
\def\calG{{\mathcal{G}}}
\def\G{{\mathcal{G}}}
\def\calL{{\mathcal{L}}}    
\def\calM{{\mathcal{M}}}
\tikzset{commutative diagrams/.cd,
mysymbol/.style={start anchor=center,end anchor=center,draw=none} }
\newcommand\MySymb[2][?]{\arrow[mysymbol]{#2}[description]{#1}}
\DeclareFontFamily{U}{MnSymbolC}{}
\DeclareSymbolFont{MnSyC}{U}{MnSymbolC}{m}{n}
\DeclareFontShape{U}{MnSymbolC}{m}{n}{
    <-6>  MnSymbolC5
   <6-7>  MnSymbolC6
   <7-8>  MnSymbolC7
   <8-9>  MnSymbolC8
   <9-10> MnSymbolC9
  <10-12> MnSymbolC10
  <12->   MnSymbolC12}{}
\DeclareMathSymbol{\intprod}{\mathbin}{MnSyC}{'270}
\begin{document}

\title{Deformations of symplectic groupoids}

\author{Cristian Camilo C\'ardenas}
\address{Universidade Federal Fluminense, Instituto de Matemática e Estatística,Rua Prof. Marcos Waldemar de Freitas Reis, S/n, 24210-201, Niterói, RJ, Brazil}
\email{ccardenascrist@gmail.com}

\author{Jo\~ao Nuno Mestre}
\address{University of Coimbra, CMUC, Department of Mathematics, Apartado 3008,
EC Santa Cruz,
3001-501 Coimbra, Portugal}
\email{jnmestre@gmail.com}

\author{Ivan Struchiner}
\address{Universidade de S\~ao Paulo, Instituto de Matem\'atica e Estat\'istica, Rua do Mat\~ao 1010, 05508-090, S\~ao Paulo, SP, Brazil}
\email{ivanstru@ime.usp.br}

\begin{abstract}

We describe the deformation cohomology of a symplectic groupoid, and use it to study deformations via Moser path methods, proving a symplectic groupoid version of the Moser Theorem.

Our construction uses the deformation cohomologies of Lie groupoids and of multiplicative forms, and we prove that in the symplectic case, deformation cohomology of both the underlying groupoid and of the symplectic groupoid have de Rham models in terms of differential forms. 

We use the de Rham model, which is intimately connected to the Bott-Shulman-Stasheff double complex, to compute deformation cohomology in several examples. We compute it for proper symplectic groupoids using vanishing results; alternatively, for groupoids satisfying homological 2-connectedness conditions we compute it using a simple spectral sequence.

Finally, without making assumptions on the topology, we constructed a map relating differentiable and deformation cohomology of the underlying Lie groupoid of a symplectic groupoid, and related it to its Lie algebroid counterpart via van Est maps.  
\end{abstract}

\maketitle

\setcounter{tocdepth}{1} 
\tableofcontents

\section{Introduction}
In this paper we study deformations of a symplectic groupoid $(\G,\omega)$, constructing the deformation cohomology $H_\mathrm{def}^\bullet(\G,\omega)$ controlling them. Any deformation of $(\G,\omega)$ gives rise to a deformation class $[\eta]$ in the cohomology, which can be used in Moser path methods. We carry out computations for the cohomology in examples, comparing it with cohomologies associated to related objects.

Symplectic groupoids were introduced by Karas\"ev \cite{Karasev}, Weinstein \cite{alan-GS} and Zakrzewski \cite{Zakrzewski}, motivated by the search for a quantization procedure for Poisson structures. They are first of all Lie groupoids, objects introduced by Ehresmann in the 1950's \cite{Ehresmann}, which generalize Lie groups and which, among other uses, permit the unified study of differential geometric objects such as foliations or Lie group actions. A Lie groupoid equipped with a compatible, i.e., \textit{multiplicative} symplectic structure, becomes well adapted to the study of Poisson structures. Poisson structures are Lie algebra structures on the algebra of smooth functions on a manifold, which in addition are biderivations of the associative algebra structure. Although these Lie algebras are infinite dimensional, they are on the infinitesimal side of a rich Lie theory, with symplectic groupoids (which are finite dimensional!) playing the part of the corresponding global object.

In analogy with the relation between Lie groups and Lie algebras, any symplectic groupoid $\G\rightrightarrows M$ induces a Poisson structure $\pi$ on its space of units $M$, such that the Lie algebroid of $\G$ is naturally isomorphic to the cotangent algebroid $(T^*M)_\pi$ associated to the Poisson manifold $(M,\pi)$.  Not all Poisson structures arise in this way, but for the ones which do, called \textit{integrable}, it was shown in in \cite{MX3} that there is always a natural symplectic groupoid structure on the source $1$-connected integration $\G$ of $(T^*M)_\pi$. The symplectic groupoid structure on $\G$ was also obtained by an infinite dimensional reduction procedure in \cite{Cattaneo_Felder}, and the characterization of which Poisson manifolds are integrable was finally settled in \cite{Integrability_Poisson}.

The survey \cite{Alan-survey} gives an account of Poisson geometry at the early stages of development of symplectic groupoids, and already then they feature prominently in the results and the problems presented, some of them now settled, some still open.

First and foremost (from our specific viewpoint) symplectic groupoids are useful in the study of the Poisson geometry of the base, both in its local and global properties.
But we should mention that symplectic groupoids have also feature in a variety of other treatments, for example in geometric quantization \cite{Hawkins}, deformation quantization (cf.\ e.g.\ \cite{Cabrera_formal, Cattaneo_Felder}), the study of the geometry of varieties appearing in Poisson-Lie theory (for example \cite{Lu_Mouquin}) and of moduli spaces of flat connections (e.g.\ \cite{Li-Bland-Severa2, Severa-moduli}).

Particulary motivating for this work, was the role played by symplectic groupoids, both in the study of local and of global Poisson geometry,  via Moser path arguments, also known as ``the Moser trick''. We mention how it came to influence our work. Starting with the famous Moser Theorem, the Moser trick has been used to prove many rigidity and normal form results in symplectic geometry, and related topics (see e.g.\ \cite{ACS}), by reducing deformation problems to cohomological conditions. It was first used in the context of Lie groupoids, to our knowledge, by Weinstein \cite{Alan_linearization}, in the pursuit of linearization theorems for proper Lie groupoids. Next, in the geometric proof of Conn's Theorem of \cite{Geometric_Conn}, a \textit{Poisson} version of the Moser Theorem was used, together with symplectic groupoids that solved the cohomological conditions. Further, similar techniques were used around leaves of a Poisson manifold \cite{Marius_Ionut_leaves}. These techniques eventually led to the geometric proof of the linearization of proper groupoids around orbits \cite{CraiS}, and to the development of the deformation cohomology $H^\bullet_\mathrm{def}(\G)$ of an arbitrary Lie groupoid in \cite{CMS}.

 Recent developments related to deformation theory for Lie groupoids include a rigidity result for foliations \cite{Matias-Rui-foliations}, the study of deformations of vector bundles over Lie groupoids \cite{PierVitag}, and of morphisms of Lie groupoids \cite{CS}. Specific classes of deformations of Lie groupoids (deformations to the normal cone) are also of importance in noncommutative geometry (see e.g.\ \cite{DS1,DS2}).

Our work brings this new understanding on deformations of Lie groupoids full circle back to Poisson geometry, as exemplified by a symplectic groupoid version of the classical Moser Theorem (detailed further in Sections 7 and 8).

\begin{theorem}
Let $(\widetilde{\calG},\omega)$ be a deformation of a compact symplectic groupoid $(\calG,\omega_\calG)$ and let $[\eta]$ be the deformation class in $H^2_\mathrm{def}(\G, \omega)$ associated to it. Then, the deformation $(\widetilde{\calG},\omega)$ is trivial if and only if $[\eta]$ is exact.
\end{theorem}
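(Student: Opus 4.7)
The plan is to prove the equivalence by splitting it into two implications, the nontrivial one being a multiplicative version of the classical Moser trick.

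For the easy direction (triviality implies exactness), I would start with a trivializing isotopy, i.e., a smooth family of symplectic groupoid isomorphisms $\Phi_t : (\calG, \omega_\calG) \to (\widetilde{\calG}_t, \omega_t)$. Differentiating this isotopy in $t$ yields a smoothly varying $1$-cochain $\beta_t \in C^1_{\mathrm{def}}(\calG, \omega_t)$, and a standard derivation-of-cocycle calculation shows $\eta_t = d\beta_t$, so in particular $[\eta]=0$ in $H^2_{\mathrm{def}}(\calG,\omega)$.

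For the hard direction (exactness implies triviality), I would adapt the Moser path method. Assume $\eta = d\beta$ for some $\beta \in C^1_{\mathrm{def}}(\calG, \omega_\calG)$. The first step is to extend this pointwise primitive to a smoothly varying family $\beta_t$ of primitives for the time-dependent cocycles $\eta_t$ associated to the deformation; here compactness of $\calG$ enters through finite-dimensionality / smooth dependence properties of the groupoid cohomologies appearing in the deformation complex. The second step is to translate $\beta_t$ into geometric data: a $1$-cochain in the deformation complex of a symplectic groupoid packages a multiplicative vector field on $\calG$ together with a compatible $1$-form, which via the nondegeneracy of $\omega_t$ combine into a single time-dependent multiplicative vector field $X_t$ on $\widetilde{\calG}$. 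Multiplicativity of $X_t$ is exactly what makes its flow act by groupoid automorphisms, and it is forced by the structure of $C^1_{\mathrm{def}}(\calG,\omega)$.

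Compactness of $\calG$ then ensures $X_t$ is complete, so its flow $\phi_t$ is defined on $[0,1]$. The Moser equation
\[
\frac{d}{dt}\phi_t^* \omega_t \;=\; \phi_t^*\bigl(\mathcal{L}_{X_t}\omega_t + \dot\omega_t\bigr) \;=\; 0
\]
then drops out of Cartan calculus together with the unpacked relation $d\beta_t = \eta_t$, which is precisely designed so that $i_{X_t}\omega_t + \dot\omega_t$ is exact in the appropriate sense. Consequently $\phi_1^*\omega_1 = \omega_\calG$, and since each $\phi_t$ is a groupoid isomorphism, $\phi_1$ trivializes the deformation.

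The main obstacle I expect is the dictionary between the abstract cohomological data and the concrete Moser construction: one must check that the cocycle condition $d\beta_t = \eta_t$, expressed in terms of the deformation complex built from both the underlying groupoid and the multiplicative form, produces \emph{exactly} the data of a multiplicative vector field satisfying the Moser equation. A secondary technical point is promoting the pointwise primitive $\beta$ to a smooth family $\beta_t$ along the deformation, where compactness is again essential. Once these are settled, the rest of the argument follows the classical Moser pattern almost verbatim.
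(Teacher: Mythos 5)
Your overall strategy coincides with the paper's: take a primitive of the deformation cocycle, convert it via $\omega^\flat$ into a multiplicative (and, for non-strict deformations, transverse) vector field, and run a Moser flow. However, there are two genuine gaps.

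First, you have dropped the gauge term. Triviality of a deformation of symplectic groupoids means equivalence to a constant one in the sense of Definition \ref{Equivdeform}, which allows $F_\e^*\omega_\e=\omega_0+\delta(\alpha_\e)$ for a smooth family of closed $2$-forms $\alpha_\e$ on the base. Correspondingly, a primitive of $\eta$ in $C^1_\mathrm{def}(\calG,\omega)\cong\Omega^1_\mathrm{def}(\calG)\oplus\Omega^2_\mathrm{cl}(M)$ carries a component $\alpha\in\Omega^2_\mathrm{cl}(M)$, and unpacking the primitive equation yields $\mathcal{L}_{X}\omega+\dot\omega=\delta(\alpha)$ rather than $0$. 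So your displayed Moser equation $\frac{d}{dt}\phi_t^*\omega_t=0$ is not what the cohomological hypothesis delivers; what one actually gets (as in the proof of Theorem \ref{thm.triviality.gen}) is $\frac{d}{d\e}\bigl[\Phi_\e^*\omega_\e-\delta(\bar\alpha_\e)\bigr]=0$ with $\bar\alpha_\e=\int_0^\e\phi_s^*\alpha_s\,ds$, and the flow alone does not trivialize the deformation --- the basic gauge transformation by $\bar\alpha_\e$ is an unavoidable part of the trivializing equivalence. The same omission affects your easy direction: differentiating a full equivalence also produces a $\dot\alpha_0$-term in the coboundary.

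Second, your step of promoting a single primitive $\beta$ of $\eta_0$ to a smooth family $\beta_t$ of primitives of $\eta_t$ by appeal to ``finite-dimensionality / smooth dependence'' has no basis: $H^\bullet_\mathrm{def}(\calG,\omega)$ is an infinite-dimensional space of differential forms even for compact $\calG$ (already $H^0_\mathrm{def}(\calG,\omega)=\Omega^1_\mathrm{cl}(M)^{\calG-\mathrm{inv}}$), and the complex carries no evident Hodge-type theory producing smoothly varying primitives. The paper never performs this step: the hypothesis of Theorems \ref{thm.triviality.s-constant} and \ref{thm.triviality.gen} is that the cocycle is \emph{smoothly} exact, i.e.\ exact as a single cocycle in the foliated complex $C^\bullet_\mathrm{def}(\widetilde{\calG},\omega)_\calF$, which already encodes the smooth family of primitives. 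If you want to prove the literal statement ``$[\eta_0]=0$ implies trivial,'' you need a genuinely new argument here; as written this is a missing idea, not a routine technicality.
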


This result also leads to local descriptions of moduli spaces of multiplicative symplectic forms on a compact Lie groupoid $\G$, given by an analogue of a Kodaira-Spencer map (See Theorem \ref{thm-moduli} for a more precise and detailed version).

\begin{theorem} Let $(\G,\omega)$ be a compact symplectic groupoid, and denote by $\mathcal{S}({\G})$ the space of multiplicative symplectic structures on $\G$. 

Then there is a neighbourhood $\mathcal{U}$ of $\omega$ in $\mathcal{S}({\G})$ and a neighbourhood $\mathcal{V}$ of $0$ in $H^2_\mathrm{def}(\G,\omega)$, such that there is an explicit 1:1 correspondence \[\kappa:\ \mathcal{U}\big/\sim\ \  \stackrel{\mathrm{1:1}}{\longrightarrow} \  \mathcal{V}\subset H^2_\mathrm{def}(\G,\omega),\] sending the equivalence class of $\omega$ to $0$. The equivalence relation is such that $\omega_1 \sim \omega_2$ if $\omega_1+\e(\omega_2-\omega_1)$ is a trivial family of multiplicative forms for all $\e$.
\end{theorem}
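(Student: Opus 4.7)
My plan is to define $\kappa$ by the affine Moser path. For $\omega'$ in a sufficiently small neighbourhood $\mathcal{U}$ of $\omega$ in $\mathcal{S}(\G)$ (non-degeneracy is open in the space of closed multiplicative $2$-forms, and compactness of $\G$ makes this bound uniform), the family $\omega_\e := \omega + \e(\omega'-\omega)$, $\e\in[0,1]$, is a smooth $1$-parameter deformation of $(\G,\omega)$. I set $\kappa(\omega')$ to be its deformation class in $H^2_\mathrm{def}(\G,\omega)$; via the de Rham model developed earlier in the paper, this class is represented by the closed multiplicative $2$-form $\omega'-\omega$ (the derivative at $\e=0$).

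Well-definedness on $\mathcal{U}/\sim$ and injectivity should both follow directly from the preceding Moser-type theorem. Two forms $\omega_1,\omega_2\in\mathcal{U}$ are equivalent precisely when the affine family $\omega_1+\e(\omega_2-\omega_1)$ is trivial, which by the preceding theorem, applied with $\omega_1$ as base point, is equivalent to the vanishing of $[\omega_2-\omega_1]$ in $H^2_\mathrm{def}(\G,\omega_1)$. Under the canonical identification $H^2_\mathrm{def}(\G,\omega_1)\cong H^2_\mathrm{def}(\G,\omega)$ valid for $\omega_1$ sufficiently close to $\omega$ (the de Rham model realises both as cohomologies of essentially the same complex of multiplicative forms, with differentials that are a small perturbation of one another), this class corresponds to $\kappa(\omega_2)-\kappa(\omega_1)$, giving $\omega_1\sim\omega_2\Leftrightarrow\kappa(\omega_1)=\kappa(\omega_2)$.

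For surjectivity onto a neighbourhood $\mathcal{V}$ of $0$, I would use the de Rham model to choose a continuous (say, linear) section $s$ of the projection from closed multiplicative $2$-forms onto $H^2_\mathrm{def}(\G,\omega)$. For $[\eta]\in\mathcal{V}$ small enough the representative $s([\eta])$ is $C^0$-small, so $\omega+s([\eta])$ remains non-degenerate, lies in $\mathcal{U}$, and satisfies $\kappa(\omega+s([\eta]))=[\eta]$ by construction.

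The main obstacle is the identification of deformation cohomologies at nearby symplectic forms invoked in the second paragraph: one must verify, through the de Rham model, that the cochain complex computing $H^\bullet_\mathrm{def}(\G,\omega')$ varies continuously with $\omega'$ in a way that produces a canonical family of isomorphisms on cohomology, so that the three classes $\kappa(\omega_1),\kappa(\omega_2)$ and $[\omega_2-\omega_1]$ can be compared in a single cohomology group. Compactness of $\G$ enters both in applying the preceding Moser theorem and in securing uniform openness of the non-degeneracy and ``smallness'' conditions used throughout.
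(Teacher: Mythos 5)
Your construction of $\kappa$ and your treatment of well-definedness and injectivity follow essentially the same route as the paper: both reduce to the $s$-constant Moser theorem (Theorem \ref{thm.triviality.s-constant}) applied to the affine family, and your deformation class of $\omega+\e(\omega'-\omega)$ is indeed the paper's $[0\oplus(\omega'-\omega)]$. The ``main obstacle'' you flag --- comparing $H^2_\mathrm{def}(\G,\omega_1)$ with $H^2_\mathrm{def}(\G,\omega)$ --- dissolves once you notice that in the de Rham model (Corollary \ref{corollary:deform.complex}) the cochain complex $\Omega^{1}_\mathrm{def}(\calG^{(\bullet)})\oplus\Omega_\mathrm{cl}^{2}(\calG^{(\bullet-1)})$ and its differential $(\zeta,\alpha)\mapsto(\delta\zeta,-d_{dR}\zeta-\delta\alpha)$ do not involve the symplectic form at all; only the identification of this complex with $C^\bullet_\mathrm{def}(\G,\omega')$ does. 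So there is no perturbation of differentials to control: the three classes $\kappa(\omega_1)$, $\kappa(\omega_2)$ and $[0\oplus(\omega_2-\omega_1)]$ already live in one and the same cohomology group. This is also the (implicit) mechanism in the paper's proof.

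The genuine gap is in surjectivity. Your continuous linear section of the projection from closed multiplicative $2$-forms onto $H^2_\mathrm{def}(\G,\omega)$ presupposes that every class in $H^2_\mathrm{def}(\G,\omega)$ is represented by a cocycle of the form $0\oplus\omega_1$ with $\omega_1\in\Omega^2_\mathrm{cl,mult}(\G)$, i.e., with vanishing component in $\Omega^1_\mathrm{def}(\G^{(2)})$. This is not true for general symplectic groupoids and is exactly where compactness enters a second time: by Theorem \ref{thm-proper-vanishing}, $H^2_\mathrm{def}(\G)=0$ for proper $\G$, so the long exact sequence of the mapping cone (as in Proposition \ref{prop-symp-prop}) gives surjectivity of $H^1_\mathrm{def}(\omega)\to H^2_\mathrm{def}(\G,\omega)$, which is precisely the needed statement. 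Without this input your section $s$ need not exist; and even granting it, one should either justify the existence of a continuous section of a quotient of infinite-dimensional spaces or, as the paper does, avoid the issue by choosing a representative class by class and simply setting $\mathcal{V}=\kappa(\mathcal{U})$. The rest of your argument is sound.
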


The key insight in order to construct the deformation cohomology $H^2_\mathrm{def}(\G, \omega)$, where the cohomological equations of the Moser Theorem naturally live, was that the pair of a Lie groupoid and an extra structure on it is more manageable when interpreted as a diagram. Then the new tools on deformations of morphisms of Lie groupoids from \cite{CS} could be used.

Moreover, in the specific case where the extra structure is symplectic, there is a description in terms of differential forms on the nerve $\G^{(\bullet)}$ of $\G$, of $H^\bullet_\mathrm{def}(\G)$, and of $H^\bullet_\mathrm{def}(\G, \omega)$. We call the latter the \textit{de Rham model for deformation cohomology} of the symplectic groupoid $(\G,\omega)$, and we summarize the results here (see Proposition \ref{prop:isomorph-nondegeneracy} and Theorem \ref{viewpoint2} for more detailed versions).

\begin{theorem} 
Let $(\calG,\omega)$ be a symplectic groupoid. Then $\omega^{b}$ induces a quasi-isomorphism  \[C^{\bullet}_\mathrm{def}(\calG)\sim \Omega^{1}(\calG^{(\bullet)}).\]

\ \

The deformation complex $C^{\bullet}_\mathrm{def}(\G,\omega)$ of the symplectic groupoid $(\G,\omega)$ is isomorphic to the mapping cone of the de Rham differential \[-d_{dR}:\Omega^{1}_\mathrm{def}(\G^{(\bullet)})\to\Omega_\mathrm{cl}^{2}(\G^{(\bullet)}).\]
\end{theorem}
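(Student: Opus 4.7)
The plan is to establish the first statement (the quasi-isomorphism) first, and then derive the second (mapping cone description) by transporting an already-established mapping cone presentation of $C^{\bullet}_\mathrm{def}(\G,\omega)$ through the map $\omega^{b}$.

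For the first statement, I would first write down $\omega^{b}$ at the cochain level explicitly. A deformation cochain on $\calG$ in degree $n$ is a map $\calG^{(n)}\to T\calG$ covering an appropriate face map and subject to a projectability constraint, and applying $\omega^{b}: T\calG\to T^{*}\calG$ fibrewise produces a cotangent-valued map that, by the constraint, descends to a genuine $1$-form on $\calG^{(n)}$. The image complex $\Omega^{1}_\mathrm{def}(\calG^{(\bullet)})$ then comes equipped with the differential inherited from the deformation differential on $C^{\bullet}_\mathrm{def}(\calG)$. The first real task is to check that this inherited differential coincides with the one induced from the Bott-Shulman-Stasheff double complex $(\Omega^{\bullet}(\calG^{(\bullet)}),d_{dR},\partial)$; this step is where multiplicativity of $\omega$ is used in an essential way, through the relations between $\omega$ and the face maps expressing $\partial\omega=0$. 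Once $\omega^{b}$ is verified to be a chain map, the quasi-isomorphism claim is immediate from nondegeneracy: fibrewise it is the musical isomorphism $T_{g}\calG\cong T_{g}^{*}\calG$, so after accounting for the projectability constraint the map is actually an isomorphism of cochain complexes in each degree.

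For the second statement, one recalls from the earlier construction of $C^{\bullet}_\mathrm{def}(\calG,\omega)$ in the paper that, since a symplectic groupoid is a diagram in the sense of \cite{CS}, its deformation complex is a mapping cone whose domain is $C^{\bullet}_\mathrm{def}(\calG)$, whose codomain is the complex of multiplicative closed $2$-forms on $\calG^{(\bullet)}$, and whose connecting map sends a cochain $c$ to the infinitesimal variation it induces on $\omega$. Unwinding the Cartan calculus on multiplicative forms shows that this variation is $-d_{dR}(\iota_{c}\omega)=-d_{dR}\,\omega^{b}(c)$ (the sign being the mapping-cone sign convention). Composing with the isomorphism of the first part rewrites the domain of this mapping cone as $\Omega^{1}_\mathrm{def}(\calG^{(\bullet)})$ and the connecting map as $-d_{dR}:\Omega^{1}_\mathrm{def}(\calG^{(\bullet)})\to\Omega^{2}_\mathrm{cl}(\calG^{(\bullet)})$, which is the stated presentation.

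The main obstacle is expected to be the chain map verification for $\omega^{b}$. The deformation differential is an alternating sum of terms built from all the structure maps of $\calG$ (including multiplication and units), while on $\Omega^{1}(\calG^{(\bullet)})$ the simplicial differential is a cleaner alternating sum of pullbacks along face maps; matching the two requires using the componentwise identities expressing $\partial\omega=0$ and tracking the projectability constraint carefully through each term. Consistent sign conventions, in particular the ones responsible for $-d_{dR}$ (rather than $+d_{dR}$) appearing in the final mapping cone, must also be maintained between the two parts.
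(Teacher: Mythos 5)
Your outline of the second half (identifying the connecting map of the mapping cone with $-d_{dR}$ via Cartan calculus and the closedness of $\omega$) matches the paper's route in Theorem \ref{viewpoint2}, modulo the detail that in degrees $k\geq 2$ the contraction $\iota_c\omega$ must be replaced by $i_{X_c}pr_1^*\omega$ for a vector field $X_c$ on $\G^{(k)}$ built from $c$ via a splitting of $dt$ (and one must check independence of the splitting); the paper then runs the flow of $X_c$ to compute $\hat{\omega}_*\circ\oplus^2T_\mathrm{def}(c)=\mathcal{L}_{X_c}pr_1^*\omega=d_{dR}(i_{X_c}pr_1^*\omega)$. That part of your plan is viable.

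The genuine gap is in the first half. You assert that the quasi-isomorphism $C^{\bullet}_\mathrm{def}(\calG)\sim\Omega^{1}(\calG^{(\bullet)})$ is ``immediate from nondegeneracy'' because fibrewise $\omega^b$ is the musical isomorphism, so that ``the map is actually an isomorphism of cochain complexes in each degree.'' This cannot be right: a deformation $k$-cochain takes values in $T_{g_1}\calG$, so its image under $\omega^b$ is a section of $pr_1^*T^*\calG$ over $\calG^{(k)}$, a bundle of rank $\dim\calG$, whereas a general element of $\Omega^1(\calG^{(k)})$ is a section of $T^*\calG^{(k)}$, which has strictly larger rank for $k\geq 2$ (for a symplectic groupoid, $3\dim M$ versus $2\dim M$ already when $k=2$). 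So $\omega^b$ identifies $C^{\bullet}_\mathrm{def}(\calG)$ only with the proper subcomplex $\Omega^{1}_\mathrm{def}(\calG^{(\bullet)})$ cut out by the two conditions of Remark \ref{remark:explicite isomorph}, and the remaining claim --- that the inclusion $\Omega^{1}_\mathrm{def}(\calG^{(\bullet)})\hookrightarrow\Omega^{1}(\calG^{(\bullet)})$ is a quasi-isomorphism --- is a nontrivial statement that does not follow from nondegeneracy at all. In the paper it is exactly the content of the Cabrera--Drummond result (Lemma \ref{LemmaCD}): the VB-groupoid complex includes quasi-isomorphically into the complex of linear cochains, which under $C^k_\mathrm{lin}(T\calG)\cong\Omega^1(\calG^{(k)})$ gives the needed quasi-isomorphism. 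You must either cite that lemma or supply the homotopy it provides; as written, your argument proves a different (and false, for the full $\Omega^1(\calG^{(\bullet)})$) statement. Relatedly, the paper sidesteps your anticipated ``main obstacle'' (a direct chain-map verification for $\omega^b$) by first identifying $C^{\bullet}_\mathrm{def}(\calG)$ with $C^{\bullet}_\mathrm{VB}(T^*\calG)$ and then using that multiplicativity makes $\omega^b$ a VB-groupoid isomorphism $T\calG\to T^*\calG$, so compatibility with the simplicial differentials is automatic; your direct verification could be made to work, but it is extra effort and does not remove the need for the quasi-isomorphism lemma above.
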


In order to effectively use a symplectic groupoid to study the Poisson structure on the base, it is desirable to have a good description of the symplectic groupoid, or at least to know some of its properties. There are conditions to characterize whether a Poisson manifold is integrable \cite{Integrability_Poisson} and if it is, there is an infinite-dimensional reduction procedure for the construction of the source $1$-connected integration \cite{Cattaneo_Felder}. Nonetheless, constructing a concrete integration is often still challenging. 

Fortunately, symplectic groupoids for several classes of Poisson manifolds have been described, usually immediately bringing about new insight about the Poisson structures they integrate; we mention a few of them. Some of the simplest cases are symplectic manifolds, the zero Poisson structure and linear Poisson structures on vector spaces and vector bundles \cite{CosteDazordWeins.}. Even for the zero Poisson structure, symplectic groupoids may provide rich geometry in the form of integral affine structures, as shown recently in \cite{PMCT1,PMCT2}. Integrations for any Poisson-Lie group are described in \cite{Lu_thesis, Lu_Weinstein}, and these and other symplectic groupoids also have interpretations in terms of moduli spaces of flat connections, for example in \cite{Li-Bland-Severa2, Severa-moduli}. Integrations of Poisson fibrations are studied in \cite{Poisson_fibrations}; integrations of Log-symplectic manifolds are studied using blow-up and gluing techniques in \cite{Gualtieri-Li}; a description of the integration of neighborhoods of Poisson transversal submanifolds is given in \cite{Pedro-Ionut-PT2}; more examples appear in relation to the theory of Poisson-Lie groups - for example, symplectic leaves of double Bruhat cells are symplectic groupoids studied in \cite{Lu_Mouquin}; and this list is far from complete.

We carried out computations for the deformation cohomology in a few of these examples. Most computable examples $(\G,\omega)$ are typically of one of two kinds: either we assumed some sort of compactness condition, or vanishing of $H^1_\mathrm{dR}(\G)$ and $H^2_\mathrm{dR}(\G)$. Clearly we cannot ask for both  vanishing $H^2_\mathrm{dR}(\G)$ and compactness, because we deal with symplectic groupoids. In fact, even examples of compact symplectic groupoids with the more reasonable condition of having simply connected source fibres are very hard to find \cite{PMCT2, David_PMCT, Zwaan}.

Asking for the symplectic groupoid to be proper (a compactness-type condition), we obtained the following result (See Section 6 for more a detailed version of it), and applied it for linear Poisson and zero Poisson structures of proper type.

\begin{proposition}
         Let $(\G, \omega)$ be a proper symplectic groupoid, and let $\nu$ denote the normal distribution to the symplectic leaves. Then there is a 7-term exact sequence 
\begin{align*}
   0  & \to \Omega^1_\mathrm{cl}(M)^{\G-\mathrm{inv}} \to \Gamma(\nu^*)^{\G-\mathrm{inv}} \to \Omega^2_\mathrm{cl}(M)^{\G-\mathrm{inv}} \to H^{1}_\mathrm{def}(\G, \omega) \to \\
  &  \to \Gamma(\nu)^{\G-\mathrm{inv}}\to H_\mathrm{def}^{1}(\omega)\to H^{2}_\mathrm{def}(\G, \omega)\to 0
  \end{align*}       
  and $H^{k}_\mathrm{def}(\G,\omega)\cong H^{k-1}_\mathrm{def}(\omega)$ for $k> 2$.
  
In particular, expressed in terms of the de Rham model, \[H^{2}_\mathrm{def}(\G, \omega)\cong H_\mathrm{def}^{1}(\omega)/d(H^1_\mathrm{def}(\G)).\]
\end{proposition}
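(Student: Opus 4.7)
The plan is to derive both the 7-term exact sequence and the higher-degree isomorphisms from the long exact sequence associated to the mapping cone description of $C^\bullet_\mathrm{def}(\G,\omega)$ supplied by the de Rham model, specialised to the proper case using the vanishing and low-degree identifications for proper Lie groupoids from \cite{CMS}.

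First I would invoke the de Rham model, which presents $C^\bullet_\mathrm{def}(\G,\omega)$ as the mapping cone of $-d_{dR}\colon \Omega^1_\mathrm{def}(\G^{(\bullet)})\to\Omega^2_\mathrm{cl}(\G^{(\bullet)})$. The canonical short exact sequence of complexes
\[0\to \Omega^2_\mathrm{cl}(\G^{(\bullet)})[-1]\to C^\bullet_\mathrm{def}(\G,\omega)\to \Omega^1_\mathrm{def}(\G^{(\bullet)})\to 0\]
yields a long exact sequence which, after rewriting the middle term via the $\omega^\flat$ quasi-isomorphism $C^\bullet_\mathrm{def}(\G)\sim\Omega^1(\G^{(\bullet)})$ and setting $H^n_\mathrm{def}(\omega):=H^n(\Omega^2_\mathrm{cl}(\G^{(\bullet)}))$, reads
\[\cdots\to H^{n-1}_\mathrm{def}(\omega)\to H^n_\mathrm{def}(\G,\omega)\to H^n_\mathrm{def}(\G)\to H^n_\mathrm{def}(\omega)\to\cdots,\]
with connecting map induced by $-d_{dR}$.

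Next, specialising to proper $\G$, I would plug in the CMS description of the deformation cohomology of the underlying Lie groupoid: the vanishing $H^n_\mathrm{def}(\G)=0$ for $n\geq 2$, together with the identifications $H^0_\mathrm{def}(\G)\cong\Gamma(\nu^*)^{\G-\mathrm{inv}}$ and $H^1_\mathrm{def}(\G)\cong\Gamma(\nu)^{\G-\mathrm{inv}}$. In the symplectic setting, the coincidence of $\G$-orbits with symplectic leaves makes $\G$-invariant $1$-forms on $M$ precisely conormal sections of $\nu$, so these identifications read through $\omega^\flat$ in the expected way. Direct inspection then gives $H^0_\mathrm{def}(\omega)=\Omega^2_\mathrm{cl}(M)^{\G-\mathrm{inv}}$ and $H^0_\mathrm{def}(\G,\omega)=\Omega^1_\mathrm{cl}(M)^{\G-\mathrm{inv}}$ from the mapping cone in degree zero (intersection of the kernels of $\delta$ and $d_{dR}$ on $\Omega^1(M)$).

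Substituting into the long exact sequence is then mechanical. For $k>2$ both $H^{k-1}_\mathrm{def}(\G)$ and $H^k_\mathrm{def}(\G)$ vanish, forcing the isomorphism $H^k_\mathrm{def}(\G,\omega)\cong H^{k-1}_\mathrm{def}(\omega)$. For $k\leq 2$ the truncated LES reproduces the claimed 7-term sequence exactly, with the two leftmost nontrivial maps being the inclusion $\Omega^1_\mathrm{cl}(M)^{\G-\mathrm{inv}}\hookrightarrow\Gamma(\nu^*)^{\G-\mathrm{inv}}$ and the de Rham differential $d_{dR}\colon\Gamma(\nu^*)^{\G-\mathrm{inv}}\to\Omega^2_\mathrm{cl}(M)^{\G-\mathrm{inv}}$. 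The tail $H^1_\mathrm{def}(\G)\to H^1_\mathrm{def}(\omega)\to H^2_\mathrm{def}(\G,\omega)\to 0$, whose first arrow is $-d_{dR}$-induced in the de Rham model, yields the quotient description $H^2_\mathrm{def}(\G,\omega)\cong H^1_\mathrm{def}(\omega)/d(H^1_\mathrm{def}(\G))$.

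The main obstacle is the verification of the low-degree identifications. CMS phrases $H^0_\mathrm{def}(\G)$ and $H^1_\mathrm{def}(\G)$ in terms of outer multiplicative vector fields and first-order infinitesimal deformations of the Lie groupoid, so one must carefully transport these through $\omega^\flat\colon T\G\to T^*\G$ to land in invariant (co)normal sections. The key geometric input is that $M\subset(\G,\omega)$ is Lagrangian, so $\omega^\flat(T_xM)$ is the annihilator of $T_xM$ in $T^*_x\G$ and $TM\cong A^*$ along units; combined with the orbits-equal-leaves coincidence, this interchanges normal and conormal invariant sections, as required to match the statement.
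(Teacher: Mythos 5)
Your proposal is correct and follows essentially the same route as the paper: the paper's proof is a one-line appeal to the cone long exact sequence (\ref{eq-long-ex-seq-cone}), the proper vanishing Theorem \ref{thm-proper-vanishing}, the low-degree computations of Section \ref{subsec-small-degrees}, and the identification $\mathfrak{i}_x\cong\nu_x^*$ for symplectic groupoids — exactly the ingredients you assemble. Your extra care in transporting the CMS identifications through $\omega^\flat$ (using that $M$ is Lagrangian) is just an expanded version of the paper's "recall that $\mathfrak{i}_x\cong\nu_x^*$" remark.
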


With the help of some simple spectral sequence arguments we also provide descriptions of the deformation cohomology of an arbitrary symplectic groupoid. Those descriptions, however, become much simpler in the case of proper symplectic groupoids, as detailed in Sections \ref{Sec-spectral-sequence-rows} and \ref{Sec-spectral-sequence-column}.

If on the other hand we ask for moderately strong topological conditions - vanishing of first and second de Rham cohomologies, we see a very close relation between deformation cohomology of $(\G,\omega)$ and Poisson cohomology $H_\pi(M)$ of the base. Poisson cohomology of $(M,\pi)$ is computed by the complex $\mathfrak{X}^\bullet(M)$ of multivector fields on $M$, with differential $d_\pi=[\pi,\cdot]_{SN}$, i.e., given by the Schouten-Nijenhuis bracket with $\pi$. It is the cohomology controlling deformations of the Poisson structure $\pi$, and in general it is hard to compute. We refer to \cite{Florian} for an account of techniques that can be used to compute it in different classes of Poisson manifolds, as well as for the very recent developments in \textit{loc. cit.}

Call a Lie groupoid $\G$ homologically $2$-connected if all the spaces $\G^{(n)}$ of its nerve, as well as its source-fibres, are homologically $2$-connected.
In this situation we obtained the following result (Section 9 contains precise and improved versions).

\begin{proposition} Let $(\G,\omega) \rightrightarrows (M,\pi)$ be a homologically $2$-connected symplectic groupoid.
Then \[H_\mathrm{def}^0(\G,\omega)\cong H^0_\pi(M)/\mathbb{R}, \ \ \ H_\mathrm{def}^1(\G,\omega)\cong H^1_\pi(M)\ \ \ H_\mathrm{def}^2(\G,\omega)\cong H^2_\pi(M),\] and there is an injective map \[H_\mathrm{def}^{3}(\G,\omega)\to H^{3}_\pi(M).\]
\end{proposition}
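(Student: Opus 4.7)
The plan is to leverage the mapping-cone description of $C^\bullet_\mathrm{def}(\G,\omega)$ from the preceding theorem, and feed its long exact sequence the two ingredients that become tractable under homological 2-connectedness. Writing $L(f)$ for the long exact sequence associated to the mapping cone of $-d_{dR}\colon\Omega^1_\mathrm{def}(\G^{(\bullet)})\to\Omega^2_\mathrm{cl}(\G^{(\bullet)})$, and using the first half of the de Rham model theorem ($\omega^\flat\colon C^\bullet_\mathrm{def}(\G)\sim\Omega^1(\G^{(\bullet)})$) to identify $H^\bullet(\Omega^1_\mathrm{def}(\G^{(\bullet)}))$ with $H^\bullet_\mathrm{def}(\G)$, this gives
\[\cdots\to H^{k-1}(\Omega^2_\mathrm{cl}(\G^{(\bullet)}))\to H^{k}_\mathrm{def}(\G,\omega)\to H^k_\mathrm{def}(\G)\xrightarrow{d_{dR}}H^{k}(\Omega^2_\mathrm{cl}(\G^{(\bullet)}))\to\cdots.\]

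First I would compute $H^k_\mathrm{def}(\G)$. Source-fibre 2-connectedness activates the van Est theorem for deformation cohomology (available from the general theory and alluded to in the last paragraph of the introduction), giving an isomorphism with the Lie-algebroid deformation cohomology $H^k_\mathrm{def}(A)$ for $k\le 2$ and an injection for $k=3$. Since the algebroid of a symplectic groupoid is the cotangent algebroid $A=(T^*M)_\pi$, one then uses the standard identification of $H^\bullet_\mathrm{def}((T^*M)_\pi)$ with Poisson cohomology $H^\bullet_\pi(M)$ to replace the target of the van Est map by $H^\bullet_\pi(M)$. Next I would compute $H^k(\Omega^2_\mathrm{cl}(\G^{(\bullet)}))$. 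Nerve 2-connectedness makes each $\G^{(n)}$ satisfy $H^1_{dR}=H^2_{dR}=0$, yielding an exact resolution
\[0\to\mathbb{R}\to C^\infty(\G^{(n)})\xrightarrow{d}\Omega^1(\G^{(n)})\xrightarrow{d}\Omega^2_\mathrm{cl}(\G^{(n)})\to 0\]
in each simplicial degree. Splitting this into two short exact sequences of complexes for the simplicial differential, and applying van Est a second time (source-fibre 2-connectedness) to kill the differentiable and $\mathbb{R}$-coefficient cohomologies in the relevant range, one identifies $H^k(\Omega^2_\mathrm{cl}(\G^{(\bullet)}))$ with the image (or cokernel) of $d_{dR}$ on $H^k_\mathrm{def}(\G)$ in low degrees.

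Plugging both computations back into the long exact sequence collapses it: the map $d_{dR}$ appearing in the sequence becomes identified with the very map used to compute $H^\bullet(\Omega^2_\mathrm{cl})$, so the connecting homomorphisms split off cleanly. The $\mathbb{R}$-correction in degree zero comes directly from the constants $\mathbb{R}\hookrightarrow C^\infty$ in the resolution above (constant Casimirs yield trivial deformations), and the injectivity in degree 3 is exactly the residual from the van Est theorem in its critical degree. The main obstacle will be compatibility: tracking the several identifications (the quasi-isomorphism $\omega^\flat$, the two van Est maps, and the connecting morphism of the mapping cone) so that the map $H^k_\mathrm{def}(\G,\omega)\to H^k_\pi(M)$ one produces actually factors through the canonical identifications on the nose. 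I expect no genuinely new analytic input to be needed beyond what is already stated or used in the paper; the work is bookkeeping to ensure the expected cancellations in the long exact sequence really happen.
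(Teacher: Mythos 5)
Your overall strategy --- exploit the two-row mapping-cone structure of $C^\bullet_\mathrm{def}(\G,\omega)$ together with the homological $2$-connectedness of the nerve and of the source fibres --- is the right one, and is close in spirit to the paper's (which runs the ``columns first'' spectral sequence of the same two-row double complex). But there is a genuine gap in the step where Poisson cohomology is supposed to appear. You compute the third term of the long exact sequence as $H^k_\mathrm{def}(\G)\cong H^k_\mathrm{def}((T^*M)_\pi)$ via the van Est theorem for \emph{deformation} cohomology, and then invoke ``the standard identification of $H^\bullet_\mathrm{def}((T^*M)_\pi)$ with $H^\bullet_\pi(M)$''. No such identification exists: the deformation cohomology of the Lie algebroid $T^*M$ is the cohomology of the complex of multiderivations of $T^*M$, which controls deformations of the \emph{Lie algebroid structure} on $T^*M$ and is in general different from (and larger than) Poisson cohomology. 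What is canonically isomorphic to $H^\bullet_\pi(M)$ is the ordinary Lie algebroid (Chevalley--Eilenberg) cohomology $H^\bullet(T^*M)$, computed by $(\mathfrak{X}^\bullet(M),d_\pi)$; the two are related only by the map $i$ of \cite{CM}, which is not an isomorphism in general --- indeed Section 10 of the paper exists precisely because these cohomologies differ. With this conflation, neither your identification of $H^k_\mathrm{def}(\G)$ with $H^k_\pi(M)$ nor the claimed collapse of the long exact sequence goes through; moreover your treatment of the remaining term $H^{k-1}(\Omega^2_\mathrm{cl}(\G^{(\bullet)}))$ stays at the level of an unverified expectation that it cancels against the connecting map.

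The fix is to let the $2$-connectedness act \emph{before} any van Est map, in the de Rham direction rather than the simplicial one. Since $H^1_{dR}(\G^{(n)})=H^2_{dR}(\G^{(n)})=0$ for every $n$, the map $-d_{dR}\colon\Omega^1(\G^{(n)})\to\Omega^2_\mathrm{cl}(\G^{(n)})$ is surjective with kernel $\Omega^1_\mathrm{cl}(\G^{(n)})=dC^\infty(\G^{(n)})\cong C^\infty(\G^{(n)})/\mathbb{R}$. The mapping cone of a degreewise surjective map is quasi-isomorphic to its kernel (and the cone over $\Omega^1_\mathrm{def}$ is quasi-isomorphic to the cone over $\Omega^1$, since the inclusion is a quasi-isomorphism), so $H^k_\mathrm{def}(\G,\omega)\cong H^k_\delta\bigl(C^\infty(\G^{(\bullet)})/\mathbb{R}\bigr)$, which equals $H^k_\mathrm{diff}(\G)$ for $k>0$ and $H^0_\mathrm{diff}(\G)/\mathbb{R}$ for $k=0$. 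Only at this point does one apply the \emph{classical} van Est theorem for differentiable cohomology (isomorphism up to degree $2$ and injectivity in degree $3$, by source-fibre $2$-connectedness), landing in the algebroid cohomology $H^\bullet(T^*M)\cong H^\bullet_\pi(M)$. This is exactly what the paper's spectral sequence computation accomplishes: the top row of the first page vanishes, the bottom row becomes $dC^\infty(\G^{(\bullet)})$, and the $\mathbb{R}$-correction in degree zero comes from the constants, as you correctly anticipated.
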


Although the conditions do sound restrictive, we focus in Section 9 on some situations in which they are satisfied, for example in the context of linear Poisson structures, Poisson-Lie groups, and cotangent VB-groupoids.

As a general result without further assumptions on topology of $(\G,\omega)$ other than connectivity of the source fibres, we construct a map $i_\G: H^\bullet(\G)\to H^\bullet_\mathrm{def}(\G)$ as a global counterpart to the map $i: H^\bullet_\pi(M)\to H^\bullet_\mathrm{def}(A)$ of \cite{CM} and prove the following (more details, and the statement at the level of cochains in Theorem \ref{global of i}).

\begin{theorem}
Let $(\calG,\omega)$ be a $s$-connected symplectic groupoid. Then  $i_{\calG}$ together with $i$ and the van Est maps for differentiable and deformation cohomology form the commutative diagram.
\begin{equation*}
  \begin{tikzcd}[column sep=4em, row sep=10ex]
    H^{k}(\calG) \MySymb[\circlearrowright]{dr} \arrow{d}[swap]{VE} \arrow{r}{i_{\calG}} & H^{k}_\mathrm{def}(\calG) \arrow{d}{VE_\mathrm{def}}\\
    H^{k}(T^{*}M) \arrow{r}{i} & H^{k}_\mathrm{def}(T^{*}M).
  \end{tikzcd}
\end{equation*}
\end{theorem}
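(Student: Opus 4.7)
The approach I would take is to verify commutativity directly at the cochain level, as suggested by the cochain-level version referenced as Theorem \ref{global of i}. This requires assembling explicit formulas for all four maps in the square. The van Est maps $VE$ and $VE_\mathrm{def}$ both follow Crainic's prescription: iterated differentiation of a (deformation) cochain along right-invariant vector fields $\overrightarrow{\alpha}$ associated to sections $\alpha$ of $A=T^*M$, evaluated at the units. The map $i$ of \cite{CM} is built from $\pi^\sharp:T^*M\to TM$, turning a Poisson multivector into a deformation cochain of the cotangent algebroid. The global counterpart $i_\calG$ is then naturally built out of $\omega^b:T\calG\to T^*\calG$ (together with the quasi-isomorphism of Proposition \ref{prop:isomorph-nondegeneracy}), pairing each argument against a right-invariant vector field through $\omega$ before differentiating.

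The crucial infinitesimal identity, forced by multiplicativity of $\omega$, is that $\omega^b$ restricted to $T_{1_x}\calG$ and further to $A_x\hookrightarrow T_{1_x}\calG$ coincides with the canonical isomorphism $A_x\cong T^*_xM$ induced by $\omega$, i.e.\ with the inverse of $\pi^\sharp$. With this in hand, I would expand both compositions on a typical tuple $(\alpha_1,\ldots,\alpha_k)$: $VE_\mathrm{def}\circ i_\calG$ produces iterated right-invariant derivatives of the original cochain with an extra contraction through $\omega^b$, evaluated at a unit; whereas $i\circ VE$ first performs the iterated differentiation to produce the van Est cochain and then inserts the arguments through $\pi^\sharp$. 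Because right-invariant extensions correspond to sections of $A$ under evaluation at the units, and $\omega^b$ restricts to $\pi^\sharp$ there, the two expansions agree on the principal terms.

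The technical difficulty I expect is that the iterated formulas also produce lower-order nested-bracket terms --- commutators of right-invariant vector fields with the Hamiltonian-type operation provided by $\omega$ --- which a priori appear on only one side of the square. I do not expect these to cancel on the nose, so the cleanest route is to package the discrepancy as $d_\mathrm{def}\circ h+h\circ d_\mathrm{def}$ for an explicit chain homotopy $h$, constructed by ``dropping the outermost differentiation'' in the van Est formula and compensating via $\omega^\sharp$. Showing that such an $h$ exists, that it computes the defect exactly at the cochain level, and that the resulting identity descends to equality on cohomology, is the main bookkeeping step, and is the part of the argument I expect to be most delicate.
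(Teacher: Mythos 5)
Your proposal starts from the right observation --- multiplicativity forces $\omega^\flat$ to restrict at the units to the canonical identification $A\cong T^*M$, and this is indeed what ultimately closes the square --- but the route you sketch contains a genuine gap and also misdescribes the maps being compared. Both $i$ and $i_\calG$ are \emph{tangent-lift} constructions, not contraction constructions: the paper identifies the map $i$ of \cite{CM} with the composite $\mathfrak{X}^k(M)\xrightarrow{T_{\mathfrak X}}\mathfrak{X}^k_\mathrm{lin}(TM)\xrightarrow{D_{TM}}Der^{k-1}(T^*M)$, and $i_\calG$ is $C^k(\calG)\xrightarrow{T}C^k_\mathrm{lin}(T\calG)\xrightarrow{(\omega^{\#})^*}C^k_\mathrm{lin}(T^*\calG)$ followed by the Cabrera--Drummond quasi-isomorphism with $C^k_\mathrm{def}(\calG)$. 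Since that last identification is only a quasi-isomorphism, neither $i_\calG$ nor $VE_\mathrm{def}$ can be written directly as a cochain map on $C^\bullet_\mathrm{def}(\calG)$; an honest cochain-level verification has to take place in $C^\bullet_\mathrm{lin}(T^*\calG)$ and its infinitesimal counterparts, which your expansion in terms of iterated right-invariant derivatives does not set up.

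More seriously, the chain homotopy you invoke to absorb the ``lower-order nested-bracket terms'' is neither constructed nor needed: in the paper the square is cut into sub-squares, each of which commutes \emph{strictly} at the cochain level. The genuinely computational piece is the compatibility of the tangent lift of cochains with the van Est maps (diagram $(I)$), and this is proved not by a homotopy but by evaluating both sides on linear sections $T\alpha$ and core sections $\hat\alpha$ of $TA\to TM$ (which together span all sections), using the explicit flows of the corresponding right-invariant vector fields on $T\calG$. The remaining sub-squares are naturality of van Est under the groupoid morphism $\omega^{\#}$ and the double-vector-bundle identity $\Theta_{A^*}=R_A^{-1}\circ L(\omega)^\flat$ relating the Tulczyjew map, the reversal isomorphism and the IM-form of $\omega$; without an identity of this kind you cannot match the infinitesimal shadow of $\omega^\flat$ with the canonical structures on $T^*A^*$ and $TA$ through which $i$ is defined. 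As written, your argument asserts the existence of a homotopy $h$ with the required properties without exhibiting it, and so does not yet constitute a proof.
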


We would like to make some final remarks.
We believe that the description of the deformation complex of a symplectic groupoid is important by itself, because this work should serve as prototype. It opens the road to similar studies for other structures on groupoids, which should shed light on deformation theoretic properties of objects described by them. We expect that similar techniques can be used, but revealing different phenomena, for multiplicative contact, Poisson, complex structures, foliations, etc.

The theory developed in this paper should have a version for local and formal symplectic groupoids, likely using the version of the Bott-Shulman-Stasheff for local Lie groupoids \cite{LBM}. In this direction, it would be interesting to study the classes of deformations of formal symplectic groupoids \cite{CDF} induced by star products arising from quantization, or pairs of star products, as in the works of \cite{Cabrera_formal} and \cite{Karabegov1, Karabegov2}. We expect that local and formal versions automatically satisfy many of the topological assumptions we used in computations, bringing the theory closer to the deformations of the underlying Poisson structures.

Finally, all the work developed in this paper is intimately connected with the Bott-Shulman-Stasheff double complex \cite{BSS}, therefore lending itself to generalizations towards twisted presymplectic, and higher (shifted) (pre)symplectic groupoids.

\subsection*{Outline of the paper}

In Section 2 we recall needed background material about Lie groupoid cohomology and deformations, VB-groupoids, multiplicative forms and symplectic groupoids, and Poisson manifolds.

In Section 3 we introduce deformations and equivalence of deformations of symplectic groupoids, and provide examples of trivial and non-trivial deformations.

In Section 4 we explain how the symplectic form of a symplectic groupoid $(\G,\omega)$ permits a description of deformation cohomology of $\G$ in terms of differential forms.

In Section 5 we come to the central object of this paper, the deformation cohomology $H_\mathrm{def}(\G, \omega)$ of a symplectic groupoid. We explain how to construct it out of the deformation cohomologies of the $\G$ and of $\omega$. We also prove that there is a de Rham model for it, in which cochains are differential forms, used for the computations in the rest of the paper.

In Section 6 we carry out the first computations of deformation cohomology, for symplectic manifolds, and proper symplectic groupoids; we then specialize to zero Poisson structures of proper type and linear Poisson structures of proper type. Using simple spectral sequence arguments we present some explicit computations for general symplectic groupoids.

In sections 7 and 8 we use the deformation cohomology to prove symplectic groupoid versions of the Moser theorem, first for specific (source-constant), and then for general deformations. We also study consequences moduli spaces of multiplicative symplectic forms.

In Section 9 we use another spectral sequence to carry out more computations of deformation cohomology, this time for symplectic groupoids for which the nerve has vanishing first and second de Rham cohomology. The relation with Poisson cohomology is very close in these cases; we include computations for linear Poisson structures, the zero Poisson structure, Poisson-Lie groups, and cotangent groupoids.

in Section 10 we study, now without topological assumptions on $(\G, \omega)$, relations between the differentiable and deformation cohomology of $\G$, the Poisson cohomology of the base, and the deformation cohomology of the corresponding algebroid. We do so by providing a map $i_\G: H^\bullet(\G)\to H^\bullet_\mathrm{def}(\G)$ and proving that it is related by van Est maps to the map $i: H^\bullet_\pi(M)\to H^\bullet_\mathrm{def}(A)$ of \cite{CM}.

Finally, we collect in an appendix the needed material on double structures, used to prove the Theorem of Section 10. 

\subsection*{Notation and conventions}

We denote a Lie groupoid $\G$ over a base $M$ by $\G\rightrightarrows M$, usually having source and target maps denoted by $s$ and $t$, multiplication denoted by $m$, inversion map $i$ and unit map $u$.

We say that a Lie groupoid $\G$ is $s$-connected (respectively $s$-$k$-connected) if all fibres of its source map are connected (respectively $k$-connected).

We denote the tangent map of a differentiable map $f:M\to N$ either by $df$ or $Tf$. 
To avoid confusion with other cohomologies, we denote de Rham cohomology of a manifold $M$ by $H^\bullet_{dR}(M)$.

\subsection*{Acknowledgements}

The authors would like to thank Alejandro Cabrera, Rui Loja Fernandes, Ioan M\u{a}rcu\c{t}, Cristi\'an Ortiz, and Luca Vitagliano for useful discussions related to the work of this paper.

C.C.C. was partially supported by a PNPD postdoctoral fellowship at UFF, and would also like to thank the University of S\~ao Paulo, where this work was initiated.
J.N.M. was partially supported by the Centre for Mathematics of the University of Coimbra - UIDB/00324/2020, funded by the Portuguese Government through FCT/MCTES, and by grant 2019/20789-4, S\~ao Paulo Research Foundation (FAPESP). I.S. was partially supported by FAPESP 2015/22059-2 and CNPq 307131/2016-5.

\section{Background}

We start by recalling some background on Lie groupoids: their cohomology and deformations, VB-groupoids, and symplectic groupoids. For a general introduction to Lie groupoids we refer to \cite{Lectures,M,MM}.

\subsection{Deformations of Lie groupoids} 
More details on the notions in this section can be found in \cite{CMS}, where  they are discussed in detail.
\begin{definition} A \textbf{family of Lie groupoids} over a manifold $B$ consists of a Lie groupoid $\widetilde{\calG}\arrows\tilde{M}$, together with a submersion $\textbf{p}:\tilde{M}\to B$, such that $\textbf{p}\circ \tilde{s}=\textbf{p}\circ \tilde{t}$.
\end{definition}

In other words, a family of Lie groupoids over $B$ consists of a submersive Lie groupoid map $\textbf{p}:(\widetilde{\calG}\arrows \tilde{M})\to(B \arrows B)$, from $\widetilde{\calG}$ to the unit groupoid of $B$. We denote by $\calG_b\arrows M_b$ the fibre of $\textbf{p}$ over $b\in B$, i.e.\ $(\textbf{p}\circ \tilde{s})^{-1}(b)\arrows \textbf{p}^{-1}(b)$.

\begin{definition}An \textbf{equivalence between families} $\widetilde{\calG}$ and $\widetilde{\calG}'$ over $B$ is given by a Lie groupoid isomorphism $F:\widetilde{\calG}\to\tilde{\calG'}$ such that $\textbf{p}'\circ F=\textbf{p}$.
A family is said to be \textbf{trivial} if it is equivalent to the product family given by  \[(\widetilde{\calG}\arrows \tilde{M})\times(B \arrows B)\stackrel{\textbf{p}=pr_B}{\longrightarrow}(B \arrows B).\]
\end{definition}

\begin{definition}\label{Deform.grpds}
A \textbf{deformation of a Lie groupoid} $\calG$ is a family of Lie groupoids $\widetilde{\calG}\arrows\tilde{M}$ over an open interval $I$ containing zero, such that $\calG_0$ is equal to $\calG$.

An \textbf{equivalence between deformations} $\widetilde{\calG}$ and $\widetilde{\calG}'$ of $\calG$ is given by an equivalence of families $F:\widetilde{\calG}_{|_J}\to\widetilde{\calG}_{|_J}'$ such that $F_0:=F_{|_{\calG_0}}=id_{\calG_0}$, where $J\subset I\cap I'$ is an open interval containing zero, and $\widetilde{\calG}_{|_J}$ denotes the restriction of the family $\widetilde{\calG}$ to the interval $J$.
\end{definition}

\begin{definition}
A deformation $\widetilde{\G}$ of a Lie groupoid $\calG$ for which $\widetilde{\G}=\G\times I$ as smooth manifolds is called a \textbf{strict} deformation.

It is called an \textbf{$s$-constant} deformation if additionally the source map is left constant, $i.e.$, $s_\varepsilon=s_0$ for all $\e\in I$.
\end{definition}

\begin{remark} A deformation of a Lie groupoid $\G$ determines a collection $\{\G_\epsilon\}$ of Lie groupoids parametrized by $\epsilon\in I$ (the fibres of the family), varying smoothly with respect to $\epsilon$ in the sense that they fit as fibres of a submersion. If the deformation is strict then the $\G_\e$ are equal as manifolds, and only the structural maps change.

Accordingly, we often use denote a deformation of $\G$ simply by $\{\G_\epsilon\}$, keeping in mind that there is a specified family that the members of this family fit into. 

Similarly, we often denote an equivalence $F:\widetilde{\G}\to\widetilde{\G}'$ of deformations by the associated family of maps $F_\epsilon:\G_\epsilon\to\G_\epsilon '$.
\end{remark}

\subsection{The nerve of a Lie groupoid and differentiable cohomology}\label{subsec-Nerve}

Let $\G\arrows M$ be a Lie groupoid and denote its space of strings of $k$ composable arrows by 
\[\calG^{(k)} =\{(g_1,\ldots,g_k) : s(g_i) = t(g_{i+1}) \text{ for all } 1\leq i\leq k-1\},\] 
and let $\G^{(0)}=M$. \textbf{The nerve of $\G$} is the simplicial manifold $\G^{(\bullet)}$ for which the space of $k$-simplices is $\calG^{(k)}$, the face maps are $d_i:\G^{(n)}\to\G^{(n-1)}$ given by 
\[d_i(g_1,\ldots,g_n) = 
\begin{cases}
(g_2,\ldots,g_n) & \text{ if }  i=0\\
(g_1,\ldots,g_i g_{i+1}, \ldots,g_n)& \text{ if }   1\leq i\leq n-1\\
(g_1,\ldots,g_{n-1})& \text{ if } i=n,\end{cases}
\] and degeneracies $s_i:\G^{(n)}\to\G^{(n+1)}$ are given by inserting the appropriate unit in the $i$-th position, $s_i(g_1,\ldots,g_n)=(g_1,\ldots,g_{i-1},1_,g_i,\ldots, g_n)$, for $1\leq i\leq n+1$.

Smooth functions on the nerve form a cosimplicial space, out of which we can construct the cochain complex $C^\bullet_\mathrm{diff}(\G)$ (also denoted just by $C^\bullet(\G)$) computing the \textbf{differentiable cohomology} of $\G$, denoted by $H^\bullet_\mathrm{diff}(\G)$. Explicitly, its spaces of cochains are $C^k_\mathrm{diff}(\G)=C^\infty(\G^{(k)})$, and the differentials are given by  \[\delta=\sum_{i=0}^{k+1} (-1)^i d_i^*: C^k_\mathrm{diff}(\G)\to C^{k+1}_\mathrm{diff}(\G).\] The subcomplex of \textbf{normalized} cochains consists of those cochains $c\in C^\bullet_\mathrm{diff}(\G)$ which vanish on all degeneracies, i.e., $s_i^*c=0$ for all $i$; it is quasi-isomorphic to $C^\bullet_\mathrm{diff}(\G)$.

More generally, $k$-forms on the nerve of $\G$ together with a differential $\delta$ defined by the same formula form the cochain complex $\Omega^k(\G^{\bullet})$. These complexes form the lines of the \textbf{Bott-Shulman-Stasheff double complex} $\left(\Omega^{\bullet}(\calG^{(\bullet)}),\delta, d\right)$ \cite{BSS}; the horizontal differentials $\delta:\Omega^{q}(\calG^{(\bullet)})\longrightarrow\Omega^{q}(\calG^{(\bullet+1)})$ are determined by the simplicial structure of the nerve of $\calG$, as described above, and vertical differentials $d:\Omega^{\bullet}(\calG^{(p)})\longrightarrow\Omega^{\bullet+1}(\calG^{(p)})$ are given by the de Rham differential of forms.

\subsection{The deformation complex of a Lie groupoid}

We recall the definition from \cite{CMS} of the deformation complex of a Lie groupoid $\calG$.
To that end, let $\bar{m}$ denote the division map  of $\calG$, defined as
\[\bar{m}(p,q) = pq^{-1}, \text{ for all } p,q \in \calG\ \text{ such that }\ s(p) = s(q).\]

\begin{definition}
\label{dfn-def-coh}
The deformation complex $(C_\mathrm{def}^\bullet(\mathcal{G}),\delta)$ of a Lie groupoid $\calG$, whose cohomology is denoted $H_\mathrm{def}^\bullet(\calG)$, is defined as follows. For $k\geq 1$, the $k$-cochains $c\in C_\mathrm{def}^k(\mathcal{G})$ are the smooth maps 
\[ c:\calG^{(k)}\longrightarrow T\calG, \ \ (g_1,\ldots,g_k)\mapsto c(g_1,\ldots,g_k)\in T_{g_1}\calG,\] 
which are $s$-projectable in the sense that 
\[ ds\circ c(g_1,g_2,\ldots,g_k)\]
does not depend on $g_1$.

 The differential of $c\in C_\mathrm{def}^k(\mathcal{G})$ is defined by
\begin{align*}(\delta c)(g_1,\ldots,g_{k+1})  = &- d\bar{m}(c(g_1g_2,\ldots,g_{k+1}),c(g_2,\ldots,g_{k+1}))  \\
&+\sum_{i=2}^k(-1)^{i} c(g_1,\ldots, g_ig_{i+1},\ldots, g_{k+1}) +(-1)^{k+1}c(g_1,\ldots, g_k).
\end{align*}
For $k= 0$, $C_\mathrm{def}^0(\mathcal{G}): = \Gamma(A)$ and the differential of $\alpha\in \Gamma(A)$ is defined by
\[ \delta(\alpha)=\overrightarrow{\alpha}+\overleftarrow{\alpha}\in C_\mathrm{def}^1(\mathcal{G}),\]
where $\overrightarrow{\alpha}$ and $\overleftarrow{\alpha}$ are the right-invariant and the left-invariant vector fields on $\G$ induced by $\alpha$. \end{definition}

Note that, to interpret the case $k= 0$, one way to think of a section of $A$ is as a map $c:\calG^{(0)}=M\longrightarrow T\calG$, with $c(1_x)=c_x \in T_{1_x}\calG$, such that $ds\circ c_x=0_x$.

\subsection{The deformation class of a deformation of Lie groupoids}\label{section-def-class-Lie-gpd}

We recall the definition from \cite{CMS} of the deformation class of a deformation $\widetilde{\G}\arrows M\stackrel{\textbf{p}}{\to} I$ of a Lie groupoid $\G$.

If $\widetilde{\G}$ is an $s$-constant deformation, then the expression 

\begin{equation}\label{def-s-constant-cocycle}
\xi(g,h):=\left.\frac{d}{d\e}\right|_{\e=0}\bar{m}_\e(gh,h)
\end{equation} 
defines a $2$-cochain in deformation cohomology. It is a cocycle and its cohomology class only depends on the equivalence class of the deformation \cite[Lemma 5.3]{CMS}. Moreover, If we let $\tilde{\xi}=-\delta(\frac{\partial}{\partial\e})\in C_\mathrm{def}^1(\widetilde{\G})$, then $\xi=\tilde{\xi}_{|\G_0}$ \cite[Prop. 5.7]{CMS}. 

The expression (\ref{def-s-constant-cocycle}) no longer makes sense if the deformation $\widetilde{\G}$ is not $s$-constant, but defining $\xi$ in terms of $\tilde{\xi}$ is still possible. For that, we need an appropriate analogue for the vector field $\frac{\partial}{\partial\e}$ on $\widetilde{\G}$.

\begin{definition}Let $\widetilde{\G}\arrows M\stackrel{\textbf{p}}{\to} I$ be a deformation of a Lie groupoid $\G$. A transverse vector field for $\widetilde{\G}$ is a vector field $\tilde{X}\in\mathfrak{X}(\widetilde{\G})$ which is $s$-projectable to a vector field $\tilde{V}\in\mathfrak{X}(\tilde{M})$, which is in turn $\textbf{p}$-projectable to $\frac{\partial}{\partial\e}\in\mathfrak{X}(I)$.
\end{definition}

\begin{proposition}[\cite{CMS}, Prop. 5.12]
Let $\widetilde{\G}$ be a deformation of $\G$. Then
\begin{enumerate}
\item There exist transverse vector fields for $\widetilde{\G}$;
\item For  $\tilde{X}\in\mathfrak{X}(\widetilde{\G})$ transverse, the restriction of $\delta\tilde{X}$ to $\G_0$ defines a cocycle $\xi_0\in C^2_\mathrm{def}(\G)$;
\item The cohomology class of $\xi_0$ does not depend on the choice of $\tilde{X}$.
\end{enumerate}
\end{proposition}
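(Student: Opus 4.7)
The plan is to exploit the fact that $\widetilde{\G}$ is fibred over $I$ by the composition $\pi:=\textbf{p}\circ\tilde s=\textbf{p}\circ\tilde t$, whose $0$-fibre is precisely $\G_0=\G$. For (1), I would produce $\tilde X$ by two successive lifts through submersions. Since $\textbf{p}:\tilde M\to I$ is a surjective submersion, a local-plus-partition-of-unity argument produces $\tilde V\in\mathfrak{X}(\tilde M)$ with $d\textbf{p}\circ\tilde V=\partial/\partial\e$. Since $\tilde s:\widetilde{\G}\to\tilde M$ is also a surjective submersion, the same argument lifts $\tilde V$ to a vector field $\tilde X\in\mathfrak{X}(\widetilde{\G})$ satisfying $d\tilde s\circ\tilde X=\tilde V\circ\tilde s$, which is by construction transverse.

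For (2), the key observation is that the division map $\bar m$ of $\widetilde{\G}$ preserves the $\e$-fibration: since each $\G_\e$ is itself a groupoid, $\pi\circ\bar m(p,q)=\pi(p)=\pi(q)$ for any admissible pair, so $d\pi\circ d\bar m(v,w)=d\pi(v)$. Combined with the identity $d\pi(\tilde X(g))=d\textbf{p}(\tilde V(\tilde s(g)))=\partial/\partial\e$, substitution into
\[(\delta\tilde X)(g_1,g_2)=-d\bar m(\tilde X(g_1g_2),\tilde X(g_2))+\tilde X(g_1)\]
makes the two $\partial/\partial\e$ contributions cancel, so $(\delta\tilde X)(g_1,g_2)\in\ker d\pi$. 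On points of $\G^{(2)}\subset\widetilde{\G}^{(2)}$ the kernel of $d\pi$ equals $T\G$, hence $(\delta\tilde X)|_{\G^{(2)}}$ takes values in $T\G$; together with the $\tilde s$-projectability inherited from $\delta\tilde X\in C^2_\mathrm{def}(\widetilde{\G})$, this gives a well-defined $\xi_0\in C^2_\mathrm{def}(\G)$. For the cocycle property, I would then observe that the formula defining $\delta$ uses only the groupoid operations of $\widetilde{\G}$, which restrict to those of $\G_0$, so restriction to $\G_0$ commutes with $\delta$ on any cochain whose values are tangent to $\G_0$. Applying this to $\xi_0$ and using $\delta^2\tilde X=0$ in $C^\bullet_\mathrm{def}(\widetilde{\G})$ gives $\delta\xi_0=0$.

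For (3), given two transverse vector fields $\tilde X_1,\tilde X_2$, the difference $\tilde Y:=\tilde X_1-\tilde X_2$ is still $\tilde s$-projectable and satisfies $d\pi\circ\tilde Y\equiv 0$, so its restriction to $\G$ is tangent to $\G$ and defines a cochain in $C^1_\mathrm{def}(\G)$. The same commutation-with-restriction principle yields $\delta(\tilde Y|_\G)=(\delta\tilde Y)|_\G=\xi_0^{(1)}-\xi_0^{(2)}$, showing that the cohomology class is independent of the choice. The only step requiring real care is the cancellation computation in (2): it is where one uses that $\textbf{p}$ is a groupoid morphism (each $\G_\e$ being a sub-groupoid), not merely a submersion, while everything else reduces to bookkeeping about restrictions of cochains to $\G_0$.
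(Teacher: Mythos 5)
This proposition is quoted from \cite{CMS} and the paper itself gives no proof of it, so there is nothing in-text to compare against; judged on its own, your argument is correct and is essentially the proof given in \cite{CMS}. All three steps are sound: lifting $\partial/\partial\e$ through the surjective submersions $\textbf{p}$ and $\tilde{s}$ gives existence; the identity $\pi\circ\bar{m}=\pi\circ pr_1$ for $\pi=\textbf{p}\circ\tilde{s}$ forces the two $\partial/\partial\e$-components of $\delta\tilde{X}$ to cancel, so $\delta\tilde{X}|_{\G_0^{(2)}}$ lands in $\ker d\pi=T\G_0$ and the cocycle condition descends from $\delta^2=0$ upstairs because the restriction of a fibre-tangent cochain commutes with $\delta$; and the difference of two transverse fields is $\pi$-vertical and $s$-projectable, hence restricts to a degree-one cochain on $\G_0$ transgressing the difference of the two cocycles.
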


\begin{definition} The resulting cohomology class $[\xi_0]\in H^2_\mathrm{def}(\G)$ is called the deformation class associated to the deformation $\widetilde{\G}$ of $\G$.
\end{definition}

Again, the deformation class is invariant under equivalence of deformations, and as remarked in \cite{PierVitag}, this follows already from the independence of the chosen transverse vector field.

Note that $\widetilde{\G}$ can also be seen as a deformation of its other fibres $\G_\e$, and for such, the deformation class will be $[\xi_\e]=[\delta\tilde{X}_{|\G_\e}]$.

\subsection{Deformations of Lie groupoid morphisms and diagrams}

We recall some background on deformations of Lie groupoid morphisms, which are studied in detail in \cite{CS}. 

\begin{definition}Let $F:\G\to\mathcal{H}$ be a morphism of Lie groupoids. A \textbf{deformation of the morphism $F$} is a smooth map $\tilde{F}: \G\times I \to \mathcal{H}$ such that $\tilde{F}(\cdot, 0)=F$ and for each $\epsilon\in I$, the map $F_\epsilon: \mathcal{G}\to \mathcal{H}$ is a morphism.
\end{definition}

More generally, and this will be useful in our study, one can at once deform the whole diagram $F: \G \to \mathcal{H}$, which for simplicity we denote by $F$ as well. 

\begin{definition}A \textbf{deformation of the diagram $F$} consists of a triple $(\widetilde{\G},\tilde{F},\tilde{\mathcal{H}})$, where $\widetilde{\G}$ and $\tilde{\mathcal{H}}$ are deformations of $\G$ and $\mathcal{H}$, and the map $\tilde{F}:\widetilde{\G}\to \tilde{\mathcal{H}}$ is a Lie groupoid morphism sending $\G_\epsilon$ to $\mathcal{H}_\epsilon$, such that $F_0:=\tilde{F}_{|_{\G_0}} = F$.
\end{definition}

Note that the requirement that $\tilde{F}$ is a morphism implies in particular that each $F_\epsilon:=\tilde{F}_{|_{\G_\epsilon}}:\G_\epsilon\to \mathcal{H}_\epsilon$ is a Lie groupoid morphism as well.

For the purposes of this paper we will be interested in simultaneous deformations of the Lie groupoid $\G$ and the morphism $F$, or in other words, deformations of the diagram $F$ where $\mathcal{H}$ is kept fixed.

\subsection{The deformation complex of a Lie groupoid morphism}\label{def_complex_morphism}

The deformation complex $C^\bullet_\mathrm{def}(F)$ of a Lie groupoid morphism $F:\G\to \mathcal{H}$ was briefly described in \cite{CMS} and further explored in \cite{CS}. It has the following description, similar to that of $C^\bullet_\mathrm{def}(\G)$.
For $k\geq 1$, \[C_\mathrm{def}^k(F)=\left\{c:\G^{(k)}\to T\mathcal{H}\ \middle|\ c(g_1,\ldots,c_k)\in T_{F(g_1)}\mathcal{H}\ \text{and}\ c \ \text{is}\ s_\mathcal{H}-\text{projectable} \right\},\] where $c$ being 
$s_\mathcal{H}-$projectable means that $ds_\mathcal{H}\circ c(g_1, \ldots, g_k)$ is independent of $g_1$.

The differential of $c$ is defined by the same formula as the differential for $C^*_\mathrm{def}(\G)$ (see Definition \ref{dfn-def-coh}), except using the division $d\bar{m}_\mathcal{H}$ of $T\mathcal{H}$ instead of the division $d\bar{m}$ of $T\G$.

In degree 0, we have $C_\mathrm{def}^0(F)=\Gamma({F^*A_\mathcal{H}})$, and $\delta \alpha = \overrightarrow{\alpha}+\overleftarrow{\alpha}\in C_\mathrm{def}^1(F)$, where $\overrightarrow{\alpha}(g)=dR_{F(g)}(\alpha_{t(g)})$ and $\overleftarrow{\alpha}(g)=dL_{F(g)}(di (\alpha_{s(g)}))$.

There is a natural cochain complex map $F_*:C^\bullet_\mathrm{def}(\G)\to C^\bullet_\mathrm{def}(F)$, given by $F_*c(g_1,\ldots,g_k):=dF_{g_1}(c(g_1,\ldots,g_k))$.

Analogously, for example, to the cases of deformations of  morphisms of associative algebras \cite{GerSchack}, of Lie algebras \cite{yael}, and of deformations of holomorphic maps (see \cite{Iacono} and references therein), it is shown in \cite{CS} that simultaneous deformations of $\G$ and $F$ are controlled by the mapping cone complex of $F_*$, \[\text{Cone}(F_*)^n=C^{n}_\mathrm{def}(\G)\oplus C^{n-1}_\mathrm{def}(F),\ \ \delta(c,Y)=(\delta c, F_*c-\delta Y).\] 

\subsection{VB-groupoids}\label{VB-grpds}

VB-groupoids can be thought of as groupoid objects in the category of vector bundles. They provide alternative ways to look at the representation theory and the deformation theory of Lie groupoids (See \cite{CMS}, \cite{GrMe-grpds} and Proposition \ref{deformation and vb-complexes} below).
We review some basics on VB-groupoids which will be useful in the following sections. For more details we refer to \cite{M}, \cite{GrMe-grpds} and \cite{BCdH}.

\begin{definition}
 A \textbf{VB-groupoid} $(\Gamma, E, \calG, M)$ consists of two Lie groupoids together with two vector bundle structures as in the diagram
\begin{equation}\label{VB-grpd}
  \begin{tikzcd}[column sep=4em, row sep=10ex]
    \Gamma \arrow[yshift=2pt]{r}{\tilde{s}} \arrow[yshift=-2pt]{r}[swap]{\tilde{t}} \arrow{d}[swap]{\tilde{q}} & E \arrow{d}{q}\\
    \calG \arrow[yshift=2pt]{r}{s} \arrow[yshift=-2pt]{r}[swap]{t}    & M,
  \end{tikzcd}
\end{equation}
where $\tilde{q}$ and $q$ are vector bundles and $(\Gamma\rightrightarrows E)$, $(\calG\rightrightarrows M)$ are Lie groupoids such that the structure maps of the groupoid $\Gamma$ 
are vector bundle morphisms over the corresponding structure maps of the groupoid $\calG$.

A \textbf{morphism of VB-groupoids} \[(\phi_\Gamma,\phi_E,\phi_\calG,\phi_M):(\Gamma, E, \calG, M)\longrightarrow (\Gamma', E', \calG', M')\] is a Lie groupoid map $(\phi_\Gamma,\phi_E):\Gamma\rightrightarrows E\to\Gamma'\rightrightarrows E'$, such that $\phi_\Gamma$ and $\phi_E$ are vector bundle morphisms covering $\phi_\calG:\calG\longrightarrow \calG'$ and $\phi_M:M\longrightarrow M'$, respectively. Restricting $\phi_\Gamma$ to the zero section, $\phi_\calG$ turns out to be a Lie groupoid morphism.
\end{definition}

\begin{remark}
The requirement that $m_\Gamma:\Gamma^{(2)}\longrightarrow\Gamma$ is a vector bundle morphism, is taken with respect to the vector bundle structure of $\Gamma^{(2)}$ over $\calG^{(2)}$.
\end{remark}

\begin{example}(Tangent VB-groupoid)
 
Given a Lie groupoid $\calG\rightrightarrows M$ with source, target and multiplication maps $s$, $t$ and $m$, by applying the tangent functor one gets the tangent groupoid $T\calG\rightrightarrows TM$ with structure maps $ds$, $dt$, $dm$ and so on. This tangent groupoid is further a VB-groupoid over $\calG\rightrightarrows M$ (with respect to the tangent projections).
\end{example}

\begin{remark}\label{rmk-core-exact-seq-TG}
Note that in the previous example one has the following short exact sequences of vector bundles over $\calG$,
\begin{equation}\label{left}
s^{*}(A_\calG)\stackrel{-l\circ di}{\longrightarrow} T\calG\stackrel{(dt)^{!}}{\longrightarrow}t^{*}(TM)
\end{equation} and
\begin{equation}\label{right}
t^{*}(A_\calG)\stackrel{r}{\longrightarrow} T\calG\stackrel{(ds)^{!}}{\longrightarrow}s^{*}(TM)
\end{equation}
where $r$ and $l$ are the right and left multiplication on vectors tangent to the $s$-fibres and $t$-fibres of $\calG$, respectively; $(ds)^{!}$ and $(dt)^{!}$ are the maps induced by $ds$ and $dt$ with image in the corresponding pullback bundles. 
\end{remark}

\begin{example}(Cotangent groupoid)\label{Ex-cot-grpd}
As noticed in \cite{CosteDazordWeins.}, the cotangent bundle of a Lie groupoid $\calG$ inherits a groupoid structure over the dual of the Lie algebroid of $\calG$,
$$T^{*}\calG\rightrightarrows A_\calG^{*},$$ 
with source and target maps induced, respectively, from the dual of the exact sequences (\ref{left}) and (\ref{right}). Explicitly, for $\alpha_g\in T^{*}_{g}\calG$ and $a\in\Gamma(A_\calG)$,
$$\left\langle \tilde{s}(\alpha_g),a_{s(g)}\right\rangle=-\left\langle \alpha_g, l_g\circ di(a)\right\rangle$$
and
$$\left\langle \tilde{t}(\alpha_g), a_{t(g)}\right\rangle=\left\langle \alpha_g, r_g(a)\right\rangle.$$
With multiplication determined by $$\left\langle \tilde{m}(\alpha_g,\beta_h),Tm(v_g,w_h)\right\rangle=\left\langle \alpha_g,v_g\right\rangle+\left\langle \beta_h,w_h\right\rangle,$$
for $(v_g.w_h)\in (T\calG)^{(2)}$.
\end{example}

\paragraph{\textbf{VB-groupoid cohomology}}
Let $\Gamma$ be a VB-groupoid. The differentiable complex of $\Gamma$ (as Lie groupoid) has a natural subcomplex $C^\bullet_\mathrm{lin}(\Gamma)$ given by the fibrewise linear cochains of $\Gamma$.

The VB-groupoid complex  $C^\bullet_\mathrm{VB}(\Gamma)$ of $\Gamma$ (\cite{GrMe-grpds}), is a subcomplex of $C^\bullet_\mathrm{lin}(\Gamma)$ which takes into account simultaneously the Lie groupoid and vector bundle structures of $\Gamma$. It consists of the left-projectable elements, that is, those $c\in C^\bullet_\mathrm{lin}(\Gamma)$ for which
\begin{enumerate}
	\item $c(0_{g_1},\gamma_{g_2},...,\gamma_{g_k})=0$,
	\item $c(0_g\cdot\gamma_{g_1},\gamma_{g_2},...,\gamma_{g_k})=c(\gamma_{g_1},\gamma_{g_2},...,\gamma_{g_k})$.
\end{enumerate}
This complex turns out to compute the cohomology of the base groupoid with coefficients in a representation up to homotopy. For the case of the tangent VB-groupoid it  yields an interpretation of the adjoint representation (\cite{GrMe-grpds}). We will make use of the following fact.

\begin{proposition}[\cite{GrMe-grpds}, Prop. 5.5, Thm. 5.6]\label{deformation and vb-complexes}

The deformation complex of a Lie groupoid $\calG$ is isomorphic to the VB-groupoid complex of its cotangent VB-groupoid, $C^\bullet_\mathrm{VB}(T^{*}\calG)$. The isomorphism is given by $C^\bullet_\mathrm{def}(\calG)\longrightarrow C^\bullet_\mathrm{VB}(T^{*}\calG)$, $c\mapsto c'$ with
$$c'(\eta_{g_1},...,\eta_{g_k})=\left\langle\eta_{g_1}, c(g_1,...,g_k)\right\rangle.$$ 
\end{proposition}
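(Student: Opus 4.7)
The strategy is to verify that the pairing map $\Phi:c\mapsto c'$ is a chain isomorphism by checking three things: (i) $\Phi$ lands in $C^\bullet_\mathrm{VB}(T^*\calG)$; (ii) $\Phi$ is invertible; (iii) $\Phi$ intertwines the differentials. Throughout, the workhorse is the defining identity of the cotangent-groupoid multiplication recalled in Example \ref{Ex-cot-grpd}, namely $\langle\tilde{m}(\alpha_g,\beta_h),Tm(v_g,w_h)\rangle=\langle\alpha_g,v_g\rangle+\langle\beta_h,w_h\rangle$, which translates operations on $T^*\calG$ into operations on $T\calG$.

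For (i), fibrewise linearity of $c'$ is immediate from linearity of the pairing, and condition (1) in the definition of $C^\bullet_\mathrm{VB}$ is automatic since $\langle 0_{g_1},\cdot\rangle=0$. Condition (2) is the content: it amounts to finding $v_g\in T_g\calG$ such that $Tm(v_g,c(g_1,\dots,g_k))=c(gg_1,\dots,g_k)$, for then the pairing identity combined with $\langle 0_g,v_g\rangle=0$ produces exactly the required equality. Existence of such a $v_g$ is where $s$-projectability of $c$ enters: the two sides have matching $s$-projections, so their difference lies in the kernel of $ds$, identified via the exact sequence of Remark \ref{rmk-core-exact-seq-TG} with a core element that can be absorbed into the first slot of $Tm$. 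For (ii), given $c'\in C^k_\mathrm{VB}(T^*\calG)$ define $c(g_1,\dots,g_k)\in T_{g_1}\calG$ by $\langle\eta_1,c(g_1,\dots,g_k)\rangle:=c'(\eta_1,\eta_2,\dots,\eta_k)$, for any composable extension $\eta_2,\dots,\eta_k$; such extensions exist since $\tilde{s}$ and $\tilde{t}$ are fibrewise surjective, and independence of the choice follows from fibrewise linearity together with condition (1), because two extensions differ by a composable tuple whose first entry is $0_{g_1}$. The resulting $c$ is $s$-projectable by reading the argument of (i) in reverse, using condition (2).

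Finally, (iii) is a term-by-term comparison of the differentials. The only face map that produces a nontrivial contribution involving the first slot is $d_1$, which inserts $\tilde{m}(\eta_1,\eta_2)$; writing $c(g_1g_2,g_3,\dots)=Tm(u_{g_1},c(g_2,\dots))$ with $u_{g_1}=d\bar{m}(c(g_1g_2,\dots),c(g_2,\dots))$ and applying the pairing identity splits $\langle\tilde{m}(\eta_1,\eta_2),c(g_1g_2,g_3,\dots)\rangle$ into a piece $\langle\eta_1,-u_{g_1}\rangle$ matching the $-d\bar{m}$ term in the deformation differential and a piece $\langle\eta_2,c(g_2,\dots)\rangle$ that cancels the $d_0$ face. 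The remaining faces $2\le i\le k+1$ correspond trivially because the multiplications occur away from the first slot. The degree-zero case is handled separately: unwinding the pairings of $\tilde{s}$ and $\tilde{t}$ with $\alpha\in\Gamma(A)$ via the formulas in Example \ref{Ex-cot-grpd} recovers $\overrightarrow{\alpha}+\overleftarrow{\alpha}$.

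The main obstacle is the back-and-forth in step (i) and its reverse in step (ii), where one must solve for $v_g$ in the equation $Tm(v_g,c(g_1,\dots))=c(gg_1,\dots)$, a linear-algebra task resting on the structure of the short exact sequences of Remark \ref{rmk-core-exact-seq-TG}; the rest of the verification is disciplined bookkeeping with signs rather than a serious geometric difficulty.
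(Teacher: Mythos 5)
This statement is imported from \cite{GrMe-grpds} (Prop.\ 5.5, Thm.\ 5.6) and the paper gives no proof of it, so there is nothing internal to compare against; judged on its own, your argument is correct and is essentially the standard proof from the cited reference. The key computations all check out: solving $Tm(v_g,c(g_1,\dots,g_k))=c(gg_1,\dots,g_k)$ does reduce, via $s$-projectability and the sequence of Remark \ref{rmk-core-exact-seq-TG}, to absorbing a core element $r_{gg_1}(a)=Tm(r_g(a),0_{g_1})$ into the first slot; and in the chain-map step the identity $Tm\bigl(d\bar{m}(c(g_1g_2,\dots),c(g_2,\dots)),\,c(g_2,\dots)\bigr)=c(g_1g_2,\dots)$ combined with the defining pairing of $\tilde{m}$ produces exactly the $-d\bar{m}$ term and the cancellation with the $d_0$ face. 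The only place you are terse is the reverse implication that condition (2) forces $s$-projectability of the reconstructed $c$; this does work, via the identity $0_g\cdot(d_{g_1}s)^{*}\xi=(d_{gg_1}s)^{*}\xi$ in $T^{*}\calG$, but it deserves a line rather than ``reading (i) in reverse.''
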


We recall also the following result of Cabrera and Drummond, that will let us substitute the VB-complex by the complex of linear cochains for most purposes.

\begin{lemma}[\cite{CD}, Lemma 3.1]\label{LemmaCD} Let $(\Gamma, E, \calG, M)$ be a VB-groupoid. The inclusion $\iota:C^\bullet_\mathrm{VB}(\Gamma) \hookrightarrow C^\bullet_\mathrm{lin}(\Gamma)$ induces an isomorphism of right $H^\bullet(\calG)$-modules in cohomology.
\end{lemma}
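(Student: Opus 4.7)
The claim has two parts: that the inclusion $\iota$ is a quasi-isomorphism, and that the induced map respects the right $H^\bullet(\calG)$-module structures. The module compatibility is easy to dispose of first. The action of $H^\bullet(\calG)$ on both complexes is induced by the cup product at the level of cochains with pullbacks $\tilde q^* f$ of differentiable cochains on $\calG$; since such pullbacks are constant along the fibres of $\tilde q$, multiplying a linear cochain by $\tilde q^* f$ produces another linear cochain, and multiplying a left-projectable cochain by $\tilde q^* f$ preserves both defining conditions of $C^\bullet_\mathrm{VB}(\Gamma)$. Hence $\iota$ is already a morphism of right $C^\bullet_\mathrm{diff}(\calG)$-modules at the cochain level, and the whole statement reduces to showing that $\iota$ is a quasi-isomorphism.

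For the quasi-isomorphism, my plan is to build an explicit deformation retraction from $C^\bullet_\mathrm{lin}(\Gamma)$ onto $C^\bullet_\mathrm{VB}(\Gamma)$. Using the unit section $0_{(-)}:\calG\to\Gamma$ of the vector bundle $\tilde q$ together with the VB-groupoid multiplication, I would define a projection $P:C^k_\mathrm{lin}(\Gamma)\to C^k_\mathrm{VB}(\Gamma)$ by subtracting from $c$ the piece obtained by replacing $\gamma_{g_1}$ with $0_{t(g_1)}\cdot\gamma_{g_1}$; this is well-defined via a splitting of the short exact sequence analogous to \eqref{right}, and it exactly cancels the failure of condition (2). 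By linearity in the first entry, condition (1) holds automatically once one works with normalized cochains. One checks directly that $P\circ\iota=\mathrm{id}_{C^\bullet_\mathrm{VB}(\Gamma)}$ and that $P$ is a chain map with respect to $\delta$, using the VB-groupoid compatibilities $\tilde s(0_g)=0_{s(g)}$, $\tilde t(0_g)=0_{t(g)}$ and $0_g\cdot 0_h=0_{gh}$.

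Next I would construct an explicit chain homotopy $h:C^k_\mathrm{lin}(\Gamma)\to C^{k-1}_\mathrm{lin}(\Gamma)$ between $\iota\circ P$ and the identity. The natural candidate is built from the simplicial extra-degeneracy given by insertion of a unit in the first slot:
\[(hc)(\gamma_{g_2},\ldots,\gamma_{g_k})=c(0_{t(g_2)},\gamma_{g_2},\ldots,\gamma_{g_k}),\]
possibly corrected by a term encoding the splitting used in defining $P$. A telescoping calculation using the cosimplicial identities for $\delta$ should then yield $\delta h+h\delta=\mathrm{id}-\iota\circ P$, at which point the quasi-isomorphism follows and combining with the first paragraph finishes the proof.

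The main obstacle, and the delicate part of the argument, is Step~2: producing a homotopy $h$ that genuinely lands in $C^{\bullet}_\mathrm{lin}(\Gamma)$ (rather than in the larger differentiable complex) and whose homotopy identity involves only terms that cancel cleanly against the correction used in defining $P$. This is essentially a bookkeeping problem controlled by the VB-groupoid axioms and the simplicial structure of $\Gamma^{(\bullet)}$, but the simultaneous presence of the groupoid and vector-bundle structures means one must track signs, degenerate slots, and the splitting of the core--side exact sequence with care. Once the correct formulas are pinned down, everything reduces to linearity and multiplicativity of the VB-groupoid structure maps.
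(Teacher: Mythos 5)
First, a remark on the comparison itself: the paper does not prove this statement, it imports it verbatim from \cite{CD} (Lemma 3.1), so there is no in-paper argument to measure yours against. Your overall shape of argument --- exhibit $C^\bullet_\mathrm{VB}(\Gamma)$ as a deformation retract of $C^\bullet_\mathrm{lin}(\Gamma)$ via a projection $P$ and a homotopy $h$, and observe separately that the right $C^\bullet(\calG)$-action only appends arguments in the last slots and therefore preserves both fibrewise linearity and the two left-projectability conditions, which constrain only the first slot --- is the right kind of plan, and your first paragraph on the module structure is essentially correct.

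The quasi-isomorphism part, however, has genuine problems beyond bookkeeping. (i) The expressions $0_{t(g_1)}\cdot\gamma_{g_1}$ and $c(0_{t(g_2)},\gamma_{g_2},\ldots,\gamma_{g_k})$ are not defined for general elements of $\Gamma$: composability in $\Gamma\rightrightarrows E$ forces $\tilde{t}(\gamma_{g_1})$ (resp.\ $\tilde{t}(\gamma_{g_2})$) to be a zero vector of $E$, and in that case $0_{1_{t(g_1)}}\cdot\gamma_{g_1}=\gamma_{g_1}$ anyway, so your formula for $P$ does not single out any ``piece'' to subtract. Any repair passes through a choice of splitting of the core sequence, and then the assertion that $P$ commutes with $\delta$ is precisely the nontrivial point: $\delta$ involves the multiplication of $\Gamma$, which mixes the slots, and a linear projection onto a subcomplex is not automatically a chain map. (ii) The claim that condition (1) ``holds automatically by linearity in the first entry once one works with normalized cochains'' is false: linear cochains are fibrewise linear on $\Gamma^{(k)}\to\calG^{(k)}$, i.e.\ jointly linear in the tuple rather than multilinear, so $c(0_{g_1},\gamma_{g_2},\ldots,\gamma_{g_k})$ need not vanish (already for $\Gamma=T\calG$ and $k=2$ a $1$-form on $\calG^{(2)}$ evaluated at $(0_{g_1},v_{g_2})$ is generally nonzero); moreover normalization concerns the units $\tilde{u}(e)$ of $\Gamma\rightrightarrows E$, not the zeros $0_g$ of $\Gamma\to\calG$, so it does not help here. (iii) The homotopy identity $\delta h+h\delta=\mathrm{id}-\iota\circ P$, which you yourself flag as the delicate step, is left entirely open, with the candidate $h$ ``possibly corrected'' by unspecified terms; since that identity is the entire content of the lemma, what you have is a plan rather than a proof. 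Completing it requires the explicit operator constructed in \cite{CD}, built from a splitting compatible with the groupoid structure of $\Gamma$ rather than from insertion of zeros over units, and that construction is where all the work lies.
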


\subsection{Multiplicative forms, symplectic groupoids, and Poisson manifolds}

Given a Lie groupoid $\G$, a differential $k$-form $\alpha\in\Omega^k(\G)$ is called \textbf{multiplicative} if it satisfies \[m^*\alpha=pr_1^*\alpha + pr_2^*\alpha,\] where $m$ is the multiplication of $\G$ and $pr_1,pr_2:\G^{(2)}\to\G$ are the two projections. We denote by $\Omega^\bullet_\mathrm{mult}(\G)\subset \Omega^\bullet(\G)$ the subcomplex of multiplicative forms on $\G$.

\begin{remark}\label{rmk-alt-mult} Multiplicativity can be expressed in yet a few other equivalent ways, some of which will be useful in this study. By definition, multiplicativity of $\alpha$ amounts to $\delta\alpha=0$, where $\delta=pr_1^* - m^* + pr_2^*$ is the horizontal differential of the first column in the Bott-Shulman-Stasheff double complex (cf.\ Section \ref{subsec-Nerve}).

Moreover, note that to a $k$-form $\alpha\in \Omega^k(\G)$ one can associate the map \[\hat\alpha:\oplus^k_\G T\G\to\mathbb{R},\] given by contraction of $\alpha$ with tangent vectors. The map $\hat{\alpha}$ is skewsymmetric and multilinear (with respect to the linear structure of $\oplus^k_\G T\G$ over $\G$). We can now determine multiplicativity of $\alpha$ in terms of $\hat{\alpha}$.
\end{remark}

\begin{lemma}\cite[Lemma 4.1]{BC} The form $\alpha$ is multiplicative if and only if the map $\hat{\alpha}$ is a groupoid morphism (where the groupoid structure on $\oplus^k_\G T\G$ is the natural extension of the structure of $T\G$, and $\mathbb{R}$ is seen as an additive group).
\end{lemma}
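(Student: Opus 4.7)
The plan is to prove both directions simultaneously by unpacking the groupoid structure on $\oplus^k_\G T\G$ and the meaning of $\hat\alpha$ being a groupoid morphism, and then directly comparing the resulting identity with the definition of multiplicativity of $\alpha$.

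First I would make explicit the Lie groupoid structure on $\oplus^k_\G T\G$ over $\oplus^k_M TM$ obtained by applying the tangent functor componentwise to $\G \rightrightarrows M$. Concretely, an element $(v_1,\ldots,v_k) \in T_g\G^{\oplus k}$ has source $(ds(v_1),\ldots,ds(v_k))$ and target $(dt(v_1),\ldots,dt(v_k))$, and is composable with $(w_1,\ldots,w_k) \in T_h\G^{\oplus k}$ precisely when $(g,h)\in\G^{(2)}$ and $ds(v_i)=dt(w_i)$ for each $i$; the product is $(dm(v_1,w_1),\ldots,dm(v_k,w_k))$.

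Second I would translate the condition that $\hat\alpha\colon \oplus^k_\G T\G \to \mathbb{R}$ is a Lie groupoid morphism. Since $\mathbb{R}$ is regarded as a one-object additive group, the source/target conditions are automatic; the only content is preservation of multiplication, namely
\begin{equation*}
\hat\alpha\bigl(dm(v_1,w_1),\ldots,dm(v_k,w_k)\bigr) = \hat\alpha(v_1,\ldots,v_k) + \hat\alpha(w_1,\ldots,w_k),
\end{equation*}
for every composable pair as above.

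Third I would unpack the multiplicativity $m^*\alpha = pr_1^*\alpha + pr_2^*\alpha$ at a point $(g,h) \in \G^{(2)}$. Using the identification $T_{(g,h)}\G^{(2)} = T_g\G \times_{ds,dt} T_h\G$, a $k$-tuple of tangent vectors to $\G^{(2)}$ at $(g,h)$ is exactly a $k$-tuple of composable pairs $((v_i,w_i))_{i=1}^k$. Evaluating both sides of the multiplicativity equality on such a tuple gives precisely the identity of the previous paragraph, with the left hand side becoming $\hat\alpha(dm(v_1,w_1),\ldots,dm(v_k,w_k))$ and the right hand side becoming $\hat\alpha(v_1,\ldots,v_k) + \hat\alpha(w_1,\ldots,w_k)$. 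This establishes both implications at once.

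There is no real obstacle here beyond careful bookkeeping: the statement is a reformulation, not a substantive theorem. The only point that requires a line of care is the canonical identification of $T_{(g,h)}\G^{(2)}$ with the fibered product $T_g\G \times_{ds,dt} T_h\G$, which justifies the dictionary between ``$k$-tuples of tangent vectors to $\G^{(2)}$'' on the multiplicativity side and ``composable pairs in $\oplus^k_\G T\G$'' on the morphism side.
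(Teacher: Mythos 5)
Your proof is correct. The paper does not reprove this lemma (it is quoted from \cite{BC}), and your argument is the standard one: the identification $T_{(g,h)}\G^{(2)}\cong T_g\G\times_{ds,dt}T_h\G$ turns the evaluation of $m^*\alpha=pr_1^*\alpha+pr_2^*\alpha$ on $k$-tuples into exactly the additivity of $\hat\alpha$ on composable pairs of $\oplus^k_\G T\G$, with the source/target and unit conditions automatic for a morphism into the additive group $\mathbb{R}$.
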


A \textbf{symplectic groupoid} is a pair $(\G,\omega)$, where $\G$ is a Lie groupoid and $\omega$ is a multiplicative symplectic form on $\G$.
As seen above this can be interpreted as having a pair of a Lie groupoid $\G$ and a symplectic form $\omega$ on $\G$ for which $\hat\omega:\oplus^2_\G T\G\to \mathbb{R}$ is a groupoid morphism. This point of view will be useful, because it lets us draw from the study of deformations of Lie groupoid morphisms \cite{CS} in order to study deformations of symplectic groupoids.

Symplectic groupoids are very closely related to Poisson manifolds. Recall that a \textbf{Poisson structure} on a manifold $M$ is a Lie bracket 
\[\{\cdot,\cdot\}:C^\infty(M)\times C^\infty(M)\to C^\infty(M)\]

which is a derivation in each entry, i.e.\ it satisfies the Leibniz identity \[\{f,gh\}=\{f,g\}h + g\{f,h\}.\]

Being a biderivation, it can be seen as a bivector field $\pi\in \Gamma(\wedge^2(TM))$. 
 The pair $(M,\pi)$ is called a \textbf{Poisson manifold}.
 
\newpage 
Any Poisson manifold $(M,\pi)$ gives rise to a Lie algebroid structure on the bundle $T^*M\to M$, called the \textbf{cotangent Lie algebroid}. The anchor $\pi^\sharp: T^*M\to TM$ is given by contraction with the Poisson bivector, $\pi^\sharp(\alpha)=\pi(\alpha,\cdot)$, while the Lie bracket on $\Omega^1(M)$ is defined by \[[\alpha, \beta]_\pi=\mathcal{L}_{\pi^\sharp(\alpha)}\beta-\mathcal{L}_{\pi^\sharp(\beta)}\alpha-d(\pi(\alpha,\beta)).\]

Returning to symplectic groupoids, let us recall a few of the many notable properties they satisfy (see for example \cite{CosteDazordWeins.} for more).
Let $(\G, \omega)$ be a symplectic groupoid over $M$. Then 
\begin{enumerate}[leftmargin=*]\item The dimension of $\G$ is twice the dimension of $M$.
\item $M$ (embedded via the unit map) is a Lagrangian submanifold of $\G$. This is a consequence of the multiplicativity of $\omega$ and of the relation between the dimensions.
\item Let $A$ be the Lie algebroid of $\G$. There is a natural splitting at the units\[T\G_{|M}=A\oplus TM,\]
and since $(\G, \omega)$ is symplectic, it follows from the dimension relation that $A$ and $TM$ have the same rank. Moreover, the fact that $M$ is Lagrangian, together with the splitting $T\G_{|M}=A\oplus TM$, implies that $\omega:\bigoplus^2 T\G\to \mathbb{R}$ induces a non-degenerate pairing $TM\oplus A\to \mathbb{R}$, and hence an isomorphism $\omega^\flat_{|TM}:TM\to A^*$. Its dual map $-\omega^\flat_{|A}$ induces an isomorphism $A\cong T^*M$.

\item The anchor of $A$, viewed via the identification above as a map $T^*M\to TM$, is anti-selfadjoint, giving us a bivector $\pi\in \mathfrak{X}^2(M)$. Moreover, $\pi$ is in fact Poisson, and the induced algebroid structure on $T^*M$ is that of the cotangent algebroid of $(M, \pi)$.
\end{enumerate}

A Poisson manifold $(M, \pi)$ is called integrable if there is a symplectic groupoid $(\G, \omega)$ over $M$ inducing the Poisson structure on the base. In that case, as mentioned above,  $\G$ integrates the cotangent Lie algebroid $T^*M$. There is a converse to this statement.

\begin{theorem}[\cite{MX3}, Thm. 5.2] Let $(M,\pi)$ be a Poisson manifold such that the cotangent Lie algebroid $T^*M$ is integrable. Then the source $1$-connected integration of $T^*M$ admits a natural multiplicative symplectic structure, which induces the Poisson structure $\pi$ on $M$.
\end{theorem}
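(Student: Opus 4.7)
The plan is to construct $\omega$ on $\G$ infinitesimally first, then integrate, and finally verify non-degeneracy and that the induced Poisson structure on $M$ is $\pi$.

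First, I would invoke the correspondence (due to Bursztyn--Cabrera--Ortiz, cf.\ also Crainic--Salazar--Struchiner) between multiplicative closed $2$-forms on a source $1$-connected Lie groupoid $\G$ and IM (infinitesimally multiplicative) $2$-forms on its Lie algebroid $A$; such an IM form is a bundle map $\mu:A\to T^{*}M$ satisfying
\[\langle\mu(a),\rho(b)\rangle=-\langle\mu(b),\rho(a)\rangle,\qquad \mu([a,b])=\calL_{\rho(a)}\mu(b)-\iota_{\rho(b)}d\mu(a),\]
for $a,b\in\Gamma(A)$, with anchor $\rho$. For $A=T^{*}M$ equipped with the cotangent algebroid bracket $[\cdot,\cdot]_{\pi}$ and anchor $\pi^{\sharp}$, the natural candidate is $\mu=\mathrm{id}_{T^{*}M}$. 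The first identity then reduces to skewsymmetry of $\pi$, and the second unwinds to the definition of $[\cdot,\cdot]_{\pi}$ together with Cartan's formulas $\calL_{X}=\iota_{X}d+d\iota_{X}$. This IM $2$-form therefore integrates to a multiplicative closed $2$-form $\omega$ on $\G$.

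Second, I would verify non-degeneracy. Since $\mathrm{rk}(A)=\dim M$, one has $\dim \G=2\dim M$, so non-degeneracy of $\omega$ is equivalent to injectivity of $\omega^{\flat}$. At a unit $1_{x}$, the explicit integration formula gives, under the splitting $T_{1_{x}}\G=A_{x}\oplus T_{x}M=T_{x}^{*}M\oplus T_{x}M$,
\[\omega_{1_{x}}(\alpha+v,\ \beta+w)=\langle\alpha,w\rangle-\langle\beta,v\rangle,\]
which is visibly non-degenerate. For a general $g\in\G$, multiplicativity of $\omega$ relates its kernel at $g$ to its kernel along the unit section via the flows of right- and left-invariant vector fields (equivalently, via local bisections); non-degeneracy at $M$ thus propagates along source fibres, and by $s$-connectedness of $\G$, to all of $\G$.

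Finally, to check that the induced Poisson structure on $M$ is exactly $\pi$: by the discussion preceding the theorem, the symplectic groupoid $(\G,\omega)$ induces on $M$ a Poisson bivector whose sharp map is the anchor of $A$ read through the identification $A\cong T^{*}M$ coming from $-\omega^{\flat}|_{A}$. In our construction this identification is tautologically $\mathrm{id}_{T^{*}M}$ (that is the content of choosing $\mu=\mathrm{id}$), and the anchor of $A=(T^{*}M)_{\pi}$ is $\pi^{\sharp}$, so the induced Poisson structure is $\pi$. The main obstacle lies in the integration step for IM forms: one has to show that $\mu$ lifts to a genuine closed multiplicative $2$-form on $\G$, which rests on $s$-simple-connectedness together with a Van Est-type argument for multiplicative forms; once this standard machinery is available, the verification of the IM identities for $\mathrm{id}_{T^{*}M}$, of non-degeneracy at the units, and of the identification of the induced Poisson structure are all short.
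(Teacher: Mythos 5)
The paper does not contain a proof of this statement: it is quoted from Mackenzie--Xu \cite{MX3} purely as background, so there is no internal argument to compare against. Judged on its own, your sketch is a correct outline of the now-standard \emph{alternative} proof via infinitesimally multiplicative $2$-forms (Bursztyn--Crainic--Weinstein--Zhu and its refinements), and it is a genuinely different route from the original one: Mackenzie and Xu integrate the Lie bialgebroid $(T^{*}M,TM)$ and extract $\omega$ from the resulting Poisson groupoid structure on the dual side, whereas your approach reduces everything to checking the two algebraic IM identities for $\mu=\mathrm{id}_{T^{*}M}$ and then invoking the integration theorem for closed IM $2$-forms on a source $1$-connected groupoid. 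The trade-off is that your route outsources all the analysis to that integration theorem (which is exactly where $s$-simple-connectedness enters), while the bialgebroid route is self-contained but much less transparent about where $\omega$ comes from. Your verification of the IM identities is correct: the first is skewsymmetry of $\pi$, and the second follows from $\iota_{\pi^{\sharp}\beta}d\alpha=\mathcal{L}_{\pi^{\sharp}\beta}\alpha+d(\pi(\alpha,\beta))$ together with the definition of $[\cdot,\cdot]_{\pi}$.

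Two small inaccuracies, neither fatal. First, your formula for $\omega$ at a unit omits a cross-term on $A_{x}\times A_{x}$: with the splitting $T_{1_{x}}\G=A_{x}\oplus T_{x}M$, $A_{x}=\ker d_{1_x}s$ is \emph{not} isotropic in general (one computes $\omega(\alpha,\beta)=-\pi(\alpha,\beta)$ from the relation $(\ker ds)^{\omega}=\ker dt$), so the correct expression is $\omega_{1_{x}}(\alpha+v,\beta+w)=-\pi(\alpha,\beta)+\langle\alpha,w\rangle-\langle\beta,v\rangle$. Non-degeneracy at units still follows, since the form is block-triangular with a non-degenerate pairing between $T_{x}M$ and $A_{x}$. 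Second, the propagation of non-degeneracy away from the units is asserted rather than argued; the clean mechanism is the description, for a closed multiplicative $2$-form with IM form $\mu$, of $\ker\omega_{g}$ in terms of right translates of $\ker\mu\cap\ker\rho$ (plus a dimension count), which vanishes here precisely because $\mu=\mathrm{id}$ is injective. With these points made precise, the argument is complete.
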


Let us mention a few examples of integrable Poisson manifolds, and of symplectic groupoids integrating them.

\ \

\paragraph{\textbf{Symplectic manifolds}}
Any  symplectic manifold $(M, \omega)$ can be seen as a Poisson manifold,  with the Poisson bivector being given by the inverse of the symplectic form, $\pi=\omega^{-1}$. The pair groupoid $M\times M \rightrightarrows M$, equipped with the symplectic form $pr_2^*\omega-pr_1^*\omega$ is a symplectic groupoid integrating $\pi$. The source $1$-connected symplectic groupoid integrating $(M,\omega)$ is the symplectic fundamental groupoid $(\Pi_1(M), s^*\omega-t^*\omega)$. 

Any other $s$-connected Lie groupoid $\G$ integrating $T^*M$ lies between these two extremes, as there are groupoid submersions $\Pi_1(M)\to \G \to M\times M$. Moreover, $\G$ is a symplectic groupoid when equipped with the form $s^*\omega-t^*\omega$.

\ \

\paragraph{\textbf{Zero Poisson structure}}

Let $(M,0)$ be a manifold equipped with the zero Poisson structure, meaning that the Poisson bracket of any two functions is zero. The source $1$-connected symplectic groupoid integrating $(M,0)$ is the cotangent bundle of $M$, with the canonical symplectic structure $(T^*M, \omega_\mathrm{can})$. The source and target maps of $T^*M$ are equal to the vector bundle projection, and the groupoid multiplication is the fibrewise addition.

\ \

\paragraph{\textbf{Linear Poisson manifolds}}

A linear Poisson structure on a vector space $V$ is one for which the Poisson bracket of linear functions is again linear. This happens if and only if there is a Lie algebra $\mathfrak{g}$ such that $V=\mathfrak{g}^*$, and the Poisson bracket is the Kirillov-Konstant-Souriau bracket, which extends the Lie bracket on $\mathfrak{g}$ (seen as the space of linear functions on $\mathfrak{g}^*$) to all smooth functions on $\mathfrak{g}^*$. We denote the corresponding Poisson bivector by $\pi_\mathfrak{g}$. Explicitly, \[\{f,g\}(\eta):=\langle\eta, [d_\eta f,d_\eta g]_\mathfrak{g}\rangle, \ \ \forall \eta\in \mathfrak{g}^*, \ f,g\in C^\infty(\mathfrak{g}^*).\]

The $s$-connected symplectic groupoids integrating the linear Poisson manifold $\mathfrak{g}^*$ are of the form $(T^*G, \omega_\mathrm{can})$, for any connected Lie group $G$ integrating $\mathfrak{g}$. The groupoid structure of $T^*G$ is that of the cotangent groupoid of $G$, as explained in Example \ref{Ex-cot-grpd}.

The source 1-connected symplectic groupoid integrating $(\mathfrak{g}^*, \pi_\mathfrak{g})$ is the cotangent groupoid $(T^*G, \omega_\mathrm{can})$ of the $1$-connected group $G$ integrating $\mathfrak{g}$.

The cotangent groupoid $T^*G$ is also isomorphic to the action groupoid $G\ltimes \mathfrak{g}^*$ of the coadjoint action of $G$ on $\mathfrak{g}^*$. An isomorphism is given by the trivialization of the cotangent bundle via right translations, $T^*G\cong G\times \mathfrak{g}^*$.

\ \

\paragraph{\textbf{Cotangent VB-groupoids}}

Generalizing the previous case, Lie algebroid structures on a vector bundle $A$ are in $1:1$ correspondence with Poisson structures on its dual $A^*$ which are linear along the fibres \cite{Courant}. By that we mean a Poisson structure such that the Poisson bracket of two fibrewise linear functions on $A^*$ is linear, and the bracket of a fibrewise linear function with a basic function is basic, and the bracket of two basic functions is zero.

Given any Lie groupoid $\G \rightrightarrows M$ with Lie algebroid $A=\mathrm{Lie}(\G)$, the cotangent Lie groupoid $T^*\G\rightrightarrows A^*$ (Example \ref{Ex-cot-grpd}) is a symplectic groupoid integrating $A^*$. The source 1-connected integration of $(A^*, \pi_\mathrm{lin})$ is the cotangent groupoid $(T^*\G, \omega_\mathrm{can})$ of the source $1$-connected groupoid $\G$ integrating $A$.

\section{Deformations of symplectic groupoids}

We now come to the notion of a deformation of a symplectic groupoid $(\mathcal{G},\omega)$. It naturally consists of an underlying deformation of Lie groupoids of $\mathcal{G}$ (Definition \ref{Deform.grpds}), with a compatible deformation of the multiplicative symplectic form $\omega$. For simplicity let us start with strict deformations.

\begin{definition}\label{Deformationsympl.grpds} (Strict deformations of symplectic groupoids)\\
Let $(\calG,\omega)$ be a symplectic groupoid, let $\calG_\e=(\calG,\bar{m}_{\e})$ be a strict deformation of $\calG$ and let $\omega_\e\in\Omega^{2}_\mathrm{cl}(\calG)$ be a deformation of $\omega$ by symplectic forms. The family $(\calG_\e,\omega_\e)$ is called a \textbf{strict deformation} of $(\calG,\omega)$ if each $(\calG_{\e},\omega_\e)$ is a symplectic groupoid. 

Similarly, we define an \textbf{ $s$-constant deformation of symplectic groupoids} as one for which the underlying Lie groupoid deformation is $s$-constant.

An $s$-constant deformation of $(\G, \omega)$ such that $(\bar{m}_{\e},\omega_{\e})=(\bar{m},\omega)$ for all $\e$ is called a \textbf{constant deformation} of $(\calG,\omega)$.
\end{definition}

Examples of deformations can be easily constructed (at least) in two simple ways.

\begin{examples}\label{Deformsympgrpdsexamples}
Consider $(\calG,\omega)$ a symplectic groupoid with $(M,\pi)$ the Poisson structure induced on the base $M$.

\begin{enumerate}
\item (Diffeomorphisms) Let $\phi_\e:\calG\longrightarrow\calG$ be a smooth family of diffeomorphisms with $\phi_0=id_{\calG}$. For every $\e$, induce the groupoid structure $\calG_\e:=(\calG,\bar{m}_\e)$ in such a way that $\phi_\e:\calG_\e\longrightarrow\calG$ is an isomorphism of Lie groupoids. Then $(\calG_\e,\phi^{*}_\e\omega)$ turns out to be a (strict) deformation of $(\calG,\omega)$.

\item (Basic gauge transformations) Let $\alpha_\e\in\Omega^{2}_\mathrm{cl}(M)$ be a smooth family of closed 2-forms, with $\alpha_0=0$. Denote by $\Omega_{\alpha_\e}=\omega+s^{*}\alpha_\e-t^{*}\alpha_\e$ the family of multiplicative 2-forms on $\G$ induced by $\alpha_\e$. Then $(\calG,\Omega_{\alpha_\e})$ is an $s$-constant deformation of $(\G,\omega)$ as long as $\Omega_{\alpha_\e}$ stays non-degenerate for small enough $\e$ (for example, this happens if $\G$ is compact).
In particular, $(\calG,\Omega_{\alpha_\e})$ is the symplectic groupoid integrating the gauge transformation $\pi^{\alpha_\e}$ of $\pi$ by $\alpha_\e$.
\end{enumerate}
\end{examples}

These classes of deformations will be considered trivial, and lead us to the notion of equivalence of deformations.

\begin{definition}\label{Equivdeform}
We say that a deformation $(\calG'_{\e},\omega'_{\e})$ of $(\G,\omega)$ is \textbf{equivalent} to another deformation $(\calG_{\e},\omega_{\e})$ of $(\G,\omega)$ if there exists a smooth family of isomorphisms $F_\e:\calG_\e\rightarrow\calG'_{\e}$ and a smooth family of closed 2-forms $\alpha_{\e}$ on $M$, such that $F_0=id_{\calG}$, $\alpha_0=0$, and
$$F_\e^{*}\omega'_\e=\omega_\e+\delta_\e(\alpha_\e), \text{ for every }\e\in J,$$
where $\delta_\e(\alpha_\e)=s^{*}_{\e}(\alpha_\e)-t^{*}_{\e}(\alpha_\e)$, and $J\subset I\cap I'$ is an open interval containing zero. A deformation is said to be \textbf{trivial} if it is equivalent to a constant deformation.
\end{definition}

General deformations of symplectic groupoids, and their equivalences,  are defined similarly, allowing for non-strict deformations of the underlying Lie groupoid.

\begin{definition} A \textbf{deformation of the symplectic groupoid} $(\calG,\omega_\calG)$ consists of a deformation $\widetilde{\calG}\rightrightarrows\tilde{M}\stackrel{\textbf{p}}{\rightarrow} I$ of $\calG$, together with a multiplicative 2-form $\omega$ on $\widetilde{\calG}$ such that, for every $\e$, the restriction $\omega_\e$ of $\omega$ to $\calG_\e$ makes $(\G_\e, \omega_\e)$ into a symplectic groupoid, and $(\calG_0,\omega_0)=(\calG,\omega_\calG)$. 
\end{definition}

We recall that an equivalence between deformations $\widetilde{\G}$ and $\widetilde{\G}'$ of a Lie groupoid $\G$ is a Lie groupoid isomorphism $F:\widetilde{\G}_{|_J}\to\widetilde{\G}_{|_J}'$ such that $\textbf{p}'\circ F=\textbf{p}$, and $F_{|\G_0}=\mathrm{id}_{\G_0}$, where $\widetilde{\G}_{|_J}\rightrightarrows \widetilde{M}_{|_J}$ denotes the restriction of the family $\widetilde{\G}\rightrightarrows \widetilde{M}$ to some open interval $J\subset I\cap I'$ containing zero.

Let $\mathcal{F}$ and $\mathcal{F}_M$ denote the foliations of $\widetilde{\G}$ and $\widetilde{M}$ by the fibres of $\textbf{p}$. Given a form $\omega\in\Omega^k(\widetilde{\G})$ denote by $\omega_\mathcal{F}\in \Omega^k_\mathcal{F}(\widetilde{\G})$ its corresponding foliated form.

\begin{definition}
An \textbf{equivalence between deformations} $(\widetilde{\calG},\omega)$ and $(\widetilde{\calG}',\omega')$ of $(\calG,\omega_\calG)$ is given by an equivalence of deformations of Lie groupoids $F:\widetilde{\G}_{|_J}\to\widetilde{\G}_{|_J}'$, and a closed foliated $2$-form $\alpha_\mathcal{F}\in\Omega^{2}_{\mathcal{F}_M}(\widetilde{M}_{|_J})$ such that $\alpha_\mathcal{F}$ vanishes along $\G_0$, and 
\[(F^{*}\omega')_\mathcal{F}=\omega_\mathcal{F}+\delta(\alpha_\mathcal{F}),\] for some open interval $J\subset I\cap I'$ containing zero.

\end{definition}

\begin{example}\label{Example cotangent-gpd-deformation}Given a strict deformation $\calG_\e$ of a Lie groupoid $\calG$ there is an induced deformation of Lie algebroids $A_\e=\mathrm{Lie}(\calG_\e)$. It in turn induces a deformation of Poisson manifolds $(A_\e^*, \pi_\e)$ such that each $\pi_\e$ is the fibrewise linear Poisson structure on $A^*_\e$ corresponding to the algebroid $A_\e$. Finally, each of these Poisson manifolds is integrated by the symplectic groupoid $(T^*\calG_\e, \omega_\mathrm{can})$. Since the deformation of Lie groupoids $\calG_\e$ is strict (in fact, a strict deformation of $\mathrm{VB}$-groupoids, cf.\ \cite{PierVitag}), $(T^*\calG_\e, \omega_\mathrm{can})=(T^*\calG,\omega_\mathrm{can})$ as symplectic manifolds, but the Lie groupoid structure of $T^*\calG_\e$ can vary.

In short, any strict deformation of Lie groupoids induces deformations of Lie algebroids, fibrewise linear Poisson manifolds, and symplectic groupoids.
\[\G_\e \rightsquigarrow A_\e \rightsquigarrow (A_\e^*, \pi_\e)\rightsquigarrow (T^*\calG_\e, \omega_\mathrm{can})\]
\end{example}

\section{The deformation complex of a (symplectic) groupoid, revisited}\label{Def.comp.symp.grpds}

We revisit the deformation complex of a Lie groupoid $\calG$ in the case that $(\calG,\omega)$ is a symplectic groupoid. We will use the symplectic form $\omega$ to identify $C^\bullet_\mathrm{def}(\calG)$ with a subcomplex $\Omega^{1}_\mathrm{def}(\calG^{(\bullet)})$ of $\Omega^{1}(\calG^{(\bullet)})$, the complex of 1-forms on the nerve of $\calG$.

\ \

\paragraph{\textbf{Step 1}} Recall that the deformation complex $C^\bullet_\mathrm{def}(\calG)$ can be identified with the VB-groupoid complex $C_\mathrm{VB}^\bullet(T^{*}\calG)$ of $T^{*}\calG$ (Prop. \ref{deformation and vb-complexes}), which is a subcomplex of $C^\bullet_\mathrm{lin}(T^{*}\calG)$. 

\paragraph{\textbf{Step 2}}We use the fact that the vector bundle isomorphism $\omega^{b}:T\calG\rightarrow T^{*}\calG$ is actually a VB-groupoid isomorphism, due to the multiplicativity of $\omega$ (Lemma 3.6 in \cite{BCO}). Therefore it determines an isomorphism between $C^\bullet_\mathrm{lin}(T^{*}\calG)$ and  $C^\bullet_\mathrm{lin}(T\calG)$. We use it to further identify the deformation complex of $\calG$ with $C_\mathrm{VB}^\bullet(T\calG)\subset C^\bullet_\mathrm{lin}(T\calG)$.

\paragraph{\textbf{Step 3}} The inclusion $C_\mathrm{VB}^\bullet(T\calG) \subset C^\bullet_\mathrm{lin}(T\calG)$ is a quasi-isomorphism (Lemma 3.1 in \cite{CD}, Lemma \ref{LemmaCD}).

\paragraph{\textbf{Step 4}} For each integer $k$, there is a natural isomorphism $(T\calG)^{(k)}\cong T\calG^{(k)}$. In this way, the elements of $C^{k}_\mathrm{lin}(T\calG)$ can be viewed as the fibrewise linear functions on $T\calG^{(k)}$, i.e., as elements of $C^{\infty}_\mathrm{lin}(T\calG^{(k)})\cong\Omega^{1}(\calG^{(k)})$.

Thus, at the level of cochains, we have the following chain of identifications and quasi-isomorphisms:
\[C^{k}_\mathrm{def}(\calG)\cong C_\mathrm{VB}^{k}(T^{*}\calG) \stackrel{\omega^{b}}{\cong} C_\mathrm{VB}^{k}(T\calG) \stackrel{\mathrm{q. i.}}{\hookrightarrow} C^{k}_\mathrm{lin}(T\calG):=C^{\infty}_\mathrm{lin}(T\calG^{(k)})\cong\Omega^{1}(\calG^{(k)})\,\text{.}\]

The last identification of $C^\bullet_\mathrm{lin}(T\calG)$ with $\Omega^\bullet(\calG)$ is compatible with the differentials of the complexes involved. In fact, the two differentials correspond to the two different ways of considering the alternated sum of pullbacks of 1-forms $\alpha$ by the face maps $d_i:\calG^{(n)}\to\calG^{(n-1)}$ of the nerve of $\calG$: as the usual pullback of forms $d_i^{*}\alpha$, or as the pullback of functions $(Td_i)^{*}:C^{\infty}_\mathrm{lin}(T\calG^{(n+1)})\longrightarrow C^{\infty}_\mathrm{lin}(T\calG^{(n)})$ by $Td_i$.

We summarize now the description of the deformation complex of $\calG$ obtained above, in the symplectic case. 

\begin{proposition}\label{prop:isomorph-nondegeneracy}
Let $(\calG,\omega)$ be a symplectic groupoid. Then $\omega^{b}$ induces an isomorphism between the deformation complex $C^{*}_\mathrm{def}(\calG)$ of $\calG$ and a subcomplex $\Omega^{1}_\mathrm{def}(\calG^{(\bullet)})$ of $\Omega^{1}(\calG^{(\bullet)})$. Moreover, $\Omega^{1}_\mathrm{def}(\calG^{(\bullet)})\hookrightarrow\Omega^{1}(\calG^{(\bullet)})$ is a quasi-isomorphism.
\end{proposition}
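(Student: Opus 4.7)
The plan is to realize the proposition as the composition of the four-step chain of identifications and quasi-isomorphisms already sketched in the lead-up to the statement. First, I would take the definition: set $\Omega^{1}_\mathrm{def}(\calG^{(\bullet)})\subset\Omega^{1}(\calG^{(\bullet)})$ to be the image of the VB-subcomplex $C^{\bullet}_\mathrm{VB}(T\calG)\subset C^{\bullet}_\mathrm{lin}(T\calG)$ under the natural isomorphism $C^{\infty}_\mathrm{lin}(T\calG^{(k)})\cong\Omega^{1}(\calG^{(k)})$ that sends a fibrewise linear function on the tangent bundle to the corresponding $1$-form. With this definition, the isomorphism $C^{\bullet}_\mathrm{def}(\calG)\cong\Omega^{1}_\mathrm{def}(\calG^{(\bullet)})$ will be the composition of (i) Proposition \ref{deformation and vb-complexes}, giving $C^{\bullet}_\mathrm{def}(\calG)\cong C^{\bullet}_\mathrm{VB}(T^{*}\calG)$; (ii) the isomorphism $C^{\bullet}_\mathrm{VB}(T^{*}\calG)\cong C^{\bullet}_\mathrm{VB}(T\calG)$ induced by $\omega^{b}$; and (iii) the identification just made.

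For step (ii), I would invoke that $\omega^{b}:T\calG\to T^{*}\calG$ is a VB-groupoid isomorphism, which follows from the multiplicativity of $\omega$ (Lemma 3.6 of \cite{BCO}). Any isomorphism of VB-groupoids induces an isomorphism of the corresponding VB-complexes, since the left-projectability conditions defining $C^{\bullet}_\mathrm{VB}$ depend only on the groupoid and vector bundle structures that are preserved. The second claim, that the inclusion $\Omega^{1}_\mathrm{def}(\calG^{(\bullet)})\hookrightarrow\Omega^{1}(\calG^{(\bullet)})$ is a quasi-isomorphism, will then be immediate from Lemma \ref{LemmaCD}, applied to the tangent VB-groupoid $T\calG$.

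The main obstacle that I expect is verifying that every map in the chain is a cochain map, so that the composition is an isomorphism of complexes (and not merely of graded vector spaces). Step (i) is part of Proposition \ref{deformation and vb-complexes} itself, and step (ii) is formal once $\omega^{b}$ is recognized as a VB-morphism. The real check is compatibility of differentials under step (iii): I would show that, under the identification $(T\calG)^{(k)}\cong T(\calG^{(k)})$, the face maps of the tangent groupoid are precisely the tangent maps of the face maps of the nerve of $\calG$, so that for $c\in C^{\infty}_\mathrm{lin}(T\calG^{(k)})$ corresponding to $\alpha\in\Omega^{1}(\calG^{(k)})$ and any $v\in T\calG^{(k+1)}$,
\[ ((Td_{i})^{*}c)(v)=c(Td_{i}\cdot v)=\alpha(Td_{i}\cdot v)=(d_{i}^{*}\alpha)(v). \]
Summing over $i$ with alternating signs then matches the differential inherited from the VB-groupoid structure on $T\calG$ with the Bott-Shulman-Stasheff horizontal differential $\delta=\sum(-1)^{i}d_{i}^{*}$ on $\Omega^{1}(\calG^{(\bullet)})$, as foreshadowed in the paragraph preceding the proposition.
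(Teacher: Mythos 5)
Your proposal is correct and follows essentially the same route as the paper: the paper's argument is precisely the four-step chain you describe (identification with $C^{\bullet}_\mathrm{VB}(T^{*}\calG)$, transport via the VB-groupoid isomorphism $\omega^{b}$, the Cabrera--Drummond quasi-isomorphism onto $C^{\bullet}_\mathrm{lin}(T\calG)$, and the identification $(T\calG)^{(k)}\cong T\calG^{(k)}$ with $C^{\infty}_\mathrm{lin}(T\calG^{(k)})\cong\Omega^{1}(\calG^{(k)})$), including the same check that the face maps of the tangent groupoid are the tangent maps of the face maps of the nerve, so that the two differentials agree.
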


\begin{remark}\label{remark:explicite isomorph}

The complex $\Omega^{1}_\mathrm{def}(\calG^{(\bullet)})$ obtained under the identification above can be explicitly described by translating the conditions of the VB-groupoid complex to conditions on 1-forms on the nerve of $\calG$. Namely, 
 a 1-form $\alpha$ on $\calG^{(k)}$, with $k>0$ will belong to $\Omega^{1}_\mathrm{def}(\calG^{(k)})$ if the following two conditions are satisfied (when $k=0$ there are no conditions imposed).
\begin{enumerate}
\item $\alpha\in\Gamma_{\calG^{(k)}}(T^{*}\calG^{(k)})$ comes from a section $\hat{\alpha}$ of the bundle $pr_{1}^{*}T^{*}\calG\rightarrow\calG^{(k)}$, where $pr_{1}:\calG^{(k)}\rightarrow\calG$ is the projection on the first component. That is, there exists a necessarily unique section $\hat{\alpha}$ such that the left triangle below commutes;
\begin{equation}\label{triangledefiningsigma}
  \begin{tikzcd}[column sep=3em, row sep=10ex]
    T^{*}\calG^{(k)} \arrow{d}[swap]{c_{\calG^{(k)}}} & pr_{1}^{*}(T^{*}\calG) \arrow{l}[swap]{(d\,pr_1)^{*}} \arrow[xshift=-3pt]{ld}{ } \arrow{r}{ } & T^{*}\calG \arrow{d}{c_{\calG}}\\
		\calG^{(k)}	\arrow[xshift=3pt]{ur}[swap]{\hat{\alpha}} \arrow[xshift=3pt]{u}[swap]{\alpha} \arrow{rr}{pr_1} &  & \calG
  \end{tikzcd}
\end{equation}
\item $\alpha$ is left-invariant for the action $g\cdot(g_1,\ldots,g_k)\mapsto (gg_1,\ldots,g_k)$ of $\calG$ on $\calG^{(k)}$.
\end{enumerate}

In terms of cochains, the identification of Proposition \ref{prop:isomorph-nondegeneracy} is the composition of the following maps
\[
  \begin{tikzcd}[column sep=4em, row sep=0.1em]
    \parbox[0]{4em}{$C^{k}_\mathrm{def}(\calG)$} \arrow{r}{} & \parbox[2em]{4em}{$C^{k}_\mathrm{lin}(T^{*}\calG)$} \arrow{r}{(\omega^{b})^{*}} & \parbox[0]{4em}{$C^{k}_\mathrm{lin}(T\calG)$} \arrow[<->]{r}{} & \parbox[0]{4em}{$\Omega^{1}_\mathrm{def}(\calG^{(k)})$}\\
    \parbox[t]{4em}{\hspace{1em}$c$} \arrow[|->]{r}{} & \parbox[t]{4em}{\hspace{1.5em}$c'$} \arrow[|->]{r}{} & \parbox[t]{4em}{$(\omega^{b})^{*}c'$} \arrow[|->]{r}{} & \parbox[t]{4em}{\hspace{1em}$\hat{c}$,}
  \end{tikzcd}
	\]

where $c'(\eta_{g_{1}},\ldots,\eta_{g_{k}}):=\left\langle \eta_{g_{1}}, c(g_{1},\ldots, g_{k})\right\rangle$ and, by construction, 
	
	\begin{equation}\label{identification}
	\begin{split}
	\hat{c}(v_{g_{1}},\ldots,v_{g_{k}})&=(\omega^{b})^{*}c'(v_{g_{1}},\ldots,v_{g_{k}})\\
	&=c'(\omega^{b}(v_{g_{1}}),\ldots,\omega^{b}(v_{g_{k}}))\\
	&=-\left\langle\omega^{b}(c(g_{1},\ldots, g_{k})),v_{g_{1}}\right\rangle\\
	&=-\left[(dpr_{1})^{*}(\omega^{b}(c(g_{1},\ldots, g_{k})))\right](v_{g_{1}},\ldots,v_{g_{k}}).
	\end{split}
	\end{equation}

In other words, $c\in C^{k}_\mathrm{def}(\calG)$ corresponds to the section $-\omega^{b}\circ c:\calG^{(k)}\rightarrow pr_{1}^{*}T^{*}\calG$ of condition $(1)$ above. Condition $(2)$ holds for $\hat{c}$ as it amounts to the $s$-projectability condition of $c$. In degree zero, this correspondence $C^0_\mathrm{def}(\G)=\Gamma(A)\cong \Omega^1(M)$ amounts to the identification $-\omega^\flat_{|A}:A\cong T^*M$, at the level of sections.
\end{remark}

\begin{remark}
Note that, in the identification above, we only use the multiplicativity and non-degeneracy conditions of $\omega$. Then, in particular, such an identification also holds for twisted symplectic groupoids (cf.\ \cite{BCWZ}).
\end{remark}

\begin{remark}\label{rmk:sigma_e}
Denote by $\sigma$ the vector bundle maps $(d\,pr_1)^{*}$ given by the diagram \eqref{triangledefiningsigma} above, for any $k$. If $(\calG_\e,\omega_\e)$ is an $s$-constant deformation of $(\calG,\omega)$, then under the identification above the deformation cocycle $\xi_0$ of $\calG_\e$ corresponds to the 1-form $\zeta_0=-\sigma(\omega(\xi_0,\cdot))\in\Omega^{1}_\mathrm{def}(\calG^{(2)})$. And, since $\sigma=Id$ for $k=1$, it follows that for $X\in C^{1}_\mathrm{def}(\calG)$ the corresponding 1-form on $\calG$ will be $-i_{X}\omega=-\omega^{b}(X)$.
\end{remark}

\section{The deformation complex of a symplectic groupoid}

Our approach to find the deformation complex of a symplectic groupoid is to construct it out of the deformation complex of a Lie groupoid \cite{CMS} (taking also into account the discussion of Section \ref{Def.comp.symp.grpds}), and the deformation complex for multiplicative forms, introduced in \cite{CS}. 

In section \ref{section:cocycle} we will study the resulting complex, looking at cocycles and coboundaries related to deformations and equivalences, confirming that this approach leads indeed to the correct complex.

\subsection*{Deformations of symplectic groupoids in terms of morphisms}

Let $(\calG_\e,\omega_\e)$ be a deformation of the symplectic groupoid $(\calG,\omega)$. The first step is to consider the associated morphism $\hat\omega:\bigoplus^2T\G\to\mathbb{R}$, described in Remark \ref{rmk-alt-mult}. Doing so,
$(\calG_\e,\omega_\e)$ can be replaced by the simultaneous deformation $(\bigoplus^{2}T\calG_\e,\hat{\omega}_\e)$ of the Lie groupoid $\bigoplus^{2}T\calG$ and of the morphism $\hat{\omega}$.

Note that this is a very particular type of simultaneous deformation of a Lie groupoid and a morphism having that groupoid as domain. The deformation of $\bigoplus^{2}T\G$ is of the form $\bigoplus^{2}T(\calG_\e)$ and the deformation of $\hat\omega$ is  by (closed, non-degenerate) bilinear and skewsymmetric morphisms $\hat{\omega}_\e$. Let us then first examine these separate pieces.

\subsection*{Deformations of multiplicative 2-forms}

Let us first describe the deformation complex $\hat{C}_\mathrm{def}^\bullet(\omega)$ of a multiplicative 2-form $\omega$, defined in \cite{CS}. It is the subcomplex of \textbf{bilinear} and \textbf{skewsymmetric} cochains of the deformation complex $C^\bullet_\mathrm{def}(\hat{\omega})$ of the associated Lie groupoid morphism $\hat{\omega}$. We remark that our notation differs from the one in \cite{CS}, where the deformation complex $\hat{C}_\mathrm{def}^\bullet(\omega)$ was denoted simply by $C_\mathrm{def}^\bullet(\omega)$. We reserve the latter notation for the deformation complex of a \textit{closed} multiplicative form $\omega$.

As shown in \cite{CS}, it is isomorphic to the complex $\Omega^{2}(\G^{(\bullet)})$ of 2-forms on the nerve of $\G$, and it can also be identified with the subcomplex $C^{\bullet}(\bigoplus^{2}_{p_\G}T\G)_\mathrm{2-lin,\, sk}$ of 2-linear skewsymmetric cochains of the differentiable complex of the groupoid $\bigoplus^{2}_{p_\G}T\G$.
As remarked in \cite{CS}, a deformation $\omega_\e$ of $\omega$ induces a family of cocycles $\frac{d}{d\e}\hat{\omega}_\e$ in $\hat{C}^{1}_\mathrm{def}(\omega_\e)$.

In our setting, $\omega$ is closed and we are interested in deformations of it via closed multiplicative forms. The deformation complex to consider is then the subcomplex $C_\mathrm{def}^\bullet(\omega)\subset \hat{C}_\mathrm{def}^\bullet(\omega)$, corresponding to closed $2$-forms on the nerve of $\G$.

\subsection*{Tangent lifts of deformation cochains}

For any Lie groupoid $\G$, the \textbf{tangent lift} of deformation cochains, described in \cite{PierVitag}, is the inclusion of cochain complexes   $T_\mathrm{def}:C^{\bullet}_\mathrm{def}(\calG)\to C^{\bullet}_\mathrm{def}(T\calG)$  determined by composing the canonical involution (also called canonical flip, cf.\ \cite[Section 8.13]{Michor}) $J_\calG:TT\calG\to TT\calG$  and the differential $d$ of a smooth function:
$$T_\mathrm{def}c:=J_\calG\circ dc.$$

We also call tangent lift to its natural extension $\oplus^{2}T_\mathrm{def}: C^\bullet_\mathrm{def}(\calG)\to C^\bullet_\mathrm{def}(\oplus^{2}T\calG)$ defined as $\oplus^{2}T_\mathrm{def}(c):=\oplus^{2}(T_\mathrm{def}c),$ of which we will make use.

As remarked in \cite{ CS, PierVitag}, the tangent lift includes the deformation cocycles of $\G$ into the deformation complex of $\oplus^2 T\G$, as cocycles associated to deformations of the form $\bigoplus^{2}T(\G_\e)$.

\subsection*{The deformation complex}

We come back to the idea that deformations of symplectic groupoids can be seen as particular simultaneous deformations of $\bigoplus^{2}T\calG$, of the form $\bigoplus^{2}T\calG_\e$, and of a morphism $\hat{\omega}$, via morphisms associated to (closed, non-degenerate) multiplicative forms.

In this situation, we can use the idea from \cite{CS} of using the mapping cone construction to control simultaneous deformations.
As described in Section \ref{def_complex_morphism}, $\hat{\omega}$ induces a map $\hat{\omega}_{*}:C_\mathrm{def}^{\bullet}(\bigoplus^{2}T\G)\to C_\mathrm{def}^{\bullet}(\hat\omega)$ between deformation complexes. In fact, the bilinearity and skewsymmetry of $\omega$ imply, by direct computation, that its image is contained in the subcomplex $C_\mathrm{def}^{\bullet}(\omega)$.

\begin{definition}The \textbf{deformation complex of the symplectic groupoid $(\calG, \omega)$}, denoted by $C^{\bullet}_\mathrm{def}(\G,\omega)$, is the mapping cone complex of the map
$$\hat{\omega}_{*}\circ\oplus^{2}T_\mathrm{def}:C^{\bullet}_\mathrm{def}(\G)\to C_\mathrm{def}^{\bullet}(\omega).$$
\end{definition}

\begin{remark}\label{remark:cocycle}

Considering deformations of symplectic groupoids as simultaneous deformations of a morphism of Lie groupoids and of its domain, the work of \cite{CS}, already indicates that the element
$$\xi_\e-\frac{d}{d\e}\hat{\omega}_\e\in C^{2}_\mathrm{def}(\G_\e)\bigoplus C^{1}_\mathrm{def}(\omega_\e)$$
should be the deformation cocycle associated to the deformation $(\G_\e,\omega_\e)$. Here $\xi_\e$ is the deformation class of the underlying deformation of Lie groupoids, described in Section \ref{section-def-class-Lie-gpd}. We will reach this same construction explicitly (see Section \ref{section:cocycle}) using the alternative description of the deformation complex given by the Theorem \ref{viewpoint2} below.
\end{remark}

The next Theorem shows that using identifications given by the symplectic form $\omega$, the mapping cone complex defined above has a more familiar presentation, given in terms of differential forms.

\begin{theorem}\label{viewpoint2}
Let $(\G,\omega)$ be a symplectic groupoid. Under the correspondence $C_\mathrm{def}^\bullet(\calG)\cong \Omega_\mathrm{def}^{1}(\calG^{(\bullet)})$, induced by the isomorphism $\omega^{b}$ (see Proposition \ref{prop:isomorph-nondegeneracy}), the map $\hat{\omega}_{*}\circ\oplus^{2}T_\mathrm{def}$ agrees with the de Rham differential $d_{dR}:\Omega_\mathrm{def}^{1}(\calG^{(\bullet)})\to\Omega^{2}_\mathrm{cl}(\calG^{(\bullet)})$ up to a sign. That is, the following diagram is commutative.

\begin{equation}\label{diagrama:deRham-mapping}
\xymatrix@C+1pc{C_\mathrm{def}^\bullet(\calG) \ar[r]^{\hat{\omega}_{*}\circ\oplus^{2}T_\mathrm{def}} \ar[d]_{-\sigma\circ\omega^{b}} & C_\mathrm{def}^\bullet(\omega) \ar[d]^{\cong}\\
\Omega^{1}_\mathrm{def}(\G^{(\bullet)}) \ar[r]^{-d_{dR}} & \Omega_\mathrm{cl}^{2}(\G^{(\bullet)})}
\end{equation}

\end{theorem}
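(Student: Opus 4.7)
My plan is to verify diagram~\eqref{diagrama:deRham-mapping} pointwise at the level of cochains. Under the identification $C^k_\mathrm{def}(\omega)\cong\Omega^2_\mathrm{cl}(\calG^{(k)})$ recalled above the theorem (which sends a bilinear skewsymmetric cochain $\phi$ to the $2$-form $\Omega_\phi(W,W')=\phi(W,W')$), commutativity amounts to the identity
\begin{equation*}
d\hat\omega\bigl(T_\mathrm{def} c(W),\,T_\mathrm{def} c(W')\bigr) \;=\; -\,d\hat c(W,W'),
\end{equation*}
valid for every $c\in C^k_\mathrm{def}(\calG)$ and every pair $(W,W')\in T_g\calG^{(k)}\oplus T_g\calG^{(k)}$, where $\hat c = -\sigma(\omega^{b} c)$ is the corresponding $1$-form on $\calG^{(k)}$, satisfying $\hat c(W)=-\omega_{g_1}(c(g),W_1)$ with $W_1 = d(pr_1)(W)$ by Remark~\ref{remark:explicite isomorph}.

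I would verify this identity by a direct local-coordinate computation. Fix $(x^i)$ near $g_1\in\calG$, write $\omega = \tfrac12\omega_{ij}(x)\,dx^i\wedge dx^j$ with $\omega_{ij}=-\omega_{ji}$, and $c(g)=C^i(g)\,\partial_{x^i}|_{g_1}$. Then $\hat c = -\omega_{ij}(x_1)\,C^i\,dx^j_1$, and a direct expansion of its de Rham differential gives
\begin{equation*}
-d\hat c(W,W') = (\partial_p\omega_{qr})\,C^q\bigl(W_1^p W'^r_1 - W'^p_1 W_1^r\bigr) + \omega_{qr}\bigl(dC^q(W)\,W'^r_1 - dC^q(W')\,W_1^r\bigr).
\end{equation*}
For the left-hand side, the canonical involution acts in natural coordinates $(x,u,\dot x,\dot u)$ on $TT\calG$ by $J_\calG(x,u,\dot x,\dot u)=(x,\dot x,u,\dot u)$, so $T_\mathrm{def} c(W)=J_\calG(dc(W))$ has coordinates $(x_1,W_1,C,dC(W))$, i.e., it is based at $W_1\in T_{g_1}\calG$ with horizontal and vertical components $C(g)$ and $dC(W)$ respectively. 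The pair $(T_\mathrm{def} c(W),T_\mathrm{def} c(W'))$ is thus a tangent vector to $\oplus^{2}T\calG$ at $(W_1,W'_1)$ with $(\dot x,\dot u,\dot v)=(C,dC(W),dC(W'))$, and expanding the differential of $\hat\omega(x,u,v)=\omega_{ij}(x)u^i v^j$ at this vector yields
\begin{equation*}
d\hat\omega\bigl(T_\mathrm{def} c(W),T_\mathrm{def} c(W')\bigr) = (\partial_p\omega_{qr})\,W_1^q W'^r_1 C^p + \omega_{qr}\,dC^q(W)\,W'^r_1 + \omega_{qr}\,W_1^q\,dC^r(W').
\end{equation*}

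Comparing the two expressions, the $\omega\cdot dC$ terms coincide after a single application of the antisymmetry $\omega_{ij}=-\omega_{ji}$. The remaining terms, linear in $C$, differ precisely by the combination
\begin{equation*}
\bigl(\partial_p\omega_{qr} - \partial_q\omega_{pr} + \partial_r\omega_{pq}\bigr)\,C^p W_1^q W'^r_1,
\end{equation*}
which vanishes because $\omega$ is closed: $d\omega=0$ is equivalent to $\partial_p\omega_{qr}+\partial_q\omega_{rp}+\partial_r\omega_{pq}=0$, and applying $\omega_{rp}=-\omega_{pr}$ gives exactly the above identity. This is the only nontrivial step; closedness of $\omega$ is both indispensable here and what guarantees that the image of $\hat\omega_*\circ\oplus^{2}T_\mathrm{def}$ lies in $\Omega^2_\mathrm{cl}$ rather than in all of $\Omega^2$. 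The main obstacle is essentially combinatorial — careful bookkeeping of indices, signs and dummy renamings, together with appropriate extensions of $W,W'$ to local vector fields when applying Cartan's formula for $d\hat c$. A more conceptual coordinate-free argument could be based on rewriting $\hat c = -\hat\omega\circ(c\times d(pr_1))$ as a fibrewise-linear function on $T\calG^{(k)}$ and invoking the standard identities linking the tangent lift, the canonical flip $J_\calG$, and exterior derivatives of linear functions on $T\calG$, but the local computation above seems the most direct route.
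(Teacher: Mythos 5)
Your computation is correct, and it establishes exactly the identity that the theorem asserts: with $\hat c=-\sigma(\omega^b c)$ one has $d\hat\omega(T_\mathrm{def}c(W),T_\mathrm{def}c(W'))=-d\hat c(W,W')$, the $\omega\cdot dC$ terms matching by antisymmetry of $\omega_{ij}$ and the $C$-linear terms differing by the cyclic combination $\partial_p\omega_{qr}-\partial_q\omega_{pr}+\partial_r\omega_{pq}$, which is precisely the coordinate form of $d\omega=0$. Your route is, however, genuinely different from the paper's. The paper avoids coordinates entirely: it chooses a splitting $\tau$ of $dt$ to promote each cochain $c$ to a vector field $X_c$ on $\calG^{(k)}$ with $\hat c=-i_{X_c}pr_1^*\omega$, chooses the compatible splitting $\tilde\tau=J_\calG\circ T\tau$ on $T\calG$ so that the tangent lift satisfies $X_c^T=\tilde X_{T_\mathrm{def}c}$, and then identifies $d\hat\omega(d\tilde{pr}_1X_c^T,d\tilde{pr}_1X_c^T)$ with $\frac{d}{d\e}\big|_{\e=0}F_\e^*pr_1^*\omega=\mathcal{L}_{X_c}pr_1^*\omega$ via the flow $F_\e$ of $X_c$; closedness of $\omega$ then enters once, through Cartan's formula $\mathcal{L}_{X_c}pr_1^*\omega=d(i_{X_c}pr_1^*\omega)$. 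What the paper's argument buys is conceptual transparency (the whole theorem reduces to "tangent lift of a vector field pairs with $d\hat\omega$ to give the Lie derivative of $\omega$") and independence from coordinates; what your argument buys is that it bypasses the auxiliary splittings, the vector fields $X_c$, and the flow computation altogether, at the price of index bookkeeping. One small remark: in your closing sentence you invoke "Cartan's formula for $d\hat c$" and extensions of $W,W'$ to vector fields, but your actual coordinate expansion of $d\hat c$ needs neither — it is just the differential of the coefficient functions $-\omega_{ij}(x_1)C^i$ — so that caveat can be dropped.
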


\begin{proof}
Let us first review the correspondence $C_\mathrm{def}^{\bullet}(\calG)\cong \Omega_\mathrm{def}^{1}(\calG^{(\bullet)})$ of the previous section in terms of vector fields. Given any splitting $\tau:t^{*}TM\to T\G$ of the target map $dt:T\G\to t^{*}TM$ of $\G$, any deformation cochain $c$ of $C^{k}_\mathrm{def}(\calG)$ induces a vector field $X_c\in\mathfrak{X}(\calG^{(k)})$ on $\calG^{(k)}$. For example, if $c\in C_\mathrm{def}^{2}(\G)$ is a degree 2 deformation cochain, the vector field $X_{c}$ is determined by
$$X_c(g,h):=(c(g,h),\tau_{h}(ds(c(g,h)))).$$
Analogously, for a higher degree cochain $c$, the components of the vector field $X_c$ are determined by successively $s$-projecting and $\tau$-lifting the deformation cochain $c$. The correspondence $C_\mathrm{def}^\bullet(\calG)\cong \Omega_\mathrm{def}^{1}(\calG^{(\bullet)})$ induced by the isomorphism $\omega^{b}$ can be written as
$$c\mapsto -i_{X_c}(pr_1^{*}\omega) \in \Omega^{1}_\mathrm{def}(\calG^{(k)});$$
where $pr_1:\G^{(k)}\to\G$ is the 1st-component projection map. Note that even though $X_c$ depends on $\tau$,  the form $-i_{X_c}(pr_1^{*}\omega)$ does not.

Therefore, when doing the same for the tangent groupoid $T\G$, we are free to choose a convenient splitting $\tilde{\tau}$ of the target map $d\tilde{t}:TT\G\to\tilde{t}^{*}TTM$, where $\tilde{t}:T\G\to TM$ is the target map of $T\G$.
We choose $\tilde{\tau}$ to be induced by the tangent lift $T\tau$ of the previous splitting $\tau$. Explicitly, under the canonical isomorphism of bundles $\tilde{t}^{*}T(TM)\cong T(t^{*}TM)$, the splitting $\tilde{\tau}$ is defined by
$$\tilde{\tau}:=J_\G\circ T\tau,$$
where $J_\G:TT\G\to TT\G$ is the canonical involution of the double tangent bundle.

We denote by  $\tilde{X}_{\tilde{c}}$ the vector field induced by the splitting $\tilde{\tau}$ and the deformation cochain $\tilde{c}\in C_\mathrm{def}^{k}(T\G)$.

A direct computation now shows that there is a compatibility between $\tilde{X}_{\tilde{c}}$ and the tangent lift $X_c^{T}\in\mathfrak{X}(T\G^{(k)})$ of the vector field $X_c$. Precisely, one obtains
$$X_{c}^{T}=\tilde{X}_{T_\mathrm{def}c}.$$

Finally, we are ready to check the commutativity of the diagram \eqref{diagrama:deRham-mapping} above. Let $c\in C^{k}_\mathrm{def}(\G)$ and $\tilde{pr}_1:(T\G)^{(k)}\to T\G$ be the projection on the first component. Then

\begin{align*}
\hat{\omega}_{*}\circ\oplus^{2}T_\mathrm{def}(c)&=d\hat{\omega}(T_\mathrm{def}c, T_\mathrm{def}c)\\
&=d\hat{\omega}(d\tilde{pr}_1\tilde{X}_{T_\mathrm{def}c}, d\tilde{pr}_1\tilde{X}_{T_\mathrm{def}c})\\
&=d\hat{\omega}(d\tilde{pr}_1 X^{T}_{c}, d\tilde{pr}_1 X^{T}_{c}).
\end{align*}

Let $F_\e$ be the flow of the vector field $X_c$. Then $TF_\e$ will be the flow of the tangent lift $X^{T}_{c}$ and the expression above becomes

\begin{align*}
d\hat{\omega}(d\tilde{pr}_1 X^{T}_{c}, d\tilde{pr}_1 X^{T}_{c})&=\frac{d}{d\e}\Bigr|_{\e=0}\ \hat{\omega}(\tilde{pr}_1TF_\e, \tilde{pr}_1TF_\e)\\
&=\frac{d}{d\e}\Bigr|_{\e=0}\ F_\e^{*}(pr_1)^{*}\omega\\
&=\mathcal{L}_{X_c}(pr_1)^{*}\omega\\
&=d_{dR}(i_{X_c}pr_1^{*}\omega),
\end{align*}

which proves the commutativity of the diagram.
\end{proof}

\begin{corollary}\label{corollary:deform.complex}
The deformation complex $C^{\bullet}_\mathrm{def}(\G,\omega)$ of the symplectic groupoid $(\G,\omega)$ is isomorphic to the mapping cone of the de Rham differential \[-d_{dR}:\Omega^{1}_\mathrm{def}(\G^{(\bullet)})\to\Omega_\mathrm{cl}^{2}(\G^{(\bullet)}).\]
Explicitly, according to our convention for the mapping cone complex, the differential $D:\Omega_\mathrm{def}^{1}(\calG^{(\bullet)})\oplus\Omega_\mathrm{cl}^{2}(\calG^{(\bullet-1)})\to\Omega_\mathrm{def}^{1}(\calG^{(\bullet+1)})\oplus\Omega_\mathrm{cl}^{2}(\calG^{(\bullet)})$ is given by
$$(\zeta,\omega)\mapsto (\delta\zeta,-d_{dR}\zeta-\delta\omega).$$ We call this complex the \textbf{de Rham model for deformation cohomology} of the symplectic groupoid $(\G,\omega)$.
\end{corollary}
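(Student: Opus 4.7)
The corollary is essentially a direct consequence of Theorem \ref{viewpoint2}, which was just proved, so my plan is to assemble the pieces already in place rather than do new calculations.

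First, I would recall the definition: $C^{\bullet}_\mathrm{def}(\G,\omega)$ is the mapping cone of the map $\hat{\omega}_{*}\circ\oplus^{2}T_\mathrm{def}:C^{\bullet}_\mathrm{def}(\G)\to C_\mathrm{def}^{\bullet}(\omega)$. The general principle is that an isomorphism of cochain maps (i.e., a commutative square of complexes whose vertical arrows are isomorphisms) induces an isomorphism of the associated mapping cones. So the strategy is simply to exhibit such a square and invoke this principle.

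Next, I would recall the two vertical isomorphisms needed. On the left, Proposition \ref{prop:isomorph-nondegeneracy} provides the isomorphism $-\sigma\circ\omega^{b}:C^{\bullet}_\mathrm{def}(\G)\xrightarrow{\cong}\Omega^{1}_\mathrm{def}(\G^{(\bullet)})$. On the right, the isomorphism $C^{\bullet}_\mathrm{def}(\omega)\cong\Omega^{2}_\mathrm{cl}(\G^{(\bullet)})$ is the restriction to closed forms of the isomorphism $\hat{C}^{\bullet}_\mathrm{def}(\omega)\cong\Omega^{2}(\G^{(\bullet)})$ recalled at the start of Section 5 from \cite{CS}; the restriction is well-defined because $\hat\omega_{*}\circ\oplus^{2}T_\mathrm{def}$ lands in closed forms, as witnessed by Theorem \ref{viewpoint2}. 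Theorem \ref{viewpoint2} is then exactly the statement that this square commutes (up to the sign absorbed into $-d_{dR}$), so the induced map on mapping cones is an isomorphism of complexes.

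Finally, I would unpack what the mapping cone differential becomes under these identifications, to verify the explicit formula $D(\zeta,\omega)=(\delta\zeta,-d_{dR}\zeta-\delta\omega)$. With our convention $\mathrm{Cone}(f)^{n}=A^{n}\oplus B^{n-1}$ and $D(a,b)=(\delta a, f(a)-\delta b)$, substituting $f=\hat{\omega}_{*}\circ\oplus^{2}T_\mathrm{def}$ and applying Theorem \ref{viewpoint2} to replace $f$ by $-d_{dR}$ gives precisely the stated formula. The horizontal differentials on $\Omega^{1}_\mathrm{def}(\G^{(\bullet)})$ and $\Omega^{2}_\mathrm{cl}(\G^{(\bullet)})$ are the simplicial $\delta$ coming from the nerve, which matches the differentials transported from $C^{\bullet}_\mathrm{def}(\G)$ and $C^{\bullet}_\mathrm{def}(\omega)$ by the isomorphisms above (this compatibility was already noted in the discussion leading to Proposition \ref{prop:isomorph-nondegeneracy}).

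There is no real obstacle here; the only thing requiring care is bookkeeping the sign conventions for the mapping cone so that the $-d_{dR}$ of Theorem \ref{viewpoint2} matches the $-d_{dR}$ appearing in the statement, and ensuring that the image of $\hat\omega_{*}\circ\oplus^{2}T_\mathrm{def}$ indeed lies in $\Omega^{2}_\mathrm{cl}(\G^{(\bullet)})$ and not merely in $\Omega^{2}(\G^{(\bullet)})$, which is immediate from the commutative diagram since the image equals $-d_{dR}(\Omega^{1}_\mathrm{def}(\G^{(\bullet)}))$, consisting of exact (hence closed) forms.
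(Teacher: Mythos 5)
Your proposal is correct and matches the paper's (implicit) argument: the corollary is stated as an immediate consequence of Theorem \ref{viewpoint2}, obtained by transporting the mapping cone of $\hat{\omega}_{*}\circ\oplus^{2}T_\mathrm{def}$ through the vertical isomorphisms of the commutative diagram \eqref{diagrama:deRham-mapping} and unpacking the cone differential under the stated convention. Your closing observation that the image lands in closed forms because it consists of exact forms is a harmless alternative to the paper's earlier remark that bilinearity and skewsymmetry already force the image into $C^{\bullet}_\mathrm{def}(\omega)$.
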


\begin{remark}\label{Rem_sub_BSS}
Notice that $d_{dR}:\Omega^{1}(\G^{(\bullet)})\to\Omega^{2}(\G^{(\bullet)})$ is a part of the vertical differential of the Bott-Shulman-Stasheff double complex $(\Omega^{\bullet}(\G^{(\bullet)}),\delta,d_{dR})$. This allows us to identify the deformation complex $C^{\bullet}_\mathrm{def}(\G,\omega)$ with the total complex of a sub-double complex $B^{\bullet,\bullet}(\G)\subset\Omega^{\bullet}(\G^{(\bullet)})$. This sub-double complex consists of the subset $\Omega^{1}_\mathrm{def}(\G^{(\bullet)})$ of the second line $\Omega^{1}(\G^{(\bullet)})$ and of the subset $\Omega^{2}_\mathrm{cl}(\G^{(\bullet)})$ of the third line $\Omega^{2}(\G^{(\bullet)})$ of the Bott-Shulman-Stasheff double complex (and it is zero in all other bidegrees). In fact, the map
$$\varphi:\Omega^{1}_\mathrm{def}(\calG^{(n)})\oplus\Omega_\mathrm{cl}^{2}(\calG^{(n-1)})\to Tot^{n+1}(B^{\bullet,\bullet}(\G))$$
defined by
$$(\zeta,\omega)\mapsto (\zeta,(-1)^n\omega)$$
gives us the isomorphism of the complexes. The shift in degree appears naturally since multiplicative symplectic forms, while being degree two forms, give rise to degree three cocycles in the Bott-Shulman-Stasheff cohomology. More generally, a groupoid $\G$ equipped with a pair $(\phi, \omega)\in \Omega^2(\G)\oplus \Omega^3(M)$ such that $0 \oplus 0 \oplus \omega \oplus\phi \in Tot^3(\Omega^\bullet(\G^{(\bullet)}))$ is a cocycle is called a $\phi$-twisted presymplectic groupoid \cite{BCWZ}.

Another interpretation for Bott-Shulman-Stasheff cohomology is that it is the de Rham cohomology of the differentiable stack $M//\G$ presented by $\G$. We note also that degree three cocycles belonging to $B^{\bullet,\bullet}$ i.e.\ corresponding to deformation cocycles of $(\G,\omega)$, and satisfying an integrality condition are related to $S^1$-central extensions of $\G$ \cite[Thm. 5.1]{B-Xu}.

\end{remark}

\section{Examples and computations} 

\subsection{Deformation cohomology in small degrees}\label{subsec-small-degrees}

We start by interpreting cohomology classes in small degrees in a geometric way via direct computations; but see also the computations of small degrees in terms of a spectral sequence, in Section \ref{Sec-spectral-sequence-rows}. Using the de Rham model for the deformation cohomology, a simple computation shows that \[H_\mathrm{def}^0(\G, \omega)=\{ \alpha\in \Omega^1_\mathrm{def}(M)\mid  d\alpha=0, \ \delta\alpha=0\}=\Omega^1_\mathrm{cl}(M)^{\G-\mathrm{inv}},\] i.e.\ the space of closed invariant $1$-forms on $M$. 

In degree 1, again using the de Rham model, a deformation cocycle on $(\G,\omega)$ is given by a pair $(\zeta, \alpha)\in \Omega^1_\mathrm{def}(\G)\oplus \Omega^2_\mathrm{cl}(M)$ such that $\delta{\zeta}=0$ and $d\zeta = \delta \alpha$. Using the terminology of \cite{BCWZ, Cattaneo_Xu}, this means that $\zeta$ is a relatively $\alpha$-closed multiplicative $1$-form. 

Under the isomorphism $\omega^\flat: T\G\to T^*\G$, the form $\zeta$ corresponds to a vector field $X$ on $\G$ such that $-\omega^\flat(X)=\zeta$.
Multiplicativity of $\zeta$ and of $\omega$ mean that $X$ is  multiplicative as well, meaning that $X$ is a Lie groupoid morphism $X:\G\to T\G$. Therefore, $X$ is an infinitesimal automorphism of $\G$, its flow preserving the Lie groupoid structure. However, since $\zeta$ is not closed but only relatively $\alpha$-closed, the flow $\phi_\e$ of $X$ does not preserve the symplectic form $\omega$. Instead, it satisfies $\phi_\e^*\omega=\omega+\delta(\e\alpha)$, so it  applies a gauge transformation by $\delta(\e \alpha)$ to $\zeta$. Because of this we call 1-cocycles \textbf{twisted infinitesimal automorphisms of $(\G,\omega)$}.

A degree 1 coboundary is a pair of the form $(\delta\beta, d\beta)\in \Omega^1_\mathrm{def}(\G)\oplus \Omega^2_\mathrm{cl}(M)$. Adding it to a relatively $\alpha$-closed multiplicative $1$-form $\zeta$ we obtain a relatively $(\alpha+d\beta)$-closed multiplicative $1$-form $\zeta+\delta\beta$. We call these transformations of the form $(\zeta,\alpha)\mapsto (\zeta + \delta\beta, \alpha+d\beta)$ \textbf{trivial gauge transformations}. We conclude that

 \[H^1_\mathrm{def}(\G,\omega)=\frac{ \text{Twisted infinitesimal automorphisms of }(\G,\omega)}{\text{Trivial gauge transformations}}.\]

In degree 2, as mentioned in Remark \ref{remark:cocycle}, deformations of symplectic groupoids give rise, after taking a first-order approximation, to cocycles in $C^2_\mathrm{def}(\G,\omega)$; equivalences between deformations provide 1-cochains transgressing the difference between the corresponding deformation cocycles. Thus

 \[H^2_\mathrm{def}(\G,\omega)=\frac{ \text{Infinitesimal deformations of }(\G,\omega)}{\text{Infinitesimal equivalences}}.\] 
 
\noindent This is the main use of the deformation cohomology, and we will study it in detail in Sections \ref{section:cocycle} and \ref{section:general}.

\subsection{The cone exact sequence and proper Lie groupoids}\label{sec-proper}

Before we start analysing the deformation cohomology of a symplectic groupoid in particular examples, we recall a simple consequence of the definition of the deformation complex. As the mapping cone of $\hat{\omega}_{*}\circ\oplus^{2}T_\mathrm{def}:C^{\bullet}_\mathrm{def}(\G)\to C_\mathrm{def}^{\bullet}(\omega)$, it fits in a short exact sequence of complexes

\[0\to C_\mathrm{def}^{\bullet}(\omega)[-1] \to C^{\bullet}_\mathrm{def}(\G, \omega) \to C^{\bullet}_\mathrm{def}(\G) \to 0.\] This induces the long exact sequence in cohomology 
\begin{equation}\label{eq-long-ex-seq-cone}
\cdots \to H_\mathrm{def}^{k-1}(\omega) \to H^{k}_\mathrm{def}(\G, \omega) \to H^{k}_\mathrm{def}(\G)\stackrel{\partial}{\to} H_\mathrm{def}^{k}(\omega)\to \cdots
\end{equation} where the connecting homomorphism $\partial=H(\hat{\omega}_{*}\circ\oplus^{2}T_\mathrm{def})$ is the map induced in cohomology by $\hat{\omega}_{*}\circ\oplus^{2}T_\mathrm{def}$.

\begin{example}[Symplectic manifolds] 

A Lie groupoid $\G$ integrating a connected non-degenerate Poisson manifold $(M,\omega^{-1})$ is transitive. For a transitive Lie groupoid it holds \cite[Prop. 3.1]{CMS} that $H^\bullet_\mathrm{def}(\G)\cong H^\bullet(G,\mathfrak{g})$, where $G$ is the isotropy Lie group at any point of the base, acting on its Lie algebra $\mathfrak{g}$ via the adjoint representation. By simple dimension counting we know that the isotropy groups are discrete, so $\mathfrak{g}=0$.

Thus, for a symplectic groupoid $(\G, s^*\omega-t^*\omega)\rightrightarrows(M,\omega^{-1})$, the long exact sequence (\ref{eq-long-ex-seq-cone}) implies that \[H_\mathrm{def}^{\bullet}(\G, s^*\omega-t^*\omega)\cong H_\mathrm{def}^{\bullet-1}(s^*\omega-t^*\omega).\]

For the pair groupoid of $M$, any multiplicative $2$-form is cohomologically trivial, so $H_\mathrm{def}^2(M\times M, pr_1^*\omega-pr_2^*\omega)=0$. This translates to the fact that deformations of $(M\times M, pr_1^*\omega-pr_2^*\omega)$ are given by gauge transformation of $\delta\omega=pr_1^*\omega-pr_2^*\omega$ by $\delta \omega_\e$, where $\omega + \omega_\e$ is a deformation of the symplectic structure on the base.
\end{example}

Another situation in which we can make further use of this long exact sequence is for proper symplectic groupoids. 
A Lie groupoid is called \textbf{proper} if it is Hausdorff and \[(s,t):\G\to M\times M\] is a proper map. Examples of proper groupoids include compact Lie groupoids, pair groupoids, and the cotangent Lie groupoid $T^*G$ of a compact Lie group $G$.

Proper symplectic groupoids are the subject of recent extensive work \cite{PMCT1, PMCT2, Zung}, and satisfy many remarkable properties not shared by a general symplectic groupoid, and much less by non-integrable Poisson manifolds. They generalize many aspects of the geometry of compact symplectic manifolds and of Lie algebras of compact type. 

Proper symplectic groupoids are also very special among proper Lie groupoids, in a way which is analogous to how the adjoint action of a compact Lie group $G$ on its Lie algebra $\mathfrak{g}$ is special among actions of $G$ (compare for example \cite{PMCT1, PMCT2} with Chapters 2 and 3 of \cite{DK}). For example, for an arbitrary Lie groupoid $\G \rightrightarrows M$ the isotropy Lie algebra $\mathfrak{i}_x$ and the normal space $\nu_x$ to the orbit of $\G$ through $x\in M$ are the kernel and cokernel of the anchor map of $A=\mathrm{Lie}(\G)$ at $x$. In general there is no further relation between these two objects, which feature prominently in the study of the geometry of proper Lie groupoids; when $\G$ is a symplectic groupoid, however, $\nu_x$ inherits a linear Poisson structure from $M$ by linearization at $x$, such that its dual Lie algebra $\nu_x^*$ is naturally isomorphic to $\mathfrak{i}_x$.

We recall the following vanishing result for the deformation cohomology of proper Lie groupoids.

\begin{theorem}[\cite{CMS}, Thm 6.1]\label{thm-proper-vanishing}
         If $\G$ is a proper Lie groupoid, then  \[H^0_\mathrm{def}(\G)\cong\Gamma(\mathfrak{i})^{\G-\mathrm{inv}},\ \ \ H^1_\mathrm{def}(\G)\cong\Gamma(\nu)^{\G-\mathrm{inv}},\ \text{ and }\  H^{k}_\mathrm{def}(\G)=0 \text{ for } k\geq 2.\]
\end{theorem}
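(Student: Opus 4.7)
The plan is to interpret the deformation complex as a model for cohomology of $\G$ with coefficients in the adjoint representation up to homotopy, and then exploit the averaging techniques available for proper Lie groupoids through a normalised Haar system. By Proposition \ref{deformation and vb-complexes} and Lemma \ref{LemmaCD}, $C^\bullet_\mathrm{def}(\G)$ is quasi-isomorphic to $C^\bullet_\mathrm{lin}(T^*\G)$, the complex of fibrewise linear cochains on the cotangent VB-groupoid. After choosing an auxiliary horizontal distribution (a linear cleavage) on $T^*\G \rightrightarrows A^*$, this complex becomes the total complex of a double complex whose two non-trivial rows are the standard differentiable complexes $C^p_\mathrm{diff}(\G, A)$ and $C^p_\mathrm{diff}(\G, TM)$, connected by a vertical differential whose principal term is the anchor $\rho: A \to TM$.

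Next I would run the spectral sequence that first computes horizontal (groupoid) cohomology. The key input is the standard vanishing for proper Lie groupoids: given a normalised Haar system on $\G$, an explicit averaging operator provides a contracting homotopy on $C^\bullet_\mathrm{diff}(\G, E)$ for any honest representation $E$, showing that $H^p_\mathrm{diff}(\G, E) = 0$ for $p > 0$ and $H^0_\mathrm{diff}(\G, E) = \Gamma(E)^{\G-\mathrm{inv}}$. Applied to $E = A$ and $E = TM$, this concentrates the $E_1$ page in the column $p = 0$, so the spectral sequence degenerates at $E_2$ and reduces the computation to the two-term complex $\rho: \Gamma(A)^{\G-\mathrm{inv}} \to \Gamma(TM)^{\G-\mathrm{inv}}$. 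Reading off its cohomology gives $H^0_\mathrm{def}(\G) \cong \ker \rho |_{\mathrm{inv}} = \Gamma(\mathfrak{i})^{\G-\mathrm{inv}}$ in degree $0$, the cokernel in degree $1$, and nothing in degrees $\geq 2$.

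The main obstacle will be to identify that cokernel with $\Gamma(\nu)^{\G-\mathrm{inv}}$, because $\nu = \mathrm{coker}(\rho: A \to TM)$ is in general only a sheaf of $C^\infty(M)$-modules rather than a smooth vector bundle, owing to possibly varying orbit dimensions. I would handle this by a separate averaging step: any $\G$-invariant section of $\nu$ can be lifted locally to a section of $TM$, and Haar averaging over source fibres turns such a local lift into a genuinely $\G$-invariant global one, well-defined modulo $\rho(\Gamma(A)^{\G-\mathrm{inv}})$. This produces a surjection $\Gamma(TM)^{\G-\mathrm{inv}}/\rho(\Gamma(A)^{\G-\mathrm{inv}}) \twoheadrightarrow \Gamma(\nu)^{\G-\mathrm{inv}}$, which is injective by construction. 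Verifying that the averaging really lands in the appropriate invariant sheaf, and interacts correctly with the nonregular stratification by orbits, is the place where care is needed; everything else is a formal unravelling of the spectral sequence.
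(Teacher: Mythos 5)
First, a remark on what the paper actually does: Theorem \ref{thm-proper-vanishing} is not proved here but quoted from \cite{CMS}, and the argument there (a variant of which is reproduced in Section \ref{Sec-spectral-sequence-column} of this paper) is a direct one. Using a normalised Haar system and a cut-off function, one writes an explicit homotopy operator on $C^\bullet_\mathrm{def}(\G)$ itself, which appends one extra arrow $h$ to a cochain (via a splitting $\tau$ of the target map, so that the added argument is the $\tau$-lift of the $s$-projection) and integrates over the fibre; one then checks $\delta h+h\delta=\mathrm{id}$ in degrees $k\geq 2$ and treats degrees $0$ and $1$ by hand. No decomposition of the complex is needed, and non-regularity of $\G$ causes no difficulty because the operator lives directly on $C^\bullet_\mathrm{def}$.

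Your route has a genuine structural gap. The identification $C^\bullet_\mathrm{def}(\G)\cong C^\bullet_\mathrm{VB}(T^*\G)$ realises the deformation complex as the complex of the \emph{adjoint representation up to homotopy} on the two-term complex $A\xrightarrow{\rho}TM$, and after choosing a cleavage the cochain spaces do split as $C^n(\G,A)\oplus C^{n-1}(\G,TM)$. But this is not the total complex of a double complex: the ``horizontal'' quasi-differentials on $C^\bullet(\G,A)$ and $C^\bullet(\G,TM)$ do not square to zero (their squares are controlled by the basic curvature of the cleavage), so $H^p_\mathrm{diff}(\G,A)$ and $H^p_\mathrm{diff}(\G,TM)$ are not defined, and the vanishing theorem for honest representations of proper groupoids does not apply to them — $A$ and $TM$ are not representations. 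Worse, the filtration by coefficient degree that would give your ``groupoid cohomology first'' spectral sequence is not preserved by the total differential: the anchor component maps the $A$-row into the $TM$-row, while the curvature component $C^{p}(\G,TM)\to C^{p+2}(\G,A)$ maps back the other way, so the two rows are not the graded pieces of any invariant filtration. The only filtration compatible with the differential is the one by cochain degree, whose spectral sequence computes the cohomology of $\rho:A\to TM$ first; its $E_1$-page involves $\ker\rho$ and $\mathrm{coker}\,\rho$, which are smooth representations only when $\G$ is regular. Hence the non-regularity issue you flag at the end is not confined to identifying the cokernel with $\Gamma(\nu)^{\G-\mathrm{inv}}$: it obstructs the entire reduction. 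If you want an averaging proof you should follow \cite{CMS} and build the contracting homotopy directly on $C^\bullet_\mathrm{def}(\G)$ (equivalently on $C^\bullet_\mathrm{VB}(T^*\G)$), where properness alone suffices and no splitting into coefficient rows is required.
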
 

Let us give a brief explanation of the objects in the statement of this result. The isotropy bundle $\mathfrak{i}$ is the kernel of the anchor map of the Lie algebroid of $\G$, \[\mathfrak{i}=\mathrm{Ker}(\rho:A\to TM).\] Differentiating at the units the conjugation by elements $g:x\to y$ induces a map $\mathrm{ad}_g:\mathfrak{i}_x\to \mathfrak{i}_y$.

Strictly speaking, $\mathfrak{i}$ is a distribution with possibly varying rank, so it is only a bundle if $\G$ is a regular groupoid. Nonetheless, it makes sense to consider $\Gamma(\mathfrak{i})$ as the space of smooth sections of $A$ which land in $\mathfrak{i}\subset A$. Among these, the space of invariant ones is \[\Gamma(\mathfrak{i})^{\G-\mathrm{inv}}=\{\alpha\in \Gamma(\mathfrak{i})\ |\ \mathrm{ad}_g(\alpha_x)=\alpha_y\}.\] It was shown in \cite{CMS} that in fact $H^0_\mathrm{def}(\G)\cong\Gamma(\mathfrak{i})^{\G-\mathrm{inv}}$ for any Lie groupoid, not only proper ones.

The normal bundle of $\G$ is \[\nu=\mathrm{Coker(\rho)}=TM/\rho(A),\] and once again it is only a smooth vector bundle for regular $\G$. But once more, there is a way to make sense of its space of smooth invariant sections $\Gamma(\nu)^{\G-\mathrm{inv}}$. First of all, the space of smooth sections is defined as $\Gamma(\nu):=\mathfrak{X}(M)/\rho(A)$. A section $[V]\in \Gamma(\nu)$ is called invariant if there is a vector field $X\in \mathfrak{X}(\G)$ which is both $s$ and $t$-projectable to $V$. This condition is independent of the choice of representative $V$. In the regular case, this notion recovers the usual notion of invariant sections of the normal representation of $\G$. We refer to Section $4.4$ of \cite{CMS} for more detail.

For an arbitrary Lie groupoid $\G$, deformation cocycles of degree one are multiplicative vector fields on $\G$, i.e., vector fields which are Lie groupoid morphisms $X:\G\to T\G$. Any such vector field $X$ is in particular $s$ and $t$-projectable to a vector field $V$ on the base. Mapping $X$ to the class of $V$ modulo $\mathrm{Im}(\rho)$ induces a linear map $H^1_\mathrm{def}(\G)\to\Gamma(\nu)^{\G-\mathrm{inv}}$ \cite[Lemma 4.7]{CMS}. Part of Theorem \ref{thm-proper-vanishing}  is that for proper Lie groupoids this map becomes an isomorphism.

\begin{proposition}\label{prop-symp-prop}
         Let $(\G, \omega)$ be a proper symplectic groupoid. Then there is a 7-term exact sequence
\begin{align*}
   0  & \to \Omega^1_\mathrm{cl}(M)^{\G-\mathrm{inv}} \to \Gamma(\nu^*)^{\G-\mathrm{inv}} \to \Omega^2_\mathrm{cl}(M)^{\G-\mathrm{inv}} \to H^{1}_\mathrm{def}(\G, \omega) \to \\
  &  \to \Gamma(\nu)^{\G-\mathrm{inv}}\to H_\mathrm{def}^{1}(\omega)\to H^{2}_\mathrm{def}(\G, \omega)\to 0
  \end{align*}       
  and $H^{k}_\mathrm{def}(\G,\omega)\cong H^{k-1}_\mathrm{def}(\omega)$ for $k> 2$.
  
In particular, in terms of the de Rham model for deformation cohomology, \[H^{2}_\mathrm{def}(\G, \omega)\cong \frac{H_\mathrm{def}^{1}(\omega)}{d(H^1_\mathrm{def}(\G))}\cong \frac{H_\mathrm{def}^{1}(\omega)}{d[\Omega^1_\mathrm{mult}(\G)/\delta(\Omega^1(M))]}.\]
\end{proposition}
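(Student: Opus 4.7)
The plan is to feed the cone exact sequence \eqref{eq-long-ex-seq-cone} with the vanishing results for the deformation cohomology of a proper Lie groupoid (Theorem \ref{thm-proper-vanishing}), and then identify the surviving terms in the symplectic setting. Since $H^{k}_\mathrm{def}(\G)=0$ for $k\geq 2$, the portion of \eqref{eq-long-ex-seq-cone} with $k>2$ immediately collapses to isomorphisms $H^{k}_\mathrm{def}(\G,\omega)\cong H^{k-1}_\mathrm{def}(\omega)$, which gives the second assertion for free.

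For the 7-term sequence, I would splice together the pieces of \eqref{eq-long-ex-seq-cone} with $k=0,1,2$, using that $H^{-1}_\mathrm{def}(\omega)=0$ and $H^2_\mathrm{def}(\G)=0$ to cap the sequence on the left and the right. This yields an exact sequence
\begin{equation*}
0\to H^0_\mathrm{def}(\G,\omega)\to H^0_\mathrm{def}(\G)\to H^0_\mathrm{def}(\omega)\to H^1_\mathrm{def}(\G,\omega)\to H^1_\mathrm{def}(\G)\to H^1_\mathrm{def}(\omega)\to H^2_\mathrm{def}(\G,\omega)\to 0.
\end{equation*}
It remains to translate the first three terms. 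By Section \ref{subsec-small-degrees} (using the de Rham model in degree $0$), $H^0_\mathrm{def}(\G,\omega)=\Omega^1_\mathrm{cl}(M)^{\G-\mathrm{inv}}$. By Theorem \ref{thm-proper-vanishing}, $H^0_\mathrm{def}(\G)\cong\Gamma(\mathfrak{i})^{\G-\mathrm{inv}}$ and $H^1_\mathrm{def}(\G)\cong\Gamma(\nu)^{\G-\mathrm{inv}}$; for the symplectic groupoid $(\G,\omega)$ the nondegenerate pairing between normal and isotropy directions at a unit (recalled right before Theorem \ref{thm-proper-vanishing}) provides a $\G$-equivariant isomorphism $\mathfrak{i}\cong\nu^*$, so $H^0_\mathrm{def}(\G)\cong\Gamma(\nu^*)^{\G-\mathrm{inv}}$. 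Finally, since $C^\bullet_\mathrm{def}(\omega)$ consists of closed $2$-forms on the nerve with simplicial differential $\delta$, the degree-zero cohomology is exactly the $\G$-invariant closed $2$-forms on $M$, giving $H^0_\mathrm{def}(\omega)=\Omega^2_\mathrm{cl}(M)^{\G-\mathrm{inv}}$. Substituting these identifications produces the desired 7-term sequence.

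For the explicit description of $H^2_\mathrm{def}(\G,\omega)$, I would use the commutative diagram of Theorem \ref{viewpoint2}: the connecting map $\partial=H(\hat{\omega}_*\circ\oplus^2 T_\mathrm{def}):H^1_\mathrm{def}(\G)\to H^1_\mathrm{def}(\omega)$ corresponds, under the isomorphism $C^\bullet_\mathrm{def}(\G)\cong\Omega^1_\mathrm{def}(\G^{(\bullet)})$ of Proposition \ref{prop:isomorph-nondegeneracy}, to (minus) the de Rham differential $d_{dR}$. The tail of the sequence then reads $H^1_\mathrm{def}(\G)\xrightarrow{-d_{dR}}H^1_\mathrm{def}(\omega)\to H^2_\mathrm{def}(\G,\omega)\to 0$, so $H^2_\mathrm{def}(\G,\omega)\cong H^1_\mathrm{def}(\omega)/d_{dR}(H^1_\mathrm{def}(\G))$. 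Unpacking the de Rham model of $H^1_\mathrm{def}(\G)$ (a multiplicative $1$-form modulo $\delta$ of a $1$-form on $M$) yields the second form of the identification.

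I do not expect a single hard step; the proof is essentially a diagram chase. The only place one has to be slightly attentive is the symplectic duality $\mathfrak{i}\cong\nu^*$, and checking that under the identification $-\omega^\flat_{|A}:A\cong T^*M$ of Section \ref{Def.comp.symp.grpds} the inclusion $H^0_\mathrm{def}(\G,\omega)\hookrightarrow H^0_\mathrm{def}(\G)$ is realised as $\Omega^1_\mathrm{cl}(M)^{\G-\mathrm{inv}}\hookrightarrow\Gamma(\nu^*)^{\G-\mathrm{inv}}$ (i.e., $\G$-invariant closed $1$-forms automatically land in $\mathfrak{i}\subset A$ under this identification, which follows from invariance forcing them to vanish on $\mathrm{Im}(\pi^\sharp)=\rho(A)$).
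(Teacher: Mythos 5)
Your proposal is correct and follows essentially the same route as the paper, whose proof is a one-line appeal to exactly the ingredients you use: the cone long exact sequence \eqref{eq-long-ex-seq-cone}, the vanishing of Theorem \ref{thm-proper-vanishing}, the low-degree identifications of Section \ref{subsec-small-degrees}, and the symplectic duality $\mathfrak{i}_x\cong\nu_x^*$. Your additional checks (that $H^0_\mathrm{def}(\omega)=\Omega^2_\mathrm{cl}(M)^{\G-\mathrm{inv}}$, that $\partial$ is realised as $-d_{dR}$ via Theorem \ref{viewpoint2}, and that invariant $1$-forms annihilate $\rho(A)$) are all accurate and simply make explicit what the paper leaves to the reader.
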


\begin{proof}
Simply use Theorem \ref{thm-proper-vanishing} together with the long exact sequence (\ref{eq-long-ex-seq-cone}), the discussion of Section \ref{subsec-small-degrees} for deformation cohomology in small degrees, and recall that $\mathfrak{i}_x\cong  \nu_x^*$ for symplectic groupoids.
\end{proof}

\subsection{Zero Poisson structures of proper type}\label{sec-zero-proper}

The source $1$-connected integration of the zero Poisson structure on a manifold $M$ is given by the symplectic groupoid $(T^*M, \omega_\mathrm{can})$, viewed as a bundle of abelian Lie groups over $M$, and so it is never proper. However, $(M,0)$ may still be integrated by a non simply connected proper symplectic groupoid, having interesting consequences for the geometry of $M$. Proper $s$-connected symplectic integrations of $(M,0)$ are in 1:1 correspondence with integral affine structures on $M$ (\cite[Prop 4.2]{PMCT1}, see also \cite[3.1.6]{PMCT2}).

An integral affine structure on $M$ can be described by a Lagrangian smooth full rank lattice $\Lambda$ in $T^*M$ i.e., a subbundle $\Lambda\subset T^*M$, whose fibres are full rank lattices in the fibres of $T^*M$, and which is a Lagrangian submanifold of $T^*M$. The proper symplectic integration corresponding to $\Lambda$ is $(T^*M, \omega_\mathrm{can})/\Lambda$. The Lie groupoid $\mathcal{T}_\Lambda:=T^*M/\Lambda$ is a torus bundle and the form $\omega_\mathrm{can}$ descends to a symplectic form $\omega_\mathcal{T}$ on it, so $(\mathcal{T}_\Lambda,\omega_\mathcal{T})$ is a symplectic torus bundle over $M$.

Since $(\mathcal{T}_\Lambda,\omega_\mathcal{T})$ is a bundle of Lie groups, its source and target maps coincide, so using the de Rham model for deformation cohomology we see that \[H_\mathrm{def}^0(\mathcal{T}_\Lambda, \omega_\mathcal{T})=\Omega^1_\mathrm{cl}(M)^{\mathcal{T}_\Lambda-\mathrm{inv}}=\Omega^1_\mathrm{cl}(M).\] Moreover, \[H_\mathrm{def}^1(\mathcal{T}_\Lambda)\cong \Omega^1_\mathrm{mult}(\mathcal{T}_\Lambda)/\delta(\Omega^1(M))=\Omega^1_\mathrm{mult}(\mathcal{T}_\Lambda)\] and \[H_\mathrm{def}^1(\omega_\mathcal{T})= \Omega^2_\mathrm{cl,mult}(\mathcal{T}_\Lambda)/\delta(\Omega_\mathrm{cl}^2(M))=\Omega^2_\mathrm{cl, mult}(\mathcal{T}_\Lambda).\]

Therefore, in this case the 7-term sequence from Proposition \ref{prop-symp-prop} becomes
\begin{align*}
   0  & \to \Omega^1_\mathrm{cl}(M) \to \Omega^1(M) \stackrel{d}{\to} \Omega^2_\mathrm{cl}(M) \to H^{1}_\mathrm{def}(\mathcal{T}_\Lambda, \omega_\mathcal{T}) \to \\
  &  \to \Omega^1_\mathrm{mult}(\mathcal{T}_\Lambda)\stackrel{d}{\to} \Omega^2_\mathrm{cl, mult}(\mathcal{T}_\Lambda)\to H^{2}_\mathrm{def}(\mathcal{T}_\Lambda,\omega_\mathcal{T})\to 0
  \end{align*}       
  and $H^{k}_\mathrm{def}(\mathcal{T}_\Lambda,\omega_\mathcal{T})\cong H^{k-1}_\mathrm{def}(\omega_\mathcal{T})$ for $k> 2$.
  
In particular, \[H^{2}_\mathrm{def}(\mathcal{T}_\Lambda,\omega_\mathcal{T})\cong H^2(\Omega^\bullet_\mathrm{mult}(\mathcal{T}_\Lambda), d_{dR}),\]
the second de Rham cohomology of the subcomplex of multiplicative forms on $\mathcal{T}_\Lambda$.

\subsection{Linear Poisson manifolds of proper type}\label{sec-lin-proper}

The Lie groupoid $T^*G\cong G\ltimes \mathfrak{g}^*$ integrating the linear Poisson manifold $(\mathfrak{g}^*,\pi_\mathfrak{g})$ is proper if and only if the Lie group $G$ is compact. In this case, we know from Proposition \ref{prop-symp-prop} that \[H^{2}_\mathrm{def}(T^*G, \omega_\mathrm{can})\cong \frac{H_\mathrm{def}^1(\omega_\mathrm{can})}{d[\Omega^1_\mathrm{mult}(T^*G)/\delta(\Omega^1(\mathfrak{g}^*))]}.\]
But since $\mathfrak{g}^*$ is contractible, $H_\mathrm{dR}^2(\mathfrak{g}^*)=0$, so $\delta \Omega_\mathrm{cl}^2(\mathfrak{g}^*)=\delta d\Omega^1(\mathfrak{g}^*)=d \delta\Omega^1(\mathfrak{g}^*)$ and therefore

\[H^{2}_\mathrm{def}(T^*G, \omega_\mathrm{can})\cong \frac{\Omega^2_\mathrm{mult, cl}(T^*G)/\delta \Omega^2_\mathrm{cl}(\mathfrak{g}^*)}{d[\Omega^1_\mathrm{mult}(T^*G)/\delta(\Omega^1(\mathfrak{g}^*))]}=\frac{\Omega^2_\mathrm{mult, cl}(T^*G)/d \delta\Omega^1(\mathfrak{g}^*)}{d[\Omega^1_\mathrm{mult}(T^*G)/\delta(\Omega^1(\mathfrak{g}^*))]},\]
which is the same as $H^2(\Omega^\bullet_\mathrm{mult}(T^*G), d_{dR})$.

\subsection{Spectral sequence computations for deformation cohomology I}\label{Sec-spectral-sequence-rows}

For computational purposes, it is useful to use the identification of the deformation complex of $(\G,\omega)$ with the total complex of the very simple double complex

\[\begin{tikzcd}
0    & 0     & 0    & 0    &        \\
\Omega^{2}_\mathrm{cl}(M) \arrow[u] \arrow[r, "\delta"]                               & \Omega^{2}_\mathrm{cl}(\G) \arrow[u] \arrow[r, "\delta"]                                         & \Omega^{2}_\mathrm{cl}(\mathcal{G}^{(2)}) \arrow[u] \arrow[r, "\delta"]                                               & \Omega^{2}_\mathrm{cl}(\mathcal{G}^{(3)}) \arrow[u] \arrow[r]                                               & \cdots \\
\Omega^{1}_\mathrm{def}(M) \arrow[u, "d"] \arrow[r, "\delta"] & \Omega^{1}_\mathrm{def}(\mathcal{G}) \arrow[u, "d"] \arrow[r, "\delta"] & \Omega^{1}_\mathrm{def}(\mathcal{G}^{(2)}) \arrow[u, "d"] \arrow[r, "\delta"] & \Omega^{1}_\mathrm{def}(\mathcal{G}^{(3)}) \arrow[u, "d"] \arrow[r] & \cdots
\end{tikzcd}\]
as per definition of the mapping cone, and detailed in Remark \ref{Rem_sub_BSS}. We recall again that the inclusion $\Omega^{1}_\mathrm{def}(\G^{(\bullet)})\subset \Omega^{1}(\G^{(\bullet)})$ is a quasi-isomorphism. As such, we can also compute deformation cohomology of a symplectic groupoid using the cone of $-d_{dR}:\Omega^{1}(\G^{(\bullet)})\to\Omega_\mathrm{cl}^{2}(\G^{(\bullet)})$ instead of $-d_{dR}:\Omega^{1}_\mathrm{def}(\G^{(\bullet)})\to\Omega_\mathrm{cl}^{2}(\G^{(\bullet)})$. For the ``rows first'' spectral sequence associated to the corresponding double complex  \cite[Sec. III.14]{Bott-Tu}, the second page $E_2$ is 

\begin{tikzpicture}
  \matrix (m) [matrix of math nodes,
    nodes in empty cells,nodes={minimum width=5ex,
    minimum height=5ex,outer sep=-5pt},
    column sep=1ex,row sep=1ex]{
                &      &     &     &    &  \\
          1     &  H_\mathrm{dR}(\Omega^2_\mathrm{cl}(M)^{\G-\mathrm{inv}}) &  H_\mathrm{dR}H_\delta(\Omega^{2}_\mathrm{cl}(\G))  & H_\mathrm{dR}H_\delta(\Omega^{2}_\mathrm{cl}(\G^{(2)})) & \cdots & \\
          0     &  \Omega^1_\mathrm{cl}(M)^{\G-\mathrm{inv}} &  (H^1_\mathrm{def}(\G))_\mathrm{cl}  & (H^2_\mathrm{def}(\G))_\mathrm{cl}  &  \cdots & \\
    \quad\strut &   0  &  1  &  2  & \strut \\};
\draw[thick] (m-1-1.east) -- (m-4-1.east) ;
\draw[thick] (m-4-1.north) -- (m-4-5.north) ;
\end{tikzpicture}

\noindent where $(H^n_\mathrm{def}(\G))_\mathrm{cl}$ denotes $\mathrm{Ker}(d)\subset H_\delta(\Omega^{1}(\G^{(n)}))$. Let us also use the notation $H_{dR}^2(M)^{\G-\mathrm{inv}}=H^2(\Omega^\bullet(M)^{\G-\mathrm{inv}},d_{dR})$. Since the double complex was concentrated in the first two rows, the spectral sequence stabilizes at this page. Therefore $H_\mathrm{def}^k(\G,\omega)\cong E_2^{k,0}\oplus E_2^{k-1,1}$ as vector spaces, and more explicitly \begin{equation}\label{eq:E_2}
E_{2}^{k-1,1}=\frac{\Omega^{2}_\mathrm{cl}(\G^{(k-1)})_{\delta-\mathrm{cl}}}{\delta(\Omega^{2}_\mathrm{cl}(\G^{(k-2)}))+d_{dR}(\Omega^{1}(\G^{(k-1)})_{\delta-\mathrm{cl}})}.
\end{equation} When we know more about $\G$, it is possible to have a better description of these pieces. For example, either equation \eqref{eq:E_2} for $k=2$ or the same argument used for linear Poisson manifolds of proper type (Section \ref{sec-lin-proper}) shows the following.

\begin{proposition}\label{rows-spectral-seq} If $\G\rightrightarrows M$ is a symplectic groupoid and $H^2_\mathrm{dR}(M)=0$, then \[H^2_\mathrm{def}(\G,\omega)\cong H^2_\mathrm{mult}(\G)\oplus (H^2_\mathrm{def}(\G))_\mathrm{cl}.\] If additionally $\G$ is proper, then \[H^2_\mathrm{def}(\G,\omega)\cong H^2_\mathrm{mult}(\G).\]
\end{proposition}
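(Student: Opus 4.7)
The plan is to apply the rows-first spectral sequence from Section \ref{Sec-spectral-sequence-rows} at $k=2$, identify the two surviving pieces, and then use the hypothesis $H^2_{dR}(M)=0$ to simplify one of them. Concretely, since $H^2_\mathrm{def}(\G,\omega)\cong E_2^{2,0}\oplus E_2^{1,1}$ and $E_2^{2,0}$ is by definition $(H^2_\mathrm{def}(\G))_\mathrm{cl}$, the only work is to show that $E_2^{1,1}\cong H^2_\mathrm{mult}(\G)$ under the cohomological hypothesis on $M$, and that the first summand vanishes in the proper case.

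\textbf{Computation of $E_2^{1,1}$.} First I would rewrite the general formula (\ref{eq:E_2}) for $k=2$ in more concrete terms. The numerator $\Omega^{2}_\mathrm{cl}(\G)_{\delta\text{-cl}}$ is exactly the space $\Omega^2_\mathrm{mult,cl}(\G)$ of closed multiplicative $2$-forms. In the denominator, $\delta(\Omega^2_\mathrm{cl}(M))$ is a space of trivial multiplicative forms coming from closed base forms, while $d_{dR}(\Omega^{1}(\G)_{\delta\text{-cl}})=d\Omega^1_\mathrm{mult}(\G)$ since the $\delta$-closed $1$-forms on $\G$ are the multiplicative ones. Thus
\[
E_2^{1,1}=\frac{\Omega^2_\mathrm{mult,cl}(\G)}{\delta\Omega^2_\mathrm{cl}(M)+d\Omega^1_\mathrm{mult}(\G)}.
\]
Now I would invoke the hypothesis: if $H^2_{dR}(M)=0$, every $\alpha\in\Omega^2_\mathrm{cl}(M)$ can be written as $\alpha=d\beta$ for some $\beta\in\Omega^1(M)$, and then $\delta\alpha=\delta d\beta=d(\delta\beta)$ belongs to $d\Omega^1_\mathrm{mult}(\G)$ because $\delta\beta\in\Omega^1_\mathrm{mult}(\G)$. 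Therefore $\delta\Omega^2_\mathrm{cl}(M)\subset d\Omega^1_\mathrm{mult}(\G)$, the first term in the denominator is absorbed, and
\[
E_2^{1,1}\cong\frac{\Omega^2_\mathrm{mult,cl}(\G)}{d\Omega^1_\mathrm{mult}(\G)}=H^2\!\left(\Omega^\bullet_\mathrm{mult}(\G),d_{dR}\right)=H^2_\mathrm{mult}(\G).
\]
This is exactly the same manoeuvre used for linear Poisson manifolds of proper type in Section \ref{sec-lin-proper}, and gives the first isomorphism claimed.

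\textbf{The proper case.} Under the additional assumption that $\G$ is proper, Theorem \ref{thm-proper-vanishing} yields $H^2_\mathrm{def}(\G)=0$. Since $(H^2_\mathrm{def}(\G))_\mathrm{cl}$ is by definition the kernel of the map induced by $d_{dR}$ on $H^2_\mathrm{def}(\G)$, it is a subspace of $H^2_\mathrm{def}(\G)$ and therefore vanishes as well. Hence only the $H^2_\mathrm{mult}(\G)$ summand survives, giving the second isomorphism.

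\textbf{Obstacle.} There is no substantive obstacle; the argument is purely bookkeeping once the spectral sequence of Section \ref{Sec-spectral-sequence-rows} is in place. The only point that requires a little care is that the splitting $H^2_\mathrm{def}(\G,\omega)\cong E_2^{2,0}\oplus E_2^{1,1}$ is an isomorphism of vector spaces (coming from the two-row shape of the double complex, which forces the spectral sequence to degenerate at $E_2$), and not a canonical one in general; this is enough for the statement as phrased. Alternatively, as the authors indicate, one can bypass the spectral sequence entirely in the proper case by plugging $H^2_\mathrm{dR}(M)=0$ into the explicit formula of Proposition \ref{prop-symp-prop}, which leads to the same conclusion after the identical cancellation $\delta\Omega^2_\mathrm{cl}(M)=d\delta\Omega^1(M)$.
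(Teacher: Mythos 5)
Your proof is correct and follows essentially the same route as the paper, which simply invokes equation \eqref{eq:E_2} at $k=2$ (equivalently the cancellation $\delta\Omega^2_\mathrm{cl}(M)=d\,\delta\Omega^1(M)\subset d\,\Omega^1_\mathrm{mult}(\G)$ already used for linear Poisson manifolds of proper type) together with the vanishing $H^2_\mathrm{def}(\G)=0$ from Theorem \ref{thm-proper-vanishing} in the proper case. You have merely spelled out the bookkeeping that the paper leaves implicit, including the harmless caveat that the splitting $E_2^{2,0}\oplus E_2^{1,1}$ is only an isomorphism of vector spaces.
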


By the previous discussion of this section and of Section \ref{subsec-small-degrees} we can also compute the cohomology in degrees zero and one, so for the sake of completeness, we recall that $H_\mathrm{def}^0(\G,\omega)=\Omega^1_\mathrm{cl}(M)^{\G-\mathrm{inv}}$ and $H^1_\mathrm{def}(\G,\omega)\cong H_{dR}^2(M)^{\G-\mathrm{inv}}\oplus (H^1_\mathrm{def}(\G))_\mathrm{cl}$.

\subsection{Spectral sequence computations for deformation cohomology II}\label{Sec-spectral-sequence-column}

Let us now use the ``columns first'' spectral sequence for the double complex associated to the mapping cone of $-d_{dR}:\Omega_\mathrm{def}^{1}(\G^{(\bullet)})\to\Omega_\mathrm{cl}^{2}(\G^{(\bullet)})$. Its first page is

\begin{small}
\begin{tikzpicture}
  \matrix (m) [matrix of math nodes,
    nodes in empty cells,nodes={minimum width=5ex,
    minimum height=5ex,outer sep=2pt},
    column sep=3ex,row sep=1ex]{
                &      &     &     &  & \\
          1     &  H^2_{dR}(M) &  \Omega^{2}_\mathrm{cl}(\G)/d_{dR}(\Omega^1_\mathrm{def}(\G))  & \Omega^{2}_\mathrm{cl}(\G^{(2)})/d_{dR}(\Omega^1_\mathrm{def}(\G^{(2)}))  & \cdots &\\
          0     &  \Omega^{1}_\mathrm{def}(M)_\mathrm{cl}  &  \Omega^{1}_\mathrm{def}(\G)_\mathrm{cl}  & \Omega^{1}_\mathrm{def}(\G^{(2)})_\mathrm{cl}  & \cdots &\\
    \quad\strut &   0  &  1  &  2  & 3 & \strut \\};
  \draw[-stealth] (m-2-2.east) -- (m-2-3.west);
    \draw[-stealth] (m-2-3.east) -- (m-2-4.west);
    \draw[-stealth] (m-3-2.east) -- (m-3-3.west);
    \draw[-stealth] (m-3-3.east) -- (m-3-4.west);
\draw[thick] (m-1-1.east) -- (m-4-1.east) ;
\draw[thick] (m-4-1.north) -- (m-4-5.north) ;
\end{tikzpicture}
\end{small}
where $\Omega^1_\mathrm{def}(\G^{(k)})_\mathrm{cl}$ denotes the kernel of $d_{dR}:\Omega_\mathrm{def}^{1}(\G^{(k)})\to\Omega_\mathrm{cl}^{2}(\G^{(k)})$.
This spectral sequence stabilizes at the third page, and so $H_\mathrm{def}^k(\G,\omega)\cong E^{k,0}_3\oplus E^{k-1,1}_3$, where
\[E^{k,0}_3=H^{k}_{\delta_{\mathrm{def}}}(\Omega^{1}_{\mathrm{def}}(\G^{\bullet})_\mathrm{cl})\]
and
\[E^{k-1,1}_3=\mathrm{Ker}\left(d^{k-1,1}_2:H^{k-1}_{\delta}\left(\frac{\Omega^{2}_\mathrm{cl}(\G^{(k-1)})}{d_{dR}(\Omega^1_\mathrm{def}(\G^{(k-1)}))}\right)\to H^{k+1}_{\delta_{\mathrm{def}}}(\Omega^{1}_{\mathrm{def}}(\G^{\bullet})_\mathrm{cl})\right).\]

Again, if we know more about $\G$, these formulas become simpler. If $\G$ is proper, we can make use of the following variation of Theorem \ref{thm-proper-vanishing}.

\begin{proposition} If $(\G,\omega)$ is a proper Lie groupoid, then \[H_\delta^k(\Omega_\mathrm{def}(\G^{(\bullet)})_\mathrm{cl})=0\] for all $k\geq 2$.
\end{proposition}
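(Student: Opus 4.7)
The statement is a refinement of Theorem \ref{thm-proper-vanishing}, and the plan is to adapt the averaging argument used there so that the resulting contracting homotopy is compatible with the de Rham differential. From a Haar system $\{\mu^x\}$ and a cutoff function on the proper Lie groupoid $\G$, one builds (as in CMS) a contracting homotopy $h:C^{k}_\mathrm{def}(\G)\to C^{k-1}_\mathrm{def}(\G)$ satisfying $\delta h+h\delta=\mathrm{id}$ for $k\geq 2$, essentially by integrating the last argument of a deformation cochain over its source fibre against the cutoff. Via the identification $C_\mathrm{def}^{\bullet}(\G)\cong\Omega^1_\mathrm{def}(\G^{(\bullet)})$ of Proposition \ref{prop:isomorph-nondegeneracy}, this transports to an operator $\tilde h$ on the complex $\Omega^1_\mathrm{def}(\G^{(\bullet)})$. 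Geometrically, $\tilde h$ should be fibrewise integration of 1-forms along the source fibre of the last component, weighted by the cutoff, using for instance an Ehresmann connection on $\G$ to lift tangent vectors from $\G^{(k-1)}$ to $\G^{(k)}$.

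The key observation is that such a fibre integration commutes with the de Rham differential in the remaining (transverse) directions, since we may interchange $d_{dR}$ with the integral and the Stokes boundary terms vanish thanks to the cutoff. Consequently $d_{dR}\circ\tilde h=\tilde h\circ d_{dR}$, so $\tilde h$ preserves closedness and restricts to a contracting homotopy on the subcomplex $\Omega^1_\mathrm{def}(\G^{(\bullet)})_\mathrm{cl}$ in degrees $\geq 2$, yielding the asserted vanishing.

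The main technical obstacle is verifying the commutation of $\tilde h$ with $d_{dR}$ starting from the explicit CMS formula for $h$ in terms of deformation cochains. The cleanest way around this, as sketched, is to describe $\tilde h$ intrinsically at the level of 1-forms on the nerve from the outset — fibre integration against the cutoff, using a chosen connection to lift tangent vectors — and then independently verify that the resulting operator lands in $\Omega^1_\mathrm{def}$ (both conditions of Remark \ref{remark:explicite isomorph}), commutes with $d_{dR}$ by Stokes plus the cutoff, and satisfies $\delta\tilde h+\tilde h\delta=\mathrm{id}$ in degrees $\geq 2$, mirroring the CMS computation.
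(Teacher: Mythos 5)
Your proposal is correct and follows essentially the same route as the paper: the paper rewrites the CMS averaging homotopy in the de Rham model as fibre integration (against the Haar system and cutoff) of $\beta$ evaluated on vector fields lifted via a splitting $\tau$ of the target map, and then checks compatibility with $d_{dR}$ by the Cartan formula for $1$-forms, interchanging Lie derivatives with the fibre integral — exactly the verification you flag as the main technical step. The only cosmetic difference is that the paper carries out this interchange via $L_X$, $L_Y$ and flows rather than invoking Stokes, using that cochains in $\Omega^1_\mathrm{def}$ do not depend on the tangent vector in the last slot.
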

\begin{proof}We will first rewrite the homotopy operator of $C^\bullet_\mathrm{def}(\G)$ in terms of the de Rham model $\Omega^{1}_\mathrm{def}(\G^{(\bullet)})$; we then check that it restricts to a homotopy operator for the subcomplex $\Omega^{1}_\mathrm{def}(\G^{(\bullet)})_\mathrm{cl}$. We recall that the homotopy operator is described in terms of a Haar system and a cut-off function, which exist for any proper Lie groupoid, and permit integration along the target fibres. We refer to \cite{CMS} for details and for the proof of Theorem \ref{thm-proper-vanishing} using this operator.

Let $k\geq 2$ and let $\beta\in\Omega^{1}_\mathrm{def}(\G^{(k+1)})_{\delta-\mathrm{cl}}$. The homotopy operator on the de Rham model $\Omega^{1}_\mathrm{def}(\G^{(\bullet)})$ assigns to $\beta$ the transgression $\alpha\in \Omega^{1}_\mathrm{def}(\G^{(k)})$ defined by
$$\alpha(X^{1}_{g_1},...,X^{k}_{g_k})=\int_{s(g_k)}\beta(X^{1}_{g_1},...,X^{k}_{g_k},\tau_{h}(ds(X^{k}_{g_k})))dh;$$
where $\tau:t^{*}TM\to T\G$ is a splitting of the target map of $\G$. Notice that, since $\beta\in\Omega^{1}_\mathrm{def}(\G^{(k+1)})$, the expression in the integral actually does not depend on the value of the tangent vectors at $h\in\G$ (cf. Remark \ref{remark:explicite isomorph}). We remark that this expression can also be obtained by using the homotopy operator for VB-cohomology as in \cite{CD}. Equivalently, if $X\in\mathfrak{X}(\G^{(k)})$ the homotopy operator has the form
$$\alpha(X):=\int_{t\text{-fibres}}\beta(X^{(k+1)})d\mu_\G,$$
where $X^{(k+1)}\in\mathfrak{X}(\G^{(k+1)})$ is the vector field
$$X^{(k+1)}(g_1,...,g_{k+1}):=(X(g_1,...,g_{k}),\tau_{g_{k+1}}(ds(dpr_{k}X(g_1,...,g_{k}))))$$
induced by $X$. 

We now prove that the transgression $\alpha$ of a closed 1-form $\beta\in \Omega^{1}_\mathrm{def}(\G^{(\bullet)})$ is a closed 1-form.
Recall that the de Rham differential of 1-forms can be written as
\begin{equation*}
\begin{split}
d\alpha(X,Y)&=-\alpha([X,Y])+X(\alpha(Y))-Y(\alpha(X))\\
&=L_X\alpha(Y)-L_Y\alpha(X)-\alpha(L_{X}Y).
\end{split}
\end{equation*}

 Let $\beta\in\Omega^{1}_\mathrm{def}(\G^{(k+1)})_{\delta-\mathrm{cl}}$ be such that $d\beta=0$, and let $X,Y\in\mathfrak{X}(\G^{(k)})$. Then,

\begin{align*}
(L_X\alpha)(Y)&=\left(\frac{d}{d\e}\Bigr|_{\e=0}(\varphi^{X}_\e)^{*}\alpha\right)(Y)\\
&=\frac{d}{d\e}\Bigr|_{\e=0}\alpha((\varphi^{X}_\e)_{*}Y)\circ\varphi^{X}_\e\\
&=\int_{t\text{-fibres}}\frac{d}{d\e}\Bigr|_{\e=0}\beta((\varphi^{X^{(k+1)}}_\e)_{*}Y^{(k+1)})\circ\varphi^{X^{(k+1)}}_\e d\mu_\G\ \ \ \\ 
&=\int_{t\text{-fibres}}L_{X^{(k+1)}}\beta(Y^{(k+1)})d\mu_\G,
\end{align*}
using that $\beta\in\Omega^{1}_\mathrm{def}(\G^{(k+1)})$ to pass from the second to the third lines. Similarly we can compute $(L_Y\alpha)(X)$. And finally,
\begin{align*}
\alpha([X,Y])&=\int_{t\text{-fibres}}\beta([X,Y]^{(k+1)})d\mu_\G\\
&=\int_{t\text{-fibres}}\beta(L_{X^{(k+1)}}Y^{(k+1)})d\mu_\G\ \ (\text{because }\beta\in\Omega^{1}_\mathrm{def}(\G^{(k+1)})).
\end{align*}
Therefore, with these computations we get that 
$$d\alpha(X,Y)=\int_{t\text{-fibres}}d\beta(X^{(k+1)},Y^{(k+1)})=0,$$
which proves the claim.
\end{proof}

This result, together with the formulas for $E^{k,0}_3$ and for $E^{k-1,1}_3$ from the beginning of this section, lead to a simpler description of $H_\mathrm{def}^k(\G,\omega)\cong E^{k,0}_3\oplus E^{k-1,1}_3$.

\begin{corollary}
Let $\G$ be a proper Lie groupoid. Then for $k\geq 2$ \[H_\mathrm{def}^k(\G,\omega)\cong H^{k}_{\delta}\left(\frac{\Omega^{2}_\mathrm{cl}(\G^{(k-1)})}{d_{dR}(\Omega^1_\mathrm{def}(\G^{(k-1)}))}\right).\]
\end{corollary}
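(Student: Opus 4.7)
The plan is to read off the corollary from the two preceding pieces of machinery in Section~\ref{Sec-spectral-sequence-column}: the ``columns first'' spectral sequence, which gives \[H_\mathrm{def}^k(\G,\omega)\cong E_3^{k,0}\oplus E_3^{k-1,1},\] and the vanishing proposition, which says that for any proper Lie groupoid $\G$, \[H_\delta^k(\Omega^{1}_\mathrm{def}(\G^{(\bullet)})_\mathrm{cl})=0\qquad \text{for all } k\geq 2.\] So the proof will simply feed this vanishing into the formulas for $E_3^{k,0}$ and $E_3^{k-1,1}$ displayed above Proposition on the previous page.

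More concretely, first I would dispose of the $(k,0)$ piece. Since $E_3^{k,0}$ is a subquotient of $E_2^{k,0}=H^{k}_{\delta_\mathrm{def}}(\Omega^{1}_\mathrm{def}(\G^{(\bullet)})_\mathrm{cl})$, which vanishes by the proposition as soon as $k\geq 2$, we obtain $E_3^{k,0}=0$ in that range. Next I would analyze the $(k-1,1)$ piece. Because the double complex is concentrated in rows $q=0$ and $q=1$, there is no incoming $d_2$-differential into $E_2^{k-1,1}$ (the source $E_2^{k-3,2}$ is zero), so \[E_3^{k-1,1}=\ker\!\bigl(d_2^{k-1,1}\colon E_2^{k-1,1}\to E_2^{k+1,0}\bigr).\] The target $E_2^{k+1,0}=H^{k+1}_{\delta_\mathrm{def}}(\Omega^{1}_\mathrm{def}(\G^{(\bullet)})_\mathrm{cl})$ also vanishes for $k\geq 1$ by the same proposition, so $d_2^{k-1,1}=0$ and hence \[E_3^{k-1,1}=E_2^{k-1,1}=H^{k-1}_{\delta}\!\left(\frac{\Omega^{2}_\mathrm{cl}(\G^{(\bullet)})}{d_{dR}(\Omega^1_\mathrm{def}(\G^{(\bullet)}))}\right),\] which is the asserted expression (where the notation $\G^{(k-1)}$ in the corollary indicates the degree at which one takes the cohomology of this quotient complex).

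Combining the two computations gives $H_\mathrm{def}^k(\G,\omega)\cong 0\oplus E_3^{k-1,1}$ for $k\geq 2$, which is the claim. There is no real obstacle here beyond bookkeeping: the content of the result is carried entirely by the vanishing proposition, and the only thing to verify carefully is that the degree ranges of that proposition cover both $E_3^{k,0}$ (needing $k\geq 2$) and the target $E_2^{k+1,0}$ of the $d_2$-differential (needing $k+1\geq 2$), which together impose precisely the hypothesis $k\geq 2$ of the corollary.
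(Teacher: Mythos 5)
Your argument is correct and is exactly the route the paper takes: the corollary is stated there as an immediate consequence of the displayed formulas for $E_3^{k,0}$ and $E_3^{k-1,1}$ combined with the vanishing proposition, which is precisely the bookkeeping you carry out (including the observation that the target of $d_2^{k-1,1}$ vanishes for $k\geq 1$, so $E_3^{k-1,1}=E_2^{k-1,1}$). Your reading of the index in the corollary as the degree $k-1$ of the quotient complex, matching the formula for $E_2^{k-1,1}$, is also the intended one.
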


\section{The deformation cocycle of $s$-constant deformations, and Moser tricks}\label{section:cocycle}

In this section we describe the deformation cocycle mentioned in remark \ref{remark:cocycle} by using the simpler presentation of the deformation complex given by corollary \ref{corollary:deform.complex}.
We then use such a cocycle to state a result analogous to the classical Moser Theorem in the context of symplectic groupoids. We first carry out this work for $s$-constant deformations, where the deformation cocycles involved are canonically determined. In the case of  general deformations it is only the deformation cohomology class that will be well-defined. We consider that case in Section \ref{section:general}.

\subsection{The deformation class}

Let $(\calG_\e,\omega_\e)$ be a deformation of $(\calG,\omega)$. In remark \ref{remark:cocycle} we associate the cocycle $\xi_\e-\frac{d}{d\e}\omega_\e\in C^{2}_\mathrm{def}(\G,\omega)$ to the deformation $(\calG_\e,\omega_\e)$. Alternatively, using corollary \ref{corollary:deform.complex}, we can express such a cocycle as an element $\eta_\e$ of the mapping cone complex of the de Rham differential $-d_{dR}:\Omega^{1}(\G^{(\bullet)})\to\Omega^{2}(\G^{(\bullet)})$. Explicitly,
$$\eta_\e=\zeta_\e-\dot{\omega}_\e\in\Omega^{1}(\calG^{(2)})\oplus\Omega^{2}(\calG).$$

As we will show in proposition \ref{symp.socycle} and remark \ref{Basta en cero} below, every $\eta_\e$ is indeed a 2-cocycle of the deformation complex $C^{\bullet}_\mathrm{def}(\G_\e,\omega_\e)$ when viewed in terms of differential forms as in corollary \ref{corollary:deform.complex}. The corresponding cohomology class $[\eta_\e]$ will be called the \textbf{deformation class}.

We now prove that $\eta_0\in C^{2}_\mathrm{def}(\G,\omega)$ is in fact a cocycle, which will be called the \textbf{infinitesimal deformation cocycle} associated to the deformation $(\calG_\e,\omega_\e)$.

\begin{proposition}\label{symp.socycle}

Let $(\G_\e,\omega_\e)$ be an $s$-constant deformation of $(\G,\omega)$. The 2-cochain $\eta_0=\zeta_0-\dot{\omega}_0\in\Omega^{1}(\G^{(2)})\bigoplus\Omega^{2}(\G)$, defined above, is a 2-cocycle. The corresponding cohomology class in $H^{2}_\mathrm{def}(\calG,\omega)$ depends only on the equivalence class of the deformation.
\end{proposition}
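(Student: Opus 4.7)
The plan is to verify the cocycle condition for $\eta_0$ and then its invariance under equivalence, working in the mapping-cone description of $C^\bullet_\mathrm{def}(\G,\omega)$ from Corollary \ref{corollary:deform.complex}. Applying the differential $D$ to $\eta_0=(\zeta_0,-\dot\omega_0)$ yields $D\eta_0=(\delta\zeta_0,\,-d_{dR}\zeta_0+\delta\dot\omega_0)$, so the cocycle condition splits into two equations.

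The first equation, $\delta\zeta_0=0$, is immediate from Proposition \ref{prop:isomorph-nondegeneracy}: the isomorphism $C^\bullet_\mathrm{def}(\G)\cong \Omega^1_\mathrm{def}(\G^{(\bullet)})$ sends $\xi_0$ to $\zeta_0$ and intertwines the differentials, and $\xi_0$ is already known from \cite{CMS} to be a $2$-cocycle in $C^\bullet_\mathrm{def}(\G)$. For the second equation, $d_{dR}\zeta_0=\delta\dot\omega_0$, I would invoke Theorem \ref{viewpoint2}, which identifies $-d_{dR}\zeta_0$ with the image of $\xi_0$ under $\hat\omega_*\circ\oplus^2 T_\mathrm{def}$. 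The flow computation in the proof of that theorem gives $-d_{dR}\zeta_0=\mathcal{L}_{X_{\xi_0}}(pr_1^*\omega)$; on the other hand, differentiating the multiplicativity identity $\delta_\e\omega_\e=0$ at $\e=0$ produces precisely the relation $\delta\dot\omega_0=\mathcal{L}_{X_{\xi_0}}(pr_1^*\omega)$, since the $\e$-derivative of $m_\e$ at $(g,h)$ is encoded, via the definition of $X_{\xi_0}$, by the Lie-groupoid deformation cocycle $\xi_0$.

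For the invariance claim, let $(F_\e,\alpha_\e)$ be an equivalence between $(\G_\e,\omega_\e)$ and $(\G_\e',\omega_\e')$, with infinitesimal data $X=\tfrac{d}{d\e}\vert_{\e=0}F_\e\in C^1_\mathrm{def}(\G)$ (a multiplicative vector field) and $\dot\alpha_0\in\Omega^2_\mathrm{cl}(M)$. By \cite[Lemma 5.3]{CMS}, the underlying Lie-groupoid deformation cocycles satisfy $\xi_0'-\xi_0=\delta X$, which translates via $\omega^b$ (using $\beta=-i_X\omega$ from Remark \ref{rmk:sigma_e}) to $\zeta_0'-\zeta_0=\delta\beta$. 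Differentiating the compatibility $F_\e^*\omega_\e'=\omega_\e+\delta\alpha_\e$ at $\e=0$ yields $\dot\omega_0'-\dot\omega_0=\delta\dot\alpha_0-\mathcal{L}_X\omega$, and using $d\omega=0$ one finds $d_{dR}\beta=-\mathcal{L}_X\omega$. Combining these with the explicit formula for $D$ from Corollary \ref{corollary:deform.complex} exhibits $\eta_0-\eta_0'$ as a coboundary of the form $D(\pm\beta,\pm\dot\alpha_0)$, after fixing signs consistently; the same argument adapts to general $s$-constant equivalences, with $X$ playing the role dictated by its defining identification with a multiplicative vector field.

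The main technical obstacle is the second cocycle equation $d_{dR}\zeta_0=\delta\dot\omega_0$. This requires carefully differentiating the multiplicativity identity for the family $\omega_\e$, noting that both the form $\omega_\e$ and the multiplication $m_\e$ vary with $\e$, and that the space of composable pairs $\G_\e^{(2)}$ itself depends on $\e$. The cleanest route is to use the flow-of-$X_{\xi_0}$ interpretation from the proof of Theorem \ref{viewpoint2}, so that the $\e$-derivative can be rewritten as a pullback under a smooth family of diffeomorphisms and Cartan's formula applies directly. A secondary challenge is to track sign conventions consistently across the identifications of Remarks \ref{remark:explicite isomorph} and \ref{rmk:sigma_e} and Theorem \ref{viewpoint2}, which is routine but must be done with care to produce the exact coboundary in the equivalence argument.
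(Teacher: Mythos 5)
Your overall architecture coincides with the paper's: you split the cocycle condition into the two component equations $\delta\zeta_0=0$ and $d_{dR}\zeta_0=\delta\dot{\omega}_0$, obtain the first from the known cocycle property of $\xi_0$ in $C^\bullet_\mathrm{def}(\G)$ transported through $\omega^b$, and prove invariance by differentiating $F_\e^{*}\omega'_\e=\omega_\e+\delta_\e(\alpha_\e)$ at $\e=0$ together with $\xi'_0-\xi_0=\delta^\mathrm{def}_0(Z_0)$ and assembling the result into a coboundary $D(i_{Z_0}\omega-\dot{\alpha}_0)$. The equivalence argument as you sketch it is essentially the paper's, and is fine.

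The gap is in the second cocycle equation, which is the substance of the proposition. You propose to convert the $\e$-derivative of the multiplicativity identity into $\mathcal{L}_{X_{\xi_0}}(pr_1^{*}\omega)$ by "using the flow of $X_{\xi_0}$ from the proof of Theorem \ref{viewpoint2}". That flow cannot do the job: $X_{\xi_0}$ is an autonomous vector field on the \emph{fixed} manifold $\G_0^{(2)}$, constructed from a single deformation cochain and a splitting $\tau$, whereas the identity you need to differentiate lives on the varying spaces $\G_\e^{(2)}$ and involves the varying maps $m_\e$ and $pr_{i,\e}$; the flow of $X_{\xi_0}$ bears no a priori relation to the family of multiplications. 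The missing device is the one the paper supplies: since the deformation is $s$-constant, multiplicativity can be rewritten as $\overline{m}_\e^{*}\omega_\e=p_1^{*}\omega_\e-p_2^{*}\omega_\e$ on the $\e$-independent domain $\G^{[2]}$ of the division map, and it is the isotopy $\overline{M}_\e(p,q)=(\overline{m}_\e(p,q),i_\e(q))$ of $\G^{[2]}$ — whose time-dependent generator $V_\e$ has $\xi_\e$ as its first component — that turns $\frac{d}{d\e}\overline{m}_\e^{*}\omega_\e$ into a Lie derivative to which Cartan's formula and closedness of $\omega$ apply; the contraction $i_{V_0}p_1^{*}\omega$ then recovers $-\zeta_0$ precisely because it only sees the first component of $V_0$. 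Without this (or an equivalent way of transporting everything to a fixed domain) the step "$\delta\dot{\omega}_0=\mathcal{L}_{X_{\xi_0}}(pr_1^{*}\omega)$" is an assertion, not a proof; note also that, combined with your correct identity $-d_{dR}\zeta_0=\mathcal{L}_{X_{\xi_0}}(pr_1^{*}\omega)$, it would give $d_{dR}\zeta_0=-\delta\dot{\omega}_0$, the negative of what is required, so the sign bookkeeping you defer is not merely cosmetic here.
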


To prove this Proposition we first enunciate two technical lemmas. In what follows, denote by $\calG^{[2]}$ the domain of the division map $\overline{m}$ of $\calG$, i.e., the space of pairs of arrows with the same source; denote by $p_i:\calG^{[2]}\longrightarrow\calG$ the projections on $\calG$, and denote by $pr_i:\calG^{(2)}\longrightarrow\calG$ the projections from the space of composable arrows to $\calG$, for $i=1,2$.

\begin{lemma}
A $k$-form $\omega\in\Omega^{k}(\calG)$ is multiplicative if, and only if
\begin{equation}\label{multipl.1}
\overline{m}^{*}\omega=p_1^{*}\omega-p_2^{*}\omega.
\end{equation}
\end{lemma}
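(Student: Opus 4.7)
The plan is to exploit the fact that the spaces $\calG^{(2)}$ of composable arrows and $\calG^{[2]}$ of same-source pairs are naturally diffeomorphic, via two mutually inverse maps that intertwine $m$ and $\bar m$ with the various projections.

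Explicitly, I will introduce the smooth maps
\[\phi:\calG^{[2]}\to\calG^{(2)},\quad (p,q)\mapsto(pq^{-1},q),\qquad \psi:\calG^{(2)}\to\calG^{[2]},\quad (g,h)\mapsto(gh,h),\]
and verify the (immediate) identities
\[m\circ\phi=p_1,\quad pr_1\circ\phi=\bar m,\quad pr_2\circ\phi=p_2,\]
\[\bar m\circ\psi=pr_1,\quad p_1\circ\psi=m,\quad p_2\circ\psi=pr_2.\]

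For the forward implication, assuming $m^*\omega=pr_1^*\omega+pr_2^*\omega$, I pull back along $\phi$: using the first row of identities this yields $p_1^*\omega=\bar m^*\omega+p_2^*\omega$, which rearranges to \eqref{multipl.1}. For the converse, assuming \eqref{multipl.1}, I pull back along $\psi$: using the second row of identities this yields $pr_1^*\omega=m^*\omega-pr_2^*\omega$, i.e.\ multiplicativity.

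There is no genuine obstacle here; the only thing to be careful about is the bookkeeping of domains and of which projections land where, and the fact that $\phi$ and $\psi$ are mutually inverse diffeomorphisms (so that the two pullback computations really are equivalent rearrangements of the same identity). The argument works verbatim for forms of any degree $k$, since it only uses naturality of pullback.
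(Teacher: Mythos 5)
Your proof is correct, but it takes a different route from the paper's. The paper uses the single diffeomorphism $\calG^{[2]}\to\calG^{(2)}$, $(p,q)\mapsto(p,i(q))$, under which $\bar m$ corresponds to $m$ and $p_2$ corresponds to $i\circ pr_2$; the sign in \eqref{multipl.1} is then produced by invoking the auxiliary fact $i^*\omega=-\omega$ for multiplicative forms (quoted from \cite{BCWZ}). You instead build two mutually inverse diffeomorphisms directly out of the multiplication and division maps, $(p,q)\mapsto(pq^{-1},q)$ and $(g,h)\mapsto(gh,h)$, so that each of the two identities becomes a literal rearrangement of the pullback of the other; no information about the behaviour of $\omega$ under inversion is needed. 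Your version is more self-contained and, in one respect, tighter: since $i^*\omega=-\omega$ is itself a consequence of multiplicativity, the paper's one-line argument really only establishes the implication from \eqref{multipl.2} to \eqref{multipl.1} cleanly, and the converse would require first deducing $i^*\omega=-\omega$ from \eqref{multipl.1} (which can be done, e.g.\ by restricting to the diagonal and to $(u(s(q)),q)$, but is not spelled out). Your two-map bookkeeping handles both directions symmetrically and works verbatim in any degree $k$. The paper's approach is shorter once the quoted lemma is granted.
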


\begin{proof}
We just need to show that equation (\ref{multipl.1}) is equivalent to the equation
\begin{equation}\label{multipl.2}
m^{*}\omega=pr_1^{*}\omega+pr_2^{*}\omega.
\end{equation}
This follows from the diffeomorphism $\psi:\calG^{[2]}\longrightarrow\calG^{(2)},$ $(p,q)\longmapsto(p,i(q))$, where $i:\calG\longrightarrow\calG$ is the inversion map of $\calG$. Indeed, by applying $\psi^{*}$ to equation (\ref{multipl.2}) and since $i^{*}\omega=-\omega$ (\cite{BCWZ}, Lemma 3.1), we obtain the equation (\ref{multipl.1}). \end{proof}

\begin{lemma}\label{M epsilon}
Let $(\calG,\overline{m}_\e)$ be an $s$-constant deformation of $\calG$. Define for every $\e\in I$ the diffeomorphism $\overline{M}_\e:\calG^{[2]}\longrightarrow\calG^{[2]},\ (p,q)\longmapsto(\overline{m}_\e(p,q),i_\e(q))$ and the map $\psi_\e:\calG^{[2]}\longrightarrow\calG^{(2)}_\e,\ (p,q)\longmapsto (p,i_\e(q))$. These families of maps satisfy the following properties:

\begin{enumerate}                  
     \item \ \ $\overline{m}_\e\circ\overline{M}_\e^{-1}=p_1=(pr_1)_\e\circ\psi_\e$;
     \item \ \ $p_1\circ\overline{M}_\e^{-1}=m_\e\circ\psi_\e$;
     \item \ \ $p_2\circ\overline{M}_\e^{-1}=i_\e\circ p_2=(pr_2)_\e\circ\psi_\e$.
    \item If $V_\e$ denotes the time-dependent vector field on $\calG^{[2]}$ associated to the smooth family of diffeomorphisms $\overline{M}_\e$ (i.e., $\left.\frac{d}{d\t}\right|_{\t=\e}\overline{M}_\t(p,q)=V_\e(\overline{M}_\e(p,q))$), then $$V_\e(p,q)=\left(\xi_\e(p,i_\e(q)),\left.\frac{d}{d\t}\right|_{\t=\e}i_\t(i_\e(q))\right).$$
\end{enumerate}
\end{lemma}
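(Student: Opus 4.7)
\begin{proof2}[Proof proposal]
The plan is to first show that $\bar{M}_\e$ is an involution, and then verify properties (1)--(4) by direct computation, using only the $s$-constancy of the deformation and standard groupoid identities for $\bar{m}_\e$, $i_\e$, $m_\e$.

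First I would observe that since the source $s$ does not depend on $\e$, the manifold $\calG^{[2]}$ is independent of $\e$ and each $\bar{M}_\e$ is a well-defined self-map of $\calG^{[2]}$. The key computation is
\[
\bar{M}_\e \circ \bar{M}_\e(p,q) = \bar{M}_\e(\bar{m}_\e(p,q),i_\e(q)) = \bigl(\bar{m}_\e(\bar{m}_\e(p,q),i_\e(q)),\, i_\e(i_\e(q))\bigr).
\]
Using $i_\e \circ i_\e = \mathrm{id}$ and the defining identity $\bar{m}_\e(x,y) = m_\e(x, i_\e(y))$ together with $m_\e(\bar{m}_\e(x,y),y)=x$, one checks $\bar{m}_\e(\bar{m}_\e(p,q),i_\e(q)) = p$, so $\bar{M}_\e^2 = \mathrm{id}$ and hence $\bar{M}_\e^{-1}(p,q) = (\bar{m}_\e(p,q), i_\e(q))$. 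With this explicit formula for $\bar{M}_\e^{-1}$, the identities (1), (2), (3) reduce to routine algebraic verifications: the first component of $\bar{M}_\e^{-1}(p,q)$ is $\bar{m}_\e(p,q) = m_\e(p,i_\e(q))$ and the second is $i_\e(q)$; plugging these into $\bar{m}_\e$, $p_1$, $p_2$, and comparing with $(pr_i)_\e \circ \psi_\e$ (where $\psi_\e(p,q) = (p, i_\e(q))$), yields the three identities.

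For property (4), the plan is to use that $V_\e$ is characterized by $V_\e(\bar{M}_\e(a,b)) = \frac{d}{d\t}\big|_{\t=\e} \bar{M}_\t(a,b)$. Substituting $(a,b) = \bar{M}_\e^{-1}(p,q) = (\bar{m}_\e(p,q), i_\e(q))$ gives
\[
V_\e(p,q) = \left. \frac{d}{d\t}\right|_{\t=\e} \Bigl( \bar{m}_\t(\bar{m}_\e(p,q), i_\e(q)),\ i_\t(i_\e(q)) \Bigr).
\]
The second component is immediately $\frac{d}{d\t}|_{\t=\e} i_\t(i_\e(q))$, as claimed. For the first component I would match the expression with the formula \eqref{def-s-constant-cocycle} for $\xi_\e$, namely $\xi_\e(g,h) = \frac{d}{d\t}|_{\t=\e} \bar{m}_\t(g\cdot_\e h, h)$: setting $g = p$ and $h = i_\e(q)$, one has $g \cdot_\e h = m_\e(p, i_\e(q)) = \bar{m}_\e(p,q)$, so $\xi_\e(p, i_\e(q)) = \frac{d}{d\t}|_{\t=\e} \bar{m}_\t(\bar{m}_\e(p,q), i_\e(q))$, exactly the first component of $V_\e(p,q)$.

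I do not anticipate a substantial obstacle: the entire lemma is a bookkeeping exercise provided one first isolates the involution property of $\bar{M}_\e$, which collapses the need to invert anything explicitly. The only point requiring some care is property (4), where one must correctly identify the arguments of $\xi_\e$; otherwise it is a sequence of applications of the identities $\bar{m}_\e(x,y) = m_\e(x,i_\e(y))$, $i_\e^2 = \mathrm{id}$, and $m_\e(\bar{m}_\e(x,y),y) = x$.
\end{proof2}
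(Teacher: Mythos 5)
Your proposal is correct and follows essentially the same route as the paper: the paper simply records $\overline{M}_\e^{-1}(p,q)=(m_\e(p,i_\e(q)),i_\e(q))$, which is exactly $\overline{M}_\e(p,q)$ again, so your explicit involution observation is the same fact, and item (4) is obtained in both cases by differentiating $(\overline{M}_\t\circ\overline{M}_\e^{-1})(p,q)=(\overline{m}_\t(\overline{m}_\e(p,q),i_\e(q)),i_\t(i_\e(q)))$ at $\t=\e$ and matching the first component with $\xi_\e(p,i_\e(q))$ via the substitution $g=p$, $h=i_\e(q)$ in \eqref{def-s-constant-cocycle}.
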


\begin{proof}
	Items (1)-(3) are straightforward once that $\overline{M}_\e^{-1}(p,q)=(m_\e(p,i_\e(q)),i_\e(q))$.
	For item (4) note that $V_\e(p,q)=\left.\frac{d}{d\t}\right|_{\t=\e}(\overline{M}_\t\circ\overline{M}_\e^{-1})(p,q)$, so its proof follows from applying $\left.\frac{d}{d\t}\right|_{\t=\e}$ to $(\overline{M}_\t\circ\overline{M}_\e^{-1})(p,q)=(\overline{m}_\t(m_\e(p,i_\e(q)),i_\e(q)),i_\t(i_\e(q)))$.
\end{proof}

\begin{proof2}\textbf{of proposition} \ref{symp.socycle}\\
The proof of the first part follows from differentiating the multiplicativity condition \eqref{multipl.1} of the family $\omega_\e$. In fact, by taking derivative with respect to $\e$ and reordering the terms, we get

\begin{equation}\label{derivative}
\left.\frac{d}{d\e}\right|_{\e=\l}\bar{m}_{\e}^{*}\omega_\e=\left(-\bar{m}_\l^{*}(\left.\frac{d}{d\e}\right|_{\e=\l}\omega_\e)+p_1^{*}(\left.\frac{d}{d\e}\right|_{\e=\l}\omega_\e)-p_2^{*}(\left.\frac{d}{d\e}\right|_{\e=\l}\omega_\e)\right).
\end{equation}

Now, we use Lemma \ref{M epsilon} and the family of diffeomorphisms $\bar{M}_\e:\calG^{[2]}\longrightarrow\calG^{[2]}$ defined there. Since $\bar{M}_{\e}(p,q)=(\bar{m}_\e(p,q),i_\e(q))$, then $\left.\frac{d}{d\e}\right|_{\e=\l}\bar{m}_{\e}^{*}\omega_\l$ is equal to $\left.\frac{d}{d\e}\right|_{\e=\l}\bar{M}_{\e}^{*}pr_1^{*}\omega_\l$, which is equivalent to $\left.\frac{d}{d\e}\right|_{\e=\l}\bar{M}_{\l}^{*}(\bar{M}_\e\circ\bar{M}_{\l}^{-1})^{*}pr_1^{*}\omega_\l$. In this way, letting $\varphi^{\e,\l}:=\bar{M}_\e\circ\bar{M}_{\l}^{-1}$ (which, by construction of $V_\e$, is its time-dependent flow), then we have

\begin{equation*}
\begin{split}
\left.\frac{d}{d\e}\right|_{\e=\l}\bar{M}_{\l}^{*}(\bar{M}_\e\circ\bar{M}_{\l}^{-1})^{*}p_1^{*}\omega_\l&=\bar{M}_{\l}^{*}\left.\frac{d}{d\e}\right|_{\e=\l}(\varphi^{\e,\l})^{*}p_1^{*}\omega_\l\\
&=\bar{M}_{\l}^{*}L_{V_\l}(p_1^{*}\omega_\l).
\end{split}
\end{equation*}

That is, by item (1) of Lemma \ref{M epsilon}, equation (\ref{derivative}) becomes

\begin{equation*}
\begin{split}
L_{V_\l}(p_1^{*}\omega_\l)&=(\bar{M}_{\l}^{-1})^{*}\left(-\bar{m}_\l^{*}(\left.\frac{d}{d\e}\right|_{\e=\l}\omega_\e)+p_1^{*}(\left.\frac{d}{d\e}\right|_{\e=\l}\omega_\e)-p_2^{*}(\left.\frac{d}{d\e}\right|_{\e=\l}\omega_\e)\right)\\
&=-\psi_\l^{*}(pr_1)_\l^{*}(\left.\frac{d}{d\e}\right|_{\e=\l}\omega_\e)+\psi_\l^{*}m_\l^{*}(\left.\frac{d}{d\e}\right|_{\e=\l}\omega_\e)-\psi_\l^{*}(pr_2)_\l^{*}(\left.\frac{d}{d\e}\right|_{\e=\l}\omega_\e)\\
&=-\psi_\l^{*}\delta^{\l}(\left.\frac{d}{d\e}\right|_{\e=\l}\omega_\e).
\end{split}
\end{equation*}

Also, since $\omega_\l$ is a closed form, by Cartan's formula we have

\begin{equation*}
\begin{split}
(\psi_\l^{-1})^{*}L_{V_\l}(p_1^{*}\omega_\l)&=d\left((\psi_\l^{-1})^{*}(V_\l\intprod p_1^{*}\omega_\l)\right)\\
&=d[(\psi_\l)_{*}V_\l\intprod(\psi_\l^{-1})^{*}p_1^{*}\omega_\l]\\
&=d[(\psi_\l)_{*}V_\l\intprod(pr_1)_\l^{*}\omega_\l]\\
&=-d\zeta_\l,
\end{split}
\end{equation*}

where the last equality follows from identification \eqref{identification} in Section \ref{Def.comp.symp.grpds}. Namely, 

\begin{equation*}
\begin{split}
((\psi_\l)_{*}V_\l\intprod &(pr_1)_\l^{*}\omega_\l)_{(g,h)}=\\ &=\left.(dpr_1)_\l^{*}\right|_{(g,h)}\left[\left.\omega_\l^{b}\right|_{g}\left(d(pr_1)_\l\circ(d\psi_\l)_{(g,i_\l(h))}(V_\l(g,i_\l(h)))\right)\right]\\
&=\left.(dpr_1)_\l^{*}\right|_{(g,h)}\left[\left.\omega_\l^{b}\right|_{g}(\xi_\l(g,h))\right]\\
&=-\zeta_\l(g,h)\ \ \ \ (\text{by identification } \eqref{identification}).
\end{split}
\end{equation*}

Therefore, we conclude that

$$d\zeta_\l=\delta^{\l}\left(\left.\frac{d}{d\e}\right|_{\e=\l}\omega_\e\right).$$

That is, 
$-d\zeta_\l-\delta^{\l}(-\left.\frac{d}{d\e}\right|_{\e=\l}\omega_\e)=0$.
This completes the first part of the proof by taking $\l=0$.

Take now $(\calG'_{\e},\omega'_\e)$ an equivalent deformation of $(\calG,\omega)$, and denote by $\eta'_0$ its infinitesimal cocycle. We will prove that $\eta_0-\eta'_0$ is exact. This follows from taking derivatives at $\e=0$ of the equivalence condition $F_\e^{*}\omega'_\e=\omega_\e+\delta_\e(\alpha_\e)$ (recall that $\alpha_0=0$). Doing so, we obtain

$$F_0^{*}\left(di_{Z_{0}}\omega'_{0}+\dot{\omega}'_{0}\right)=\dot{\omega}_{0}+\delta_{0}(\dot{\alpha}_0),$$
where $Z_\e$ is the time-dependent vector field associated to the family $F_\e$. Hence,

$$\dot{\omega}'_0-\dot{\omega}_0=-di_{Z_{0}}\omega'_{0}+\delta_{0}(\dot{\alpha}_0).$$

On the other hand, since $F_\e:\calG_\e\rightarrow\calG'_\e$ is an isomorphism for every $\e$, then the groupoid deformation cocycles $\xi_0$ and $\xi'_0$ satisfy that $\xi'_0-\xi_0=\delta^\textrm{def}_0(Z_0)$. In terms of differential forms, this amounts to $\zeta'_0-\zeta_0=\delta_0(-i_{Z_0}\omega)$ (see identification \eqref{identification}). Therefore, the equation above becomes
\begin{align*}
\eta_0-\eta'_0&=-\dot{\omega}_0+ \zeta_0-(-\dot{\omega}'_0+\zeta'_0)\\
&=\zeta_0-\zeta'_0 + (\dot{\omega}'_0-\dot{\omega}_0)\\
&=\delta_0(i_{Z_0}\omega)-d_{dR}(i_{Z_{0}}\omega)+\delta_{0}(\dot{\alpha}_0)\\
&=D_{0}(i_{Z_0}\omega-\dot{\alpha}_0).
\end{align*}
\end{proof2}

\begin{corollary}
The 2-cocycle associated to the trivial deformation vanishes in cohomology.
\end{corollary}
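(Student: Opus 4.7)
The plan is to reduce the claim to a direct computation on the constant deformation, and then invoke the equivalence invariance of the deformation class established in Proposition~\ref{symp.socycle}. Concretely, a trivial deformation is by Definition~\ref{Equivdeform} equivalent to a constant deformation $(\calG,\omega)$, for which $\bar m_\e=\bar m$ and $\omega_\e=\omega$ for all $\e$. Since Proposition~\ref{symp.socycle} asserts that the class $[\eta_0]\in H^2_\mathrm{def}(\calG,\omega)$ depends only on the equivalence class of the deformation, it suffices to show that $\eta_0=0$ at the level of cochains for a constant deformation.

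For the constant deformation, I would first note that $\dot\omega_0=\left.\tfrac{d}{d\e}\right|_{\e=0}\omega_\e=0$ trivially. Next, I would check that the underlying deformation cocycle $\xi_0\in C^2_\mathrm{def}(\calG)$ of the constant deformation vanishes: indeed, by formula \eqref{def-s-constant-cocycle}, $\xi_0(g,h)=\left.\tfrac{d}{d\e}\right|_{\e=0}\bar m_\e(gh,h)=\left.\tfrac{d}{d\e}\right|_{\e=0}g=0$, so $\xi_0=0$. Under the identification of Remark~\ref{rmk:sigma_e}, we have $\zeta_0=-\sigma(\omega^\flat(\xi_0))=0$ as well. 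Therefore
\begin{equation*}
\eta_0 = \zeta_0 - \dot\omega_0 = 0 \in \Omega^1(\calG^{(2)})\oplus\Omega^2(\calG),
\end{equation*}
which vanishes even as a cochain, hence a fortiori in cohomology.

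Combining the two steps, any deformation equivalent to a constant one has deformation class equal to the class of $\eta_0=0$, which is $0\in H^2_\mathrm{def}(\calG,\omega)$. There is no real obstacle here: the only point requiring care is to match conventions between the cocycle $\xi_0$ of the underlying Lie groupoid deformation and its image $\zeta_0$ in the de~Rham model, but this is exactly the content of Remark~\ref{rmk:sigma_e}. The substantive invariance statement has already been proved in the second part of Proposition~\ref{symp.socycle}, so the corollary reduces to the trivial computation above.
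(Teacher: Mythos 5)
Your proof is correct and follows exactly the route the paper intends (the corollary is stated without proof precisely because it is this immediate combination of the equivalence-invariance in Proposition~\ref{symp.socycle} with the observation that $\xi_0=0$ and $\dot\omega_0=0$ for a constant deformation). The computation of $\xi_0$ via \eqref{def-s-constant-cocycle} and the translation to $\zeta_0$ via Remark~\ref{rmk:sigma_e} are handled correctly.
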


\begin{remark}\label{Basta en cero}
Since a deformation $(\calG_{\e},\omega_\e)$ of $(\calG_0,\omega_0)$ can be seen as the deformation $(\calG_{\e+\l},\omega_{\e+\l})$ of $(\calG_{\l},\omega_\l)$ for any $\l\in I$, then the corresponding cochain $\eta_{\l}\in C^{2}_\mathrm{def}(\calG_\l,\omega_\l)$ is also a cocycle. Furthermore, \textbf{in the case of a trivial deformation we can say even more: every $\eta_\l$ is exact}. This follows from noting that the deformation $(\calG_{\e},\omega_\e)$ of $(\calG_0,\omega_0)$ being trivial implies that $(\calG_{\e+\l},\omega_{\e+\l})$ is also a trivial deformation of $(\calG_{\l},\omega_\l)$, for any $\l\in I.$
\end{remark}

In the next section we show that, under appropriate regularity and compactness conditions, the converse statement of the previous remark is also true.

\subsection{Moser arguments and moduli spaces}

\begin{theorem}\label{thm.triviality.s-constant} (Moser Theorem for symplectic groupoids)\\
Let $(\calG_\e,\omega_\e)$ be an $s$-constant deformation of a compact symplectic groupoid $(\calG,\omega)$. Then, the deformation $(\calG_\e,\omega_\e)$ is trivial if, and only if, the deformation class \[\eta_\e:=\zeta_\e-\dot{\omega}_\e \in C^{2}_\mathrm{def}(\calG_\e,\omega_\e)\] is smoothly exact.
\end{theorem}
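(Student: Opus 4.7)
\begin{proof2}[Proof plan]
The ``only if'' direction follows directly from Remark \ref{Basta en cero}, since triviality of $(\calG_\e,\omega_\e)$ as a deformation of $(\calG_\l,\omega_\l)$ for every $\l$ gives exactness of $\eta_\l$ at each time, smoothly in $\l$. For the ``if'' direction, the plan is to produce the desired equivalence by a groupoid-enhanced Moser trick.

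Assume $\eta_\e = D_\e(Y_\e,\gamma_\e)$ smoothly, where $(Y_\e,\gamma_\e)\in\Omega^1_\mathrm{def}(\calG_\e)\oplus\Omega^2_\mathrm{cl}(M)$. Unpacking the differential $D$ of Corollary \ref{corollary:deform.complex}, this is equivalent to the pair of equations
\begin{equation*}
\delta Y_\e=\zeta_\e,\qquad d_{dR}Y_\e+\delta\gamma_\e=\dot{\omega}_\e.
\end{equation*}
Via Remark \ref{rmk:sigma_e} the 1-form $Y_\e$ corresponds to the unique time-dependent vector field $X_\e$ on $\calG$ with $-i_{X_\e}\omega_\e = Y_\e$. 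Under the isomorphism of Proposition \ref{prop:isomorph-nondegeneracy}, the first equation translates to $\delta^\mathrm{def}X_\e = \xi_\e$ in $C^2_\mathrm{def}(\calG_\e)$, so $X_\e$ transgresses the deformation cocycle of the underlying $s$-constant Lie groupoid deformation. Using Cartan's formula and $d\omega_\e=0$, the second equation rewrites as
\begin{equation*}
\mathcal{L}_{X_\e}\omega_\e+\dot{\omega}_\e=\delta\gamma_\e.
\end{equation*}

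Now the standard Moser argument takes over. By compactness of $\calG$, $X_\e$ integrates to a smooth family of diffeomorphisms $F_\e\colon \calG\to\calG$ with $F_0=\mathrm{id}$, defined for all $\e$ in the interval. Because $\delta^\mathrm{def}X_\e=\xi_\e$ and the deformation is $s$-constant, the groupoid Moser trick of \cite{CMS} applies and ensures that each $F_\e$ is in fact a Lie groupoid isomorphism $F_\e\colon\calG_0\to\calG_\e$; this is the step I expect to be the main obstacle, but it is precisely the content of the Moser argument for Lie groupoids, and so we may invoke it. Consequently $F_\e$ restricts on units to a diffeomorphism $(F_\e)_M\colon M\to M$ intertwining source and target maps, from which $F_\e^{*}\delta\beta=\delta (F_\e)_M^{*}\beta$ for any $\beta\in\Omega^\bullet(M)$.

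Finally, define
\begin{equation*}
\alpha_\e:=\int_0^\e (F_\t)_M^{*}\gamma_\t\,d\t\in\Omega^2_\mathrm{cl}(M),
\end{equation*}
so that $\alpha_0=0$. Differentiating $F_\e^{*}\omega_\e$ and using the two displayed identities above yields
\begin{equation*}
\tfrac{d}{d\e}F_\e^{*}\omega_\e = F_\e^{*}\bigl(\mathcal{L}_{X_\e}\omega_\e+\dot{\omega}_\e\bigr)=F_\e^{*}\delta\gamma_\e=\delta (F_\e)_M^{*}\gamma_\e=\tfrac{d}{d\e}\delta\alpha_\e.
\end{equation*}
Integrating from $0$ to $\e$ gives $F_\e^{*}\omega_\e=\omega+\delta\alpha_\e$, which is exactly the equivalence of $(\calG_\e,\omega_\e)$ with the constant deformation of $(\calG,\omega)$ (Definition \ref{Equivdeform}), proving triviality.
\end{proof2}
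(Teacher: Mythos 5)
Your proposal is correct and follows essentially the same route as the paper's proof: unpack smooth exactness into the two equations $\delta Y_\e=\zeta_\e$ and $d_{dR}Y_\e+\delta\gamma_\e=\dot\omega_\e$, pass to the vector field $X_\e$ with $-i_{X_\e}\omega_\e=Y_\e$, invoke the Lie groupoid Moser argument so that the flow gives isomorphisms $\G_0\to\G_\e$, and integrate the gauge term $\gamma_\e$ along the base isotopy to obtain $F_\e^{*}\omega_\e=\omega+\delta\alpha_\e$. The converse via Remark \ref{Basta en cero} is also exactly the paper's argument, so there is nothing to flag.
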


\begin{proof}
We will show that the smooth exactness of the deformation class $\eta_\e$ implies the triviality of the deformation $(\calG_\e,\omega_\e)$. The remark \ref{Basta en cero} above shows the converse statement. Thus, let us assume that $\eta_\e$ is a smoothly exact family of cocycles. Then, we get
\begin{equation*}
\delta_{\t}(-\chi_\t)=\zeta_\t \text{ and } -\dot{\omega}_\t=-d(-\chi_\t)-\delta_\t(\tilde{\alpha}_\t);
\end{equation*}
 where $\chi_\t\in\Omega^{1}(\calG_\t)$ and $\tilde{\alpha}_\t\in\Omega^{2}(M)$ are smooth families of forms. The first equation amounts to $\delta^{\t}_\mathrm{def}(X_\t)=\xi_\t$, where $X_\t$ is the smooth family of vector fields such that $\chi_\t=\iota_{X_\t}\omega_\t$ for every $\t$. Let $(\phi_\e,\varphi_\e)$ be the flow of the time-dependent vector field $X_\e$ (starting at time zero). Then, by letting  $\bar{\alpha}_\t:=\int_{0}^{\t}(\phi_\e^{*}\tilde{\alpha}_\e) d\e$ be the  primitive of the curve $\t\mapsto\phi_\t^{*}\tilde{\alpha}_\t$, we obtain
 
 $$\dot{\omega}_\t=-d(\iota_{X_\t}\omega_\t)+\delta_\t((\varphi_\t^{-1})^{*}\frac{d}{ds}\bar{\alpha}_\t),$$
 which is,
 
 $$\frac{d}{d\e}\rvert_{\e=\t}\left[\phi_\e^{*}\omega_\e-\delta_0(\bar{\alpha}_\e)\right]=0.$$
 
 That is,
 $$\phi_\e^{*}\omega_\e-\delta_0(\alpha_\e)=\omega, \text{ for } \alpha_\e=\bar{\alpha}_0-\bar{\alpha}_\e.$$
 Hence, one concludes that the deformation $(\G_\e,\omega_\e)$ is trivial.
 \end{proof}  

As a consequence of this version of the Moser Theorem, then for a compact symplectic groupoid $(\G,\omega)$ we can describe a neighbourhood of $\omega$ in a moduli space of multiplicative symplectic structures on $\G$. We say \textit{a} moduli space and not \textit{the} moduli space, because there are a few options of which equivalence relation to consider. We will use one that is suggested by the equivalence relation on deformations of symplectic groupoids. 
 
The space of closed multiplicative $2$-forms on a Lie groupoid $\G$ is the subspace $\Omega_\mathrm{cl,mult}^2(\G)\subset \Omega^2(\G)$. It is a linear subspace, being the intersection of the kernel of the two linear operators $d_{dR}$ and $\delta$. Non-degeneracy is an open condition, so the space of multiplicative symplectic structures on $\G$ is an open neighbourhood of $\omega$ in  $\Omega_\mathrm{cl,mult}^2(\G)$, which we denote by $\mathcal{S(\G)}$.

\begin{definition}Two multiplicative symplectic forms $\omega_1,\omega_2\in\mathcal{S(\mathcal{\G})}$ are said to be \textbf{convex gauge-isotopic} if the deformation $(\G,\omega_\e)$ given by $\omega_1 +\e(\omega_2-\omega_1)$ is a trivial deformation of symplectic groupoids (in particular $\omega_\e$ is required to be non-degenerate for all $\e\in[0,1]$).

Denote by $\sim_\mathrm{cgi}$ the equivalence relation on $\mathcal{S}(\G)$ generated by convex gauge-isotopies.
\end{definition}

\begin{theorem}[Moduli space]\label{thm-moduli} Let $(\G,\omega)$ be a compact symplectic groupoid. Then there is a neighbourhood $\mathcal{U}$ of $\omega$ in $\mathcal{S}({\G})$ and a neighbourhood $\mathcal{V}$ of $0$ in $H^2_\mathrm{def}(\G,\omega)$, such that the map $\kappa :\mathcal{U}\to H^2_\mathrm{def}(\G,\omega)$, defined by $\kappa(\omega_1) = [0\oplus (\omega_1-\omega)]$ induces a 1:1 correspondence \[\mathcal{U}\big/\sim_\mathrm{cgi}\ \  \longleftrightarrow\ \  \mathcal{V}\subset H^2_\mathrm{def}(\G,\omega),\] sending the equivalence class of $\omega$ to $0$. 
\end{theorem}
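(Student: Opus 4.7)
The plan is to leverage the Moser Theorem for $s$-constant deformations (Theorem \ref{thm.triviality.s-constant}) applied to convex deformations, together with the $\omega$-independent de Rham model for deformation cohomology from Corollary \ref{corollary:deform.complex}. First, I would check that $\kappa$ lands in $2$-cocycles: the pair $(0,\omega_1-\omega)\in\Omega^1_\mathrm{def}(\G^{(2)})\oplus\Omega^2_\mathrm{cl}(\G)$ is $\delta$-closed because $\omega_1$ and $\omega$ are both multiplicative, and $\omega_1-\omega$ is closed since both are symplectic. Via Proposition \ref{symp.socycle}, $\kappa(\omega_1)$ is visibly the deformation class of the $s$-constant deformation $\omega_\e=\omega+\e(\omega_1-\omega)$ of $(\G,\omega)$.

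For the descent of $\kappa$ to $\mathcal{U}/\sim_\mathrm{cgi}$, suppose $\omega_1\sim_\mathrm{cgi}\omega_2$ through a single convex gauge-isotopy, i.e., the deformation $\omega_\e=\omega_1+\e(\omega_2-\omega_1)$ of $(\G,\omega_1)$ is trivial. The underlying Lie groupoid is constant, so by Proposition \ref{symp.socycle} the deformation cocycle is $\eta_\e=(0,\omega_1-\omega_2)$, \textit{constant in $\e$}. Applying Theorem \ref{thm.triviality.s-constant}, smooth exactness of $\eta_\e$ reduces to exactness of $(0,\omega_1-\omega_2)$ in the (canonical, $\omega$-independent) de Rham model, which is precisely $\kappa(\omega_1)=\kappa(\omega_2)$; transitivity extends this to $\sim_\mathrm{cgi}$. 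For injectivity I would reverse the argument: if $\kappa(\omega_1)=\kappa(\omega_2)$, a constant-in-$\e$ primitive for $(0,\omega_1-\omega_2)$ exhibits $\eta_\e$ as smoothly exact, and Moser yields triviality of the convex deformation, hence $\omega_1\sim_\mathrm{cgi}\omega_2$. I would choose $\mathcal{U}$ small enough that any convex combination of its elements remains in $\mathcal{S}(\G)$, which is possible since non-degeneracy is open and $\G$ is compact.

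Defining $\mathcal{V}:=\kappa(\mathcal{U}/\sim_\mathrm{cgi})$, it contains $0=\kappa([\omega])$ and consists of classes $[(0,\alpha)]$ with $\omega+\alpha\in\mathcal{S}(\G)$. Via the cone long exact sequence \eqref{eq-long-ex-seq-cone}, the subspace of $H^2_\mathrm{def}(\G,\omega)$ representable by such cocycles coincides with the kernel of the forgetful map $H^2_\mathrm{def}(\G,\omega)\to H^2_\mathrm{def}(\G)$, and $\mathcal{V}$ is an open neighborhood of $0$ inside this kernel, because the natural continuous linear map from the Fr\'echet space of closed multiplicative $2$-forms on $\G$ into this kernel is surjective, and its domain contains an open neighborhood of $\omega$. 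The main technical obstacle, in my view, is not the Moser-theoretic core (which is already in place as Theorem \ref{thm.triviality.s-constant}), but rather specifying a topology on $H^2_\mathrm{def}(\G,\omega)$ --- as a quotient of Fr\'echet spaces --- in which the ``neighborhood of $0$'' statement becomes precise.
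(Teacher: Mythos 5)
Your descent and injectivity arguments follow essentially the same route as the paper: both reduce to the observation that the convex deformation $\omega_1+\e(\omega_2-\omega_1)$ has constant deformation cocycle $0\oplus(\omega_1-\omega_2)$, and both invoke Theorem \ref{thm.triviality.s-constant} in each direction. That part is fine.

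The gap is in the final step, where you produce $\mathcal{V}$. You correctly identify, via the cone long exact sequence \eqref{eq-long-ex-seq-cone}, that the classes representable by cocycles of the form $0\oplus\alpha$ are exactly the kernel of the forgetful map $H^2_\mathrm{def}(\G,\omega)\to H^2_\mathrm{def}(\G)$, and you conclude that $\mathcal{V}$ is a neighbourhood of $0$ \emph{inside that kernel}. But the theorem asserts that $\mathcal{V}$ is a neighbourhood of $0$ in all of $H^2_\mathrm{def}(\G,\omega)$; if the kernel were a proper subspace, your $\mathcal{V}$ would have empty interior in $H^2_\mathrm{def}(\G,\omega)$ and the statement would fail. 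The missing ingredient is that for a compact (hence proper) symplectic groupoid one has $H^2_\mathrm{def}(\G)=0$ by Theorem \ref{thm-proper-vanishing}, so the kernel is the whole of $H^2_\mathrm{def}(\G,\omega)$ and, by Proposition \ref{prop-symp-prop}, every class admits a representative $0\oplus\omega_1$ with $\omega_1\in\Omega^2_\mathrm{mult,cl}(\G)$. This is precisely where the paper uses properness; without it your argument only describes a neighbourhood of $0$ in $\mathrm{Ker}\bigl(H^2_\mathrm{def}(\G,\omega)\to H^2_\mathrm{def}(\G)\bigr)$. Once you insert this vanishing result, the rest of your construction (choosing $\mathcal{U}$ convex and small enough that representatives stay nondegenerate) goes through as in the paper. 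Your closing remark about the topology on the quotient is a fair caveat, but the paper operates at the same level of precision there, so it is not a defect relative to the intended argument.
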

\begin{proof}

The map $\kappa$ descends to the quotient: if $\omega_1$ and $\omega_2$ are convex gauge-isotopic, then the deformation $(\G,\omega_1 +\e(\omega_2-\omega_1))$ of $(\G,\omega_1)$ is trivial, so its deformation class $[0\oplus(\omega_2-\omega_1)]\in H^2_\mathrm{def}(\G,\omega_1)$ vanishes. Therefore $[0\oplus(\omega_2-\omega_1)]\in H^2_\mathrm{def}(\G,\omega)$ also vanishes. Thus $\kappa(\omega_2)-\kappa(\omega_1)=[0\oplus(\omega_2-\omega)]-[0\oplus(\omega_1-\omega)]=[0\oplus(\omega_2-\omega_1)]=0$. 

Let $\mathcal{U}''$ be a convex neighbourhood of $\omega$ in $\mathcal{S}(\G)$, and let $\omega_1,\omega_2\in \mathcal{U}''$ be such that $\kappa(\omega_1)-\kappa(\omega_2)=0$. By definition, $[0\oplus(\omega_1-\omega_2)]=0\in H^2_\mathrm{def}(\G,\omega)$, meaning that $(\omega_1-\omega_2)$ is exact in deformation cohomology. We can consider the $s$-constant deformation $(\G,\omega_2+\e(\omega_1-\omega_2))$ of $(\G,\omega_2)$ (it is a deformation of symplectic groupoids because $\mathcal{U}''$ is convex). This deformation has a constant deformation class $[0\oplus(\omega_1-\omega_2)]=0\in H^2_\mathrm{def}(\G,\omega_2)$ for all $\e$, which we know to be exact. Therefore the deformation classes are smoothly exact and by Theorem \ref{thm.triviality.s-constant} we conclude the triviality of the deformation. This implies that $(\G,\omega_1)$ and $(\G,\omega_2)$ are convex gauge-isotopic. Hence $\kappa$ descends to an injective map.

Neighbourhoods $\mathcal{U}'$ of $\omega$ and $\mathcal{V}'$ of $0$ can be chosen such that $\kappa$ maps $\mathcal{U}'$ onto $\mathcal{V}'$. The reason for this is that by Proposition \ref{prop-symp-prop} any class in $H^2_\mathrm{def}(\G,\omega)$ has a representative of the form $0\oplus \omega_1$ with $\omega_1\in \Omega_\mathrm{mult}^2(\G)$. Let $\mathcal{V}'$ be small enough that for every class in $\mathcal{V}'$ there is such a representative $0\oplus \omega_1$ for which $\omega+\omega_1$ is non-degenerate, and $\omega_1$ belongs to $\mathcal{U}''$. By definition $\kappa(\omega+\omega_1)=[0\oplus \omega_1]$, so $\kappa:\mathcal{U}'=\kappa^{-1}(\mathcal{V}')\to \mathcal{V'}$ becomes surjective. Finally, let $\mathcal{U}\subset \mathcal{U}'$ be a convex neighbourhood of $\omega$, and let $\mathcal{V}=\kappa(\mathcal{U})$.
\end{proof}

\begin{remark}[On the relation with Kodaira-Spencer maps]
The map $\kappa$ plays a similar role to a Kodaira-Spencer map in the theory of deformations of complex structures \cite[Sec. 4.2]{Kodaira}. Allowing for some heuristics, and if we disregard issues with spaces of infinite dimension, there is a tautological (infinite-dimensional) family of symplectic groupoids \[pr_{\mathcal{S}(\G)}: \mathfrak{G}=(\G\times \mathcal{S}(\G),\widetilde{\omega})\longrightarrow \mathcal{S}(\G),\] 
where $\tilde{\omega}$ is the tautological $pr_{S(\G)}$-foliated $2$-form on $\mathfrak{G}=\G \times S(\G)$ with values $\tilde{\omega}(g,\omega)=\omega(g)$. The fibre $pr_{\mathcal{S}(\G)}^{-1}(\omega)$ of this family over $\omega\in \mathcal{S}(\G)$ is equal to $(\G,\omega)$.

By analogy with the case of complex manifolds, the Kodaira-Spencer map of this family is a map $KS:T_\omega \mathcal{S}(\G)\to H^2_\mathrm{def}(\G,\omega)$; given a smooth curve $\gamma:I\to \mathcal{S}(\G)$, define $KS(\dot{\gamma}(0))$ to be the deformation class of the deformation $\textbf{p}:\gamma^*\mathfrak{G}\to I$.
Since $\mathcal{S}(\G)$ is an open subset of the vector space $\Omega^2_\mathrm{mult,cl}(\G)$, there is a natural identification $T_\omega \mathcal{S}(\G)\cong \Omega^2_\mathrm{mult,cl}(\G)$. Under this identification, the Kodaira-Spencer map $KS$ corresponds to $\kappa$.

For a compact symplectic groupoid $(\G,\omega)$, taking the analogy further, a germ of a neighbourhood of $0$ in $H^2_\mathrm{def}(\G,\omega)$ plays the role of what is called a Kuranishi space (cf.\ \cite[Def. 4]{Catanese}, see also Corollary 9 in \textit{loc.\ cit.}) because infinitesimal deformations of $(\G,\omega)$ are unobstructed, as the proof of Theorem \ref{thm-moduli} shows. 
\end{remark}

\begin{remark}[Other versions of the Moser theorem]\label{Rmk-other-moduli}
It also makes sense to study deformations of multiplicative symplectic forms on a fixed Lie groupoid $\G$, up to a stricter equivalence relation where triviality is given by paths of symplectic groupoid isomorphisms (i.e., not allowing for cohomologically trivial gauge transformations as equivalences). In this case, the correct deformation complex to consider would be $(\Omega^\bullet_\mathrm{mult}(\G), d_{dR})$, and a Moser theorem in that setting is detailed in \cite{CS}. It is similar to Theorem \ref{thm.triviality.s-constant} in the sense that when $\G$ is compact, deformations will be trivial if and only if their deformation classes in $H^2_\mathrm{mult}(\G)$ are smoothly exact. 
This in turn will lead to a similar correspondence to that of Theorem \ref{thm-moduli}, of the form
\[\mathcal{U}/\sim\ \ \longleftrightarrow\ \  \mathcal{V}\subset H^2_\mathrm{mult}(\G),\] for neighbourhoods $\mathcal{U}$ of $\omega$ in $\mathcal{S}(\G)$ and $\mathcal{V}$ of $0$ in $H^2_\mathrm{mult}(\G)$. Here $\sim$ denotes the equivalence relation generated by convex isotopies, where $\omega_0$ and $\omega_1$ are convex-isotopic if the form $\omega_\e=\omega_0+\e(\omega_1-\omega_0)$ is symplectic for all $\e\in [0,1]$, and there is an isotopy $F_\e$ of $\G$ such that each $F_\e$ is a Lie groupoid isomorphism satisfying $F_\e^*\omega_\e=\omega_0$.
\end{remark}

\begin{remark}\label{remark-moduli2} Any deformation $\textbf{p}:(\widetilde{\G}\rightrightarrows \tilde{M})\to (I\rightrightarrows I)$ of a compact Lie groupoid $\G$ is a trivial deformation \cite[Thm. 7.4, Rem. 7.9]{CMS}. If moreover $\textbf{p}$ is proper, then the entire family $\widetilde{\G}$ is trivial, without the need to restrict to a smaller interval $J\subset I$.
This means that any deformation $(\G_\e,\omega_\e)$ of a compact symplectic groupoid is equivalent to one of the form $(\G,\phi_\e^*\omega_\e)$, and in particular $s$-constant, for $\e$ small enough. Thus, by Proposition \ref{symp.socycle}, any deformation class in $H^2_\mathrm{def}(\G,\omega)$ is represented by a cocycle of the form $0\oplus \dot{\omega}_0\in C^2_\mathrm{def}(\G)\oplus C^1_\mathrm{def}(\omega)$, and has also a corresponding cocycle in $H^2_\mathrm{mult}(\G)$. 

Therefore, if the space of Lie groupoid structures on a compact manifold $\G$ (or perhaps having an underlying submersion $s:\G\to M$ as source) were locally path-connected, then the correspondences of Theorem \ref{thm-moduli} and of Remark \ref{Rmk-other-moduli} can be upgraded. They would give local descriptions not only of moduli spaces of multiplicative symplectic forms on $\G$, but of symplectic groupoids structures on the manifold $\G$ (respectively, with underlying source $s:\G\to M$). Although we do not know if the space of Lie groupoid structures on $\G$ is locally path-connected, in this direction we note that the space of actions of a compact Lie group on a compact manifold is, indeed, locally path-connected \cite{Palais_path}.
\end{remark}


\section{The deformation class of general deformations}\label{section:general}

In this section we show how general deformations of symplectic groupoids can also be handled in an infinitesimal way.

\subsection{Defining the deformation class}

In order to describe the deformation class of a deformation $(\widetilde{\G},\omega)$ we first reinterpret the definition of the deformation cocycle in the case of $s$-constant deformations $(\widetilde{\G}=\calG\times I,\omega_\e)$. The deformation cocycle $\eta_\e=\zeta_\e-\dot{\omega}_\e$ can be alternatively expressed as
\begin{equation}\label{eq:cocycle2}
\eta_\e=\sigma(\omega(\delta_\mathrm{def}(\partial/\partial\e),\cdot))_{|_{\G^{(2)}_\e}}-(\mathcal{L}_{\partial/\partial\e}\omega)_{|_{\G_\e}},
\end{equation}
where $\partial/\partial\e$ is the vector field on $\widetilde{\G}$ whose flow $F_\e$ is $(g,\l)\mapsto (g,\l+\e)$, and \[\sigma:pr_1^{*}T^{*}\widetilde{\calG}\to T^{*}\widetilde{\calG}^{(2)}\] is the map of vector bundles over $\G^{(2)}$ defined on Remark \ref{rmk:sigma_e} by $\sigma=(dpr_1)^*$.

Note also that $\omega$ is a 2-form on $\widetilde{\calG}$ which restricts to $\omega_\e$ over the fibre $\calG\times \{\e\}$.
This alternative description of the deformation cocycle
 can be extended in such a way that the deformation \textit{class} still makes sense for a general deformation.

\begin{proposition}\label{prop.gen.cocycle}
Let $(\widetilde{\calG},\omega)$ be a deformation of $(\G,\omega_\G)$, and let $X\in\mathfrak{X}(\widetilde{\G})$ be a \textit{transverse} vector field; that is, a vector field on $\widetilde{\G}$ which projects by $\textbf{p}\circ\tilde{s}$ to the vector field $\partial/\partial\e$ on $I$. Then,
\begin{equation}\label{eq:generalcocycle}
\eta_\e:=\sigma(\omega(\delta_\mathrm{def}X,\cdot))|_{\G^{(2)}_\e}-(\mathcal{L}_{X}\omega)|_{\G_\e}\  \in\ \Omega^{1}_\mathrm{def}(\calG_\e^{(2)})\oplus\Omega^{2}(\calG_\e).
\end{equation}
defines a family of cocycles $\eta_\e\in C^{2}_\mathrm{def}(\calG_\e,\omega_\e)$. 
Moreover, the cohomology class $[\eta_0]$ at time zero does not depend on the choice of the transverse vector field $X$.
\end{proposition}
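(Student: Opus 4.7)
The plan is to reduce both parts to Proposition \ref{symp.socycle}. For the cocycle condition, I will locally trivialize $(\widetilde{\calG},\omega,\textbf{p})$ via the flow of $X$ into an $s$-constant deformation and invoke Proposition \ref{symp.socycle} directly. For the independence of $[\eta_0]$ on $X$, I will compute the difference of the two cochains and exhibit it as an explicit mapping-cone coboundary.

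For the cocycle condition, fix $\e_0\in I$ and let $\phi^X_t$, $\phi^{\tilde V}_t$ be the local flows of $X$ and of its $\tilde s$-projection $\tilde V\in\mathfrak{X}(\tilde M)$, defined for $t$ in an open interval $J$ around $0$. Transversality yields $\tilde s\circ\phi^X_t=\phi^{\tilde V}_t\circ\tilde s$ and $\textbf{p}(\phi^{\tilde V}_t(x))=\textbf{p}(x)+t$, so $\phi^X_t$ restricts to a diffeomorphism $\calG_{\e_0}\to\calG_{\e_0+t}$ covering $\phi^{\tilde V}_t\colon M_{\e_0}\to M_{\e_0+t}$. Setting $\Phi(g,t):=\phi^X_t(g)$ gives a local diffeomorphism $\Phi\colon\calG_{\e_0}\times J\to\widetilde{\calG}|_J$, together with its base-level version $\Phi_M$, that intertwines $\textbf{p}$ with the projection to $J$ and intertwines the source and target maps. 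Pulling back the Lie groupoid structure and the form $\omega$ through $\Phi$ produces an $s$-constant deformation $(\calG_{\e_0},\bar m_t,\Phi^*\omega)$ of $(\calG_{\e_0},\omega_{\e_0})$, and under this identification $X$ becomes $\partial/\partial t$. A direct computation then shows that the cochain \eqref{eq:generalcocycle} at $\e=\e_0$ corresponds via $\Phi$ to the $s$-constant deformation cocycle of Proposition \ref{symp.socycle} at $t=0$, and is therefore a $2$-cocycle in $C^2_\mathrm{def}(\calG_{\e_0},\omega_{\e_0})$.

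For the independence statement, let $X_1,X_2$ be two transverse vector fields and set $Y:=X_1-X_2$. Since both $X_i$ project by $\textbf{p}\circ\tilde s$ to $\partial/\partial\e$, $Y$ is tangent to each fibre $\calG_\e$, and the $\tilde s$-projectability of $X_1$ and $X_2$ implies that $Y|_{\calG_0}$ is an element of $C^1_\mathrm{def}(\calG_0)$. Linearity of $\delta_\mathrm{def}$ and of the Lie derivative, together with $\omega|_{\calG_0}=\omega_0$, gives
\[\eta_0^{X_1}-\eta_0^{X_2}=\bigl(\sigma(\omega_0(\delta_\mathrm{def}(Y|_{\calG_0}),\cdot)),\ -\mathcal{L}_{Y|_{\calG_0}}\omega_0\bigr).\]
Under the chain isomorphism of Proposition \ref{prop:isomorph-nondegeneracy}, $Y|_{\calG_0}$ corresponds to $-\iota_{Y|_{\calG_0}}\omega_0$, and because this isomorphism intertwines the two differentials, the first component above equals $\delta(\iota_{Y|_{\calG_0}}\omega_0)$. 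Closedness of $\omega_0$ and Cartan's formula give that the second component is $-d(\iota_{Y|_{\calG_0}}\omega_0)$. Recalling the mapping-cone differential $D(\zeta,\beta)=(\delta\zeta,-d\zeta-\delta\beta)$ from Corollary \ref{corollary:deform.complex}, we obtain $\eta_0^{X_1}-\eta_0^{X_2}=D\bigl(\iota_{Y|_{\calG_0}}\omega_0,\,0\bigr)$, whence $[\eta_0^{X_1}]=[\eta_0^{X_2}]$.

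The main technical point is the construction of the trivialization $\Phi$ and the verification that the pulled-back family genuinely defines an $s$-constant deformation of symplectic groupoids in the sense of Definition \ref{Deformationsympl.grpds}; this uses the full strength of the transversality hypothesis, namely both the $\tilde s$-projectability of $X$ to $\tilde V$ and the $\textbf{p}$-projectability of $\tilde V$ to $\partial/\partial\e$, rather than merely the weaker condition $d(\textbf{p}\circ\tilde s)(X)=\partial/\partial\e$.
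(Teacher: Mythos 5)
Your argument for the independence of $[\eta_0]$ on the transverse vector field is correct and complete: the difference $Y=X_1-X_2$ is killed by $d(\textbf{p}\circ\tilde{s})$, hence tangent to the fibres, and is $\tilde{s}$-projectable, so $Y|_{\calG_0}\in C^1_\mathrm{def}(\calG_0)$; your identification of $\eta_0^{X_1}-\eta_0^{X_2}$ with $D\bigl(\iota_{Y|_{\calG_0}}\omega_0,0\bigr)$ is exactly right (and is in fact spelled out more explicitly than in the paper, whose displayed proof concentrates on the cocycle identity).

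The cocycle part, however, has a genuine gap. Your reduction rests on the global trivialization $\Phi\colon\calG_{\e_0}\times J\to\widetilde{\calG}|_J$ built from the flow of $X$, and such a $\Phi$ need not exist: the proposition carries no compactness or properness hypothesis, so the flow of $X$ need not be defined for any uniform time $t$ on all of $\calG_{\e_0}$ (already for a trivial family $\calG\times I$ with $\calG$ non-compact one can choose a transverse $X$ whose flow escapes in arbitrarily short time), and without properness of $\textbf{p}$ the fibres $\calG_\e$ need not even be diffeomorphic to $\calG_{\e_0}$. Hence the assertion that $\phi^X_t$ restricts to a diffeomorphism $\calG_{\e_0}\to\calG_{\e_0+t}$ fails in general, and Proposition \ref{symp.socycle} cannot be invoked; shrinking to neighbourhoods of the relevant points does not help, since a neighbourhood is not a groupoid and Proposition \ref{symp.socycle} concerns global deformations. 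The paper is explicitly aware of this obstruction (see the remark following Theorem \ref{thm.triviality.gen}, where the trivialization route is noted to require compactness), and for that reason its proof is a direct, purely infinitesimal computation that never integrates $X$: one introduces $\calM(g,h)=(M(g,h),i(h))$ on $\widetilde{\calG}^{(2)}$, uses multiplicativity of $\omega$ on the total space to rewrite $M^{*}\calL_X\omega$, and shows that the simplicial alternating sum of $\calL_X\omega$ equals $\iota_{(\delta_\mathrm{def}X,0)}pr_1^{*}d\omega+d\,\iota_{(\delta_\mathrm{def}X,0)}pr_1^{*}\omega$, the first term dying on restriction to $\calG_\e^{(2)}$ because $\delta_\mathrm{def}X$ is tangent to the fibres and each $\omega_\e$ is closed. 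Even in the compact case, where your $\Phi$ does exist, the step ``a direct computation then shows that the cochain corresponds to the $s$-constant cocycle'' is the actual content of the reduction and is only asserted; to repair the argument in general you would either have to add a properness hypothesis or replace the global flow by a fibrewise infinitesimal computation of the above kind.
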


\begin{definition} The resulting cohomology class $[\eta_0]\in H^2_\mathrm{def}(\G,\omega_\G)$ is called \textbf{the deformation class} associated to the deformation $(\widetilde{\G},\omega)$.
\end{definition}

\begin{remark}
Equivalently, the family of cocycles $\eta_\e$ can be viewed as a single cocycle $\eta$ on the ``\emph{foliated version}'' $C^\bullet_\mathrm{def}(\widetilde{\calG},\omega)_{\mathcal{F}}$ of $C^\bullet_\mathrm{def}(\widetilde{\calG},\omega)$. That is, on the mapping cone complex of the leafwise de Rham differential $d_{\mathcal{F}}:\Omega^{1}_\mathrm{def}(\widetilde{\calG}^{(\bullet)})_{\mathcal{F}}\to\Omega^{2}_{\mathcal{F}}(\widetilde{\calG}^{(\bullet)})$. Here, to shorten the notation we let $\mathcal{F}$ refer to the foliation $\calF^{(n)}$ on each $\widetilde{\G}^{(n)}$ with leaves $\G_\e^{(n)}$, induced by the foliation on $\widetilde{\G}$ previously denoted by $\calF$, having the fibres $\G_\e$ as leaves. The space $\Omega^{1}_\mathrm{def}(\widetilde{\calG}^{(\bullet)})_{\mathcal{F}}$ is the space of the foliated forms corresponding to forms in $\Omega^{1}_\mathrm{def}(\widetilde{\calG}^{(\bullet)})$. Explicitly, we have
\begin{equation}
\eta=\sigma(\omega(\delta_\mathrm{def}X,\cdot))|_{\mathcal{F}^{(2)}}-(\mathcal{L}_{X}\omega)|_{\mathcal{F}}\ \ \in\ \Omega^{1}_\mathrm{def}(\widetilde{\calG}^{(2)})_\mathcal{F}\oplus\Omega^{2}_\mathcal{F}(\widetilde{\calG}).
\end{equation}
\end{remark}

\begin{proof2} \textbf{of Proposition} \ref{prop.gen.cocycle}

Let $\calM:\widetilde{\calG}^{(2)}\rightarrow\widetilde{\calG}^{(2)}$ be the diffeomorphism defined by $\calM(g,h)=(M(g,h),i(h))$, where $M$ is the multiplication of $\widetilde{\calG}$.

On the one hand, notice that
\begin{align*}
M^{*}\calL_X\omega&=\calM^{*}pr_1^{*}\calL_X\omega\\
&=\calM^{*}\calL_{X^{pr_1}}pr_1^{*}\omega,\ \ \ \ \text{where }\ \ X^{pr_1}_{(g,h)}:=(X_g,(i_{*}X)_h)\in T_{(g,h)}\widetilde{\calG}^{(2)}\\
&=\calL_{X^{M}}\calM^{*}pr_1^{*}\omega,\ \ \ \ \text{where }\ \ X^{M}_{(g,h)}\ :=(\calM_{*}^{-1}X^{pr_1})_{(g,h)}\\ & \hspace{5.5cm} =(X_{gh}\cdot(i_{*}X)_{h^{-1}},X_h)\in T_{(g,h)}\widetilde{\calG}^{(2)}\\
&=\calL_{X^{M}}M^{*}\omega\\
&=\calL_{X^{M}}pr_1^{*}\omega+\calL_{X^{M}}p_2^{*}\omega.
\end{align*}

That is,
\begin{align*}
(M^{*}\calL_X\omega)_{(g,h)}&=\big(\calL_{X^{M}}pr_1^{*}\omega+\calL_{X^{M}}p_2^{*}\omega\big)_{(g,h)}\\
&=(d\i_{X^{M}}pr_1^{*}\omega)_{(g,h)}+(d_{(g,h)}pr_1)^{*}\big(d\omega(X_{gh}\cdot di(X_h),-,-)\big)\\ &+ (\calL_{X^{M}}p_2^{*}\omega)_{(g,h)}.
\end{align*}

On the other hand,
\begin{align*}
(pr_1^{*}\calL_X\omega)_{(g,h)}&=(pr_1^{*}\i_{X}d\omega)_{(g,h)}+(d\;pr_1^{*}\i_X\omega)_{(g,h)}\\
&=(d_{(g,h)}pr_1)^{*}d\omega(X_g,-,-)+(d\;pr_1^{*}\i_X\omega)_{(g,h)}
\end{align*}

and
\[p_2^{*}\calL_X\omega=\calL_{X^{M}}p_2^{*}\omega.\]

Therefore,
\begin{align*}
(pr_1^{*}\calL_X\omega &-M^{*}\calL_X\omega +p_2^{*}\calL_X\omega)_{(g,h)} = \\ &=(d_{(g,h)}pr_1)^{*}d\omega(X_g,-,-)+(d\;pr_1^{*}\i_X\omega)_{(g,h)}-(d\i_{X^{M}}pr_1^{*}\omega)_{(g,h)}\\ &-(d_{(g,h)}pr_1)^{*}\big(d\omega(X_{gh}\cdot di(X_h),-,-)\big)\\
&=(d_{(g,h)}pr_1)^{*}d\omega(X_g-X_{gh}\cdot di(X_h),-,-)+ \big(d(pr_1^{*}\i_X\omega-\i_{X^{M}}pr_1^{*}\omega)\big)_{(g,h)}\\
&=\big(\i_{(\delta_\mathrm{def}X,0)}pr_1^{*}d\omega\big)_{(g,h)} + \big(d\i_{(\delta_\mathrm{def}X,0)}pr_1^{*}\omega\big)_{(g,h)}.
\end{align*}

Thus, since $(\delta_\mathrm{def}X)_{(g,h)}$ is tangent to the fibres of $\G_\e$ of $\widetilde{\calG}$ and each restriction $\omega_\e\equiv\omega|_{\calG_\e}$ is a closed 2-form, then
$$\left.\big(\i_{(\delta_\mathrm{def}X,0)}pr_1^{*}d\omega\big)\right|_{\calG_\e}=0,$$
for all $\e$, which proves the proposition.
\end{proof2}

\begin{remark}
Alternatively, since the tangent lift map $\oplus^{2}T_\mathrm{def}$ takes transversal vector fields to transversal vector fields, Proposition \ref{prop.gen.cocycle} can also be proved in terms of deformations of Lie groupoid morphisms by using the results of \cite{CS}.
\end{remark}


\subsection{Triviality of deformations with exact class}

In this section we generalize Theorem \ref{thm.triviality.s-constant} to general deformations of compact symplectic groupoids.

\begin{theorem}\label{thm.triviality.gen}
Let $(\widetilde{\calG},\omega)$ be a deformation of a compact symplectic groupoid $(\calG,\omega_\calG)$ and let $\eta=\sigma(\omega(\delta_\mathrm{def}X,\cdot))|_{\mathcal{F}^{(2)}}-(\mathcal{L}_{X}\omega)|_{\mathcal{F}}$ be the deformation cocycle in the foliated cohomology $C^{\bullet}_\mathrm{def}(\G,\omega)_{\mathcal{F}}$, associated to the deformation via a transverse vector field $X$ on $\widetilde{\G}$. Then, $(\widetilde{\calG},\omega)$ is trivial if and only if $\eta$ is exact.
\end{theorem}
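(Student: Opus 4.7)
My plan is to reduce Theorem \ref{thm.triviality.gen} to its $s$-constant counterpart, Theorem \ref{thm.triviality.s-constant}, by exploiting the triviality of deformations of compact Lie groupoids recalled in Remark \ref{remark-moduli2}. The forward implication (trivial $\Rightarrow$ exact) is essentially the foliated version of Proposition \ref{symp.socycle}, while the reverse implication requires first trivializing the underlying family of Lie groupoids and then invoking the $s$-constant Moser theorem on the resulting $s$-constant deformation.

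For the forward direction I would first note that on a constant deformation $(\calG\times I,\mathrm{pr}_\calG^{*}\omega_\calG)$, taking the transverse vector field $X=\partial/\partial\e$ gives $\delta_\mathrm{def}X=0$ and $\mathcal{L}_X\mathrm{pr}_\calG^{*}\omega_\calG=0$, so its deformation cocycle vanishes identically. The computation from the proof of Proposition \ref{symp.socycle}, reread in the foliated setting, then shows that an equivalence $(F,\alpha_\mathcal{F})$ between two deformations of symplectic groupoids produces a foliated 1-cochain of the form $\iota_Z \omega-\alpha_\mathcal{F}$ (with $Z$ the vector field tangent to the family of isomorphisms $F$) whose $D$-coboundary is the difference of the corresponding deformation cocycles; applied to an equivalence with a constant deformation, this exhibits $\eta$ as exact in $C^{\bullet}_\mathrm{def}(\calG,\omega)_{\mathcal{F}}$.

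For the main direction, by Remark \ref{remark-moduli2} the underlying family of compact Lie groupoids admits, on some smaller interval $J\ni 0$, an isomorphism of Lie groupoid families $F:\widetilde{\calG}_{|J}\to\calG\times J$ with $F_0=\mathrm{id}_{\calG}$. Pulling $\omega$ back via $F^{-1}$ yields an $s$-constant deformation $(\calG\times J,\omega')$ of $(\calG,\omega_\calG)$, and $F$ together with $\alpha_\mathcal{F}=0$ is an equivalence of deformations of symplectic groupoids between $(\widetilde{\calG},\omega)$ and $(\calG\times J,\omega')$. Applying the forward direction to this equivalence, exactness of $\eta$ transfers to exactness of the deformation cocycle $\eta'$ of $(\calG\times J,\omega')$ in the foliated complex; restricting a foliated primitive slice by slice yields smooth exactness of the family $\eta'_\e$ in the sense of Theorem \ref{thm.triviality.s-constant}. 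That theorem then gives triviality of the $s$-constant deformation $(\calG\times J,\omega')$, and composing the resulting equivalence with $F$ yields triviality of $(\widetilde{\calG},\omega)$.

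The main obstacle will be checking carefully that the computation of Proposition \ref{symp.socycle} extends to the foliated setting to produce a genuine \emph{foliated} transgression $\iota_Z\omega-\alpha_\mathcal{F}$, rather than only a pointwise one at $\e=0$. Once this is in place, the remainder of the argument is essentially formal, relying on the invariance of the cocycle under the choice of transverse vector field (Proposition \ref{prop.gen.cocycle}), the fact that $F$ intertwines transverse vector fields and foliated differentials, and the reduction to the $s$-constant Moser theorem.
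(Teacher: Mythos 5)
Your proposal is correct in outline, but it takes a genuinely different route from the paper's own proof --- in fact it is precisely the alternative the authors flag in the remark immediately following their proof and deliberately decline to pursue. You first trivialize the underlying family of Lie groupoids using the rigidity of deformations of compact Lie groupoids (Remark \ref{remark-moduli2}, i.e.\ \cite[Thm. 7.4]{CMS}), transport the problem to an $s$-constant deformation, and then invoke Theorem \ref{thm.triviality.s-constant}; the paper instead argues directly in the foliated complex, using the primitive $\alpha\oplus\mathcal{Y}$ to produce a \emph{multiplicative} transverse vector field $X+Y$ on $\widetilde{\calG}$ whose flow, corrected by the integrated gauge term $\bar{\alpha}_\e$, realizes the trivialization in one step. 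Your reduction buys a shorter argument that leans on existing results, but it has two costs the direct proof avoids: it requires the hypotheses of the Lie-groupoid rigidity theorem (in particular Hausdorffness of $\widetilde{\calG}$, which the authors explicitly single out as the reason the Reeb-stability route is conditional), and it passes through a specific normal form for the deformation, which makes it less adaptable to relative or semi-local versions. The gap you yourself identify --- upgrading the equivalence-invariance computation of Proposition \ref{symp.socycle} to a genuine foliated transgression $\iota_Z\omega-\alpha_\mathcal{F}$ rather than a pointwise one at $\e=0$ --- is real and must be carried out for your argument to close (both to transfer exactness across the equivalence $F$ and to get the forward implication), but it is a routine adaptation along the lines of the lemma the paper proves before its own argument, so I see no obstruction of substance.
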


The proof of this result will be based on the following lemma.

\begin{lemma} Let $(\widetilde{\G},\omega)$ be a deformation of $(\G,\omega_\G)$ as in the theorem above. Then, the associated foliated form $\omega_\calF$ induces the isomorphism
$$C^{\bullet}_\mathrm{def}(\widetilde{\calG})_{\calF}\to \Omega^{1}_\mathrm{def}(\widetilde{\calG}^{(\bullet)})_\calF,\ \xi\mapsto -\sigma(\omega(\xi,\cdot))_{\calF}=-\sigma_\calF(\omega_\calF(\xi,\cdot)).$$
Under this isomorphism, the image $\mathcal{Y}=-i_Y\omega_\calF$ of an element $Y\in C^{1}_\mathrm{def}(\widetilde{\calG})_{\calF}$ satisfies
$$d_\calF\mathcal{Y}=-(\mathcal{L}_{Y}\omega)_{\calF},\ \text{and }\  \delta_\calF\mathcal{Y}=-\sigma(\omega(\delta_\mathrm{def}Y,\cdot))_\calF.$$
\end{lemma}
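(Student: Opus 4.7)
My plan is to reduce both assertions to the non-foliated statements established in Section~\ref{Def.comp.symp.grpds}, by working leaf-by-leaf along the foliation $\calF$ and then checking that the resulting leafwise identifications assemble into smooth (foliated) maps.

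For the isomorphism claim, I would first unwind the definitions: a foliated deformation cochain $\xi\in C^{k}_\mathrm{def}(\widetilde{\calG})_\calF$ is a smooth family $\{\xi_\e\}_{\e\in I}$ with $\xi_\e\in C^{k}_\mathrm{def}(\calG_\e)$, and similarly a foliated deformation $1$-form consists of a smooth family of elements in $\Omega^{1}_\mathrm{def}(\calG_\e^{(\bullet)})$. Since each fibre $(\calG_\e,\omega_\e)$ is a symplectic groupoid, Proposition~\ref{prop:isomorph-nondegeneracy} applied leafwise produces, for each $\e$, a linear isomorphism $\xi_\e\mapsto -\sigma_\e(\omega_\e(\xi_\e,\cdot))$ between $C^{k}_\mathrm{def}(\calG_\e)$ and $\Omega^{1}_\mathrm{def}(\calG_\e^{(k)})$. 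Smoothness of this assignment and of its inverse in the parameter $\e$ is automatic from smoothness of $\omega$ on $\widetilde{\calG}$ and of the structural maps of $\widetilde\calG$, yielding the required foliated isomorphism.

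For the formula $d_\calF\mathcal{Y}=-(\mathcal{L}_Y\omega)_\calF$, I would apply Cartan's magic formula leafwise. By hypothesis $Y\in C^{1}_\mathrm{def}(\widetilde\calG)_\calF$ is tangent to $\calF$, and since $\omega$ restricts to the closed $2$-form $\omega_\e$ on each leaf $\calG_\e$, the foliated form $\omega_\calF$ is leafwise closed: $d_\calF\omega_\calF=0$. Then leafwise,
\begin{equation*}
\mathcal{L}_Y\omega_\calF=d_\calF(i_Y\omega_\calF)+i_Y\,d_\calF\omega_\calF=d_\calF(i_Y\omega_\calF),
\end{equation*}
so $d_\calF\mathcal{Y}=-d_\calF(i_Y\omega_\calF)=-(\mathcal{L}_Y\omega)_\calF$, as claimed.

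For the formula $\delta_\calF\mathcal{Y}=-\sigma(\omega(\delta_\mathrm{def}Y,\cdot))_\calF$, I would observe that this is precisely the assertion that the identification of the first part intertwines the deformation differential $\delta_\mathrm{def}$ on $C^{\bullet}_\mathrm{def}(\calG_\e)$ with the simplicial differential $\delta$ on $\Omega^{1}_\mathrm{def}(\calG_\e^{(\bullet)})$. This chain map property is already built into Proposition~\ref{prop:isomorph-nondegeneracy}, whose proof shows that the map $c\mapsto-\sigma(\omega^\flat\circ c)$ is an isomorphism of cochain complexes (compatibility of the two presentations of the alternating sum of pullbacks along the face maps of the nerve, as discussed right after Step~4 in Section~\ref{Def.comp.symp.grpds}). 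Applying this leafwise and then letting $\e$ vary smoothly yields the foliated identity. The one point requiring attention—and the only mild technical obstacle—is verifying that the maps $\sigma$ and the simplicial face maps respect the foliation $\calF$; but this is automatic since the face maps of $\widetilde\calG^{(\bullet)}$ cover the identity on $I$ via $\textbf{p}$, and $\sigma=(d\,pr_1)^{*}$ commutes with restriction to leaves.
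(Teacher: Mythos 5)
Your proposal is correct and follows essentially the same route as the paper: the isomorphism is obtained by running the argument of Proposition \ref{prop:isomorph-nondegeneracy} along the leaves, the first identity is Cartan's formula together with leafwise closedness of $\omega$, and the second identity is exactly the chain-map property of the identification $c\mapsto-\sigma(\omega^{b}\circ c)$ transported to the foliated setting. The paper's proof is just a more condensed version of the same computation, so nothing further is needed.
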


\begin{proof}
Checking that $\omega_\calF$ induces an isomorphism is done by essentially the same arguments as those for Proposition \ref{prop:isomorph-nondegeneracy}.

Now,
\begin{align*}
d_\calF\mathcal{Y}&=(-d\i_Y\omega)_\calF\\
&=(-d\i_Y\omega)_\calF+\underbrace{(-\i_Yd\omega)_\calF}_{=0}\\
&=-(\mathcal{L}_Y\omega)_\calF;
\end{align*}
where the second equality is due to the fact that $\omega$ is $d_\calF$-closed. Moreover,

\begin{align*}
\delta_\calF\mathcal{Y}&=-\delta_\calF(\i_Y\omega_\calF) =-\delta_\calF(\i_Y\omega)_\calF =-(\delta\i_Y\omega)_\calF\\
& =(\delta(\omega^{b})^{*}(Y))_\calF\\
& =((\omega^{b})^{*}(\delta_\mathrm{def}Y))_\calF \\
& =-\sigma(\omega(\delta_\mathrm{def}Y,\cdot))_\calF. \qedhere
\end{align*} \end{proof}

\begin{proof2} \textbf{of Theorem} \ref{thm.triviality.gen}

If $\eta$ is exact, then there exists $\alpha\oplus\mathcal{Y}\in\Omega^{2}_{\calF}(\widetilde{M})\oplus\Omega^{1}_\mathrm{def}(\widetilde{\calG})_{\calF}$ such that 

$$\sigma(\omega(\delta_\mathrm{def}X,\cdot))|_{\mathcal{F}^{(2)}}=\delta\mathcal{Y}$$
and
\begin{equation}\label{eq:2*}
-(\mathcal{L}_{X}\omega)|_{\mathcal{F}}=-\delta_\calF(\alpha)-d_{\mathcal{F}}\mathcal{Y}.
\end{equation}

Since the foliated 2-form $\omega_\calF$ induces the isomorphism $C^{*}_\mathrm{def}(\calG)_{\calF}\to \Omega^{1}_\mathrm{def}(\calG^{(*)})_\calF$ (see previous Lemma), it follows that $\mathcal{Y}=-i_{Y}\omega_\calF$ for some vector field $Y$ on $\widetilde{\G}$ which is tangent to the fibres $\G_\e$ of $\widetilde{\G}$. Therefore, the first equation is equivalent to

\begin{equation}\label{eq:1*}
\sigma(\omega(\delta_\mathrm{def}X,\cdot))|_{\mathcal{F}^{(2)}}=-\sigma(\omega(\delta_\mathrm{def}Y,\cdot))|_{\mathcal{F}^{(2)}},
\end{equation}
and equation \eqref{eq:2*} becomes

\begin{equation}\label{eq:2**}
(\mathcal{L}_{X+Y}\omega)_{\mathcal{F}}=\delta_\calF(\alpha).
\end{equation}

Thus, equation \eqref{eq:1*} says that $X+Y$ is a multiplicative and transverse vector field on $\widetilde{\G}$. Let us denote by $F_\e$ the flow of $X+Y$ on $\widetilde{\G}$. Then the equation \eqref{eq:2**} is equivalent to
$$\frac{d}{d\e}F_\e^{*}\omega_\calF=F_{\e}^{*}\delta_\calF\left(\alpha\right).$$

Denote by $(\Phi_\e,\phi_\e)$ the family of restricted morphisms $F_\e|_{{\G}_0}:\G_0\to{\G}_\e$. Then, by restricting the previous equation to the fibre ${\G}_0$ we get
\begin{align*}
\frac{d}{d\e}\Phi_\e^{*}\omega_\e&=\Phi_\e^{*}(\delta_{{\G}_\e}(\alpha_\e))\\
&=\delta_{{\G}_0}(\phi_\e^{*}\alpha_\e)\\
&=\delta_{{\G}_0}(\frac{d}{d\e}\bar{\alpha}_\e)\\
&=\frac{d}{d\e}\delta_{{\G}_0}(\bar{\alpha}_\e);
\end{align*}
where $\bar{\alpha}_\t:=\int_{0}^{\t}(\phi_\e^{*}\alpha_\e) d\e$ is the primitive of the curve $\e\mapsto\phi_\e^{*}\alpha_\e$. Hence, the sequence of equalities tells us that the expression $\Phi_\e^{*}\omega_\e-\delta_{\widetilde{\G}_0}(\bar{\alpha}_\e)$ is constant in $\e$. Thus,
$$\Phi_\e^{*}\omega_\e=\omega_0+\delta_{\widetilde{\G}_0}(\bar{\alpha}_\e).$$
Therefore, if $\Phi:\G\times I\to\widetilde{\G}$ is the isomorphism determined by the family $\Phi_\e$ and $\bar{\alpha}\in\Omega^{2}_\mathcal{F}(M\times I)_\mathrm{cl}$ is the  closed foliated 2-form which equals $\bar{\alpha}_\e$ when restricted to the fibre $\G_\e$, then
$$(\Phi^{*}\omega)_{\mathcal{F}}=(pr_\G^{*}\omega_0)_\mathcal{F}+\delta_{\mathcal{F}}(\bar{\alpha}),$$
where $pr_\G:\G\times I\to \G$ is the obvious projection to $\G$. This proves the triviality of the deformation $(\widetilde{\calG},\omega)$.
\end{proof2}

\begin{remark} Alternatively, Theorem \ref{thm.triviality.gen} could be  deduced from Theorem \ref{thm.triviality.s-constant} by first using the Reeb stability theorem for the foliation of $\widetilde{\G}$ by the fibres $\G_\e$ (at least for $\widetilde{\G}$ Hausdorff) and using the triviality of proper deformations of compact Lie groupoids \cite[Thm. 7.4]{CMS}. Nonetheless, it seems to us that it is useful  having a direct proof of the general version of the theorem at hand, without the need to pass through a specific type of deformation. This proof should also leave more room for generalizations to relative or semi-local versions for groupoids admitting well adapted tubular neighborhoods, or normal forms, around submanifolds as for example in \cite[Section 8]{PMCT1}, or \cite{Pedro-Ionut-PT2}.
\end{remark}


\section{More examples and relation with Poisson cohomology}\label{Section-more-examples}

We once more use a spectral sequence in order to compute deformation cohomology, as done in sections \ref{Sec-spectral-sequence-rows} and \ref{Sec-spectral-sequence-column}. This time we turn to the ``columns first'' spectral sequence for the double complex associated to the mapping cone of $-d_{dR}:\Omega^{1}(\G^{(\bullet)})\to\Omega_\mathrm{cl}^{2}(\G^{(\bullet)})$. Its first page is

\begin{small}
\begin{tikzpicture}
  \matrix (m) [matrix of math nodes,
    nodes in empty cells,nodes={minimum width=5ex,
    minimum height=5ex,outer sep=2pt},
    column sep=3ex,row sep=1ex]{
                &      &     &     &  & \\
          1     &  H_{dR}^2(M) &  H_{dR}^2(\G)  & H_{dR}^2(\G^{(2)}) & H_{dR}^2(\G^{(3)}) &\\
          0     &  \Omega^{1}_\mathrm{cl}(M) &  \Omega^{1}_\mathrm{cl}(\G)  & \Omega^{1}_\mathrm{cl}(\G^{(2)})  & \Omega^{1}_\mathrm{cl}(\G^{(3)}) &\\
    \quad\strut &   0  &  1  &  2  & 3 & \strut \\};
  \draw[-stealth] (m-2-2.east) -- (m-2-3.west);
    \draw[-stealth] (m-2-3.east) -- (m-2-4.west);
    \draw[-stealth] (m-2-4.east) -- (m-2-5.west);
    \draw[-stealth] (m-3-2.east) -- (m-3-3.west);
    \draw[-stealth] (m-3-3.east) -- (m-3-4.west);
    \draw[-stealth] (m-3-4.east) -- (m-3-5.west);
\draw[thick] (m-1-1.east) -- (m-4-1.east) ;
\draw[thick] (m-4-1.north) -- (m-4-6.north) ;
\end{tikzpicture}
\end{small}

If we assume that $M$, $\G$ and $\G^{(2)}$ have vanishing first and second de Rham cohomology, then this page becomes

\begin{small}
\begin{tikzpicture}
  \matrix (m) [matrix of math nodes,
    nodes in empty cells,nodes={minimum width=5ex,
    minimum height=5ex,outer sep=2pt},
    column sep=3ex,row sep=1ex]{
                &      &     &     &  & \\
          1     &  0  & 0 & 0 & H_{dR}^2(\G^{(3)}) &\\
          0     &  d(C^\infty(M)) &  d(C^\infty(\G)) & d(C^\infty(\G^{(2)}))  & \Omega^{1}_\mathrm{cl}(\G^{(3)}) &\\
    \quad\strut &   0  &  1  &  2  & 3 & \strut \\};

    \draw[-stealth] (m-3-2.east) -- (m-3-3.west);
    \draw[-stealth] (m-3-3.east) -- (m-3-4.west);
    \draw[-stealth] (m-3-4.east) -- (m-3-5.west);
\draw[thick] (m-1-1.east) -- (m-4-1.east) ;
\draw[thick] (m-4-1.north) -- (m-4-6.north) ;
\end{tikzpicture}
\end{small}

In  this case the second page is 

\begin{small}
\begin{tikzpicture}
  \matrix (m) [matrix of math nodes,
    nodes in empty cells,nodes={minimum width=5ex,
    minimum height=5ex,outer sep=2pt},
    column sep=2ex,row sep=1ex]{
                &      &     &     &  & \\
          1     &  0  & 0 & 0 &  H_\delta(H_{dR}^2(\G^{(3)})) &\\
          0     &  H_\delta(d(C^\infty(M))) &  H_\delta(d(C^\infty(\G))) & H_\delta(d(C^\infty(\G^{(2)})))  & H_\delta(\Omega^{1}_\mathrm{cl}(\G^{(3)})) &\\
    \quad\strut &   0  &  1  &  2  & 3 & \strut \\};
\draw[thick] (m-1-1.east) -- (m-4-1.east) ;
\draw[thick] (m-4-1.north) -- (m-4-6.north) ;
\end{tikzpicture}
\end{small}
and therefore the deformation cohomology of the symplectic groupoid $(\G,\omega)$ is very closely related in degrees up to $3$ (the most important ones for deformation theory) to the differentiable cohomology of $\G$, which is $H_\delta(C^\infty(\G^{(\bullet)}))$.

Recall also that there is a van Est map relating differentiable cohomology of $\G$ with the algebroid cohomology of its Lie algebroid \cite{C}, which is naturally isomorphic to the Poisson cohomology of the base, in the case of a symplectic groupoid.

We now take a closer look at some (families of) symplectic groupoids satisfying these conditions. We start by illustrating a general strategy with the following case.

\subsection{Linear Poisson structures - Part 2}

Let $(\mathfrak{g}^*, \pi_\mathfrak{g})$ be a linear Poisson structure and let $G$ be the $1$-connected Lie group integrating the Lie algebra $\mathfrak{g}$. We recall that the cotangent groupoid $(T^*G, \omega_\mathrm{can})$ is a source 1-connected integration of $(\mathfrak{g}^*, \pi_\mathfrak{g})$, and that $T^*G\cong G\ltimes \mathfrak{g}^*$. 

\begin{proposition}\label{proposition-lin-poisson} Let $\mathfrak{g}$ be a Lie algebra and let $G$ be the $1$-connected Lie group integrating it. Then there are isomorphisms 

 \[H_\mathrm{def}^0(T^*G,\omega_\mathrm{can})\cong H^0_{\pi_\mathfrak{g}}(\mathfrak{g}^*)/\mathbb{R}, \ \ \ H_\mathrm{def}^i(T^*G,\omega_\mathrm{can})\cong H^i_{\pi_\mathfrak{g}}(\mathfrak{g}^*),\] for $i=1,2$, and an injective map \[H_\mathrm{def}^3(T^*G,\omega_\mathrm{can})\to H^3_{\pi_\mathfrak{g}}(\mathfrak{g}^*).\]
\end{proposition}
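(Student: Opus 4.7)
The plan is to reduce the computation first to differentiable cohomology of $\G = T^*G$ and then, via van Est, to Poisson cohomology of $(\mathfrak{g}^*, \pi_\mathfrak{g})$. The key topological input is that a 1-connected Lie group $G$ is in fact homologically 2-connected: $\pi_1(G)=0$ by assumption and $\pi_2(G)=0$ by the classical theorem of E.\ Cartan, so by Hurewicz $H_1(G)=H_2(G)=0$, hence $H^1_{dR}(G)=H^2_{dR}(G)=0$. Since $T^*G\cong G\ltimes\mathfrak{g}^*$, the nerve is $(T^*G)^{(k)}\cong G^k\times \mathfrak{g}^*$, which is homotopy equivalent to $G^k$. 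By the K\"unneth formula we therefore have $H^1_{dR}((T^*G)^{(k)})=H^2_{dR}((T^*G)^{(k)})=0$ for every $k\geq 0$.

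With this vanishing in hand, the ``columns first'' spectral sequence for the de Rham model (as in the beginning of Section \ref{Section-more-examples}) simplifies drastically. On the first page, row $0$ becomes $d(C^\infty((T^*G)^{(\bullet)}))$ (since every closed $1$-form is exact) and row $1$ is identically zero (since $H^2_{dR}((T^*G)^{(k)})=0$ for all $k$). Thus $E_2=E_\infty$ and
\[H^n_\mathrm{def}(T^*G,\omega_\mathrm{can})\ \cong\ H^n\!\left(d(C^\infty((T^*G)^{(\bullet)})),\delta\right)\]
for every $n$.

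Next, I would relate this to the differentiable cohomology of $T^*G$ via the short exact sequence of $\delta$-cochain complexes
\[0\longrightarrow R^{\bullet}\longrightarrow C^\infty((T^*G)^{(\bullet)})\stackrel{d}{\longrightarrow} d(C^\infty((T^*G)^{(\bullet)}))\longrightarrow 0,\]
where $R^{\bullet}$ is the subcomplex of (locally) constant functions, which is well defined since every nerve piece $G^k\times \mathfrak{g}^*$ is connected. A direct check using the cosimplicial formula for $\delta$ on constants shows that $R^{\bullet}$ is the complex $\mathbb{R}\xrightarrow{0}\mathbb{R}\xrightarrow{=}\mathbb{R}\xrightarrow{0}\mathbb{R}\xrightarrow{=}\cdots$, whose cohomology is $\mathbb{R}$ in degree $0$ and vanishes in positive degrees. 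The associated long exact sequence therefore gives
\[H^0_\mathrm{def}(T^*G,\omega_\mathrm{can})\cong H^0_\mathrm{diff}(T^*G)/\mathbb{R},\qquad H^n_\mathrm{def}(T^*G,\omega_\mathrm{can})\cong H^n_\mathrm{diff}(T^*G)\text{ for }n\geq 1.\]

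Finally, I would apply Crainic's van Est theorem. The source fibres of the action groupoid $G\ltimes \mathfrak{g}^*$ are all diffeomorphic to $G$, which by the discussion above is homologically $2$-connected. Hence the van Est map $H^i_\mathrm{diff}(T^*G)\to H^i(T^*\mathfrak{g}^*)=H^i_{\pi_\mathfrak{g}}(\mathfrak{g}^*)$ is an isomorphism for $i\leq 2$ and injective for $i=3$. Combining this with the previous identifications yields the stated isomorphisms in degrees $0,1,2$ and the injection in degree $3$. The only delicate point is to make sure that the second-page differential $d_2:E_2^{k-1,1}\to E_2^{k+1,0}$ does not spoil the collapse of the spectral sequence in higher degrees, but this is automatic since the entire first row of $E_2$ is zero; I do not expect any other serious obstacle.
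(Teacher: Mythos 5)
Your proof is correct and follows essentially the same route as the paper's: the same ``columns first'' spectral sequence, the same Hurewicz/K\"unneth argument giving homological $2$-connectedness of the nerve $G^k\times\mathfrak{g}^*$, the same identification of the $E_2$ page with differentiable cohomology via the subcomplex of constants, and the same van Est step using the $2$-connected source fibres. Your packaging of the constants step as a short exact sequence of complexes with its long exact sequence is just a minor rephrasing of the paper's direct computation of $\mathrm{Ker}(d)$.
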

\begin{proof}

Using that $T^*G\cong G\ltimes \mathfrak{g}^*$, we compute the deformation cohomology of $(T^*G,\omega_\mathrm{can})$ using the ``columns first'' spectral sequence for the double complex associated to the mapping cone of $d:\Omega^1((G\ltimes \mathfrak{g}^*)^{(\bullet)})\to\Omega^2_\mathrm{cl}((G\times \mathfrak{g}^*)^{(\bullet)})$. Its first page is

\begin{small}
\begin{tikzpicture}
  \matrix (m) [matrix of math nodes,
    nodes in empty cells,nodes={minimum width=5ex,
    minimum height=5ex,outer sep=2pt},
    column sep=3ex,row sep=1ex]{
                &      &     &     &  & \\
          1     &  H_{dR}^2(\mathfrak{g}^*) &  H_{dR}^2(G\ltimes \mathfrak{g}^*)  & H_{dR}^2((G\ltimes \mathfrak{g}^*)^{(2)}) & H_{dR}^2((G\ltimes \mathfrak{g}^*)^{(3)}) &\\
          0     &  \Omega^{1}_\mathrm{cl}(\mathfrak{g}^*) &  \Omega^{1}_\mathrm{cl}(G\ltimes \mathfrak{g}^*)  & \Omega^{1}_\mathrm{cl}((G\ltimes \mathfrak{g}^*)^{(2)})  & \Omega^{1}_\mathrm{cl}((G\ltimes \mathfrak{g}^*)^{(3)}) &\\
    \quad\strut &   0  &  1  &  2  & 3 & \strut \\};
  \draw[-stealth] (m-2-2.east) -- (m-2-3.west);
    \draw[-stealth] (m-2-3.east) -- (m-2-4.west);
    \draw[-stealth] (m-2-4.east) -- (m-2-5.west);
    \draw[-stealth] (m-3-2.east) -- (m-3-3.west);
    \draw[-stealth] (m-3-3.east) -- (m-3-4.west);
    \draw[-stealth] (m-3-4.east) -- (m-3-5.west);
\draw[thick] (m-1-1.east) -- (m-4-1.east) ;
\draw[thick] (m-4-1.north) -- (m-4-6.north) ;
\end{tikzpicture}\end{small}

For the action groupoid $G\ltimes \mathfrak{g}^*$, the nerve is $(G\ltimes \mathfrak{g}^*)^{(n)}\cong G^n\times \mathfrak{g}^*$. Because $G$ is a $1$-connected Lie group, and since $\pi_2(G)=0$ for any Lie group, we conclude by Hurewicz's Theorem that $(G\ltimes \mathfrak{g}^*)^{(n)}$ has vanishing first and second de Rham cohomology for any $n$.  

Thus, the first line of the page above vanishes, and the de Rham differential induces a surjective cochain map onto the $0$-th line, $d: C^\infty(G^\bullet\times \mathfrak{g}^*)\to \Omega_\mathrm{cl}^1(G^\bullet\times \mathfrak{g}^*)$. The first page becomes 

\begin{small}
\begin{tikzpicture}
  \matrix (m) [matrix of math nodes,
    nodes in empty cells,nodes={minimum width=5ex,
    minimum height=5ex,outer sep=2pt},
    column sep=3ex,row sep=1ex]{
                &      &     &     &  & \\
          1     &  0 &  0  & 0 & 0 &\\
          0     &  dC^\infty(\mathfrak{g}^*) &  dC^\infty(G\ltimes \mathfrak{g}^*)  & dC^\infty((G\ltimes \mathfrak{g}^*)^{(2)})  & dC^\infty((G\ltimes \mathfrak{g}^*)^{(3)}) &\\
    \quad\strut &   0  &  1  &  2  & 3 & \strut \\};
  \draw[-stealth] (m-2-2.east) -- (m-2-3.west);
    \draw[-stealth] (m-2-3.east) -- (m-2-4.west);
    \draw[-stealth] (m-2-4.east) -- (m-2-5.west);
    \draw[-stealth] (m-3-2.east) -- (m-3-3.west);
    \draw[-stealth] (m-3-3.east) -- (m-3-4.west);
    \draw[-stealth] (m-3-4.east) -- (m-3-5.west);
\draw[thick] (m-1-1.east) -- (m-4-1.east) ;
\draw[thick] (m-4-1.north) -- (m-4-6.north) ;
\end{tikzpicture}
\end{small}

Since both $G$ and $\mathfrak{g}^*$ are connected, the kernel of $d$ consists of constant functions, forming the complex $\mathbb{R}\stackrel{0}{\to}\mathbb{R}\stackrel{id}{\to}\mathbb{R}\stackrel{0}{\to}\mathbb{R}\stackrel{id}{\to}\cdots$. Its cohomology vanishes in all degrees except $0$, where it is isomorphic to $\mathbb{R}$. Therefore, the terms of this spectral sequence stabilize at the second page, where we have on the $0$-th line that $H_\delta(\Omega_\mathrm{cl}^1(\mathfrak{g}^*)))\cong H_\mathrm{diff}^0(G\ltimes \mathfrak{g}^*)/\mathbb{R}$ and for $i>0$, $H_\delta(\Omega_\mathrm{cl}^1(G^i\times \mathfrak{g}^*))\cong H_\mathrm{diff}^i(G\ltimes \mathfrak{g}^*)$.

Finally, since the $s$-fibres of $G\ltimes \mathfrak{g}^*$ are $2$-connected, the van Est map relating differentiable and algebroid cohomology is an isomorphism up to degree 2, and injective in degree 3. The statement follows, as the algebroid cohomology of the algebroid of $G\ltimes \mathfrak{g}^*$ is naturally isomorphic to the Poisson cohomology of $\mathfrak{g}^*$.
\end{proof}
 
\subsection{Groupoids with homologically 2-connected nerve}

As is clear from the proof of Proposition \ref{proposition-lin-poisson}, a similar situation happens for other symplectic groupoids $(\G,\omega)\rightrightarrows  (M,\pi)$ with simple enough topology. We will see examples of them in the rest of this section. First of all, exactly the same arguments as for the proof of Proposition \ref{proposition-lin-poisson} let us prove the following results.

\begin{proposition} Let $(\G,\omega) \rightrightarrows (M,\pi)$ be a  symplectic groupoid such that for every $n\geq 0$ the space $\G^{(n)}$ is homologically $2$-connected. Then the de Rham differential $d:C_\mathrm{diff}^\bullet(\G)\to \Omega^1(\G^{(\bullet)})$ induces isomorphisms \[H_\mathrm{diff}^0(\G)/\mathbb{R}\cong H_\mathrm{def}^0(\G,\omega), \ \ \ H_\mathrm{diff}^i(\G)\cong H_\mathrm{def}^i(\G,\omega)\ \ \text{ for }i>0.\]
\end{proposition}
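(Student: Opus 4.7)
The plan is to apply the columns-first spectral sequence for the de Rham model of $C^\bullet_\mathrm{def}(\G,\omega)$ described in Section \ref{Sec-spectral-sequence-column}, following the template of the proof of Proposition \ref{proposition-lin-poisson}. The key input is that homological $2$-connectedness of each $\G^{(n)}$ implies (via Hurewicz and de Rham) that $H^1_\mathrm{dR}(\G^{(n)}) = H^2_\mathrm{dR}(\G^{(n)}) = 0$, while connectedness gives $H^0_\mathrm{dR}(\G^{(n)}) = \mathbb{R}$.

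First I would consider the columns-first spectral sequence for the double complex associated to the mapping cone of $-d_\mathrm{dR}:\Omega^1(\G^{(\bullet)})\to\Omega^2_\mathrm{cl}(\G^{(\bullet)})$, which computes $H^\bullet_\mathrm{def}(\G,\omega)$ (using that $\Omega^1_\mathrm{def}(\G^{(\bullet)})\hookrightarrow\Omega^1(\G^{(\bullet)})$ is a quasi-isomorphism by Proposition \ref{prop:isomorph-nondegeneracy}). Taking cohomology along each column: the upper entry at column $n$ becomes $H^2_\mathrm{dR}(\G^{(n)}) = 0$, and the lower entry becomes $\ker(d:\Omega^1(\G^{(n)})\to\Omega^2_\mathrm{cl}(\G^{(n)})) = \Omega^1_\mathrm{cl}(\G^{(n)}) = dC^\infty(\G^{(n)})$, using the vanishing of $H^1_\mathrm{dR}(\G^{(n)})$. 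Thus the first page is concentrated on the bottom row as the subcomplex $(dC^\infty(\G^{(\bullet)}),\delta)$, and the spectral sequence collapses at the second page, giving $H^k_\mathrm{def}(\G,\omega)\cong H^k(dC^\infty(\G^{(\bullet)}),\delta)$ in all degrees.

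Next I would relate this subcomplex to the differentiable cohomology of $\G$ via the short exact sequence of cochain complexes
\begin{equation*}
0 \longrightarrow \mathbb{R}^{\bullet} \longrightarrow C^\infty(\G^{(\bullet)}) \stackrel{d}{\longrightarrow} dC^\infty(\G^{(\bullet)}) \longrightarrow 0,
\end{equation*}
where $\mathbb{R}^{\bullet}$ is the subcomplex of constant functions (a copy of $\mathbb{R}$ in each degree, thanks to connectedness of each $\G^{(n)}$), equipped with the simplicial differential $\delta=\sum(-1)^i d_i^*$ which alternates between $0$ and $\mathrm{id}$. This constant complex has cohomology $\mathbb{R}$ in degree $0$ and zero elsewhere. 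The induced long exact sequence in cohomology then yields $H^0_\mathrm{diff}(\G)/\mathbb{R} \cong H^0(dC^\infty(\G^{(\bullet)}),\delta)$ and $H^i_\mathrm{diff}(\G) \cong H^i(dC^\infty(\G^{(\bullet)}),\delta)$ for $i>0$, with the comparison map being the one induced by $d$. Combined with the spectral sequence collapse, this gives precisely the claimed isomorphisms.

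The main point to check carefully is the degree bookkeeping in the mapping cone (cf.\ Remark \ref{Rem_sub_BSS}), to confirm that the bottom row of the double complex contributes to $H^\bullet_\mathrm{def}(\G,\omega)$ without a shift; beyond that, the argument is a direct generalization of the one carried out for linear Poisson structures.
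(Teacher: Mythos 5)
Your proof is correct and follows essentially the same route as the paper: the paper proves this proposition by observing that the argument given for Proposition \ref{proposition-lin-poisson} (columns-first spectral sequence for the mapping cone of $-d_{dR}:\Omega^1(\G^{(\bullet)})\to\Omega^2_\mathrm{cl}(\G^{(\bullet)})$, vanishing of $H^1_\mathrm{dR}$ and $H^2_\mathrm{dR}$ of each $\G^{(n)}$, collapse onto the bottom row $dC^\infty(\G^{(\bullet)})$, and comparison with $C^\infty(\G^{(\bullet)})$ via the subcomplex of constants) applies verbatim under the stated topological hypotheses. Your explicit use of the short exact sequence $0\to\mathbb{R}^\bullet\to C^\infty(\G^{(\bullet)})\to dC^\infty(\G^{(\bullet)})\to 0$ is just a slightly more formal packaging of the same final step, and the degree bookkeeping you flag does work out as you expect.
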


\begin{corollary}\label{cor-simple-top} Let $(\G,\omega) \rightrightarrows (M,\pi)$ be a  symplectic groupoid with homologically $k$-connected source fibres, and such that $\G^{(n)}$ is homologically $2$-connected for each $n\geq 0$.
Then \[H_\mathrm{def}^0(\G,\omega)\cong H^0_\pi(M)/\mathbb{R}, \ \ \ H_\mathrm{def}^i(\G,\omega)\cong H^i_\pi(M),\] for $1\leq i\leq k$, and there is an injective map \[H_\mathrm{def}^{k+1}(\G,\omega)\to H^{k+1}_\pi(M).\]
\end{corollary}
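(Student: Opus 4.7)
The plan is to combine the preceding Proposition with a van Est argument, so that only elementary bookkeeping remains. By the preceding Proposition, the hypothesis that each $\G^{(n)}$ is homologically $2$-connected already gives
\[H_\mathrm{def}^0(\G,\omega)\cong H_\mathrm{diff}^0(\G)/\mathbb{R},\qquad H_\mathrm{def}^i(\G,\omega)\cong H_\mathrm{diff}^i(\G)\ \text{ for } i\geq 1.\]
So the task reduces to comparing the differentiable cohomology of $\G$ with the Poisson cohomology of its base.

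Next I would invoke Crainic's van Est theorem \cite{C}: since the source fibres of $\G$ are homologically $k$-connected, the van Est map
\[VE\colon H^\bullet_\mathrm{diff}(\G)\longrightarrow H^\bullet(A)\]
to the Lie algebroid cohomology of $A=\mathrm{Lie}(\G)$ is an isomorphism in degrees $\leq k$ and injective in degree $k+1$. Because $(\G,\omega)$ is symplectic, $A\cong T^{*}M$ as the cotangent Lie algebroid of $(M,\pi)$ (this identification was recalled in Section~2 and is the one implicit in the statement), and the Chevalley--Eilenberg complex of the cotangent algebroid with trivial coefficients is precisely $(\mathfrak{X}^{\bullet}(M),d_{\pi}=[\pi,\cdot]_{SN})$, i.e.\ the Poisson complex. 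Thus $H^\bullet(A)\cong H^\bullet_{\pi}(M)$ naturally.

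Chaining the two identifications, for $1\leq i\leq k$,
\[H^{i}_{\mathrm{def}}(\G,\omega)\cong H^{i}_{\mathrm{diff}}(\G)\xrightarrow[\cong]{VE} H^{i}(A)\cong H^{i}_{\pi}(M),\]
and in degree $k+1$ the second arrow is only injective, yielding the desired injection $H^{k+1}_{\mathrm{def}}(\G,\omega)\hookrightarrow H^{k+1}_{\pi}(M)$. For degree zero, source-connectedness (which is implicit in the $k$-connectedness assumption, $k\geq 0$) gives $H^0_\mathrm{diff}(\G)=C^\infty(M)^{\G-\mathrm{inv}}$; these are precisely the Casimirs of $\pi$ (the functions whose Hamiltonian vector field vanishes, since bi-invariant functions on $\G$ descend to functions constant along orbits of the symplectic foliation), so $H^0_\mathrm{diff}(\G)=H^0_\pi(M)$ and dividing by $\mathbb{R}$ gives the stated formula for $H^0_\mathrm{def}(\G,\omega)$.

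I do not expect any real obstacle: the two ingredients (the preceding Proposition and the van Est theorem in the range allowed by source-connectedness) fit together immediately once the algebraic identification $H^\bullet(T^{*}M)\cong H^\bullet_\pi(M)$ is recorded. The only mild point to be careful about is the degree-zero shift by $\mathbb{R}$, which comes from the preceding Proposition and not from the van Est map.
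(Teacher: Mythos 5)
Your proposal is correct and follows essentially the same route as the paper: the preceding Proposition reduces everything to differentiable cohomology, and then Crainic's van Est theorem (isomorphism in degrees $\leq k$, injective in degree $k+1$ for homologically $k$-connected source fibres) together with the identification $H^\bullet(A)\cong H^\bullet_\pi(M)$ for the cotangent algebroid finishes the argument, exactly as in the paper's proof of the linear Poisson case which the corollary explicitly mimics. Your remark about the degree-zero shift by $\mathbb{R}$ coming from the Proposition rather than from van Est is also the right bookkeeping.
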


\begin{remark} If $(\G,\omega)$ satisfies the conditions of Corollary \ref{cor-simple-top} for $k=2$, then it also satisfies Proposition \ref{rows-spectral-seq}. Combining the two, we obtain \[H^2_\pi(M)\cong H^2_\mathrm{mult}(\G)\oplus (H^2_\mathrm{def}(\G))_\mathrm{cl}.\]
\end{remark}

Note that the maps inducing the isomorphisms in these results are the de Rham differential $d:C^\bullet(\G)\to \Omega^1(\G^{(\bullet)})$ and the van Est map $VE:H^\bullet(\G)\to H^\bullet_\pi(M)$. We will revisit them in Theorem \ref{global of i} (see also Remark \ref{rmk-i-is-d}) for arbitrary source connected symplectic groupoids, without further topological assumptions.

There are several examples of symplectic groupoids satisfying these topological conditions. For example, let $\G\rightrightarrows M$ be a Lie groupoid such that the source map $s$ is a fibration with $k$-connected fibres, over a $k$-connected manifold $M$. Then $\G^{(n)}$ is $k$-connected for all $n$. This can be seen by inductively considering the long exact sequences in homotopy groups for the source map $s:\G\to M$ and for the fibrations $s^*:\G^{(n)}=\G \tensor[_s]{\times}{_t} \G^{(n-1)}\to \G^{(n-1)}$ (the pullback of $s$ along $(t\circ {pr}_1):\G^{(n-1)}\to M$).

\subsection{Zero Poisson structures}

Let us focus on the source $1$-connected symplectic groupoid $(T^*M, \omega_\mathrm{can})\rightrightarrows (M,0)$ integrating the zero Poisson structure on $M$. In this case, the nerve is $T^*M^{(n)}=\oplus^n T^*M$, so $H^k_\mathrm{dR}(T^*M^{(n)})\cong H_\mathrm{dR}^k(M)$, for all $k, n \geq 0$. The source fibres are contractible, being vector spaces.
We also note that for the zero Poisson bivector $\pi=0$, the differential of the Poisson complex vanishes, so $H^\bullet_\pi(M,0)=\mathfrak{X}^\bullet(M)$.
Thus, using Corollary \ref{cor-simple-top}, we come to the following result.

\begin{corollary}Let $M$ be a homologically $2$-connected manifold. Then \[H^i_\mathrm{def}(T^*M,\omega_\mathrm{can})\cong H^i_\pi(M,0)\cong \mathfrak{X}^i(M)\] for $i\geq 1$, and $H^0(T^*M,\omega_\mathrm{can})\cong C^\infty(M)/\mathbb{R}$.
\end{corollary}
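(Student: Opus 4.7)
The plan is to apply Corollary \ref{cor-simple-top} directly to the symplectic groupoid $(T^*M,\omega_\mathrm{can})\rightrightarrows (M,0)$, with the integer $k$ there chosen arbitrarily large. Concretely, the work amounts to checking the two topological hypotheses of that corollary and then identifying Poisson cohomology of the zero bivector.

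First I would verify the hypothesis on source fibres. Since the source (and target) of $T^*M\rightrightarrows M$ is the bundle projection, its $s$-fibres are the cotangent vector spaces $T_x^*M$. These are contractible, hence homologically $k$-connected for every $k$. Next I would check the hypothesis on the nerve: the space $(T^*M)^{(n)}$ of $n$ composable arrows of the additive groupoid $T^*M$ is the Whitney sum $\oplus^n T^*M\to M$, which is a real vector bundle over $M$ and therefore deformation retracts onto the zero section via fibrewise scaling. Consequently $(T^*M)^{(n)}$ has the same homology as $M$, so the assumption that $M$ is homologically $2$-connected passes to every $(T^*M)^{(n)}$.

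With both hypotheses in place and the source fibres $k$-connected for every $k$, Corollary \ref{cor-simple-top} supplies isomorphisms
\[H^0_\mathrm{def}(T^*M,\omega_\mathrm{can})\cong H^0_\pi(M,0)/\mathbb{R}, \qquad H^i_\mathrm{def}(T^*M,\omega_\mathrm{can})\cong H^i_\pi(M,0)\ \text{ for all } i\geq 1.\]
To finish, I would identify the Poisson cohomology of $(M,0)$: because the Poisson differential is $d_\pi=[\pi,\cdot]_\mathrm{SN}$ and $\pi=0$, the differential vanishes identically on $\mathfrak{X}^\bullet(M)$, giving $H^i_\pi(M,0)=\mathfrak{X}^i(M)$ for all $i\geq 0$. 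In particular $H^0_\pi(M,0)=C^\infty(M)$, so the quotient in degree zero is $C^\infty(M)/\mathbb{R}$, which yields the stated formulas.

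The only conceptual obstacle would be the hypotheses of Corollary \ref{cor-simple-top}, but these are essentially immediate from the vector bundle structure of $T^*M$ and the contractibility of its fibres; the rest is a bookkeeping identification of Poisson cohomology with multivector fields.
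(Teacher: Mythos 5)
Your proposal is correct and follows essentially the same route as the paper: the paper likewise observes that the nerve $(T^*M)^{(n)}=\oplus^n T^*M$ has the de Rham cohomology of $M$, that the source fibres are contractible vector spaces, and that the Poisson differential of $\pi=0$ vanishes so that $H^\bullet_\pi(M,0)=\mathfrak{X}^\bullet(M)$, before invoking Corollary \ref{cor-simple-top}. No gaps.
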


\subsection{Poisson-Lie groups}

A Poisson-Lie group is a Lie group $G$ equipped with a Poisson structure $\pi_G$ for which the multiplication $m:G\times G\to G$ is a Poisson map, where $G\times G$ is equipped with the product Poisson structure. This amounts to the bivector $\pi_G$ being \textit{multiplicative}. Multiplicativity of $\pi_G$ implies that  it vanishes at the identity $e\in G$, so its linearization at $e$ induces a linear Poisson structure on the Lie algebra $\mathfrak{g}$. Therefore it induces a Lie algebra structure on the dual $\mathfrak{g}^*$. The dual Lie group of $G$ is then the $1$-connected Lie group integrating the Lie algebra $\mathfrak{g}^*$, and is denoted by $G^*$. It is also a Poisson-Lie group, which we denote $(G^*,\pi_{G^*})$.

For each $\xi\in\mathfrak{g}^*$, denote by $\xi^l$ and $\xi^r$ the left-invariant and the right-invariant 1-forms on $G$ with value $\xi$ at the identity, respectively. The maps \[\lambda,\rho:\mathfrak{g}^*\to \mathfrak{X}(G),\ \ \ \lambda(\xi)=\pi_G^\sharp(\xi^l),\ \ \ \rho(\xi)=-\pi_G^\sharp(\xi^r)\] define the left and right infinitesimal dressing actions of $\mathfrak{g}^*$ on $G$, respectively.
We say that a multiplicative Poisson tensor $\pi_G$ on $G$ is complete if each left (or, equivalently, each right) dressing vector field is complete on $G$.

In \cite{Lu_Weinstein}, Lu and Weinstein have constructed, for any Poisson-Lie group $G$, a symplectic groupoid $\Gamma$ integrating $G$, with a second compatible groupoid structure on $\Gamma$ making it into a symplectic groupoid integrating $G^*$. In general $\Gamma$ is a submanifold of $G\times G^*\times G^*\times G$. In the simplest case, when $(G, \pi_G)$ is a simply connected complete Poisson-Lie group, $\Gamma$ is $G\times G^*$, which is in this case also diffeomorphic to the double group $D$ of $G$; the groupoid structures on $D$ are in fact action Lie groupoid structures, for actions of $G$ and $G^*$ on each other obtained by slightly modifying the dressing actions (see \cite[Section 4.1]{Lu_thesis}). 

\begin{corollary}\label{simple-top-PLgroups} Let $(G,\pi_G)$ be a simply connected complete Poisson-Lie group with dual Lie group $G^*$ and 1-connected integration denoted by $(D, \omega)\rightrightarrows (G,\pi_G)$. 

Then there are isomorphisms 

 \[H_\mathrm{def}^0(D,\omega)\cong H^0_{\pi_G}(G)/\mathbb{R}, \ \ \ H_\mathrm{def}^i(D,\omega)\cong H^i_{\pi_G}(G),\] for $i=1,2$, and an injective map \[H_\mathrm{def}^3(D,\omega)\to H^3_{\pi_G}(G).\]

\end{corollary}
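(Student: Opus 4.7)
The plan is to verify the topological hypotheses of Corollary \ref{cor-simple-top} with $k=2$ for the source $1$-connected integration $(D,\omega)\rightrightarrows(G,\pi_G)$, and then apply that corollary directly. Since the corollary already packages the spectral sequence argument together with the van Est map identifying the algebroid cohomology of the cotangent algebroid with Poisson cohomology, the only real content left is checking that the source fibres of $D$ are homologically $2$-connected and that each $D^{(n)}$ is homologically $2$-connected.

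First I would recall, as in the discussion preceding the corollary, that in the simply connected complete case $D$ is diffeomorphic to $G\times G^*$ and the groupoid structure integrating $(G,\pi_G)$ is an action groupoid for a (modified) dressing action of $G^*$ on $G$. In particular, the source map has fibres diffeomorphic to $G^*$, and the nerve is \[D^{(n)}\cong (G^*)^n\times G.\] Since $G$ and $G^*$ are Poisson--Lie groups with $G$ simply connected by hypothesis and $G^*$ simply connected by definition as the $1$-connected Lie group integrating $\mathfrak{g}^*$, both are $1$-connected. Moreover, for any Lie group $H$ one has $\pi_2(H)=0$, so by the Hurewicz theorem $G$ and $G^*$ are both homologically $2$-connected.

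Next I would conclude, by the Künneth theorem, that finite products of homologically $2$-connected spaces are homologically $2$-connected; hence each $D^{(n)}\cong (G^*)^n\times G$ is homologically $2$-connected, and the source fibre $G^*$ is in particular homologically $2$-connected. This is exactly the pair of assumptions required to invoke Corollary \ref{cor-simple-top} with $k=2$, which yields the claimed isomorphisms $H_\mathrm{def}^0(D,\omega)\cong H^0_{\pi_G}(G)/\mathbb{R}$ and $H_\mathrm{def}^i(D,\omega)\cong H^i_{\pi_G}(G)$ for $i=1,2$, together with the injection $H_\mathrm{def}^3(D,\omega)\hookrightarrow H^3_{\pi_G}(G)$.

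There is essentially no obstacle beyond bookkeeping: the substantive work (spectral sequence collapse, van Est, identification of algebroid cohomology with Poisson cohomology) is already done in Corollary \ref{cor-simple-top}. The only subtle point is making sure the model $D\cong G\times G^*$ and the action groupoid description from \cite{Lu_thesis, Lu_Weinstein} are correctly invoked so that the nerve really is a product of Lie groups; the completeness hypothesis on $\pi_G$ is precisely what guarantees this global product description, and is therefore where I would be most careful in citing the literature.
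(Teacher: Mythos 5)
Your proposal is correct and follows essentially the same route as the paper: both verify that $G$ and $G^*$ are homologically $2$-connected (simple connectedness plus $\pi_2=0$ for Lie groups and Hurewicz) and then invoke Corollary \ref{cor-simple-top} with $k=2$, using the action-groupoid description $D\cong G^*\ltimes G$ from the complete simply connected case. The only cosmetic difference is that you establish $2$-connectedness of the nerve via the explicit product $D^{(n)}\cong (G^*)^n\times G$ and K\"unneth, whereas the paper appeals to its preceding remark that a fibration source map with $k$-connected fibres over a $k$-connected base forces $\G^{(n)}$ to be $k$-connected; both verifications are valid.
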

\begin{proof} Both $G$ and $G^*$ are $2$-connected, and so $D\rightrightarrows G$ has $2$-connected base $G$ and source fibres (diffeomorphic to $G^*$).  The source map of $D$ is a fibration, since $D$ is an action groupoid. Therefore, $D\rightrightarrows G$ satisfies the conditions of Corollary \ref{cor-simple-top} with $k=2$.
\end{proof}

Exchanging the roles of $G$ and $G^*$ and bearing in mind that a Poisson-Lie group $G$ is complete if and only if $G^*$ is complete \cite[Prop. 2.42]{Lu_thesis}, one obtains an analogous result for the symplectic groupoid structure of $D$ over $G^*$.
When $G$ is equipped with the zero Poisson structure, then $G^*$ is $\mathfrak{g}^*$ seen as an abelian group, with the linear Poisson structure, and $D=T^*G$; so in this sense Corollary \ref{simple-top-PLgroups} recovers Proposition \ref{proposition-lin-poisson} for complete $G$. Other interesting, and non-trivial, Poisson-Lie group structures have been constructed, for example for connected compact semisimple $G$ \cite{Lu-Weinstein-Bruhat}.

\subsection{Cotangent VB-groupoids}
For any cotangent VB-groupoid $T^*\G \rightrightarrows A^*$ of a Lie groupoid $\G$, the nerve $(T^*\G)^{(\bullet)}$ is a simplicial vector bundle over the nerve of $\G$. Moreover, the core exact sequence of $T^*\G$ (dual to sequence $(2)$ in Remark \ref{rmk-core-exact-seq-TG}) implies that the source fibres of $T^*\G$ are affine bundles over the source fibres of $\G$. These facts together with Corollary \ref{cor-simple-top} lead to the following result.

\begin{corollary}\label{Prop-cotang-grpd} Let $\G$ be a Lie groupoid such that $\G^{(n)}$ is homologically $2$-connected for all $n\geq 0$ and such that its source map $s$ has homologically $k$-connected fibres. Then there are isomorphisms 

 \[H_\mathrm{def}^0(T^*\G,\omega_\mathrm{can})\cong H^0_\pi(A^*)/\mathbb{R}, \ \ \ H_\mathrm{def}^i(T^*\G,\omega_\mathrm{can})\cong H^i_\pi(A^*),\] for $1\leq i\leq k$, and an injective map \[H_\mathrm{def}^{k+1}(T^*\G,\omega_\mathrm{can})\to H^{k+1}_\pi(A^*).\]
\end{corollary}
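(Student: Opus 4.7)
The plan is to apply Corollary \ref{cor-simple-top} to the symplectic groupoid $(T^*\G,\omega_\mathrm{can}) \rightrightarrows (A^*,\pi_\mathrm{lin})$; essentially all the work consists of checking that the topological hypotheses on $\G$ transfer to $T^*\G$.

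First I would verify that $(T^*\G)^{(n)}$ is homologically $2$-connected for every $n\geq 0$. As noted in the discussion preceding the statement, the nerve $(T^*\G)^{(\bullet)}$ is a simplicial vector bundle over $\G^{(\bullet)}$; in particular, in each simplicial degree the projection $(T^*\G)^{(n)} \to \G^{(n)}$ is a vector bundle and hence a homotopy equivalence (deformation retract onto the zero section). Therefore $(T^*\G)^{(n)}$ inherits homological $2$-connectedness from $\G^{(n)}$.

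Next I would check that the source fibres of $T^*\G \rightrightarrows A^*$ are homologically $k$-connected. Dualizing the short exact sequence \eqref{right} of Remark \ref{rmk-core-exact-seq-TG} yields, for each $g\in \G$, a surjection $\tilde{s}:T^*_g\G \twoheadrightarrow A^*_{s(g)}$ whose kernel is isomorphic to $s^*(T^*M)_g$. Fixing $\xi\in A^*_x$ and letting $g$ vary over $s^{-1}(x)$, these surjections globalize to an affine bundle $\tilde{s}^{-1}(\xi) \to s^{-1}(x)$, as already recalled in the paragraph preceding the statement. Affine bundles are homotopy equivalences onto their base, so the source fibres of $T^*\G$ are homotopy equivalent to those of $\G$ and the $k$-connectedness assumption transfers.

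With both hypotheses in place, Corollary \ref{cor-simple-top} applied to $(T^*\G,\omega_\mathrm{can})$ gives the stated isomorphisms and the injection in degree $k+1$, once one recalls from Section 2 that the Poisson structure induced on the base $A^*$ by $\omega_\mathrm{can}$ is precisely the fibrewise linear Poisson structure $\pi_\mathrm{lin}$ attached to the algebroid $A$. I do not anticipate any real obstacle beyond this bookkeeping; the argument is a direct application of Corollary \ref{cor-simple-top} together with two very standard homotopy equivalences (vector and affine bundles over their bases).
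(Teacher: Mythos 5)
Your argument is correct and follows the paper's own reasoning exactly: the paper likewise deduces homological $2$-connectedness of $(T^*\G)^{(n)}$ from the simplicial vector bundle structure of the nerve over $\G^{(\bullet)}$, identifies the source fibres of $T^*\G$ as affine bundles over those of $\G$ via the dual of the core exact sequence, and then applies Corollary \ref{cor-simple-top}. No gaps.
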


\begin{remark}
The results of this section indicate that at the infinitesimal level, the deformation theory of a symplectic groupoid $(\G,\omega)\rightrightarrows  (M,\pi)$ with homologically $2-$connected nerve and source fibres is very closely related with that of $(M,\pi)$ itself. And moreover, that this situation occurs in many natural examples.

Both $H_\mathrm{def}^2(\G,\omega)$ and $H^2_\pi(M)$ encode infinitesimal deformations modulo trivial deformations, while $H_\mathrm{def}^1(\G,\omega)$ and $H^1_\pi(M)$ encode infinitesimal automorphisms modulo trivial ones.
The missing piece, from the point of view of deformation theory, is relating obstructions. On the Poisson side, $H^3_\pi(M)$ encodes obstructions to extending an infinitesimal deformation to a formal one. This interpretation uses the Schouten-Nijenhuis differential graded Lie algebra (DGLA) structure on $\mathfrak{X}^\bullet(M)$.

 There is a well known principle of deformation theory due to Deligne \cite{Deligne} and Drinfeld \cite{Drinfeld}, and in an equivalent form, to Schlessinger and Stasheff \cite{Schlessinger_Stasheff}, (now formalized as a theorem in derived deformation theory \cite{lurie,Pridham}) and explored further by many others; it states that every reasonable formal deformation problem in characteristic zero is controlled by a DGLA $\mathfrak{g}$. Equivalence classes of infinitesimal deformations are given by elements $[c]\in H^2(\mathfrak{g})$; obstructions to extending $c$ to a formal deformation are detected by a squaring map $[\cdot,\cdot]:H^2(\mathfrak{g)}\to H^3(\mathfrak{g})$ (degrees may vary according to the index convention for $\mathfrak{g}$).

It is not known to date how to describe such a DGLA (nor an $L_\infty$-algebra) structure on the deformation complex of a Lie groupoid $\G$ (not even when $\G$ is a Lie group!). The work of \cite{Joost} points to its existence, although in a non-constructive way, via an interpretation of deformations of Lie groupoids as deformations of certain kinds of diagrams of $C^\infty$-schemes. Similarly, we do not know how to describe such a structure for $C_\mathrm{def}^\bullet(\G,\omega)$.
It is our hope that understanding better the close relation between the deformation complexes of $\G$ and of $(\G, \omega)$ with known DGLA's (such as $\mathfrak{X}^\bullet(M)$, as in this section, or the deformation complex of a Lie algebroid, in the next) may help in eventually finding an explicit description of the DGLA (or $L_\infty$-algebra) structures on these complexes.

In any case, if $H^3_\mathrm{def}(\G,\omega)$ does encode obstructions to extending an infinitesimal deformation of a symplectic groupoid to a formal one, it is reasonable to expect that the map of Proposition \ref{proposition-lin-poisson} sends the obstruction for an infinitesimal deformation $[\eta]$ of $(\G,\omega)$ to the obstruction of a corresponding infinitesimal deformation $[\Lambda]$ of Poisson structures on the base. If that is the case, injectivity would say that if $[\Lambda]$ is unobstructed, then so is $[\eta]$. On the other hand, if $[\eta]$ is the deformation class associated to an actual deformation of $(\G,\omega)$ then the corresponding infinitesimal deformation $[\Lambda]$ of Poisson structures should also be unobstructed.
\end{remark}


\section{The map between differentiable and deformation cohomologies}\label{i_calG}

In the previous section we have explored the relation between deformation cohomology and Poisson cohomology for symplectic groupoids with somewhat simple topology. We now turn our attention to what can be said about the relation between the deformation cohomology of any symplectic groupoid $(\G, \omega)$ and other related cohomologies, such as $H^\bullet(\G)$ and Poisson cohomology of the base.

Let $(M,\pi)$ be a Poisson manifold and let $(T^{*}M)_{\pi}$ be its associated Lie algebroid. In the study of deformation of Lie algebroids of \cite{CM}, the authors define a map $i:H^\bullet_{\pi}(M)\longrightarrow H^\bullet_\mathrm{def}\left((T^{*}M)_{\pi}\right)$ between the Poisson cohomology of $(M,\pi)$, which controls deformations of Poisson structures, and the deformation cohomology of the algebroid $(T^{*}M)_{\pi}$. 
In this section we construct the global counterpart $i_{\calG}$ of the map $i$.

We collect in the appendix some material, including definitions, on double vector bundles and VB-algebroids (vector bundle objects in the category of Lie algebroids), that is of use in the proofs of this section.

\subsection{Deformation cohomology of Lie algebroids}\label{Deform.algbrds}

We recall the deformation complex of a Lie algebroid $A$, denoted by $(C^{\bullet}_\mathrm{def}(A),\delta)$ and defined in \cite{CM}; we recall also its interpretation in terms of VB-algebroids.

A derivation on a vector bundle $E\longrightarrow M$ is a linear operator $D:\Gamma(E)\longrightarrow\Gamma(E)$ such that there exists a vector field $\sigma_D\in\mathfrak{X}(M)$, called the symbol of $D$, which satisfies
$$D(fs)=fD(s)+\sigma_D(f)s,\ \text{for } s\in\Gamma(E) \text{ and } f\in C^{\infty}(M).$$
A \textbf{multiderivation of degree} $n$ on $E$ is a multilinear and antisymmetric map
$$D:\underbrace{\Gamma(E)\times\cdots\times\Gamma(E)}_{n+1\ \mathrm{times}}\longrightarrow\Gamma(E)$$
which is a derivation in each entry, i.e., there is a \textbf{symbol map}
$$\sigma_D:   \underbrace{\Gamma(E)\times\cdots\times\Gamma(E)}_{n\ \mathrm{times}}\longrightarrow\mathfrak{X}(M)$$
which is $C^{\infty}(M)$-linear in each entry and satisfies
\[D(s_0,s_1,...,fs_{n})=fD(s_0,...,s_{n})+\sigma_D(s_0,...,s_{n-1})(f)s_{n},\] for $s_i\in\Gamma(E)$ and $f\in C^{\infty}(M).$
The space of multiderivations of degree $n$ is often denoted by $Der^{n}(E)$. One sets $Der^{-1}(E)=\Gamma(E)$.

The \textbf{deformation complex of} a Lie algebroid $A$ is the complex for which the space of $k$-cochains $C^{k}_\mathrm{def}(A)$ consists of the multiderivations of degree $k-1$ on $A$, i.e., $C^{k}_\mathrm{def}(A)=Der^{k-1}(A)$, with differential given by the 'de Rham type' formula
\begin{equation*}
\begin{split}
  \delta(D)(\alpha_0,...,\alpha_{k})=\Sigma_{i}(-1)^{i}[\alpha_i,&D(\alpha_0,...,\hat{\alpha}_{i},...,\alpha_k)]+\\
  &+\Sigma_{i<j}(-1)^{i+j}D([\alpha_i,\alpha_j],\alpha_0,...,\hat{\alpha}_i,...,\hat{\alpha}_j,...,\alpha_k).
  \end{split}
\end{equation*}

\paragraph{\textbf{VB-algebroid complex}}

For a VB-algebroid  $D\longrightarrow E$ over $A\longrightarrow M$, the \textbf{VB-algebroid complex} of $D$  is the subcomplex $C^{\bullet}_\mathrm{VB}(D)=C^{\bullet}_\mathrm{lin}(D)$ of $C^{\bullet}(D)$ (the Chevalley-Eilenberg complex of $D$) consisting  of cochains which are linear in the following sense: regard the cochains in $C^{k}(D)=\Gamma(E,\Lambda^{k}D^{*}_{E})$ as the $k$-multilinear and alternating functions $\oplus^{k}_{E}D\longrightarrow\mathbb{R}$; a $k$-cochain is called \textbf{linear} if it is fibrewise linear with respect to the vector bundle structure of $\oplus^k_{E}D$ over $\oplus^k_{M}A$ (cf.\ \cite{CD}, see also \cite{CM}).

\begin{remark}\label{D_{A^{*}}}
As pointed out in (\cite{CM}, Prop. 7), there is an interpretation of the deformation complex of the Lie algebroid $A$ in terms of the VB-algebroid complex of $T^{*}A^{*}\longrightarrow A^{*}$ (see example \eqref{ex:cotangentalgbrd}).
In fact, given any vector bundle $E\to M$, define the isomorphism (\cite{CM}, section 4.9) \[D_{E}:\mathfrak{X}_\mathrm{lin}^{k}(E)\longrightarrow Der^{k-1}(E^{*}),\ \ \ D_E(X)(s_1,...,s_k):=X(l_{s_1},...,l_{s_k}),\] where $s_i\in\Gamma(E^{*})$ and $l:\Gamma(E^{*})\stackrel{\cong}{\longrightarrow}C^{\infty}_\mathrm{lin}(E)$ is the function assigning to a section $s\in\Gamma(E^{*})$ the corresponding (fibrewise) linear function on $E$. Thus, if $E=A^{*}$, then $\mathfrak{X}^{k}_\mathrm{lin}(A^{*})=C^{k}_\mathrm{VB}(T^{*}A^{*})$, and $D_{A^{*}}:\mathfrak{X}^{\bullet}_\mathrm{lin}(A^{*})\longrightarrow C^{\bullet}_\mathrm{def}(A)$ is moreover an isomorphism of differential graded Lie algebras, where on $\mathfrak{X}^{\bullet}_\mathrm{lin}(A^{*})$ one considers the Schouten bracket of multivector fields.
\end{remark}

\paragraph{\textbf{The map $i$}} Let $(M,\pi)$ be a Poisson manifold. We now revisit the cochain map $i:C^{\bullet}_{\pi}(M)\rightarrow C^{\bullet}_\mathrm{def}(T^{*}M)$ introduced in \cite{CM}. It is defined in terms of cochains by $i:C^{k}_{\pi}(M)\rightarrow C^{k}_\mathrm{def}(T^{*}M)$, $X\mapsto D_{X}$, where $D_{X}\in Der^{k-1}(T^{*}M)$ acts on exact forms by
$$D_{X}(df_{1},..., df_k):=d(X(f_1,...,f_k)).$$
An expression for the multiderivation $D_{X}$ acting on arbitrary forms can also be obtained (see \cite{CM}, Prop. 3), but the previous formula is enough for our purposes.

In order to can study the map $i$ in another way, we use the notion of the \textbf{tangent lift} $T_{\mathfrak{X}}:\mathfrak{X}^{k}(M)\longrightarrow\mathfrak{X}^{k}_\mathrm{lin}(TM)\subset\mathfrak{X}^{k}(TM)$ of $k$\textbf{-multivector fields}, defined as follows. Regard the $k$-vector fields as the $k$-multilinear functions

$$\mathfrak{X}^{k}(M)=\Gamma(\bigwedge^{k}_{p_M}TM)=\{\bigoplus^{k}_{c_M}T^{*}M\stackrel{k\text{-multilinear}}{\longrightarrow}\mathbb{R}\}.$$
Thus,
$$\mathfrak{X}^{k}(TM)=\Gamma(\bigwedge^{k}_{p_{TM}}T(TM))=\{\bigoplus^{k}_{c_{TM}}T^{*}(TM)\stackrel{k\text{-multilinear}}{\longrightarrow}\mathbb{R}\}.$$
Then, if $X\in\mathfrak{X}^{k}(M)$, its tangent lift $\tilde{X}$ is given by
\begin{equation}\label{eq:tangliftvector}
\tilde{X}:=TX\circ\oplus^k\Theta_{TM}^{-1}:\bigoplus^{k}_{c_{TM}}T^{*}(TM)\longrightarrow\bigoplus^{k}_{Tc_{M}}T(T^{*}M)\longrightarrow\mathbb{R};
\end{equation}
where $TX\in C^{\infty}_{k-lin}(\bigoplus^{k}_{Tc_{M}}T(T^{*}M))$ means the tangent lift, induced by the differential, of the $k$-multilinear function corresponding to the vector field $X$.

Notice further that, in this way, $\tilde{X}$ is (fibrewise) linear with respect to the vector bundle structure of $\bigoplus^{k}_{c_{TM}}T^{*}(TM)$ over $\bigoplus^{k}_{c_M}T^{*}M$ ($TX$ is (fibrewise) linear with respect to the bundle projection $\oplus^{k} p_{T^{*}M}$, and $\oplus^{k}\Theta_{TM}$ is an isomorphism of DVBs). That is, $\tilde{X}\in\mathfrak{X}^{k}_\mathrm{lin}(TM)$, i.e., the tangent lift of the multivector field $X\in\mathfrak{X}^{k}(M)$ is a \textit{linear} multivector on $TM$.

\bigskip

We identify the map $i$ with the tangent lift of multivector fields, through the isomorphism $D_{TM}$ of multiderivations with linear multivector fields.The map $i$ is determined by the composition
\begin{equation}\label{i}\mathfrak{X}^{k}(M)\stackrel{T_{\mathfrak{X}}}{\longrightarrow}\mathfrak{X}_\mathrm{lin}^{k}(TM)\stackrel{D_{TM}}{\longrightarrow}Der^{k-1}(T^{*}M).
\end{equation}

For simplicity we will verify the equality in the case $k=2$. Let \[l:\mathfrak{X}^{2}(M)\longrightarrow C^{\infty}_{2-\mathrm{lin}}(\bigoplus^{2}_{c_M}T^{*}M)\] be the correspondence assigning to bivector fields on a manifold $M$ the associated bilinear antisymmetric functions on $\bigoplus^{2}_{c_M}T^{*}M$. With this assignment, one can write the element $TX$ in expression \eqref{eq:tangliftvector} as $T(l_X)\in C^{\infty}_{k-\mathrm{lin}}(\bigoplus^{k}_{Tc_{M}}T(T^{*}M))$. Thus, one gets
\begin{align*}
D_{TM}(\tilde{X})(df_1,df_2)&=\tilde{X}(l_{df_1},l_{df_2})\\
  &=l_{\tilde{X}}(d\tilde{f}_1,d\tilde{f}_2)\ \ (\text{where }\tilde{f}_i:=l_{df_i}\text{is the tangent lift of }f_i)\\
  &=T(l_{X})\circ \Theta^{-1}_{TM}(d\tilde{f}_1,d\tilde{f}_2)\\  
  &=T(l_{X})(T(df_1),T(df_2))\\
  &=T(l_{X}(df_1,df_2))\\
  &=d(l_{X}(df_1,df_2))\\
  &=d(X(f_1,f_2))\ \ (\text{by definition of } l)\\
  &=i(X)(df_1,df_2). 
\end{align*}
  where in the fourth equality we use that $\Theta_{TM}\circ T(df)=d\tilde{f}$ (see \cite{M} p. 394).\\

\subsection{The map $i_\calG$ and van Est commutativity}

We now define the map \[i_\calG:H^{\bullet}_{\mathrm{diff}}(\calG)\longrightarrow H^{\bullet}_\mathrm{def}(\calG)\] which will be regarded as the global counterpart of the map $i$ in the sense of Theorem \ref{global of i} below. As we will see, similarly to what happened for $C^{\bullet}_\mathrm{def}(\G,\omega)$ (Remark \ref{Rem_sub_BSS}) the map $i_\G$ can be identified with a piece of the Bott-Shulman-Stasheff double complex.

Recall from Proposition \ref{deformation and vb-complexes} that $C^{\bullet}_\mathrm{def}(\calG)\cong C^{\bullet}_\mathrm{VB}(T^{*}\calG)$, and moreover, by Lemma 3.1 in \cite{CD}, that $H^{\bullet}_\mathrm{VB}(T^{*}\calG)\cong H^{\bullet}_\mathrm{lin}(T^{*}\calG)$.

Thus, we will define the map (between cohomologies!) $i_\calG: H^{\bullet}(\calG)\longrightarrow H^{\bullet}_\mathrm{def}(\calG)$ in terms of a chain map $j:C^{\bullet}(\calG)\longrightarrow C^{\bullet}_\mathrm{lin}(T^{*}\calG)$; the latter is the composition
$$C^{k}(\calG)\stackrel{T}{\longrightarrow}C^{k}_\mathrm{lin}(T\calG)\stackrel{(\omega^{\#})^{*}}{\longrightarrow} C^{k}_\mathrm{lin}(T^{*}\calG),$$
where $\omega^{\#}:=(\omega^{b})^{-1}:T^{*}\calG\rightarrow T\calG$, and $T$ is the natural \textbf{tangent lift of groupoid cochains}. It is given by $T(c):(v_{g_1},...,v_{g_k})\mapsto dc(v_{g_1},...,v_{g_k})$, using the canonical identification $(T\calG)^{(k)}\cong T\calG^{(k)}$.      

The main property of the map $i_\calG$ is that for $s$-connected symplectic groupoids, it is indeed the global counterpart of the map $i$ of \cite{CM}.

\begin{theorem}\label{global of i}
Let $(\calG,\omega)$ be a symplectic groupoid. If $\calG$ is $s$-connected, the map $i_{\calG}:C^{\bullet}(\calG)\stackrel{j}{\rightarrow}C^{\bullet}_\mathrm{lin}(T^{*}\calG)\stackrel{q.i.}{\sim} C^{\bullet}_\mathrm{def}(\calG)$, defined above, is the global counterpart of the map $i$. That is, $i_{\calG}$ together with $i$ and the van Est maps for differentiable \cite{C} and deformation cohomology \cite{CD,CMS} form the commutative diagram.

\begin{equation*}\label{Global}
  \begin{tikzcd}[column sep=4em, row sep=10ex]
    C^{k}(\calG) \MySymb[\circlearrowright]{dr} \arrow{d}[swap]{VE} \arrow{r}{i_{\calG}} & \makebox[5em][l]{$C^{*}_\mathrm{lin}(T^{*}\calG)\stackrel{q.i.}{\sim} C^{*}_\mathrm{def}(\calG)$} \arrow{d}{VE_\mathrm{def}}\\
    C^{k}(T^{*}M) \arrow{r}{i} & C^{k}_\mathrm{def}(T^{*}M).
  \end{tikzcd}
\end{equation*}
\end{theorem}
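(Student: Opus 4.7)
The plan is to verify the commutativity by decomposing $i_\calG$ into its defining pieces and handling each piece separately, passing through the intermediate complex of linear cochains on the tangent VB-groupoid $T\calG \rightrightarrows TM$.

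First, I would insert this intermediate column into the diagram, so that the top row becomes
\[C^{k}(\calG) \xrightarrow{\;T\;} C^{k}_{\mathrm{lin}}(T\calG) \xrightarrow{(\omega^{\sharp})^{*}} C^{k}_{\mathrm{lin}}(T^{*}\calG),\]
and the bottom row becomes (using the tangent lift $T_{\mathfrak{X}}$ of multivector fields and the isomorphism $D_{TM}$ from Remark \ref{D_{A^{*}}})
\[\mathfrak{X}^{k}(M) \xrightarrow{T_{\mathfrak{X}}} \mathfrak{X}^{k}_{\mathrm{lin}}(TM) \xrightarrow{D_{TM}} C^{k}_{\mathrm{def}}(T^{*}M).\]
A van Est map $VE_{\mathrm{tan}}$ for the tangent VB-groupoid fills the middle vertical arrow, reducing the problem to verifying commutativity of two smaller squares.

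Commutativity of the right square should come almost formally from the fact that $\omega^{\sharp}: T^{*}\calG \to T\calG$ is a VB-groupoid isomorphism (a consequence of multiplicativity and non-degeneracy of $\omega$, as already exploited in Proposition \ref{prop:isomorph-nondegeneracy}), together with the naturality of the van Est map for VB-groupoids (cf.\ \cite{CD}). At the infinitesimal level, $\omega^{\sharp}$ restricted to $A$ recovers the identification $A \cong T^{*}M$ which realizes the cotangent algebroid structure, so both composite maps evaluate the same pairing at the units.

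Commutativity of the left square is the technical heart: it amounts to the identity $VE_{\mathrm{tan}}(T(c)) = T_{\mathfrak{X}}(VE(c))$ for every $c\in C^{k}(\calG)$. This is essentially the statement that the tangent functor commutes with the van Est construction. The plan is to expand both sides via their definitions as iterated differentiation along right-invariant vector fields and restriction to units: on the right-hand side one applies right-invariant vector fields $R_{\alpha_{i}}$ to $c$ and then takes the tangent lift of the resulting multivector field; on the left-hand side one applies the right-invariant vector fields associated to the tangent-lifted sections $T\alpha_{i} \in \Gamma(TA) = \Gamma(\mathrm{Lie}(T\calG))$ to the function $T(c)$. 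The key auxiliary lemma is that, via the canonical involution $J_{\calG}: TT\calG \to TT\calG$, the right-invariant vector field on $T\calG$ induced by $T\alpha$ agrees with $J_{\calG}\circ TR_{\alpha}$, and iterating this identity matches the two constructions.

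The main obstacle will be the careful bookkeeping of the several canonical flips arising when multiple differentiations interact with the tangent functor, and the simultaneous handling of the two VB-structures on $TT\calG$. This is exactly the kind of argument that the appendix on double structures is set up to streamline, and I expect that once the identity $R_{T\alpha} = J_{\calG}\circ TR_{\alpha}$ is properly placed in that double-vector-bundle framework, the iterated version follows by induction on the cochain degree. Combining the two squares then yields the commutativity of the original diagram.
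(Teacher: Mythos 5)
Your overall decomposition --- inserting the intermediate column $C^{k}_{\mathrm{lin}}(T\calG)$ and splitting the diagram into a left square (tangent lift versus van Est) and a right square (the identification induced by $\omega^{\sharp}$) --- matches the paper's strategy, but both halves as you describe them have genuine gaps. For the left square, the identity $VE_{\mathrm{tan}}(Tc)=T_{\mathfrak X}(VE(c))$ is an equality of cochains on the tangent algebroid $TA\cong A_{T\calG}$, so it must be verified on a set of sections that generates $\Gamma(TA)$ over $C^{\infty}(TM)$. Tangent lifts $T\alpha$ alone do not generate: one also needs the core sections $\hat\alpha$. Your key lemma (that the right-invariant vector field of $T\alpha$ is the complete lift of $\vec\alpha$, with flow $T\psi^{\vec\alpha}_{\e}$) only covers the purely linear case; for a core section the relevant right-invariant vector field is the vertical lift $(\vec\alpha)^{\uparrow}$, with flow $v_g\mapsto v_g+\e\vec\alpha_g$, and the cases of exactly one core section and of at least two core sections require separate computations (exploiting the fibrewise linearity of $Tc$) with qualitatively different outcomes: the pullback by $p_M$ of the value of $VE(c)$ in the first case, and zero in the second. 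Without these cases the left square is not established; no induction on degree over linear sections alone can supply them.

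For the right square, naturality of the van Est map applied to the VB-groupoid isomorphism $\omega^{\sharp}$ only takes you from $C^{k}_{\mathrm{lin}}(T\calG)$ to $C^{k}_{\mathrm{lin}}(A_{T^{*}\calG})$. The bottom-right corner of the theorem is $C^{k}_{\mathrm{def}}(T^{*}M)=Der^{k-1}(T^{*}M)$, and reaching it requires composing the identification $A_{T^{*}\calG}\cong T^{*}A$, the reversal isomorphism $R_{A}^{-1}:T^{*}A\to T^{*}A^{*}$, the isomorphism $D_{A^{*}}$ of Remark \ref{D_{A^{*}}}, and the IM-form $\sigma:A\to T^{*}M$. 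The substantive step, which your proposal does not identify, is the compatibility of all these identifications, namely the equality $\Theta_{A^{*}}=R_{A}^{-1}\circ L(\omega)^{b}$, where $L(\omega)=\i_{A}^{*}\omega^{T}$ is the linear $2$-form on $A$ corresponding to $\mathrm{Lie}(\omega^{\flat})$. Proving it uses the relation $\Theta_{TM}=R_{TM}\circ\omega^{b}_{\mathrm{can}}$ between the Tulczyjew map and the canonical symplectic form, the naturality of the reversal isomorphism, and the characterization of linear $2$-forms arising from multiplicative ones; this is where the double-structure machinery of the appendix is actually consumed, not in the left square. It is also only at this stage that the closedness of $\omega$ enters (to produce the Poisson structure $\pi$ and the algebroid isomorphism $\sigma$ onto $(T^{*}M)_{\pi}$); your argument never invokes closedness, which is a sign that the ``almost formal'' right square is hiding the real content.
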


In the statement of this result, and throughout the rest of this section, we let $C^\bullet_\mathrm{diff}(\G)$, or simply $C^\bullet(\G)$ (respectively $C^\bullet_\mathrm{VB}(\Gamma)$ and $C^\bullet_\mathrm{lin}(\Gamma)$) denote the subcomplex of normalized cochains of $\G$ (respectively normalized VB-cochains and linear cochains of $\Gamma$), which we recall are those that vanish on all degeneracies.

\begin{remark}\label{rmk-i-is-d}
The map $i_\G:H^{\bullet}_\mathrm{diff}(\G)\to H^{\bullet}_\mathrm{def}(\G)$, according to its definition, can  essentially be viewed as the de Rham differential $d_{dR}:H^{\bullet}_\mathrm{diff}(\G)\to H^{\bullet}(\Omega^{1}(\G^{(*)}))$. In fact, the two maps are related by the following diagram.

\[\xymatrix{H^{\bullet}_\mathrm{def}(\G) \ar[rr]^{(\omega^{b})^{*}} & &H^{\bullet}(\Omega^{1}(\G^{(*)}))\\
 & H^{\bullet}_\mathrm{diff}(\G) \ar[ur]_{d_{dR}} \ar[ul]^{i_\G}. & }\]

Therefore, just as $C^\bullet_\mathrm{def}(\G,\omega)$ could be identified with the total complex of a sub complex of the Bott-Shulman-Stasheff double complex (Remark \ref{Rem_sub_BSS}), the map $i_\G$ appears as the vertical differential between the first two lines of the double complex.

Note also that the maps \[ \xymatrix{ & C^\bullet(\G) \ar[dr]^{i_\G} \ar[dl]_{VE}. & \\ C^\bullet(T^*M) & & C_\mathrm{lin}^\bullet(T^*\G)
}\] appearing in the statement were precisely the ones used to relate deformation cohomology of $(\G,\omega)$ and Poisson cohomology of the base in Section \ref{Section-more-examples}, in cases with simple topology.

\end{remark}

The proof of Theorem \ref{global of i} makes use of the \textbf{tangent lift of algebroid cochains}, $T:C^\bullet(A)\to C^\bullet_{\mathrm{lin}}(TA)$ which is the infinitesimal analogue of the tangent lift of groupoid cochains, and is defined as follows. 
A $k$-cochain $c\in C^{k}(A)=\Gamma(\Lambda^{k}A^{*})$ can be regarded as a $k$-linear and skew-symmetric map $c:\bigoplus^{k}A\to\mathbb{R}$. Its tangent lift $Tc\in C^{k}_\mathrm{lin}(TA)$ is
$$Tc(v_1,...v_k):=dc(v_1,...,v_k),$$
for $(v_1,...,v_k)\in\bigoplus^{k}_{TM}TA$, using the identification $T(\bigoplus^{k}A)\cong\bigoplus^{k}_{TM}TA$.

It is shown in Section \ref{sec-tangent-lift} of the Appendix that the tangent lift is a map of cochain complexes, and we detail some of its properties in Lemma \ref{tangent lift of algebroid-cochains}.

\begin{remark}
Notice that if $A=T^{*}M$ is the Lie algebroid associated to a Poisson manifold $(M,\pi)$, then the tangent lift of algebroid cochains agrees with the tangent lift $TX$ of multilinear elements used in expression \eqref{eq:tangliftvector} to define the tangent lift of multivector fields on $M$.
\end{remark}

\begin{proof2}\textbf{of Theorem} \ref{global of i}

The proof follows from working on each of the properties of the symplectic form $\omega$ (non-degeneracy, multiplicativity and closedness). On the one hand, notice first that, from non-degeneracy and multiplicativity, the map $j$ fits into the following diagram

\begin{equation}\label{d1}
  \begin{tikzcd}[column sep=4em, row sep=10ex]
    C^{k}(\calG) \MySymb[(I)]{ddr} \arrow{dd}{VE} \arrow{r}{T} & C^{k}_\mathrm{lin}(T\calG) \MySymb[(II)]{dr} \arrow{d}{VE} \arrow{r}{(\omega^{\#})^{*}} & C^{k}_\mathrm{lin}(T^{*}\calG) \arrow{d}{VE}\\
    & C^{k}_\mathrm{lin}(A_{T\calG}) \MySymb[(III)]{dr} \arrow{d}{j_{\calG}^{*}}[swap]{\cong} \arrow{r}{(Lie \omega^{\#})^{*}} & C^{k}_\mathrm{lin}(A_{T^{*}\calG}) \arrow{d}{(\theta^{-1}_{\calG})^{*}}\\
		C^{k}(A) \arrow{r}{T} & C^{k}_{T\pi}(TA)_\mathrm{lin} \arrow{r}{(L(\omega)^{\#})^{*}} & C^{k}_{A^{*}}(T^{*}A)_\mathrm{lin} 
  \end{tikzcd}
\end{equation}

The commutativity of diagram $(I)$ (which we prove in subsection \ref{(I)} below) is a general fact relating the tangent lifts of cochains on Lie groupoids and Lie algebroids by the van Est map, and it does not involve the symplectic structure. The commutativity of diagram $(II)$ follows from the naturality of the van Est map with respect to morphisms of Lie groupoids (Lemma 2.10 \cite{CD}). And finally, the commutativity of diagram $(III)$ amounts to regarding the morphism of Lie algebroids $\mathrm{Lie}(\omega^{b}):A_{T\calG}\rightarrow A_{T^{*}\calG}$ in terms of the (canonically) isomorphic Lie algebroids $TA_{\calG}$ and $T^{*}A_{\calG}$. We denote such a morphism by $L(\omega)^{b}:TA_{\calG}\longrightarrow T^{*}A_{\calG}$. Moreover, since this latter morphism is the infinitesimal counterpart of a morphism of Lie groupoids induced by a multiplicative 2-form on $\calG$, then it also turns out to be induced by a 2-form $L(\omega)$ on $A$ (that fact explains our notation $L(\omega)^{b}$ for the morphism). In fact, one can explicitly describe such a 2-form $L(\omega)$ by using the tangent lift $\omega^{T}\in\Omega^{2}(T\calG)$ of the form $\omega\in\Omega^{2}(\calG)$. One obtains $L(\omega)=\i_A^{*}\omega^{T}$, where $\i_A$ is the inclusion map $A_\calG\hookrightarrow T\calG$. This last point is developed in (\cite{BCO}, Prop. 3.7).\\
Composing the reversal isomorphism $T^{*}A^{*}\stackrel{R_A}{\longrightarrow}T^{*}A$ (Section \ref{Reversal isomorphism}) and the isomorphism $D_{A^{*}}$ (Remark \ref{D_{A^{*}}}) with the lower part of diagram (\ref{d1}), we obtain
\begin{equation}\label{d2}
\begin{tikzcd}[column sep=1.5em, row sep=10ex]
H^{k}(\calG) \arrow{rr}{j} \arrow{d}{VE} & & H^{k}_\mathrm{lin}(T^{*}\calG) \arrow{rr}{\cong} \arrow{d}{VE_\mathrm{lin}} & & H^{k}_\mathrm{def}(\calG) \arrow{d}{VE_\mathrm{def}}\\
H^{k}(A) \arrow{rr}{(L(\omega)^{\#})^{*}\circ T} & & H^{k}_{A^{*}}(T^{*}A)_\mathrm{lin} \arrow{r}{R^{*}}[swap]{\cong} & H^{k}_{A^{*}}(T^{*}A^{*})_\mathrm{lin} \arrow{r}{D_{A^{*}}}[swap]{\cong} & H^{k}_\mathrm{def}(A).
\end{tikzcd}
\end{equation}

On the other hand, adding the closedness property of the 2-form $\omega$,  we get both a Poisson structure $\pi$ on $M$  and, at the infinitesimal level, an isomorphism of Lie algebroids $\sigma:A_{\calG}\rightarrow (T^{*}M)_{\pi}$. Thus, having in mind the expression (\ref{i}) for the map $i$, we have the following commutative diagram.
\begin{flushleft}

\begin{small}
\begin{equation}\label{d3}
\begin{tikzcd}[column sep=2.8em, row sep=10ex]
C^{k}(A)\arrow{r}{T} \arrow{d}{(\sigma^{-1})^{*}} & C^{k}_{T\pi}(TA)_\mathrm{lin} \arrow{r}{(\Theta_{A^{*}}^{-1})^{*}} \arrow{d}{(T\sigma^{-1})^{*}} & C^{k}_{A^{*}}(T^{*}A^{*})_\mathrm{lin} \arrow{r}{D_{A^{*}}} & Der^{k-1}(A) \arrow{d}{(\sigma^{-1})_{\#}}\\
C^{k}_{c_{M}}(T^{*}M) \arrow{r}{T} & C^{k}_{Tc_{M}}(T(T^{*}M))_\mathrm{lin} \arrow{r}{(\Theta_{TM}^{-1})^{*}} & C^{k}_{c_{TM}}(T^{*}(TM))_\mathrm{lin} \arrow{r}{D_{TM}} \arrow{u}{((T\omega)^{*})^{\#}} & Der^{k-1}(T^{*}M),
\end{tikzcd}
\end{equation}
\end{small}

\end{flushleft}
where $\Theta_{A^{*}}$ is the isomorphism of VB-algebroids induced from $\Theta_{TM}$ by using the isomorphism $\sigma:A\longrightarrow T^{*}M$. That is, \[\Theta_{A^{*}}=\left((T\sigma^{*})^{\star}_{(\sigma^{*})^{-1}}\right)^{-1}\circ\Theta_{TM}\circ T\sigma:TA\longrightarrow T^{*}A^{*}\] (see Remark \ref{dualstar}) and for simplicity we write simply $\Theta_{A^{*}}=\left((T\sigma^{*})^{*}\right)^{-1}\circ\Theta_{TM}\circ T\sigma$.

Hence, comparing the upper part of diagram (\ref{d3}) and the lower part of diagram \eqref{d2}, we obtain two VB-algebroid isomorphisms from $T^{*}A^{*}$ to $TA$: $L(\omega)^{\#}\circ R_{A}$ and $\Theta_{A^{*}}^{-1}$. We will prove now that these two maps are the same, which will complete the proof of the theorem (just put together diagrams (\ref{d2}) and the diagram induced in cohomology by (\ref{d3})).
In fact,
\begin{align*}
\Theta_{A^{*}}&=((T\sigma^{*})^{*})^{-1}\circ\Theta_{TM}\circ(T\sigma)\\
&=((T\sigma^{*})^{*})^{-1}\circ(R_{TM}\circ\omega^{b}_\mathrm{can})\circ T\sigma\ \ \ (\text{Prop. }\ref{Tulczyjew})\\
&=R_{A^{*}}\circ(T\sigma)^{*}\circ\omega^{b}_\mathrm{can}\circ T\sigma\ \ \ (\text{Prop. }\ref{R_A, R_B})\\
&=R_{A}^{-1}\circ(\sigma^{*}\omega_\mathrm{can})^{b}\\
&=R_{A}^{-1}\circ L(\omega)^{b};
\end{align*}
where the fact that $R_{A^{*}}=R_A^{-1}$ is easily verified from a local point of view 
and the last step follows from the characterization of the \textit{linear} 2-forms on $A_\calG$ coming from multiplicative 2-forms on $\calG$ (\cite{BCO}, Prop. 4.6).
\end{proof2}

\begin{remark}(IM-2-forms) It is worth to mention that the map $\sigma$ of the previous proof is the IM-2-form associated to the multiplicative 2-form $\omega$ (cf.\ \cite{BC,BCO,BCWZ,CSS}), and it makes sense even when $\omega$ is only multiplicative but not closed.
\end{remark}

\subsection{Proof of the commutativity of diagram $(I)$}\label{(I)}

Here we complete the proof of Theorem \ref{global of i} by proving the commutativity of diagram $(I)$ in (\ref{d1}) above.  
We will proceed by using the properties of the tangent lift detailed in Lemma \ref{Tang. lift on algbrds}, and by working on core sections and tangent lift of sections, which together span all sections of the tangent algebroid (example \ref{Tangent}).

Recall the definition of the van Est map $VE:C^{k}(\calG)\longrightarrow C^{k}(A_\calG)$, between the differentiable cohomology of a Lie groupoid and the algebroid cohomology of its associated Lie algebroid. Given $c\in C^{k}(\calG)$, $VE(c)$ is defined (using the conventions from \cite{CD, C}) by
$$VE(c)(X_1,...,X_k)=\sum_{\sigma\in S_{n}}sgn(\sigma)R_{\sigma(X_1)}\circ\cdots\circ R_{\sigma(X_k)}c;\ \text{for } X_i\in\Gamma(A_\calG),$$
where if $X\in\Gamma(A_\calG)$, the map $R_{X}:C^{k}(\calG)\longrightarrow C^{k-1}(\calG)$ is given by
$$R_{X}c(g_1,...,g_{k-1})=\left.\frac{d}{d\e}\right|_{\e=0}c(\psi^{\vec{X}}_\e(t(g_1)),g_1,...,g_{k-1}),$$
$\psi^{\vec{X}}_\e$ being the flow of the right invariant vector field associated to $X\in\Gamma(A_\calG)$.\\

We prove explicitly the commutativity of $(I)$ for $k=2$, commenting along the way how to extend the proof to the general case. Note that, for $c\in C^{2}(\calG)$, by definition of $R_{X}$ we have
\begin{equation}\label{vE}
(R_{X_1}R_{X_2}c)_{(x)}=\left.\frac{d}{d\e_1}\right|_{\e_1=0}\left.\frac{d}{d\e_2}\right|_{\e_2=0}c\left(\psi_{\e_2}^{\vec{X}_2}(\phi_{\e_1}^{V_1}(x)),\psi_{\e_1}^{\vec{X}_1}(x)\right),
\end{equation}
where $V_1=\rho(X_1)\in\mathfrak{X}(M)$ is the projection of $X_1$ by the anchor.

In order to use formula (\ref{vE}) for sections of the tangent Lie algebroid, we study the flow of the appropriate right-invariant vector fields on $T\calG$. We then split the proof in three cases:\\

\paragraph{\textit{Linear sections:}} ($X_i=j_{\calG}\circ(T\alpha_i)$, $\alpha_i\in\Gamma(A_\calG)$, $i=1,2.$)\\

In this case, the flow of $\vec{X}_i\in\mathfrak{X}(T\calG)$ is the tangent lift of the flow of $\vec{\alpha}_i$ (\cite{MX2}, Thm 7.1), thus for instance,
\begin{equation*}
\begin{split}
\psi_\e^{\vec{X}_\i}(Tu(w_y))&=(T\psi_\e^{\vec{\alpha}_i})(Tu(w_y))\\
&=T(\psi_\e^{\vec{\alpha}_i}\circ u)(w_y)\\
&=\left.\frac{d}{d\l}\right|_{\l=0}\psi_\e^{\vec{\alpha}_i}\circ u(\gamma(\l)),
\end{split}
\end{equation*}
for $\gamma(\l)$ a curve determining $w_y$.
Then,
\begin{equation*}
\begin{split}
(R_{X_1}&R_{X_2}(Tc))_{(w_x)}=\\
&=\left.\frac{d}{d\e_1}\right|_{\e_1=0}\left.\frac{d}{d\e_2}\right|_{\e_2=0}dc\left(\left.\frac{d}{d\l}\right|_{\l=0}(\psi_{\e_2}^{\vec{\alpha}_2}(\psi_{\e_1}^{V_1}(\gamma(\l)))), \psi_{\e_1}^{\vec{\alpha}_1}((\gamma(\l)))\right)\\
&=\left.\frac{d}{d\l}\right|_{\l=0}\left(R_{\alpha_1}R_{\alpha_2}c\right)_{(\gamma(\l))}\\
&=d(R_{\alpha_1}R_{\alpha_2}c)_{(w_x)}.
\end{split}
\end{equation*}

Therefore, it follows that $\left.VE(Tc)(X_1,X_2)\right|_{w_x}=\left.d\left(VE(c)(\alpha_1,\alpha_2)\right)\right|_{w_x},$ which, by item (1) in Lemma \ref{Tang. lift on algbrds}, is the same as $$\left.j_{\calG}^{*}(VE(Tc))(T\alpha_1,T\alpha_2)\right|_{w_x}=\left.T\left(VE(c)\right)(T\alpha_1,T\alpha_2)\right|_{w_x}.$$
The same argument works for any $k$-cochain ($k>0$) if we take only linear sections.\\

\paragraph{\textit{One core section case:}} Let $\hat{\alpha}\in\Gamma(TA_\calG)$ be the core section associated to $\alpha\in\Gamma(A_\calG)$, and $X:=j_{\calG}\circ\hat{\alpha}$.\\

In this case, the right-invariant vector field on $T\calG$ associated to $X$ is $\vec{X}=(\vec{\alpha})^{\uparrow}$ (\cite{MX2}, Thm 7.1), therefore its flow is given by $\psi_\e^{(\vec{\alpha})^{\uparrow}}(v_g)=v_g+\e\vec{\alpha}_g$.

Thus, if $X_1:=j_{\calG}\circ T\alpha_1$ and $X_2:=j_{\calG}\circ\hat{\alpha}_2$, denote by $u_1=\rho(\alpha_1)\in\mathfrak{X}(M)$:
\begin{equation*}
\begin{split}
\left(R_{X_1}R_{X_2}(Tc)\right)_{(w_x)}&=\left.\frac{d}{d\e_1}\right|_{\e_1=0}\left.\frac{d}{d\e_2}\right|_{\e_2=0}Tc\left(\psi_{\e_2}^{(\vec{X}_2)}(t_{T\calG}(\psi_{\e_1}^{\vec{X}_1}(w_x))), \psi_{\e_1}^{\vec{X}_1}(w_x)\right)\\
&=\left.\frac{d}{d\e_1}\right|_{\e_1=0}\left.\frac{d}{d\e_2}\right|_{\e_2=0}Tc(\underbrace{T\psi_{\e_1}^{u_1}(w_x)}_{w'_{\psi_{\e_1}^{u_1}(x)}}+\e_2\left.\vec{\alpha}_2\right|_{\psi_{\e_1}^{u_1}(x)},T\psi_{\e_1}^{\vec{\alpha}_1}(w_x))\\
&=\left.\frac{d}{d\e_1}\right|_{\e_1=0}\underbrace{\left.\frac{d}{d\e_2}\right|_{\e_2=0}Tc(w'_{\varphi_{\e_1}^{u_1}(x)},T\psi_{\e_1}^{\vec{\alpha_1}}(w_x))}_{0}\\
&+\left.\frac{d}{d\e_1}\right|_{\e_1=0}\left.\frac{d}{d\e_2}\right|_{\e_2=0}dc\left(\e_2\left.\vec{\alpha}_2\right|_{\varphi_{\e_1}^{u_1}(x)},0_{\psi_{\e_1}^{\vec{\alpha}_1}(x)}\right)\\
&=\left.\frac{d}{d\e_1}\right|_{\e_1=0}\left.\frac{d}{d\l}\right|_{\l=0}c\left(\psi_{\l}^{\vec{\alpha}_2}(\varphi_{\e_1}^{u_1}(x)),\psi_{\e_1}^{\vec{\alpha}_1}(x)\right)\\
&=\left(R_{\alpha_1}R_{\alpha_2}c\right)_{(x)},
\end{split}
\end{equation*}
where in the third equality we use the linearity of $Tc$ over $\calG^{(2)}$ ($Tc\in C^{2}_\mathrm{lin}(T\calG)$).

Similarly, by changing the order,
\begin{equation*}
\begin{split}
(R_{X_2}&R_{X_1}(Tc))_{(w_x)}=\\ &=\left.\frac{d}{d\e_2}\right|_{\e_2=0}\left.\frac{d}{d\e_1}\right|_{\e_1=0}Tc\left((T\psi_{\e_1}^{\vec{\alpha_1}})\left(\underbrace{\varphi_{\e_2}^{u^{\uparrow}_2}(w_x)}_{w_x+\e_2(u_2)_{x}}\right), \underbrace{\psi_{\e_2}^{(\vec{\alpha}_2)^{\uparrow}}(w_x)}_{w_x+\e_2(\vec{\alpha}_2)_{x}}\right)\\
&=\left.\frac{d}{d\e_2}\right|_{\e_2=0}\left.\frac{d}{d\e_1}\right|_{\e_1=0}dc\left((T\psi_{\e_1}^{\vec{\alpha_1}})(\e_2(u_2)_{x}),\e_2(\vec{\alpha}_2)_{x}\right)\\
&+\left.\frac{d}{d\e_2}\right|_{\e_2=0}\left.\frac{d}{d\e_1}\right|_{\e_1=0}dc(T\psi_{\e_1}^{\vec{\alpha_1}}(w_x),w_x)\\
&=\left.\frac{d}{d\e_2}\right|_{\e_2=0}\left.\frac{d}{d\e_1}\right|_{\e_1=0}\e_2\;dc\left((T\psi_{\e_1}^{\vec{\alpha_1}})(\left.\frac{d}{d\l}\right|_{\l=0}\varphi^{u_2}_\l(x)), \left.\frac{d}{d\l}\right|_{\l=0}\psi^{\vec{\alpha}_2}_\l(x)\right)\\
&=\left.\frac{d}{d\e_2}\right|_{\e_2=0}\left.\frac{d}{d\e_1}\right|_{\e_1=0}\e_2\!\left.\frac{d}{d\l}\right|_{\l=0}c\left(\psi_{\e_1}^{\vec{\alpha_1}}(\varphi^{u_2}_\l(x)), \psi^{\vec{\alpha}_2}_\l(x)\right)\\
&=\left.\frac{d}{d\e_2}\right|_{\e_2=0}\left.\frac{d}{d\e_1}\right|_{\e_1=0}c\left(\psi_{\e_1}^{\vec{\alpha_1}}(\varphi^{u_2}_{\e_2}(x)), \psi^{\vec{\alpha}_2}_{\e_2}(x)\right)\\
&=\left(R_{\alpha_2}R_{\alpha_1}c\right)(x),
\end{split}
\end{equation*}
where in the second equality we view $dc$ as an element of $C^{2}_\mathrm{lin}(T\calG)$. Hence, it follows that $VE(Tc)(X_1,X_2)_{(w_x)}=\left(VE(c)_{(\alpha_1,\alpha_2)}\right)_{(x)},$ which, by Lemma \ref{Tang. lift on algbrds}, is \[\left.j_{\calG}^{*}(VE(Tc))(T\alpha_1,\hat{\alpha}_2)\right|_{(w_x)}=\left.T(VE(c))(T\alpha_1,\hat{\alpha}_2)\right|_{(w_x)},\]
as we want.

The main ideas used in the proof of this case are the following. $(i)$ in the expression $\psi_{\e}^{(\vec{\alpha_2})^{\uparrow}}(v_x)=v_x+\e\left.\vec{\alpha}_2\right|_x$ for the flow of the right-invariant vector field on $T\calG$ associated to the core section $\hat{\alpha}_2$, the parameter $\e$ only appears multiplied by $\left.\vec{\alpha}_2\right|_x$; $(ii)$ by the linearity of $Tc\in C^{2}_\mathrm{lin}(T\calG)$ and $(i)$, we can form two vectors of $(T\calG)^{(2)}$: one multiplied by $\e_2$, the other one independent of $\e_2$. Using these facts, a completely analogous argument works for $k$-cochains. In fact, by skew-symmetry of the elements of $C^{k}(A_{T\calG})$, we can assume that $X_k\in\Gamma(A_{T\calG})$ is the core section in the expression $VE(Tc)(X_1,...,X_k)$, so that the proof of the equality is just like that above.\\

\paragraph{\textit{More than one core section case:}} ($X_i:=j_{\calG}\circ\hat{\alpha}_i$, $\alpha_i\in\Gamma(A)$, $i=1,2.$)

\begin{equation*}
\begin{split}
(R_{X_1}&R_{X_2}(Tc))_{(w_x)}=\\ &=\left.\frac{d}{d\e_1}\right|_{\e_1=0}\left.\frac{d}{d\e_2}\right|_{\e_2=0}Tc\left(\psi_{\e_2}^{(\vec{X}_2)}(t_{T\calG}\psi_{\e_1}^{\vec{X}_1}(w_x)), \psi_{\e_1}^{(\vec{X}_1)}(w_x)\right)\\
&=\left.\frac{d}{d\e_1}\right|_{\e_1=0}\left.\frac{d}{d\e_2}\right|_{\e_2=0}Tc\left(t_{T\calG}\psi_{\e_1}^{(\vec{\alpha}_1)^{\uparrow}}(w_x)+\e_2(\vec{\alpha}_2)_{x},w_x+\e_1(\vec{\alpha}_1)_{x}\right)\\
&=\left.\frac{d}{d\e_1}\right|_{\e_1=0}\left.\frac{d}{d\e_2}\right|_{\e_2=0}Tc\left(w_x+\e_1(\rho(\alpha_1))_{x}+\e_2(\vec{\alpha}_2)_{x}, w_x+\e_1(\vec{\alpha}_1)_{x}\right)\\
&=\left.\frac{d}{d\e_1}\right|_{\e_1=0}\left.\frac{d}{d\e_2}\right|_{\e_2=0}Tc\left(w_x+\e_1(\rho(\alpha_1))_{x}, w_x+\e_1(\vec{\alpha}_1)_{x}\right)\\
&+\left.\frac{d}{d\e_1}\right|_{\e_1=0}\left.\frac{d}{d\e_2}\right|_{\e_2=0}Tc\left(\e_2(\vec{\alpha}_2)_{x},0_x\right)\\
&=0.
\end{split}
\end{equation*}

That is, $j_{\calG}^{*}\left[VE(Tc)\right](\hat{\alpha}_1,\hat{\alpha}_2)=0$. Then item (3) in Lemma \ref{Tang. lift on algbrds} completes the commutativity in this case.
Analogously it is shown that the equality holds for $k$-cochains if we have more than one core section: it suffices to decompose the vector in $(T\calG)^{(k)}$ as a sum of two vectors, one depending only on $\e_1$ and another depending only on $\e_2$. 

This finishes the proof that the van Est maps intertwine tangent lifts of cochains (of Lie groupoids and Lie algebroids), i.e., of the commutativity of the diagram $(I)$.

\begin{remark}
Recall that any strict deformation of Lie groupoids induces deformations of Lie algebroids, fibrewise linear Poisson manifolds, and symplectic (VB-)groupoids (Example \ref{Example cotangent-gpd-deformation}).
\[\G_\e \rightsquigarrow A_\e \rightsquigarrow (A_\e^*, \pi_\e)\rightsquigarrow (T^*\calG_\e, \omega_\mathrm{can})\]

There are now plenty of relations between these objects at the level of the corresponding deformation cohomologies. There is a van Est map $H^\bullet_\mathrm{def}(\G)\to H^\bullet_\mathrm{def}(A)$ \cite[Thm. 10.1]{CMS}; there is an isomorphism $H^\bullet_\mathrm{def}(A)\cong H^\bullet_{\pi,\mathrm{lin}}(A^*)$ \cite[Prop. 8]{CM}; the map $i:H^\bullet_{\pi}(A^*)\longrightarrow H^\bullet_\mathrm{def}\left(T^{*}A^*\right)$ relates infinitesimal deformations of Poisson structures on $A^*$ and of Lie algebroids on $T^*A^*$. Proposition \ref{Prop-cotang-grpd} gives a relation between $H^\bullet_{\pi}(A^*)$ and $H^\bullet_\mathrm{def}(T^*\G,\omega_\mathrm{can})$. There is also an inclusion induced by the tangent lift of \cite{PierVitag}, $H^\bullet_\mathrm{def}(\G)\xhookrightarrow{} H^\bullet_\mathrm{def, lin}(T^*\G)\subset H^\bullet_\mathrm{def}(T^*\G)$, a linear version of $i_{T^*\G}$. 
\end{remark}
\newpage
\appendix

\section{Double structures}

\subsection{VB-algebroids}

The infinitesimal counterpart of VB-groupoids are objects called VB-algebroids \cite{GrMe, Mdoublealgbrds}, which are vector bundles objects in the category of Lie algebroids. 
Similarly to the situation with VB-groupoids, VB-algebroids provide alternative viewpoints on the deformation cohomology (Rmk \ref{D_{A^{*}}}) and on the representation theory of Lie algebroids \cite{GrMe}.

\begin{definition}
A \textbf{double vector bundle} $(D,E,A,M)$, or just $D$, is a diagram

\begin{equation}\label{DVB}
  \begin{tikzcd}[column sep=4em, row sep=10ex]
    D \MySymb[\circlearrowright]{dr} \arrow{d}[swap]{q^{D}_{A}} \arrow{r}{q^D_E} & E \arrow{d}{q_E}\\
    A \arrow{r}{q_A} & M,
  \end{tikzcd}
\end{equation}
such that the rows and columns are vector bundles, and $q^D_E$ and the addition map of $D\stackrel{q^D_E}{\longrightarrow}E$, $+_E:D\oplus_{E}D\longrightarrow D$, are vector bundle morphisms over $q_A$ and the addition of $A$, $+_:A\oplus_{M}A\longrightarrow A$, respectively.
\end{definition}

In the \textbf{vertical bundle structure} on $D$ with base $A$, $\tilde{D}_A$, we use the notation $\tilde{0}^A:A\longrightarrow D,\ a\longmapsto\tilde{0}^A_a$ for the zero-section; similarly, in the \textbf{horizontal vector bundle structure} on $D$ over $E$, $\tilde{D}_E$, we write $\tilde{0}^E:E\longrightarrow D,\ b\longmapsto\tilde{0}^E_b$. The vector bundles $A$ and $E$ over $M$ are called the \textbf{side bundles} of (\ref{DVB}); we denote their zero-sections by $0^A$ and $0^E$, respectively.

\begin{definition}
A \textbf{morphism of double vector bundles} \[(\phi,\phi_E,\phi_A,\phi_M):(D, E, A, M)\longrightarrow (D', E', A', M')\] consists of maps $\phi:D\longrightarrow D'$, $\phi_E:E\longrightarrow E'$, $\phi_A:A\longrightarrow A'$, $\phi_M:M\longrightarrow M'$ such that each of $(\phi,\phi_E)$, $(\phi,\phi_A)$, $(\phi_E,\phi_M)$ and $(\phi_A,\phi_M)$ are morphisms of the corresponding vector bundles.\\
If $M=M'$, $E=E'$ and $\phi_E=id_E$, one says that $\phi$ \textit{preserves} $E$. If, further, $A=A'$ and $\phi_A=id_A$ one says that $\phi$ \textit{preserves the side bundles}.
\end{definition}

\paragraph{\textbf{The core bundle and core and linear sections}}A third vector bundle over $M$ associated to $(D,E,A,M)$ is the \textbf{core bundle} $C$, defined as the intersection of the kernels of $q^D_E$ and $q^D_A$. To avoid confusion, when
regarding $c\in C$ as belonging to $D$, we will denote it by $\bar{c}$. The core fits into an exact sequence of vector bundles over $A$.
\begin{equation}\label{Coreseq.}
0\rightarrow q^{*}_{A}C\stackrel{\tau_A}{\longrightarrow}\tilde{D}_A\stackrel{(q^{D}_E)^{!}}{\longrightarrow}q_A^{*}E\rightarrow 0,
\end{equation}

where $(q^D_E)^{!}$ is the induced projection on $q_A^{*}E$ and $\tau_A(a,c)=\tilde{0}^A_a+_E\bar{c}$ (which makes sense because (i) $\tilde{0}^A_a$ and $\bar{c}\in D$ are in the same $q^D_E$-fibre over $0^E_{q_A(a)}$ and (ii) $q^D_A$ is a morphism, thus $\tau_A(a,c)\in\tilde{D}_A$ is over $a\in A$). This sequence is called the \textbf{core sequence of} $D$ \textbf{over} $A$. Analogously, there is a core sequence of $D$ over $E$.

An important aspect of the core sequences is that, for instance in (\ref{Coreseq.}), a section of $C$ induces a section in $\Gamma_A(D)$ of $\tilde{D}_A$. In fact, $c\in\Gamma(C)$ defines $c^{A}\in\Gamma_A(D)$ by
$$c^{A}(a):=\tau(a,c_{q_A(a)})=\tilde{0}^A_a+_E\bar{c}_{q_A(a)}.$$
The section $c^{A}$ is called the \textbf{core section over} $A$ \textbf{corresponding to} $c$. The space of core sections over $A$ is denoted by $\Gamma_\textrm{core}(D,A)$. Analogously, with the core sequence over $E$, one gets core sections over $E$.

Another special type of sections of $\tilde{D}_A$, called the \textbf{linear sections} of $D$ over $A$, are those which are vector bundle morphisms from $A\longrightarrow M$ to $D\longrightarrow E$. The space of linear sections is denoted by $\Gamma_\mathrm{lin}(D,A)$. 

An important fact about linear and core sections is that together they span all sections of $D$ over $A$ \cite{Mdoubles}. 

\begin{definition}
 A VB-algebroid is a DVB as in (\ref{DVB}), where $D\longrightarrow E$ is a Lie algebroid with anchor map $\rho_D:D\longrightarrow TE$ being a vector bundle morphism over $A\longrightarrow TM$ and such that the Lie bracket $[\cdot,\cdot]_D$ satisfies the following conditions:
 \begin{enumerate}
  \item $[\Gamma_\mathrm{lin}(D,E),\Gamma_\mathrm{lin}(D,E)]_D\subset\Gamma_\mathrm{lin}(D,E)$,
  \item $[\Gamma_\mathrm{lin}(D,E),\Gamma_\mathrm{core}(D,E)]_D\subset\Gamma_\mathrm{core}(D,E)$,
  \item $[\Gamma_\mathrm{core}(D,E),\Gamma_\mathrm{core}(D,E)]_D=0$.
 \end{enumerate}
\end{definition}

As pointed out in \cite{GrMe}, a VB-algebroid $D\longrightarrow E$ induces a Lie algebroid structure on $A\longrightarrow M$ and the structure maps (projection, zero section, sum) of the vertical bundle structures form Lie algebroid morphisms. In this sense, a VB-algebroid can be though as a vector bundle in the category of Lie algebroids, (\cite{GrMe}, Thm. 3.7).

\begin{example}\label{Tangent}(Tangent prolongation DVB; Tangent Lie algebroid)
\begin{enumerate}
	\item Applying the tangent functor to the structure maps (projection, addition, zero-section) of the vector bundle $E\stackrel{q}{\longrightarrow} M$  yields the DVB
\begin{equation*}\label{Tangent DVB}
  \begin{tikzcd}[column sep=4em, row sep=10ex]
    TE \MySymb[\circlearrowright]{dr} \arrow{d}[swap]{p_E} \arrow{r}{Tq} & TM \arrow{d}{p_M}\\
    E \arrow{r}{q} & M,
  \end{tikzcd}
\end{equation*}
 with core $E$. The core sections over $E$ and $TM$ corresponding to $\alpha\in\Gamma(E)$ are respectively:
$$\alpha^{\uparrow}\in\Gamma_E(TE),\ \ \alpha^{\uparrow}(e)=\tilde{0}^E_e+_{TM}\overline{\alpha(p_E(e))}=\left.\frac{d}{d\e}\right|_{\e=0}(e+\e\alpha(p_E(e))),$$
called the vertical lift of $\alpha$, and
$$\hat{\alpha}\in\Gamma_{TM}(TE),\ \hat{\alpha}(v_x)=\tilde{0}^{TM}_{v_x}+_E\overline{\alpha(x)}\in T_{0^{E}_x}E.$$
Note that $\tilde{0}^{TM}=T(0^{E})$, and $\overline{\alpha(x)}=\alpha^{\uparrow}(0^{E}_x)$.

For any $X\in\Gamma(E)$, its tangent prolongation $T(X):=d(X)\in\Gamma(TE,TM)$ is a linear section of $TE$ over $TM$. Linear sections of $TE$ over $E$ are called \textbf{linear vector fields} on $E$.
\item Let $F:E\longrightarrow E'$ be a vector bundle morphism over $f:M\longrightarrow M'$. The tangent prolongation of $F$ induces the morphism of vector bundles $(TF,Tf,F,f):TE\longrightarrow TE'$.
\end{enumerate}

Given a Lie algebroid $A\stackrel{\pi}{\longrightarrow} M$, there is a Lie algebroid structure on $TA\stackrel{T\pi}{\longrightarrow}TM$ making it into a VB-algebroid. Denote by $\hat{X}_i\in\Gamma_c(TA,TM),\ i=1,2,$ the core section corresponding to $X_i\in\Gamma(A)$. It suffices to define the Lie  bracket on core sections and linear sections of the form $T(X)$, $X\in\Gamma(A)$:
\begin{equation}\label{Bracket}
[TX_1,TX_2]=T[X_1,X_2],\ [TX_1,\hat{X}_2]=\hat{[X_1,X_2]},\ [\hat{X}_1,\hat{X}_2]=0.
\end{equation}

The anchor $\rho_T$ is defined by $\rho_T=J^{-1}\circ T(\rho)$ where
\[\begin{tikzcd}[column sep=2em, row sep=10ex]
   T(TM) \arrow{rr}{J} \arrow{dr}[swap]{p_{TM}} & & T(TM)\arrow{dl}{Tp_{M}}\\
    & TM & \\
\end{tikzcd}\]
is the canonical \textit{involution} of the double tangent bundle $T(TM)$, called the canonical flip, determined locally by $J(x_i,\dot{x}_i,\delta x_i,\delta\dot{x}_i)=(x_i,\delta x_i,\dot{x}_i,\delta\dot{x}_i),$ where for local coordinates $(x^i)$ of $M$, $(\dot{x}^{i})$ are the coordinates on the fibres of $TM$ and $(\delta x^{i}, \delta\dot{x}^{i})$ are the coordinates on the fibres of $T(TM)\stackrel{p_{TM}}{\longrightarrow}TM$.
\end{example}

\begin{example}\label{Dual}(Dual DVB)
By dualizing the core exact sequence over $A$ 
, we can induce a double vector bundle
\begin{equation}\label{Dual DVB}
  \begin{tikzcd}[column sep=4em, row sep=10ex]
    D^{*}_A \MySymb[\circlearrowright]{dr} \arrow{d}[swap]{q^{*A}_{A}} \arrow{r}{q^{*A}_{C^{*}}} & C^{*} \arrow{d}{q_{C^{*}}}\\
    A \arrow{r}{q_A} & M,
  \end{tikzcd}
\end{equation}
with core $E^{*}$, where $D^{*}_A$ denotes the dual over $A$, and $q^{*A}_A:D^{*}_A\longrightarrow C^{*}$ comes from the dual of $\tau_A$:
$$\left\langle q^{*A}_{C^{*}}(\eta_a),c\right\rangle=\left\langle \eta_a,\tau_A(a,c)\right\rangle,$$
for $\eta_a:(q^D_A)^{-1}(a)\stackrel{\text{linear}}{\longrightarrow}\mathbb{R}$ and $c\in C_{q_A(a)}$. The addition $+_{C^{*}}:D^{*}_A\oplus_{C^{*}}D^{*}_A\longrightarrow D^{*}_A$ is defined in such a way that the natural pairing $\left\langle ,\right\rangle:D^{*}_A\oplus\tilde{D}_A\longrightarrow\mathbb{R}$ is linear with respect to the vector bundle structure over $C^{*}\oplus_M E$, i.e.,
$$\left\langle \eta_a+_{C^{*}}\eta'_{a'}, d_a+_{E}d'_{a'}\right\rangle=\left\langle \eta_a,d_a\right\rangle+\left\langle \eta'_{a'},d'_{a'}\right\rangle.$$

Note that $\eta_a+_{C^{*}}\eta'_a$ is determined by the expression above due to the fact that any element in $(\tilde{D}_A)_{a+a'}$ can be written as the sum of elements $d\in(\tilde{D}_A)_{a}$ and $d'\in(\tilde{D}_A)_{a'}$. It is not hard to see that $+_{C^{*}}$ given in this form is well-defined. The zero above $\kappa\in C^{*}_x$, denoted by $\tilde{0}^{*A}_\kappa:(\tilde{D}_A)_{0^{A}_x}=\left[\mathrm{Ker}(q^D_A)^{!}\right]_{0^{A}_x}\stackrel{\textit{linear}}{\longrightarrow}\mathbb{R}$, is defined by
$$\left\langle \tilde{0}^{*A}_\kappa,\tilde{0}^{E}_e+_A \bar{c}\right\rangle=\left\langle \kappa,c\right\rangle,\text{ for } e\in E_x,\ c\in C_x.$$
Analogously, one can take the dual of the core exact sequence over $E$, inducing a DVB $(D^{*}_E,E,C^{*},M)$ with core $A^{*}$. See \cite{M} or \cite{Mduality-triple} for further details.
\end{example}

\begin{remark}
Similarly to the dual of a morphism of vector bundles over the same base covering the identity, one defines the dual of a morphism of DVBs which have one same side bundle. If $(\phi,\phi_E,id_A,\phi_M):(D,E,A,M)\longrightarrow(D',E',A,M)$ is a DVB morphism preserving $A$, dualizing $\phi$ as a morphism of vector bundles over $A$ yields $(\phi_A^{*},\phi_C^{*},id_A,\phi_M):(D'^{*}_{A},(C')^{*},A,M)\longrightarrow(D^{*}_{A},C^{*},A,M)$, a DVB morphism preserving $A$  with core morphism $\phi_E^{*}:(E')^{*}\longrightarrow E^{*}$.
\end{remark}

\paragraph{\textbf{Isomorphisms of duals of DVBs}}\label{Z_A}

As example \ref{Dual} shows, there are two different ways to dualize a DVB: the \textit{vertical} and the \textit{horizontal} dualizations, which are related by mixed iteration. Indeed, the horizontal dual $((D^{*}_A)^{*}_{C^{*}},C^{*},E,M)$ of the dual DVB (\ref{Dual DVB}) is a DVB with the same side and core bundles as $(D^{*}_E,E,C^{*},M)$. One can check that they are isomorphic DVBs. Namely, $Z_E:D^{*}_E\longrightarrow(D^{*}_A)^{*}_{C^{*}}$ is the isomorphism induced by a natural pairing $\mid\cdot,\cdot\mid$ between the vertical and horizontal duals $D^{*}_A$ and $D^{*}_E$ as vector bundles over $C^{*}$. The pairing is defined by
\begin{equation}\label{eq:DVBpairing}
\mid\eta_a,\theta_e\mid=\langle\eta_a,d\rangle_A-\langle\theta_e,d\rangle_E;
\end{equation}
where $\eta_a\in D^{*}_A,\ \theta_e\in D^{*}_E$ with $q^{*A}_{C^{*}}(\eta_a)=q^{*E}_{C^{*}}(\theta_e)$ and $d\in D$ is any element such that the canonical pairings in the RHS make sense.

Of course, this pairing also yields the isomorphism $D^{*}_A\stackrel{Z_A}{\cong}(D^{*}_E)^{*}_{C^{*}}$ which induces the identity on the cores $E^{*}$ and on the side bundles $C^{*}$, and is $-id_A$ on the remaining side bundles $A$ (\cite{M}, Corollary 9.2.4). Summing up, taking duals over $C^{*}$ interchanges the vertical and horizontal duals of $(D,E,A,M)$. Or equivalently, mixed iteration of vertical and horizontal duals interchanges the duals of $D$ (e.g.\ horizontal dual followed by vertical one is the flip of the vertical one, where the flip of a DVB $(D,E,A,M)$ is $(D,A,E,M)$).

\subsection{Reversal isomorphism}\label{Reversal isomorphism}
The reversal isomorphism of DVBs \[R_A:T^{*}A^{*}\longrightarrow T^{*}A\] is a Legendre type map which allows us to relate two cotangent spaces. We present here some details of its definition.

Examples \ref{Tangent} and \ref{Dual} show us two important ways to get double vector bundles from usual vector bundles (by tangent prolongation and dualizing). By considering also dualization of vector bundles, there exists a certain compatibility between these processes: dualization commutes with tangent prolongation (up to a canonical isomorphism). Indeed, the tangent lift of the canonical pairing, $\left\langle, \right\rangle_A$, between $A$ and $A^{*}$ over $M$ induces the \textbf{tangent pairing} $\left\langle \left\langle ,\right\rangle\right\rangle_A$ between $TA$ and $T(A^{*})$ over $TM$. For $(v_{a^{*}},w_a)\in TA^{*}\oplus_{TM}TA$, $$\left\langle \left\langle v_{a^{*}}, w_a \right\rangle\right\rangle_A:=\left.\frac{d}{d\e}\right|_{\e=0}\left\langle \gamma(\e),\alpha(\e)\right\rangle_A;$$
where $\gamma(\e)$ and $\alpha(\e)$ are curves on $A^{*}$ and $A$ representing $v_{a^{*}}$ and $w_a$, respectively, with $p_{A^{*}}(\gamma(\e))=p_{A}(\alpha(\e))$. This pairing yields the isomorphism \[I_A:TA^{*}\longrightarrow(TA)^{*}_{TM}\] which is moreover an isomorphism of DVBs: between the tangent prolongation of the dual of $A$ and the (horizontal) dual of the tangent prolongation of $A$. The isomorphism $I_A$ preserves the sides and core bundles, and is often called the \textbf{internalization map} (\cite{M}, Prop. 9.3.2).

\begin{equation}\label{Internalization}
  \begin{tikzcd}[column sep=4em, row sep=10ex]
    TA^{*} \MySymb[\circlearrowright]{dr} \arrow{d}[swap]{p_{A^{*}}} \arrow{r}{Tq_{A^{*}}} & TM \arrow{d}{p_M}\\
    A^{*} \arrow{r}{q_{A^{*}}} & M
  \end{tikzcd}\stackrel{I_A}{\longrightarrow} \begin{tikzcd}[column sep=4em, row sep=10ex]
    (TA)^{*}_{TM} \MySymb[\circlearrowright]{dr} \arrow{d}[swap]{q^{*TM}_{A^{*}}} \arrow{r}{q^{*TM}_{TM}} & TM \arrow{d}{p_M}\\
    A^{*} \arrow{r}{q_{A^{*}}} & M.
  \end{tikzcd}
\end{equation}
Consider now the tangent prolongation DVB of $A\longrightarrow M$. The natural pairing \eqref{eq:DVBpairing} existing between its (horizontal and vertical) duals induces the isomorphism
$$Z_A:T^{*}A=(TA)^{*}_{A}\longrightarrow((TA)^{*}_{TM})^{*}_{A^{*}}.$$
Composing such an isomorphism with the dual $(I_A)^{*}_{A^{*}}$ of $I_A$ over $A^{*}$, one obtains the following isomorphism of DVBs
$$(I_A)^{*}_{A^{*}}\circ Z_A:T^{*}A\longrightarrow T^{*}A^{*}$$
which (like $Z_A$) induces $-id_A$ on the side bundles $A$ and preserves the cores $T^{*}M$ and sides $A^{*}$. 
The \textbf{reversal isomorphism} $R_A:T^{*}A^{*}\longrightarrow T^{*}A$ is then defined by $R_A:=((I_A)^{*}_{A^{*}}\circ Z_A)^{-1}\circ(-_{A^{*}}id_{T^{*}A^{*}})$, which will be then an isomorphism of DVBs preserving the sides and inducing $-id_{T^{*}M}$ on the cores.

Alternatively, it is possible to describe this map in a simple way by using local coordinates. 
Let $(x^{i},u_d)$ and $(x^{i},u^{d})$ be (fibred) local coordinates of the vector bundles $A\longrightarrow M$ and $A^{*}\longrightarrow M$. Let then $(x^{i},u_d,\dot{x}^i,\dot{u}_d)$ denote the tangent coordinates on $TA$, where $(\dot{x}^{i})$ are the coordinates of elements in $T_xM$ and $(\dot{u}_d)$ the ones of tangent elements to the fibre $A_x$ of $A\to M$, which identify with elements of $A_x$ and therefore are also called the \textbf{core} coordinates of $TA$. Similarly, the vertical and horizontal duals of $TA$ has a local description. Denote by $(x^{i},u_d,\delta x^i,\delta u_d)$ the cotangent coordinates on $T^{*}A$, and let $(x^{i},\zeta_d,\dot{x}^i,\eta_d)$ be the coordinates on the dual $T^{\bullet}A:=(TA)^{*}_{TM}$, where $\zeta_d$ and $\eta_d$ are the dual components to $u^{d}$ and $\dot{u}^{d}$, respectively. In these coordinates,  the dual map $(I_A)^{*}_{A^{*}}:(T^{\bullet}A)^{*}_{A^{*}}\longrightarrow(TA^{*})^{*}_{A^{*}}$ is given by the flip
$$(x^{i},\alpha^{d},\beta_i,\eta_d)\mapsto(x^{i},\eta_d,\beta_i,\alpha^{d});$$
analogously, the isomorphism $Z_A$ above is determined locally by
$$(x^{i},u^{d},\delta x^i,\delta u^{d})\mapsto(x^{i},-u^{d},\delta x^i,\delta u^{d}).$$ Therefore, since locally the automorphism $-_{A^{*}}id_{T^{*}A^{*}}$ only changes the sign on the third and fourth components, the reversal isomorphism has the form
\begin{equation}\label{Reversal}
R_A(x^i,u^{d},\delta x^{i},\delta u^{d})=(x^i,\delta u^{d},-\delta x^{i},u^{d}).
\end{equation}

The following propositions tell us about some naturality properties that the reversal isomorphism satisfies, and which will be useful for us later on. In order to state them, we first adopt some convenient notation for the dual of morphisms of vector bundles and DVBs.

\begin{remark}\label{dualstar}
Consider  two vector bundles $A\longrightarrow M_1$ and $B\longrightarrow M_2$, and a vector bundle morphism $\Phi:A\longrightarrow B$  covering the diffeomorphism $f:M_1\longrightarrow M_2$. Notice that the pointwise dual of $\Phi$ induces a natural morphism $\Phi^{\star}_{f^{-1}}:B^{*}\longrightarrow A^{*}$ between the duals of $B$ and $A$ covering the diffeomorphism $f^{-1}$. 

We remark that such a dual construction can also be made for morphisms of DVBs which share one of their side bundles. For instance, let $f:A\longrightarrow B$ be an isomorphism, then one has the dual of the DVB morphism $Tf:TA\longrightarrow TB$ (which covers the isomorphism $f$), denoted by $(Tf)^{\star}_{f^{-1}}:T^{*}B\longrightarrow T^{*}A$.
\end{remark}

\begin{proposition}\label{R_A, R_B}

Consider two vector bundles $A$ and $B$  over $M$. If $f:A\longrightarrow B$ is an isomorphism covering the identity, the following diagram commutes.
 \begin{equation}\label{Naturality of R_A}
  \begin{tikzcd}[column sep=4em, row sep=10ex]
    T^ {*}A^{*} \MySymb[\circlearrowright]{dr} \arrow{d}[swap]{((T(f^{*}))^{\star}_{(f^{*})^{-1}}} \arrow{r}{R_A} & T^{*}A\\
    T^{*}B^{*} \arrow{r}{R_B} & T^{*}B \arrow{u}[swap]{(Tf)^{\star}_{f^{-1}}}
  \end{tikzcd}
 \end{equation}
\end{proposition}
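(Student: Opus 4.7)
My plan is to reduce the verification to a coordinate computation, making use of the explicit formula (\ref{Reversal}) for the reversal isomorphism. The point is that both the vertical maps in diagram (\ref{Naturality of R_A}) are cotangent lifts of tangent lifts of the same underlying isomorphism $f$ (and its dual $f^{*}$), so they should become trivial in well-chosen local frames.

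Concretely, fix a coordinate chart $(x^{i})$ of $M$ over which both $A$ and $B$ trivialize, pick any local frame $(e_{d})$ of $A$, and use the push-forward frame $(f(e_{d}))$ of $B$. In the induced fibre coordinates $(x^{i}, u_{d})$ on $A$ and $(x^{i}, v_{d})$ on $B$, the isomorphism $f$ acts by $v_{d}=u_{d}$, and dually $f^{*}:B^{*}\to A^{*}$ is the identity $u^{d}=v^{d}$ in the dual coordinates. Taking tangent prolongations, $Tf:TA\to TB$ and $T(f^{*}):TB^{*}\to TA^{*}$ are the identity on tangent coordinates $(x^{i},u_{d},\dot{x}^{i},\dot{u}_{d})$ (respectively $(x^{i},u^{d},\dot{x}^{i},\dot{u}^{d})$), and therefore their fibrewise duals $(Tf)^{\star}_{f^{-1}}$ and $((T(f^{*}))^{\star}_{(f^{*})^{-1}}$ act as the identity on cotangent coordinates $(x^{i},u_{d},\delta x^{i},\delta u_{d})$ and $(x^{i},u^{d},\delta x^{i},\delta u^{d})$.

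With this set-up both $R_{A}$ and $R_{B}$ are given by the same formula (\ref{Reversal}), namely $(x^{i},u^{d},\delta x^{i},\delta u^{d})\mapsto (x^{i},\delta u^{d},-\delta x^{i},u^{d})$, while the vertical arrows are the identity, so the diagram commutes on this chart. Since the statement is local, this finishes the proof.

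An alternative, more intrinsic route is to unfold $R_{A}=((I_{A})^{\star}_{A^{*}}\circ Z_{A})^{-1}\circ(-_{A^{*}}\mathrm{id}_{T^{*}A^{*}})$ and prove naturality of $I_{A}$ and $Z_{A}$ separately: naturality of the internalization map $I_{A}$ follows from naturality of the canonical pairing $\langle\cdot,\cdot\rangle_{A}$ under dualization of bundle maps, which lifts verbatim to the tangent pairing $\langle\langle\cdot,\cdot\rangle\rangle_{A}$; naturality of $Z_{A}$ follows from the universal pairing (\ref{eq:DVBpairing}) between horizontal and vertical duals, which is compatible with any DVB isomorphism. The main (and really only) delicate point in either approach is bookkeeping the signs in the various dualizations, but this is cleanly handled by the coordinate formula.
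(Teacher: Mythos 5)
Your proof is correct and takes essentially the same route as the paper, which likewise disposes of the claim by appealing directly to the local coordinate formula \eqref{Reversal} for the reversal isomorphism; your choice of $f$-adapted frames (possible because $f$ covers the identity) just makes explicit why the vertical arrows become identities in coordinates. The intrinsic alternative you sketch is not needed here.
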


\begin{proof}
The commutativity of this diagram follows directly from the local description of the reversal isomorphism given in \eqref{Reversal}.
\end{proof}



\begin{remark}
The reversal isomorphism $R_A$ satisfies some additional properties related to symplectic geometry: to begin with, it is an anti-symplectomorphism with respect to the canonical symplectic structures on the cotangent bundles $T^*A^*$ and $T^*A$ (\cite{M}, Thm 9.5.2)). Another property involving the canonical symplectic structure on $T^*M$ is given in Proposition \ref{Tulczyjew} below. 
\end{remark}

\begin{example}\label{ex:cotangentalgbrd}(Cotangent Lie algebroid)
The reversal isomorphism allows to make sense of the so called \emph{cotangent Lie algebroid}. It is well known that a Lie algebroid $A\longrightarrow M$ induces a Poisson structure on its dual $A^{*}\longrightarrow M$, which is linear with respect to the vector bundle structure over $M$ (\cite{Courant}, Thm. 2.1.4). In this way, the cotangent bundle of $A$, \[T^{*}A\stackrel{R_A^{-1}}{\cong}T^{*}A^{*}\longrightarrow A^{*}\] inherits a Lie algebroid structure, which is also a VB-algebroid over $A\longrightarrow M$. Such a VB-algebroid is also often defined as the \emph{dual VB-algebroid} of $TA$ (see Section 3 in \cite{GrMe}).
\end{example}

For a Lie groupoid $\G$ with Lie algebroid $A_\G$, the tangent and cotangent algebroids of $A_\G$ are (isomorphic to) the Lie algebroids of the tangent and cotangent groupoids (section \ref{VB-grpds}) of $\G$, respectively. In fact, the canonical flip $J_\calG:T(T\calG)\longrightarrow T(T\calG)$ restricts to the isomorphism of Lie algebroids 
$$TA_\calG\stackrel{j_\calG}{\longrightarrow}A_{T\calG}$$
over $TM$ (\cite{M}, Thm 9.7.5). And the dual of the map $J_\calG$ gives rise to the isomorphism between the Lie algebroid of $T^*\calG$ and the cotangent algebroid. Explicitly, the isomorphism
$$\Theta_{T\calG}:=J_\calG^{*}\circ I_{T\calG}:T(T^{*}\calG)\longrightarrow (T(T\calG))^{*}_{TM}\longrightarrow T^{*}(T\calG)$$
given by the composition of the internalization map and the dual of the involution induces the isomorphism (\cite{M}, p. 463)
$$A_{T^{*}\calG}\stackrel{\theta_\calG}{\longrightarrow}T^{*}A_{\calG}.$$
The map $\Theta_{T\calG}$ is often called the \textbf{Tulczyjew map}, and it is closely related to the canonical symplectic structure $\omega_\mathrm{can}$ on the cotangent bundle $T^{*}\calG$.

\begin{proposition}\label{Tulczyjew}(\cite{M}, Thm 9.6.7)
Given a manifold $M$, the Tulczyjew and reversal isomorphisms $\Theta_{TM}$ and $R_{TM}$ are related by the following commutative diagram
\begin{equation*}
  \begin{tikzcd}[column sep=4em, row sep=10ex]
    T(T^ {*}M) \arrow{dr}[swap]{\omega_\mathrm{can}^{b}} \arrow{r}{\Theta_{TM}} & T^{*}(TM)\\
    & T^{*}(T^{*}M) \arrow{u}[swap]{R_{TM}},
  \end{tikzcd}
 \end{equation*}
where $\omega_\mathrm{can}\in\Omega^{2}(T^{*}M)$ is the canonical symplectic structure on the cotangent bundle $T^{*}M$.
\end{proposition}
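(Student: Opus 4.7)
The plan is to verify $\Theta_{TM} = R_{TM} \circ \omega_\mathrm{can}^{b}$ by a direct computation in adapted local coordinates. Choose coordinates $(x^i)$ on $M$, inducing fiber coordinates $v^i$ on $TM$ and $\xi_i$ on $T^*M$, and use the associated tangent coordinates $(x^i, \xi_i, \dot x^i, \dot\xi_i)$ on $T(T^*M)$ together with cotangent coordinates $(x^i, \xi_i, a_i, b^i)$ on $T^*(T^*M)$ and $(x^i, v^i, \alpha_i, \beta_i)$ on $T^*(TM)$, representing the covectors $a_i dx^i + b^i d\xi_i$ and $\alpha_i dx^i + \beta_i dv^i$ respectively.

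First I would compute the right-hand side $R_{TM} \circ \omega_\mathrm{can}^{b}$. Taking $\omega_\mathrm{can} = dx^i \wedge d\xi_i$, contraction with $V = \dot x^i \partial_{x^i} + \dot\xi_i \partial_{\xi_i}$ gives $\iota_V \omega_\mathrm{can} = \dot x^i d\xi_i - \dot\xi_i dx^i$, so
\[\omega_\mathrm{can}^{b}(x^i, \xi_i, \dot x^i, \dot\xi_i) = (x^i, \xi_i, -\dot\xi_i, \dot x^i).\]
Applying the explicit formula \eqref{Reversal} for $R_A$ with $A = TM$ (so that $u^d = \xi_i$) then yields
\[R_{TM} \circ \omega_\mathrm{can}^{b}(x^i, \xi_i, \dot x^i, \dot\xi_i) = (x^i, \dot x^i, \dot\xi_i, \xi_i) \in T^*(TM).\]

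Next I would compute $\Theta_{TM} = J_{TM}^{*} \circ I_{TM}$ in the same coordinates. Given $W = (x^i, \xi_i, \dot x^i, \dot\xi_i) \in T(T^*M)$ represented by the curve $t \mapsto (x^i + t \dot x^i, \xi_i + t\dot\xi_i)$, and $V = (x^i, v^i, \dot x^i, V^i) \in T(TM)$ represented by $t \mapsto (x^i + t\dot x^i, v^i + tV^i)$ over the common point $(x^i, \dot x^i) \in TM$, the definition of the tangent pairing yields
\[\langle\langle W, V\rangle\rangle = \left.\tfrac{d}{dt}\right|_{t=0}(\xi_i + t\dot\xi_i)(v^i + tV^i) = \xi_i V^i + \dot\xi_i v^i.\]
Hence $I_{TM}(W)$ is the linear functional on the fiber of $Tp_M\colon T(TM) \to TM$ over $(x^i, \dot x^i)$ with components $(\dot\xi_i, \xi_i)$ dual to $(v^i, V^i)$. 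Transporting this functional by the canonical involution $J_{TM}(y^i, u^i, Y^i, U^i) = (y^i, Y^i, u^i, U^i)$, which is a VB-isomorphism between the two vector bundle structures on $T(TM)$ over $TM$, produces the cotangent vector $\dot\xi_i dx^i + \xi_i dv^i \in T^{*}_{(x, \dot x)}TM$, i.e.,
\[\Theta_{TM}(x^i, \xi_i, \dot x^i, \dot\xi_i) = (x^i, \dot x^i, \dot\xi_i, \xi_i),\]
which coincides with $R_{TM} \circ \omega_\mathrm{can}^{b}(W)$.

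The main obstacle is bookkeeping: correctly matching the paper's sign conventions for $\omega_\mathrm{can}$, the coordinate identifications between horizontal and vertical duals of the DVB $T(TM)$, and the interpretation of $J_{TM}^{*}$ as the dual of $J_{TM}$ viewed as a VB-isomorphism between the two sides of the DVB $T(TM)$ over $TM$. Once these conventions are fixed, the proof reduces to the short coordinate computation outlined above.
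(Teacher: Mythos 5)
Your computation is correct, and it is worth noting that the paper does not actually prove this proposition: it is quoted from Mackenzie's book (Thm 9.6.7), so your coordinate verification supplies a self-contained argument where the paper only gives a citation. Your argument meshes well with the paper's own conventions: you use the explicit local formula \eqref{Reversal} for $R_A$ with $A=TM$, and your computation of $\Theta_{TM}=J^{*}\circ I_{TM}$ via the tangent pairing and the involution agrees with the definitions in the appendix; both sides come out as $(x^i,\xi_i,\dot x^i,\dot\xi_i)\mapsto \dot\xi_i\,dx^i+\xi_i\,dv^i$ at the base point $(x^i,\dot x^i)\in TM$. The one point you should make explicit rather than leave as a caveat is the sign convention: the identity $\Theta_{TM}=R_{TM}\circ\omega_\mathrm{can}^{b}$ holds precisely for $\omega_\mathrm{can}=dx^i\wedge d\xi_i=-d(\xi_i\,dx^i)$ and for $\omega_\mathrm{can}^{b}(V)=\iota_V\omega_\mathrm{can}$; with the opposite sign for either the symplectic form or the musical map one obtains $-\Theta_{TM}$, so a sentence fixing these two conventions at the outset would make the proof airtight. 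Beyond that, the only remaining remark is that the pointwise identity verified in an arbitrary chart suffices because all three maps are globally defined, which you may state in one line.
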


\begin{remark}
In the case of a Poisson manifold $M$ , the Tulczyjew map of $M$ $\Theta_{TM}:T(T^{*}M)\longrightarrow T^{*}(TM)$ has an additional property: it is an isomorphism of VB-algebroids (i.e., it is an isomorphism of DVBs which preserves the Lie algebroid structures involved), between the tangent algebroid and the Lie algebroid associated to the linear Poisson structure on $TM$ (\cite{M}, Prop. 10.3.13).
\end{remark}

\subsection{Tangent lift of algebroid cochains}\label{sec-tangent-lift}

Just as there is a tangent lift of Lie groupoid cochains, there is an infinitesimal version, allowing to lift algebroid cochains from a Lie algebroid $A$ to the algebroid $TA$. Let us recall its definition, and detail some of its properties.

\begin{definition} Let $A\to M$ be a Lie algebroid. The \textbf{tangent lift of algebroid cochains} is the map \[T:C^\bullet(A)\to C^\bullet_{\mathrm{lin}}(TA)\] defined as follows. 
An algebroid $k$-cochain $c\in C^{k}(A)=\Gamma(\Lambda^{k}A^{*})$ can be regarded as a $k$-linear and skew-symmetric map $c:\bigoplus^{k}A\to\mathbb{R}$. We define its tangent lift $Tc\in C^{k}_\mathrm{lin}(TA)$ by
$$Tc(v_1,...v_k):=dc(v_1,...,v_k),$$
where $(v_1,...,v_k)\in\bigoplus^{k}_{TM}TA$, and using the identification $T(\bigoplus^{k}A)\cong\bigoplus^{k}_{TM}TA$.
\end{definition}

\ \

\ \ 

\begin{lemma}\label{Tang. lift on algbrds}
Let $k$ be a positive integer. The tangent lift of Lie algebroid cochains $T:C^{k}(A)\longrightarrow C^{k}(TA)$ satisfies the following conditions:

\begin{enumerate}
	\item (Linear sections)
	\begin{align*}\left.Tc(T\alpha_1,...,T\alpha_k)\right|_{w_x}&=(Tc)_{w_x}\left(T\alpha_1(w_x),...,T\alpha_k(w_x)\right)=\\ &=\left[T\left(c(\alpha_1,...,\alpha_k)\right)\right](w_x),\end{align*}
	\item (One core section) $\left.Tc(T\alpha_1,...,\hat{\alpha}_k)\right|_{w_x}=\left.c(\alpha_1,...,\alpha_k)\right|_{x}$,
  \item (More than one core sections) $\left.Tc(T\alpha_1,...,\hat{\alpha}_{k-1},\hat{\alpha}_k)\right|_{w_x}=0$.
\end{enumerate}
\end{lemma}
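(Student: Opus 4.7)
The plan is to evaluate $Tc$ directly from its definition: viewing $c:\bigoplus^k A\to\mathbb{R}$ as a skew-symmetric multilinear map, one has $Tc(v_1,\ldots,v_k)=dc(v_1,\ldots,v_k)$ under the canonical identification $T(\bigoplus^k A)\cong\bigoplus_{TM}^k TA$. Hence to compute $Tc$ on a $k$-tuple in $\bigoplus_{TM}^k TA$ lying over a base point $p\in\bigoplus^k A$, I would pick a convenient smooth curve $\sigma:(-\e,\e)\to\bigoplus^k A$ with $\sigma(0)=p$ and $\dot\sigma(0)$ equal to the given $k$-tuple, and differentiate $c\circ\sigma$ at $t=0$. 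All three items reduce to choosing the right $\sigma$ for each configuration of linear and core sections; one then relies on the multilinearity of $c$ in each slot.

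For item (1), extend $w_x$ to a local vector field on $M$ with flow $\phi_t$, and take $\sigma(t)=(\alpha_1(\phi_t(x)),\ldots,\alpha_k(\phi_t(x)))$. Its tangent at $t=0$ is precisely $(T\alpha_1(w_x),\ldots,T\alpha_k(w_x))$, and by the chain rule
\[Tc(T\alpha_1(w_x),\ldots,T\alpha_k(w_x))=\tfrac{d}{dt}\big|_{0}c\bigl(\alpha_1(\phi_t(x)),\ldots,\alpha_k(\phi_t(x))\bigr)=d\bigl(c(\alpha_1,\ldots,\alpha_k)\bigr)(w_x),\]
which is $T(c(\alpha_1,\ldots,\alpha_k))(w_x)$ by definition of the tangent lift of a function.

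For items (2) and (3) the key observation is that in a local trivialization the core-section vector $\hat{\alpha}_k(w_x)\in T_{0^A_x}A$ is represented by the curve $t\mapsto t\,\alpha_k(\phi_t(x))$, since its tangent in local coordinates reads $(w_x,\tilde\alpha_k(x))$, which matches $\tilde{0}^{TM}_{w_x}+_A\overline{\alpha_k(x)}$. For item (2), substitute in the last slot: take $\sigma(t)=(\alpha_1(\phi_t(x)),\ldots,\alpha_{k-1}(\phi_t(x)),t\,\alpha_k(\phi_t(x)))$; then multilinearity pulls the factor $t$ out of the last argument of $c$, and differentiating $t\cdot c(\alpha_1(\phi_t(x)),\ldots,\alpha_k(\phi_t(x)))$ at $t=0$ leaves exactly $c(\alpha_1,\ldots,\alpha_k)(x)$. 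For item (3), place $t$-factors in both core slots: $\sigma(t)=(\alpha_1(\phi_t(x)),\ldots,t\,\alpha_{k-1}(\phi_t(x)),t\,\alpha_k(\phi_t(x)))$; multilinearity now yields $c\circ\sigma(t)=t^2\,c(\alpha_1(\phi_t(x)),\ldots,\alpha_k(\phi_t(x)))$, whose derivative at $0$ vanishes.

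The only delicate step is the tangent computation $\frac{d}{dt}\big|_{0}\bigl(t\,\alpha_k(\phi_t(x))\bigr)=\hat\alpha_k(w_x)$, identifying the sum of a vertical and a core contribution; this is a brief local-coordinate check using the explicit description of core sections in Example \ref{Tangent}. Two further remarks: arbitrary placement of the core sections in item (3) is reduced to the stated one by skew-symmetry of $c$ (and hence of $Tc$), and the general case of the lemma follows by $C^\infty(TM)$-linearity in each argument of $Tc$, since linear sections $T\alpha$ together with core sections $\hat\alpha$ span $\Gamma(TA,TM)$.
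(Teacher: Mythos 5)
Your proof is correct, and it takes a mildly different route from the paper's. Both arguments compute directly from the definition $Tc=dc$, but the paper works at the level of tangent vectors: it decomposes the core value as $\hat\alpha(w_x)=T0^{A}(w_x)+_{A}(\alpha^{\uparrow})_{0^{A}_x}$ using the double-vector-bundle structure of $TA$, kills the summand involving $T0^{A}(w_x)$ by linearity of $Tc$ over $TM$, splits the remaining sum using linearity of $Tc$ over $\oplus^{k}_{M}A$, and then evaluates the vertical piece as $\frac{d}{d\lambda}\big|_{\lambda=0}c(\ldots,\lambda\alpha(x))$. You instead exhibit a single representing curve for the whole $k$-tuple --- the key observation being that $t\mapsto t\,\alpha_k(\phi_t(x))$ represents $\hat\alpha_k(w_x)$, which does check out in fibred coordinates (velocity $(x,0,w_x,\alpha_k(x))$, matching $\tilde{0}^{TM}_{w_x}+_{A}\overline{\alpha_k(x)}$) --- and you apply multilinearity of $c$ itself \emph{before} differentiating, so that items (2) and (3) collapse to $\frac{d}{dt}\big|_{0}\bigl(t\,f(t)\bigr)=f(0)$ and $\frac{d}{dt}\big|_{0}\bigl(t^{2}f(t)\bigr)=0$. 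What your version buys is a more elementary computation that never needs the bilinearity of $Tc$ with respect to the two different vector bundle structures on $\oplus^{k}_{TM}TA$; the small price is the local-coordinate verification of the representing curve, which you correctly isolate as the only delicate step. Your closing remarks (skew-symmetry to reduce arbitrary placements of core sections to the stated one, and the fact that tangent and core sections span) match how the paper uses the lemma.
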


\begin{proof}
Part 1 is direct. For simplicity, we prove the parts 2 and 3 for $k=2$, the general case $(k\neq2)$ being completely analogous.

\begin{equation*}
\begin{split}
\left.Tc(T\alpha_1,\hat{\alpha}_2)\right|_{w_x}&=(Tc)_{(w_x)}(T\alpha_1(w_x),\hat{\alpha}_2(w_x))\\
&=(Tc)_{(w_x)}\left(\overbrace{T\alpha_1(w_x)}^{\in\ T_{\alpha_1(x)}A}, T0^{A}(w_x)+_{A}(\alpha_2^{\uparrow})_{0^{A}_x}\right)\\
&=\underbrace{(Tc)_{(w_x)}\left(T\alpha_1(w_x), T0^{A}(w_x)\right)}_{=0}+\\
&+(Tc)_{(0^{A}_x)}\left(\tilde{0}^{A}_{\alpha_1(x)}, (\alpha_2^{\uparrow})_{0^{A}_x}\right)\ \ \ \ (\text{Linearity of } Tc \text{ over} \oplus^{2}_{M}A)\\
&=\left.\frac{d}{d\l}\right|_{\l=0}c(\alpha_1(x), \l\alpha_2(x))\\
&=\left.c(\alpha_1,\alpha_2)\right|_{(x)}\ \ \ \ (\text{Bilinearity of } c\ (\text{over } M)),
\end{split}
\end{equation*}
where in the third equality the first term vanishes by multilinearity of $Tc$ with respect to the vector bundle $TA\longrightarrow TM$.

Finally, considering more than one core section:

\begin{equation*}
\begin{split}
\left.Tc(\hat{\alpha}_1,\hat{\alpha}_2)\right|_{w_x}&=Tc(T0^{A}(w_x)+_{A}(\alpha_1^{\uparrow})_{0^{A}_x}, T0^{A}(w_x)+_{A}(\alpha_2^{\uparrow})_{0^{A}_x})\\
&=Tc(T0^{A}(w_x), T0^{A}(w_x))+Tc((\alpha_1^{\uparrow})_{0^{A}_x},(\alpha_2^{\uparrow})_{0^{A}_x})\\
&=0+\left.\frac{d}{d\l}\right|_{\l=0}c(\l\alpha_1(x),\l\alpha_2(x))\\
&=\left.\frac{d}{d\l}\right|_{\l=0}\l^{2}\cdot c(\alpha_1(x),\alpha_2(x))\\
&=0,
\end{split}
\end{equation*}
where in the third equality $Tc(T0^{A}(w_x), T0^{A}(w_x))=0$ by multilinearity of $Tc$ with respect to the vector bundle $TA\longrightarrow TM$.

To extend the proof to the case $k\neq2$, one again uses the linearity of $Tc$ to get a sum of a vanishing term with a simpler expression in $c$ and sections of $A_\calG$. \end{proof}

\ \

\begin{lemma}\label{tangent lift of algebroid-cochains}
Let $A$ be a Lie algebroid over $M$. The tangent lift is a cochain complex map
$$T:C^{\bullet}(A)\longrightarrow C^{\bullet}_\mathrm{lin}(TA)\subset C^{\bullet}(TA).$$
\end{lemma}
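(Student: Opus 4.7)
My plan is to prove two things: that $Tc$ actually lies in the linear subcomplex $C^\bullet_{\mathrm{lin}}(TA)$, and that $T$ commutes with the Chevalley-Eilenberg differentials. For the linearity, the point is that $Tc$ on a tuple $(v_1,\ldots,v_k)\in \oplus^k_{TM} TA$ is defined as the differential $dc$ of the $k$-multilinear function $c:\oplus^k A\to\mathbb{R}$, applied via the identification $T(\oplus^k A)\cong \oplus^k_{TM}TA$. Since the differential of a fibrewise multilinear map between vector bundles is fibrewise multilinear with respect to the natural DVB structures on the tangent bundles, $Tc$ is multilinear with respect to the vector bundle structure of $\oplus^k_{TM} TA$ over $\oplus^k_M A$; this is exactly linearity in the sense of $C^\bullet_\mathrm{lin}(TA)$. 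Checking this cleanly is a straightforward chart computation once one fixes the identification $T(\oplus^k A)\cong \oplus^k_{TM}TA$.

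The main part is the chain map property $T(\delta c)=\delta(Tc)$. Since sections of $TA\to TM$ are spanned by linear sections $T\alpha$ and core sections $\hat\alpha$ for $\alpha\in\Gamma(A)$, it suffices to evaluate both sides on tuples of such sections. I will use Lemma \ref{Tang. lift on algbrds} as the main computational tool, together with the three bracket relations of \eqref{Bracket} and the behaviour of the tangent anchor $\rho_T = J^{-1}\circ T\rho$ on linear and core sections. The key facts I will rely on are: $\rho_T(T\alpha)$ is the tangent lift of $\rho(\alpha)$ on $TM$, hence sends the tangent lift $Tf$ of $f\in C^\infty(M)$ to $T(\rho(\alpha)(f))$ and the pullback $p_{TM}^*f$ to $p_{TM}^*(\rho(\alpha)(f))$; while $\rho_T(\hat\alpha)$ is the vertical lift $\rho(\alpha)^\uparrow$, sending $Tf$ to $p_{TM}^*(\rho(\alpha)(f))$ and killing $p_{TM}^*f$.

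I would split the verification of $\delta(Tc) = T(\delta c)$ into three cases according to how many core sections appear in the input $k+1$-tuple. If all inputs are linear sections $T\alpha_i$, Lemma \ref{Tang. lift on algbrds}(1) converts everything on both sides of the equality into the tangent lift of a function on $M$, and the bracket identity $[T\alpha_i,T\alpha_j]=T[\alpha_i,\alpha_j]$ together with the anchor behaviour above collapses the computation to $T(\delta c(\alpha_0,\ldots,\alpha_k))$. If exactly one core section appears, say $\hat\alpha_k$, then Lemma \ref{Tang. lift on algbrds}(2) tells us each value of $Tc$ is a pullback $p_{TM}^*$ of a corresponding value of $c$; using $[T\alpha_i,\hat\alpha_k]=\widehat{[\alpha_i,\alpha_k]}$ and the pullback-killing/pullback-preserving behaviour of $\rho_T$ on linear/core sections, every term in $\delta(Tc)$ reduces to $p_{TM}^*$ of the corresponding term of $\delta c$, matching $T(\delta c)$ again by Lemma \ref{Tang. lift on algbrds}(2). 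If two or more core sections appear, Lemma \ref{Tang. lift on algbrds}(3) makes $T(\delta c)$ vanish; for $\delta(Tc)$, the relation $[\hat\alpha_i,\hat\alpha_j]=0$ plus the fact that a bracket $[T\alpha,\hat\beta]$ is again a core section guarantees every summand of $\delta(Tc)$ still contains at least two core inputs, so each summand vanishes by Lemma \ref{Tang. lift on algbrds}(3).

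The main technical obstacle is bookkeeping in the mixed case with exactly one core section: one must carefully track how the anchor interacts with both $Tf$ and $p_{TM}^*f$ terms and verify that the terms indexed by the Chevalley-Eilenberg differential match up correctly with the signs. The rest is either direct or an immediate corollary of Lemma \ref{Tang. lift on algbrds}.
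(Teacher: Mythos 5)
Your proposal follows essentially the same route as the paper: evaluate both sides on tuples of tangent sections $T\alpha$ and core sections $\hat\alpha$, split into cases by the number of core sections, and use Lemma \ref{Tang. lift on algbrds} together with the bracket relations \eqref{Bracket} and the identities $\rho_{TA}(T\alpha)=\rho(\alpha)^T$, $\rho_{TA}(\hat\alpha)=\rho(\alpha)^\uparrow$. The all-linear and one-core cases are handled as in the paper, and your preliminary linearity check is a reasonable addition (the paper treats it as part of the definition).

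There is one inaccuracy in your final case. With exactly two core sections, say $\hat\alpha_k$ and $\hat\alpha_{k+1}$, the anchor terms of the form $\rho_{TA}(\hat\alpha_k)\,Tc(T\alpha_1,\ldots,T\alpha_{k-1},\hat\alpha_{k+1})$ have only \emph{one} core input inside $Tc$, so they do not vanish by Lemma \ref{Tang. lift on algbrds}(3); by part (2) of that lemma the inner expression equals $p_M^{*}\,c(\alpha_1,\ldots,\alpha_{k-1},\alpha_{k+1})$, which is generally nonzero. These terms vanish instead because the vertical lift $\rho(\alpha_k)^\uparrow$ annihilates pullbacks of functions from the base --- a fact you state correctly in your list of anchor properties but do not invoke here. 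The remaining summands (those with brackets, and anchor terms whose $Tc$-argument retains two core sections) do vanish as you say. So the proof goes through, but the stated justification for that family of terms should be corrected.
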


\begin{proof}
We divide this proof in three cases, by evaluating the cochain $\delta(Tc)$ on tangent and core sections of $TA\to TM$, which together span all sections of the tangent algebroid (example \ref{Tangent}).

First we remark some useful facts about the anchor $\rho_{TA}$ of $TA$ and the image of core and tangent sections by the anchor. Recall that $\rho_{TA}=J_M\circ d\rho_A$. Then, on core sections
$$\rho_{TA}(\hat{\alpha})=(\rho(\alpha))^{\uparrow},$$
where for $X\in\mathfrak{X}(M)$, $X^{\uparrow}\in\mathfrak{X}(TM)$ denotes the vertical vector field on $TM$ induced by $X$. This follows from the facts that the involution map $J_M$ identifies $T_{M}(TM)$ with the vertical bundle $V(TM)$, and it is the identity on $V_{M}(TM)$. On tangent sections,
$$\rho_{TA}(T\alpha)=\rho_A(\alpha)^{T},$$
where for $X\in\mathfrak{X}(M)$, $X^{T}\in\mathfrak{X}(TM)$ is the tangent lift of $X$. Recall also the compatibility between the tangent lifts of vector fields and functions: $X^{T}$ applied to the tangent lift $Tf$ of $f\in C^{\infty}(M)$ is the tangent lift of the function $X(f)$.

Let now $c\in C^{k}(A)$ be an algebroid $k$-cochain.

\begin{itemize}[leftmargin=*]

\item \emph{Tangent sections}\\
The compatibility between the tangent lifts of vector fields and functions allows to check directly that $\delta(Tc)=T(\delta c)$ when applied to $k+1$ tangent sections $T\alpha_i$ of $TA$.\\

\item \emph{One core section}\\
Recall that, by the second statement of Lemma \ref{Tang. lift on algbrds}, \[Tc(T\alpha_1,...,\hat{\alpha}_k)={p_M}^{*}(c(\alpha_1,...,\alpha_k)),\] thus on the one hand $T(\delta c)(T\alpha_1,...,\hat{\alpha}_{k+1})={p_M}^{*}(c(\alpha_1,...,\alpha_{k+1}))$. On the other hand,
\begin{align*}
\delta(Tc)(T\alpha_1,...,\hat{\alpha}_{k+1})&=\Sigma_{i<j}(-1)^{i+j-1}{p_M}^{*}(c([\alpha_i,\alpha_j],\alpha_1,...,\alpha_{k+1}))\\
&+\Sigma_{i}^{k}(-1)^{i}\rho_{TA}(T\alpha_i)({p_M}^{*}c(\alpha_1,...,\alpha_{i-1},\alpha_{i+1},...,\alpha_{k+1}))\\
&+(-1)^{k+1}\rho_{TA}(\hat{\alpha_{k+1}})(T(c(\alpha_1,...\alpha_k))).
\end{align*}
Now, since in the second and third row of this equation we have the tangent and vertical lifts of vector fields of $M$, we use the expression for their flows and that will allow us to prove the equality with $T(\delta c)(T\alpha_1,...,\hat{\alpha}_{k+1})$.\\

\item \emph{More than one core section}\\
On the one hand, statement (3) of Lemma \ref{Tang. lift on algbrds} says that \[T(\delta c)(\alpha_1,...,\hat{\alpha}_{k},\hat{\alpha}_{k+1})=0.\]
On the other hand, the same statement (3) implies that
\begin{align*} 
\delta(Tc)(\alpha_1,...,\hat{\alpha}_{k},\hat{\alpha}_{k+1})&=(-1)^{k}\rho_{TA}(\hat{\alpha}_k)Tc(\alpha_1,...,\alpha_{k-1},\hat{\alpha}_{k+1})\\
&+(-1)^{k+1}\rho_{TA}(\hat{\alpha}_{k+1})Tc(\alpha_1,...,\hat{\alpha}_{k})\\
&=(-1)^{k}(\rho_{A}(\alpha_k))^{\uparrow}{p_M}^{*}c(\alpha_1,...,\alpha_{k-1},\alpha_{k+1})\\
&+(-1)^{k+1}(\rho_{A}(\alpha_{k+1}))^{\uparrow}{p_M}^{*}c(\alpha_1,...,\alpha_{k})\\
&=0.\qedhere
\end{align*}
\end{itemize}
\end{proof}

\bibliographystyle{plain} 
\bibliography{bibliografia2}

\end{document}